\newtheorem{definition}{Definition}
\newtheorem{theorem}{Theorem}
\newtheorem{lemma}{Lemma}
\newtheorem{claim}{Claim}
\newtheorem{inftheorem}{Informal Theorem}
\def\BState{\State\hskip-\ALG@thistlm}
\newcommand{\lp}{\left}
\newcommand{\rp}{\right}
\newcommand{\mrm}[1]{\mathrm{#1}}
\newcommand{\mbb}[1]{\mathbb{#1}}
\newcommand{\mcal}[1]{\mathcal{#1}}
\def\abs#1{\left|#1\right|}
\DeclareMathOperator*{\poly}{poly}
\def\R{\mathbb{R}}
\def\reals{\mathbb{R}}
\def\nats{\mathbb{N}}
\newcommand{\me}{\mathrm{e}}
\def\d{\mrm{d}}
\newcommand\norm[1]{\left\| #1 \right\|}
\newcommand\snorm[2]{\left\| #2 \right\|_{#1}}
\newcommand\mat[1]{\bm{#1}}
\newcommand\matr[1]{\bm{#1}}
\renewcommand\vec[1]{\bm{#1}}
\def\Domain{\mcal{D}}
\newcommand{\tr}{\mathrm{tr}}
\newcommand{\1}[1]{\bm{1}_{#1}}
\DeclareMathOperator{\sgn}{sgn}
\newcommand{\normal}{\mcal{N}}
\def\VC{\mrm{VC}}
\def\Prob{\mathbb{P}}
\DeclareMathOperator*{\E}{\mathlarger{\mbb{ E }}}
\DeclareMathOperator*{\Exp}{\mathlarger{\mbb{ E }}}
\def\NS{\bm{\mrm{NS}}}
\DeclareMathOperator*{\Var}{\mrm{Var}}
\newcommand{\dchi}[2]{D_{\chi^2} (#1\|#2) }
\newcommand{\dtv}[2]{d_{\mrm{TV}} (#1, #2) }
\newcommand{\dhel}[2]{d_{\mrm{H}} (#1, #2) }
\newcommand{\wh}[1]{\widehat{#1}}
\newcommand{\wt}[1]{\widetilde{#1}}
\def\eps{\varepsilon}
\def\tind{\psi}
\newcommand{\dist}{\mathrm{dist}}
\newcommand{\diag}{\mathrm{diag}}
\newcommand{\symm}{\mathcal{Q}}
\newcommand{\paragr}[1]{\noindent \textbf{#1}}
\newcommand{\Nt}{\normal^*}
\newcommand{\mt}{\vec{\mu}^*}
\newcommand{\St}{\mat{\Sigma}^*}
\newcommand{\tim}{\tilde{\vec{\mu}}}
\newcommand{\tiS}{\tilde{\mat{\Sigma}}}
\newcommand{\N}{\normal}
\newcommand{\m}{\vec{\mu}}
\renewcommand{\S}{\mat{\Sigma}}
\newcommand{\wb}[1]{\wt{ \boldsymbol {#1}}}
\newif\ifnotes\notestrue
\definecolor{mygrey}{gray}{0.50}
\newcommand{\notename}[2]{{\textcolor{red}{\footnotesize{\bf (#1:} {#2}{\bf
) }}}}
\newcommand{\notename}[2]{{}}
\newif\ifhideproofs
\newenvironment{prevproof}[2]{\noindent {\em {Proof of {#1}~\ref{#2}:}}}{$\hfill\qed$\vskip \belowdisplayskip}
\newenvironment{prevproofbig}[2]{\noindent \subsection*{Proof of {#1}~\ref{#2}\\}}{$\hfill\qed$\vskip \belowdisplayskip}
\begin{document}

\title{Efficient Truncated Statistics with Unknown Truncation}

\author{
  Vasilis Kontonis \\
  \small UW Madison \\
  \small \href{mailto:tzamos@wisc.edu}{kontonis@wisc.edu} \normalsize
  \and Christos Tzamos \\
  \small UW Madison \\
  \small \href{mailto:tzamos@wisc.edu}{tzamos@wisc.edu} \normalsize
  \and Manolis Zampetakis \\
  \small MIT \\
  \small \href{mailto:mzampet@mit.edu}{mzampet@mit.edu} \normalsize
}

\maketitle

\begin{abstract}

  We study the problem of estimating the parameters of a Gaussian
distribution when samples are only shown if they fall in some (unknown)
subset $S \subseteq \R^d$. This core problem in truncated statistics has
long history going back to Galton, Lee, Pearson and Fisher. Recent work
by Daskalakis et al. (FOCS'18), provides the first efficient algorithm
that works for arbitrary sets in high dimension when the set is known,
but leaves as an open problem the more challenging and relevant case of
unknown truncation set.

  Our main result is a computationally and sample efficient algorithm
for estimating the parameters of the Gaussian under arbitrary unknown
truncation sets whose performance decays with a natural measure of
complexity of the set, namely its Gaussian surface area. Notably, this
algorithm works for large families of sets including intersections of
halfspaces, polynomial threshold functions and general convex sets. We
show that our algorithm closely captures the tradeoff between the
complexity of the set and the number of samples needed to learn the
parameters by exhibiting a set with small Gaussian surface area for
which it is information theoretically impossible to learn the true
Gaussian with few samples.
 \end{abstract}

\setcounter{page}{0}
\thispagestyle{empty}
\newpage

\section{Introduction}
\label{sec:intro}

A classical challenge in Statistics is estimation from truncated samples.
Truncation occurs when samples falling outside of some subset $S$ of the
support of the distribution are not observed. Truncation of samples has
myriad manifestations in business, economics, engineering, social
sciences, and all areas of the physical sciences.

Statistical estimation under truncated samples has had a long history in
Statistics, going back to at least the work of Galton \cite{Galton1897}
who analyzed truncated samples corresponding to speeds of American
trotting horses. Following Galton's work, Pearson and Lee
\cite{Pearson1902, PearsonLee1908, Lee1914} used the method of moments in
order to estimate the mean and standard deviation of a truncated
univariate normal distribution and later Fisher \cite{fisher31} used the
maximum likelihood method for the same estimation problem. Since then,
there has been a large volume of research devoted to estimating the
truncated normal distribution; see e.g.
\cite{Schneider86,Cohen91,BalakrishnanCramer}. Nevertheless, the first
algorithm that is provably computationally and statistically efficient was
only recently developed by Daskalakis et al. \cite{DGTZ18}, under the
assumption that the truncation set $S$ is known.

In virtually all these works the question of estimation under unknown
truncation set is raised. Our work resolves this question by providing
tight sample complexity guarantees and an efficient algorithm for
recovering the underlying Gaussian distribution.  Although this estimation
problem has clear and important practical and theoretical motivation too
little was known prior to our work even in the asymptotic regime.  In the
early work of Shah and Jaiswal \cite{ShahJ1966} it was proven that the
method of moments can be used to estimate a single dimensional Gaussian
distribution when the truncation set is unknown but it is assumed to be an
interval. In the other extreme where the set is allowed to be arbitrarily
complex, Daskalakis et al.  \cite{DGTZ18} showed that it is information
theoretically impossible to recover the parameters.  We provide the first
complete analysis of the number of samples needed for recovery taking into
account the complexity of the underlying set.

\paragraph{Our Contributions.}

Our work studies the estimation task when the truncation set belongs in a
family $\mathcal{C}$ of ``low complexity''. We use two different notions for
quantifying the complexity of sets: the VC-dimension and the Gaussian Surface
Area.

Our first result is that for any set family with VC-dimension
$\VC(\mathcal{C})$, the mean and covariance of the true $d$-dimensional
Gaussian Distribution can be recovered up to accuracy $\eps$ using only
$\tilde{O} \left( \frac{\VC(\mathcal{C})}{\eps} + \frac{d^2}{\eps^2} \right)$
truncated samples.

\begin{inftheorem} \label{thm:mainInformal0}
  Let $\mathcal{C}$ be a class of sets with VC-dimension
  $\VC(\mathcal{C})$ and let $N = \tilde{O} \left(
  \frac{\VC(\mathcal{C})}{\eps} + \frac{d^2}{\eps^2} \right)$. Given $N$
  samples from a $d$-dimensional Gaussian $\normal(\vec \mu,\matr \Sigma)$
  with unknown mean $\mu$ and covariance $\matr \Sigma$, truncated on a
  set $S \in \mathcal{C}$ with mass at least $\alpha$, it is possible to
  find an estimate $(\hat{\vec{\mu}}, \hat{\matr{\Sigma}})$ such that
  $\dtv{\normal(\vec\mu,\matr\Sigma)}{\normal(\hat{ \vec{ \mu}},\hat{
  \matr {\Sigma}})} \le \eps$.
\end{inftheorem}

The estimation method computes the set of smallest mass that maximizes the
likelihood of the data observed and learns the truncated distribution
within error $O(\eps)$ in total variation distance. To translate this
error in total variation to parameter distance, we prove a general result
showing that it is impossible to create a set (no matter the complexity)
so that two Gaussians whose parameters are far have similar truncated
distributions (see Lemma~\ref{lem:tvdLowerBound}).

A simple but not successful approach would be to first try to learn an
approximation of the truncation set with symmetric difference roughly
$\eps^2/d^2$ with the true set and then run the algorithm of
\cite{DGTZ18} using the approximate oracle.  This approach would lead to a
$\VC(\mcal{S}) d^2/\eps^2$ sample complexity that is worse than what we
get.  More importantly, doing empirical risk minimization\footnote{That is
  finding a set of the family that contains all the observed samples.}
  using truncated samples does not guarantee that we will find a set of
  small \emph{symmetric} difference with the true and it is not clear how
  one could achieve that.

Our result bounds the sample complexity of identifying the underlying
Gaussian distribution in terms of the VC-dimension of the set but does not
yield a computationally efficient method for recovery.  Obtaining a
computationally efficient algorithm seems unlikely, unless one restricts
attention to simple specific set families, such as axis aligned
rectangles. One would hope that exploiting the fact that samples are drawn
from a ``tame'' distribution, such as a Gaussian, can lead to general
computationally efficient algorithms and even improved sample complexity.

Indeed, our main result is an algorithm that is both computationally and
statistically efficient for estimating the parameters of a spherical
Gaussian and uses only $d^{O( \Gamma^2(\mathcal{C}) )}$ samples, where
$\Gamma(\mathcal{C})$ is the \textit{Gaussian Surface Area} of the class
$\mathcal{C}$, an alternative complexity measure introduced by Klivans et
al.  \cite{KOS08}:

\begin{inftheorem} \label{thm:mainInformal1}
  Let $\mathcal{C}$ be a class of sets with Gaussian surface area at most
  $\Gamma(\mathcal{C})$ and let $k = \poly(1/\alpha,
  1/\eps)\Gamma(\mathcal{C})^2$. Given $N = d^k$ samples from a spherical
  $d$-dimensional Gaussian $\normal(\vec \mu,\sigma^2 \matr I)$, truncated on a set $S \in
  \mathcal{C}$ with mass at least $\alpha$,in time $\poly(m)$, we can find an estimate $\hat \mu,
  \hat \sigma^2$ such that \[
    \dtv{\normal(\vec \mu, \sigma^2 \matr I)}{\normal(\hat{\vec{\mu}},
  \hat \sigma^2 \matr{I})} \le \eps.\]
\end{inftheorem}

The notion of Gaussian surface area can lead to better sample complexity
bounds even when the VC dimension is infinite. An example of such a case
is when $\mathcal{C}$ is the class of all convex sets.
Table~\ref{tab:surface-area} summarizes the known bounds for the Gaussian
surface area of different concept classes and the implied sample
complexity in our setting when combined with our main theorem.

\begin{table*}[ht]
  \def\arraystretch{1.2}
  \centering
  \begin{tabular}{l c c}
    \hline
    \textbf{Concept Class} & \textbf{Gaussian Surface Area} & \textbf{Sample Complexity} \\
    \hline\hline
    Polynomial threshold functions of degree $k$ & $O(k)$ \cite{Kan11} & $d^{O(k^2)}$ \\
    Intersections of $k$ halfspaces & $O(\sqrt{\log{k}})$ \cite{KOS08} & $d^{O(\log k)}$ \\
    General convex sets & $O(d^{1/4})$ \cite{Bal93} & $d^{O(\sqrt{d})}$ \\
    \hline
    \hline
  \end{tabular}
  \caption{\small Summary of known results for Gaussian Surface Area. The last
  column gives the sample complexity we obtain for our setting.}
  \label{tab:surface-area}
\end{table*}

Beyond spherical Gaussians, our main result extends to Gaussians with
arbitrary diagonal covariance matrices. In addition, we provide an
information theoretic result showing that the case with general covariance
matrices can also be estimated using the same sample complexity bound by
finding a Gaussian and a set that matches the moments of the true
distribution.  We remark our main algorithmic result Informal
Theorem~\ref{thm:mainInformal2} uses Gaussian Surface Area whereas our
sample complexity result Informal Theorem~\ref{thm:mainInformal1} uses
VC-dimension.  We discuss the differences of the two approaches in
Section~\ref{sec:vcGsa}.

\begin{inftheorem} \label{thm:mainInformal2}
  Let $\mathcal{C}$ be a class of sets with Gaussian surface area at most
  $\Gamma(\mathcal{C})$ and let $k = \poly(1/\alpha,
  1/\eps)\Gamma(\mathcal{C})^2$.  Any truncated Gaussian with $\normal(\hat{
\vec{\mu}}, \hat{\matr{\Sigma}}, \hat S)$ with $\hat S \in \mathcal{C}$ that
  approximately matches the moments up to degree $k$ of a truncated
  $d$-dimensional Gaussian $\normal(\vec \mu,\matr \Sigma,S)$ with $S \in
  \mathcal{C}$, satisfies $\dtv{\normal(\vec \mu,\matr \Sigma)}{\normal(\hat{
  \vec{\mu}},\hat{\matr{\Sigma}})} \le \eps$. The number of samples to estimate
  the moments within the required accuracy is at most $d^{O(k)}$.
\end{inftheorem}

This shows that the first few moments are sufficient to identify the
parameters. Analyzing the guarantees of moment matching methods is
notoriously challenging as it involves bounding the error of a system of
many polynomial equations. Even for a single-dimensional Gaussian with
truncation in an interval, where closed form solutions of the moments
exist, it is highly non-trivial to bound these errors~\cite{ShahJ1966}. In
contrast, our analysis using Hermite polynomials allows us to easily
obtain bounds for arbitrary truncation sets in high dimensions, even
though no closed form expression for the moments exists.

We conclude by showing that the dependence of our sample complexity bounds
both on the VC-dimension and the Gaussian Surface Area is tight \emph{up
to polynomial factors}.  In particular, we construct a family in $d$
dimensions with VC dimension $2^d$ and Gaussian surface area $O(d)$ for
which it is not possible to learn the mean of the underlying Gaussian
within 1 standard deviation using $o(2^{d/2})$ samples.

\begin{inftheorem}\label{thm:mainInformal3}
  There exists a family of sets $\mcal{S}$ with $\Gamma({\mcal{S}}) =
  O(d)$ and VC-dimension $2^d$ such that any algorithm that draws $N$
  samples from $\normal(\vec \mu, \matr I, S)$ and computes an estimate
  $\wt {\vec \mu}$ with $\norm{\wt {\vec \mu} - \vec \mu}_2 \leq 1$ must
  have $N= \Omega(2^{d/2})$.
\end{inftheorem}

\paragraph{Our techniques and relation to prior work.}
The work of Klivans et al.~\cite{KOS08} provides a computationally and
sample efficient algorithm for learning geometric concepts from labeled
examples drawn from a Gaussian distribution. On the other hand, the recent
work of Daskalakis et al.~\cite{DGTZ18} provides efficient estimators for
truncated statistics with \emph{known} sets. One could hope to combine
these two approaches for our setting, by first learning the set and then
using the algorithm of~\cite{DGTZ18} to learn the parameters of the
Gaussian. This approach, however, fails for two reasons. First, the
results of Klivans et al.~\cite{KOS08} apply in the supervised learning
setting where one has access to both positive and negative samples, while
our problem can be thought of as observing only positive examples (those
falling inside the set). In addition, any direct approach that extends
their result to work with positive only examples requires that the
underlying Gaussian distribution is known in advance.

One of our key technical contributions is to extend the techniques of
Klivans et al. \cite{KOS08} to work with \emph{positive only examples}
from an \emph{unknown} Gaussian distribution, which is the major case of
interest in truncated statistics.  To perform the set estimation Klivans
et al.  \cite{KOS08}, rely on a family of orthogonal polynomials with
respect to the Gaussian distribution, namely the Hermite polynomials and
show that the indicator function of the set is well approximated by its
low degree Hermite expansion.  While we cannot learn this function
directly in our setting, we are able to recover an alternative function,
that contains ``entangled'' information of both the true Gaussian
parameters and the underlying set. After learning the function, we
formulate an optimization problem whose solution enables us to decouple
these two quantities and retrieve both the Gaussian parameters and the
underlying set. We describe our estimation method in more detail in
Section~\ref{sec:algorithm}. As a corollary of our approach, we obtain
the first efficient algorithm for learning geometric concepts from
positive examples drawn from an unknown spherical Gaussian distribution.

\paragraph{Simulations.}
In addition to the theoretical guarantees of our algorithm, we
empirically evaluate its performance using simulated data. We present the
results that we get in Figure~\ref{fig:simulations}, where one can see
that even when the truncation set is complex, our algorithm finds an
accurate estimation of the mean of the untruncated distribution.  Observe
that our algorithm succeeds in estimating the true mean of the input
distribution despite the fact that the set is unknown and the samples
look similar in both cases.
\begin{figure}[t!]
  \centering
  \begin{subfigure}[t]{0.48\textwidth}
    \centering
    \includegraphics[width=\textwidth]{./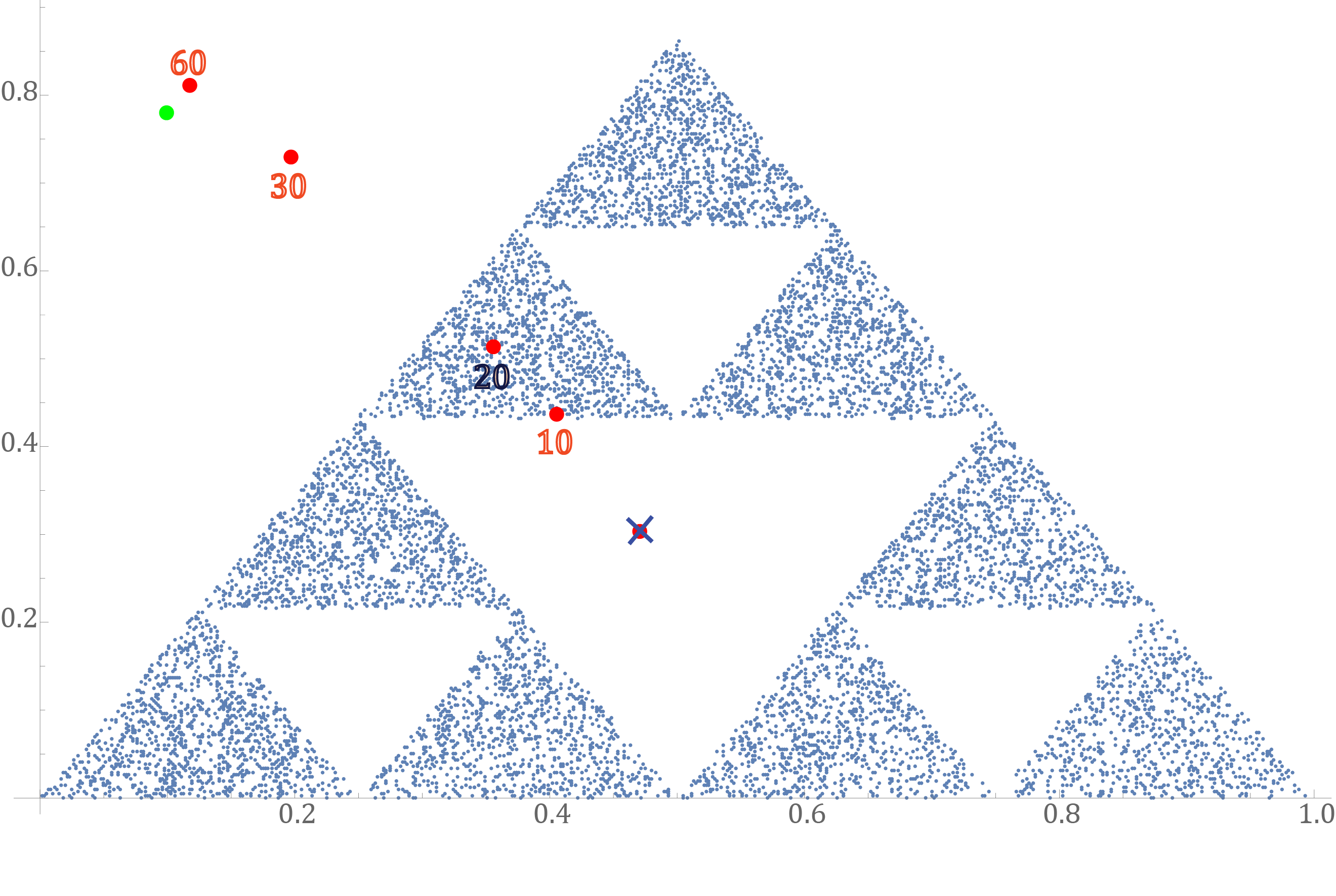}
    \caption{Execution of our algorithm for isotropic Gaussian distribution with $\vec{\mu}^* = (0.1, 0.78)$ and $\vec{\mu}_{S} = (0.48, 0.32)$.}
  \end{subfigure}
  ~~~
  \begin{subfigure}[t]{0.48 \textwidth}
    \centering
    \includegraphics[width=\textwidth]{./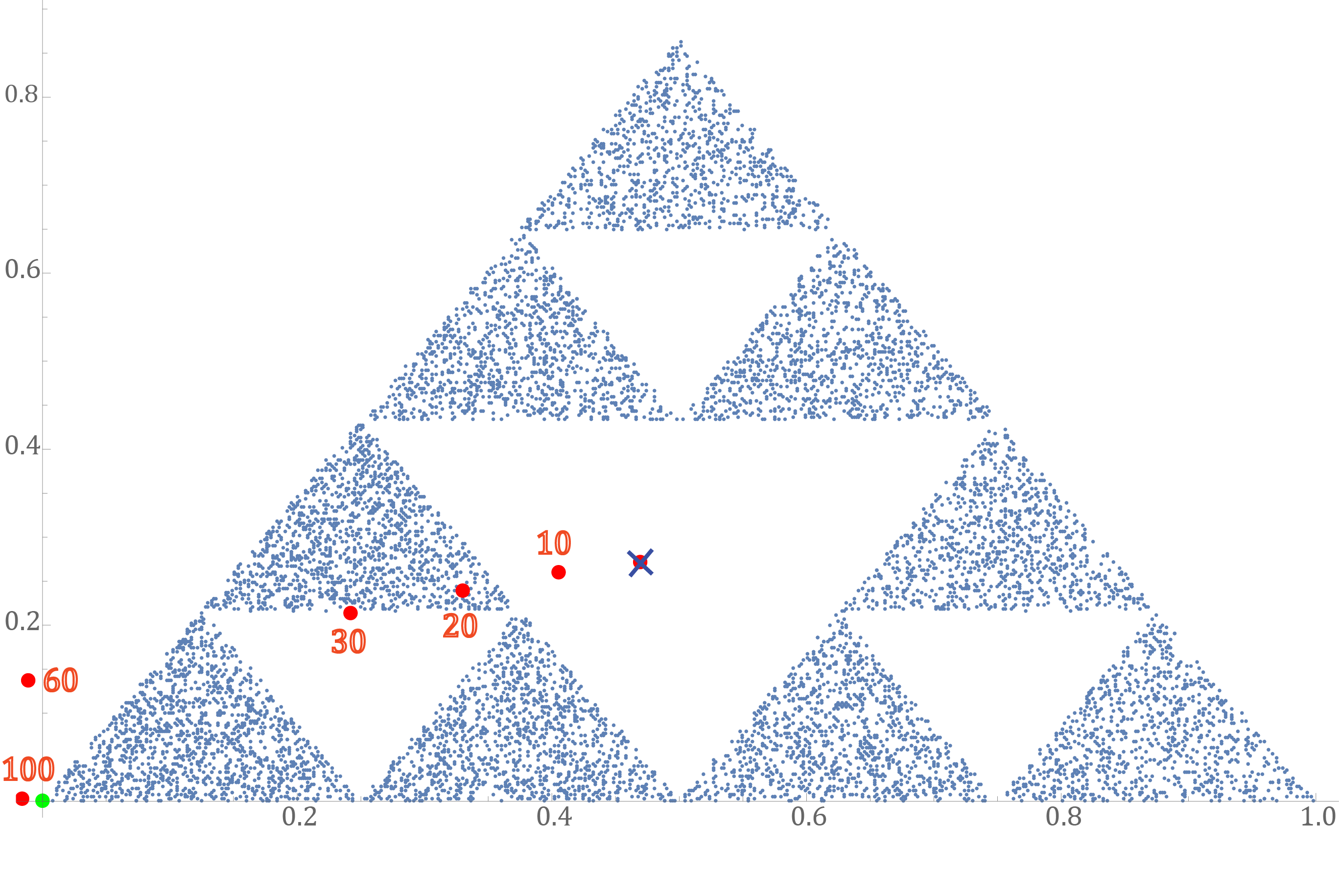}
    \caption{Execution of our algorithm for isotropic Gaussian distribution with $\vec{\mu}^* = (0, 0)$ and $\vec{\mu}_{S} = (0.47, 0.27)$.}
  \end{subfigure}
  \caption{Illustration of the results of our algorithm for an unknown
truncation set. The $\times$ sign corresponds to the conditional mean of
the truncated distribution, while the green point corresponds to the true
mean and the red points correspond to the estimated true mean depending
on the degree of the Hermite polynomials that are being used by the
algorithm.}\label{fig:simulations}
\end{figure}
 \subsection{Further Related Work} \label{sec:relatedWork}

  Our work is related to the field of robust statistics as it can
robustly learn a Gaussian even in the presence of an adversary erasing
samples outside a certain set. Recently, there has been a lot of
theoretical work doing robust estimation of the parameters of
multi-variate Gaussian distributions in the presence of arbitrary
corruptions to a small $\varepsilon$ fraction of the samples, allowing
for both deletions of samples and additions of samples that can also be
chosen adaptively
\cite{DKK+16b,CSV17,LRV16,DKK+17,DKK+18}. When the corruption of the data
is so powerful it is easy to see that the estimation error of the
parameter depends on $\eps$ and cannot shrink to $0$ as the number of
samples grows to infinity. In our model the corruption is more
restrictive but in return our results show how to estimate the
parameters of a multi-variate Gaussian distribution
\textit{to arbitrary} accuracy even when the fraction of corruption is
any constant less than $1$.

  Our work also has connections with the literature of learning from
positive examples. At the heart of virtually all of the results in this
literature is the use of the exact knowledge of the original
non-truncated distribution to be able to generate fake negative
examples, e.g. \cite{Den98, LDG00}. When the original distribution is
uniform, better algorithms are known. Diakonikolas et al. \cite{DDS14}
gave efficient learning algorithms for DNFs and linear threshold
functions, Frieze et al. \cite{FJK96} and Anderson et al. \cite{AGR13}
gave efficient learning algorithms for learning $d$-dimensional
simplices. Another line of work proves lower bounds on the sample
complexity of recovering an unknown set from positive examples.
Goyal et al. \cite{GR09} showed that learning a convex set in
$d$-dimensions to accuracy $\eps$ from positive samples, uniformly
distributed inside the set, requires at least $2^{\Omega(\sqrt{d/\eps})}$
samples, while the work of \cite{Eld11} showed that
$2^{\Omega(\sqrt{d})}$ samples are necessary even to estimate the mass
of the set. To the best of our knowledge, no matching upper bounds are
known for those results. Our estimation result implies that
$d^{\poly(\frac 1\eps) \sqrt{d}}$ are sufficient to learn the set and
its mass when given positive samples from a Gaussian truncated on the
convex set.
 \section{Preliminaries} \label{sec:model}

\paragr{Notation.} We use small bold letters $\vec{x}$ to refer to real
vectors in finite dimension $\reals^d$ and capital bold letters
$\matr{A}$ to refer to matrices in $\reals^{d \times \ell}$. Similarly, a
function with image in $\reals^d$ is represented by a small and bold
letter $\vec{f}$. Given a subset $S$ of $\R^d$ we define
$\1{S}(\vec x)$ to be its $0 - 1$ indicator. Let
$\matr{A} \in \reals^{d \times d}$, we define
$\matr{A}^{\flat} \in \reals^{d^2}$ to be the standard vectorization of
$\matr{A}$. Let also $\symm_d$ be the set of all the symmetric
$d \times d$ matrices.
The \emph{Frobenius norm} of a matrix
$\matr{A}$ is defined as $\norm{\matr{A}}_F=\norm{\matr{A}^{\flat}}_2$.
\medskip

\paragr{Gaussian Distribution.} Let $\normal(\vec{\mu}, \matr{\Sigma})$
be the normal distribution with mean $\vec{\mu}$ and covariance matrix
$\matr{\Sigma}$, with the following probability density function
\begin{align} \label{eq:normalDensityFunction}
  \normal(\vec{\mu}, \matr{\Sigma}; \vec{x}) =
  \frac{1}{\sqrt{\det(2 \pi \matr{\Sigma})}}
  \exp \left( - \frac{1}{2} (\vec{x} - \vec{\mu})^T \matr{\Sigma}^{-1}
    (\vec{x} - \vec{\mu}) \right).
\end{align}
Also, let $\normal(\vec{\mu},\matr{\Sigma}; S)$ denote the
\emph{probability mass of a measurable set $S$} under this Gaussian
measure. We shall also denote by $\normal_0$ the standard Gaussian,
whether it is single or multidimensional will be clear from the context.
\medskip

\paragr{Truncated Gaussian Distribution.} Let $S \subseteq \reals^d$ be
a subset of the $d$-dimensional Euclidean space, we define the
\textit{$S$-truncated normal distribution}
$\normal(\vec{\mu}, \matr{\Sigma}, S)$ the normal distribution
$\normal(\vec{\mu}, \matr{\Sigma})$ conditioned on taking values in the
subset $S$. The probability density function of
$\normal(\vec{\mu}, \matr{\Sigma}, S)$ is the following
\begin{equation} \label{eq:truncatedNormalDensityFunction}
  \normal(\vec{\mu}, \matr{\Sigma}, S; \vec{x}) =
\frac{\1{S}(\vec x)}{\normal(\vec \mu, \matr{\Sigma};S)} \normal(\vec{\mu}, \matr{\Sigma}; \vec{x}).
\end{equation}

\noindent We will assume that the covariance matrix $\Sigma$ is full rank.
The case where $\Sigma$ is not full rank we can easily detect and solve
the estimation problem in the linear subspace of samples.
\smallskip

\noindent The core complexity measure of Borel sets in $\reals^d$ that
we use is the notion of Gaussian Surface Area defined below.

\begin{definition}[\textsc{Gaussian Surface Area}]
  \label{def:GaussianSurfaceArea}
    For a Borel set $A \subseteq \R^d$, $\delta \geq 0$ let $A_\delta = \{x :
    \dist(x, A) \leq \delta\}$.  The Gaussian surface area of $A$ is
    \[ \Gamma(A) = \liminf_{\delta \to 0} \frac{\normal_0(A_\delta \setminus A)}{\delta}.\]
We define the Gaussian surface area of a family of sets $\mcal{C}$ to be
$\Gamma(\mcal{C}) = \sup_{C \in \mcal{C}} \Gamma(C)$.
\end{definition}

\subsection{Problem formulation} \label{sec:problemFormulation}

Given samples from a truncated Gaussian $\normal^*_S \triangleq
\normal(\vec{\mu}^*, \matr{\Sigma}^*, S)$, our goal is to learn the
parameters $(\vec{\mu}^*,\matr{\Sigma}^*)$ and recover the set $S$. We
denote by $\alpha^* = \normal(\vec{\mu}^*, \matr{\Sigma}^*; S)$, the total
mass contained in set $S$ by the untruncated Gaussian $\normal^*
\triangleq \normal(\vec{\mu}^*, \matr{\Sigma}^*)$. Throughout this paper,
we assume that we know an absolute constant $\alpha > 0$ such that
\begin{equation}\label{eq:globalLowerBound}
  \normal(\vec \mu^*, \mat \Sigma^* ; S) = \alpha^* \geq \alpha.
\end{equation}

\section{Identifiability with bounded VC dimension}
\label{sec:identifiabilityVC}
In this section we analyze the sample compexity of learning the true
Gaussian parameters when the truncation set has bounded VC-dimension.
In particular we show that the overhead over the $d^2/\eps^2$ samples
(which is the sample compexity of learning the parameters of the
Gaussian without truncation) is proportional to the VC dimension of the
class.
\begin{theorem}\label{thm:MLESamples}
  Let $\mcal{S}$ be a family of sets of finite VC dimension, and let
  $\normal(\vec \mu, \matr \Sigma, S)$ be a truncated Gaussian
  distribution such that $\normal(\vec \mu, \matr \Sigma; S) \geq \alpha$.
  Given $N$ samples with
  \[
    N = \poly(1/\alpha)\ \wt O\lp( \frac{d^2}{\eps^2} + \frac{\VC(\mcal{S})}{\eps} \rp)
  \]
  Then, with probability at least $99\%$, it is possible to identify
  $(\wb \mu, \wb \Sigma)$ that satisfy \( \dtv{\normal(\vec \mu, \matr
  \Sigma)}{\normal(\wb{\mu}, \wb{\Sigma })} \leq \eps \) and
  $ \norm{\matr \Sigma^{-1/2}(\vec \mu - \wb \mu)}_{2} \le \eps $ and $
  \norm{\mat{I} - \matr \Sigma^{-1/2} \wb {\Sigma} \matr
  \Sigma^{-1/2}}_{F} \le \eps$.
\end{theorem}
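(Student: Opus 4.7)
The plan is to use maximum likelihood estimation (MLE) over the parametrized family $\{\normal(\vec{\mu}, \matr{\Sigma}, S) : \vec{\mu} \in \R^d,\ \matr{\Sigma} \succ 0,\ S \in \mcal{S}\}$ and show, via uniform convergence, that the MLE output is close in total variation to the true truncated distribution $P^* \triangleq \normal(\vec{\mu}^*, \matr{\Sigma}^*, S^*)$. The theorem then follows by invoking Lemma~\ref{lem:tvdLowerBound} to translate TV closeness of the truncated distributions into closeness of the underlying untruncated Gaussians in both TV and in the stated parameter metrics.

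First I would set up the estimator. Given samples $x_1, \dots, x_N$ from $P^*$, any candidate $(\vec{\mu}, \matr{\Sigma}, S)$ with $S \in \mcal{S}$ containing all samples has empirical log-likelihood
$$L_N(\vec{\mu}, \matr{\Sigma}, S) = \frac{1}{N}\sum_{i=1}^N \log\normal(\vec{\mu}, \matr{\Sigma}; x_i) - \log\normal(\vec{\mu}, \matr{\Sigma}; S).$$
Since $(\vec{\mu}^*, \matr{\Sigma}^*, S^*)$ is itself feasible, the MLE $(\wb{\mu}, \wb{\Sigma}, \wb{S})$ achieves at least this value; for fixed Gaussian parameters, the inner maximization over $S$ amounts to choosing the smallest-mass $S \in \mcal{S}$ that contains every observed sample, matching the description in the introduction.

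The core step is to establish two uniform convergence statements. First, a realizable VC bound: taking $N = \wt{O}(\VC(\mcal{S})/\eps)$ samples ensures that every $S \in \mcal{S}$ containing all samples satisfies $P^*(S) \geq 1 - \eps$, so in the untruncated scale $\normal(\vec{\mu}^*, \matr{\Sigma}^*; S \cap S^*) \geq \alpha(1-\eps)$. This controls how much true mass can be missed by $\wb{S}$. Second, a parametric concentration bound: because $\log\normal(\vec{\mu}, \matr{\Sigma}; x)$ is quadratic in $x$, the map $\frac{1}{N}\sum_i \log\normal(\vec{\mu}, \matr{\Sigma}; x_i)$ converges to its $P^*$-expectation uniformly over a suitable $\poly(1/\alpha)$-bounded set of parameters as soon as the first two empirical moments of $P^*$ concentrate in Mahalanobis-scaled norm, which costs $\wt{O}(d^2/\eps^2)$ samples by standard subgaussian concentration (combined with a crude preprocessing step that rescales the data so that $(\vec{\mu}^*, \matr{\Sigma}^*)$ lies in a bounded region).

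Combining these, a standard MLE-consistency argument yields a KL-type bound between $P^*$ and $\wb{P} \triangleq \normal(\wb{\mu}, \wb{\Sigma}, \wb{S})$. The subtle point, which I expect to be the main obstacle, is handling the potential support mismatch: because $\wb{S}$ need not contain $S^*$, the raw divergence $\dkl{P^*}{\wb{P}}$ can be infinite. To deal with this I would compare $P^*$ to the intermediate measure $\tilde{P} \triangleq P^*|_{\wb{S} \cap S^*}$, paying a TV cost of at most $1 - P^*(\wb{S}) \leq \eps$ from the realizable VC step, and then bound $\dkl{\tilde{P}}{\wb{P}}$ by combining the parametric uniform convergence (applied to both the data term and to the shift in $\log\normal(\vec{\mu}, \matr{\Sigma}; \wb{S})$ relative to $\log\normal(\vec{\mu}^*, \matr{\Sigma}^*; S^*)$) with Pinsker's inequality. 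The resulting truncated-TV bound $\dtv{P^*}{\wb{P}} \leq O(\eps)$, with $\poly(1/\alpha)$ factors absorbed into $N$, is then fed into Lemma~\ref{lem:tvdLowerBound} to produce the untruncated TV bound and the Mahalanobis mean and Frobenius covariance closeness guarantees stated in the theorem.
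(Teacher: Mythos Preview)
Your outline correctly recognizes that the inner maximization over $S$ is ``take the smallest-mass consistent set'' and that the endgame is Lemma~\ref{lem:tvdLowerBound}. But the paper's argument does \emph{not} run a single global MLE and then bound KL via uniform log-likelihood convergence plus Pinsker. Instead it decouples the two ingredients: it lays down an $\eps$-cover of Gaussian parameters of size $(d/\eps)^{O(d^2)}$, for each candidate $(\vec u,\matr B)$ finds the minimum-mass consistent set $S_{\vec u,\matr B}$ using $\wt O(\VC(\mcal S)/\eps)$ samples (Lemma~\ref{lem:tvdVCBound}), and then runs the TV-tournament of Lemma~\ref{lem:tournament} to pick the winner, at cost $\wt O(\log(\#\text{hypotheses})/\eps^2)=\wt O(d^2/\eps^2)$ samples (Lemma~\ref{lem:learningTVD}). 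The key reason the per-candidate TV bound is $O(\eps)$ rather than $O(\sqrt\eps)$ is structural: choosing the minimum-mass set forces the candidate density to dominate the true one pointwise on the common support, so the TV collapses to the missing mass, which is $\le\eps$ by the realizable VC bound.

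Your KL/Pinsker route loses exactly this. With $\wt O(d^2/\eps^2)$ samples the empirical log-likelihood concentrates uniformly to $O(\eps)$, which gives a KL bound of order $\eps$ and hence, via Pinsker, only $\dtv{P^*}{\wb P}=O(\sqrt\eps)$; matching the stated theorem would then require $\wt O(d^2/\eps^4)$ samples in the parametric part. There is also a second-order loose end in your support-mismatch fix: after passing to $\tilde P=P^*|_{\wb S}$, the leftover contribution of $S^*\setminus\wb S$ to the population log-likelihood comparison is a signed integral of a quadratic against $P^*$ over a set of mass $\le\eps$, and you would need a separate tail-splitting argument (using your a priori $\poly(1/\alpha)$ parameter box) to show it cannot be very negative and inflate $\dkl{\tilde P}{\wb P}$. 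The paper's cover-plus-tournament approach sidesteps both issues by never passing through KL at all.
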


Our algorithm works by first learning the truncated distribution within
total variation distance $\eps$. To do this, we first assume that we
know the mean and covariance of the underlying Gaussian by guessing the
parameters and accurately learn the underlying set. After drawing $N =
\Theta(\frac{\VC(\mcal{S}) \log(1/\eps)}{\eps})$ samples from the
distribution, any set in the class that contains the samples will only
exclude at most an $\eps$ fraction of the total mass.  Picking the set
$\wt S$ that maximizes the likelihood of those samples, i.e. the set
with minimum mass according to the guessed Gaussian distribution,
guarantees that the total variation distance between the learned
truncated distribution and the true is at most $\eps$, if the guess of
the parameters was accurate (Lemma~\ref{lem:tvdVCBound}).  The proof of
Lemma~\ref{lem:tvdVCBound} can be found in Appendix~\ref{app:vc}.

\begin{lemma}\label{lem:tvdVCBound}
  Let $\mcal{S}$ be a family of subsets in $\R^d$ and Let $\normal(\vec
  \mu, \matr \Sigma, S^*) = \normal_S^*$ be a Normal distribution
  truncated on the set $S^* \in \mcal{S}$.  Fix $\eps \in (0,1), \delta
  \in (0,1/4)$ and let
  \[
    N = O\lp(\frac{\VC(\mcal{S}) \log(1/\eps)}{\eps} + \log\lp(\frac 1 \delta \rp) \rp)
  \]
  Moreover, let $\wb \mu, \wb \Sigma$ be such that $\dtv{\normal(\wb
  \mu, \wb \Sigma)}{\normal(\vec \mu, \matr \Sigma)} \leq \eps$.  Assume
  that we draw $N$ samples $\vec x_i$ from $\normal_{S^*}$, Let $\wt{S}$
  be the solution of the problem
  \begin{align*}
    \min_S \normal(\wb \mu, \wb \Sigma; S) \,\,
    \text{ subject to } x_i \in S \text{ for all } i\in [n]
  \end{align*}
  Then with probability at least $1-\delta$ we have
  \(
  \dtv{\normal(\wb \mu, \wb \Sigma, \wt S)}{\normal(\vec \mu, \matr \Sigma, S)}
  \leq 3\eps/(2 \alpha).
  \)
\end{lemma}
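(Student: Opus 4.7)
The plan is to reduce the claim to a bound on $p(\wt S \triangle S^*)$, where $p(\cdot) = \normal(\vec\mu, \matr\Sigma; \cdot)$ and $\wb p(\cdot) = \normal(\wb\mu, \wb\Sigma; \cdot)$; by hypothesis $|\wb p(A) - p(A)| \le \eps$ for every measurable $A$, and $p(S^*) = \alpha^* \ge \alpha$.

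For the ``inside'' direction I would apply the VC $\eps$-net theorem to the class $\mcal{S}$ under the \emph{truncated} law $\normal(\vec\mu,\matr\Sigma,S^*)$, from which the samples are drawn. The standard bound at the quoted sample size guarantees, with probability $1-\delta$, that every $S\in\mcal{S}$ whose truncated mass is at most $1-\eps$ misses at least one of the $N$ samples. Since $\wt S$ contains all samples by construction, this forces $\normal(\vec\mu,\matr\Sigma,S^*;\wt S) \ge 1-\eps$, which unrolls to $p(\wt S\cap S^*) \ge (1-\eps)\alpha^*$, hence $p(S^*\setminus\wt S) \le \eps\,\alpha^*$.

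For the ``outside'' direction I would exploit the optimality of $\wt S$ as the minimizer of $\wb p(\cdot)$ among sets in $\mcal{S}$ consistent with the samples. Since $S^*$ is itself consistent (the samples are supported on it), $\wb p(\wt S) \le \wb p(S^*)$. Applying $|\wb p - p| \le \eps$ on both sides gives $p(\wt S) \le p(S^*) + 2\eps$, and subtracting the common part $p(\wt S\cap S^*)$ yields $p(\wt S\setminus S^*) \le p(S^*\setminus\wt S) + 2\eps \le \eps\alpha^* + 2\eps$. Together with the previous step, $p(\wt S\triangle S^*) = O(\eps)$.

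To finish I would invoke the elementary inequality $\dtv{q(\cdot\mid A)}{q(\cdot\mid B)} \le q(A\triangle B)/\min(q(A),q(B))$, applied first with $q=p$, $A=\wt S$, $B=S^*$ to bound $\dtv{\normal(\vec\mu,\matr\Sigma,\wt S)}{\normal(\vec\mu,\matr\Sigma,S^*)}$, together with an analogous conditioning bound using $\dtv{\wb p}{p} \le \eps$ to pass from $\normal(\wb\mu,\wb\Sigma,\wt S)$ to $\normal(\vec\mu,\matr\Sigma,\wt S)$; both bounds require $\wb p(\wt S), p(\wt S) \gtrsim \alpha$, which the previous steps ensure for sufficiently small $\eps$. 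A triangle inequality then produces the target $3\eps/(2\alpha)$ after tracking constants. The step I expect to be most delicate is this bookkeeping of the conditioning factors; the VC uniform-convergence ingredient is the clean one. A conceptually important observation is that the minimality in the definition of $\wt S$ is essential: a merely consistent hypothesis could place arbitrary additional mass outside $S^*$, whereas minimality combined with the TV-closeness of $\wb p$ to $p$ pins $\wt S$ to be comparable in mass to $S^*$.
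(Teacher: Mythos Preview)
Your plan is correct and is structurally the same as the paper's proof: both use a VC $\eps$-net bound (the paper applies it to the class $\{S^*\setminus S:S\in\mcal S\}$, which has the same VC dimension---your phrasing ``to the class $\mcal S$'' should really be to complements/differences, but this is cosmetic) to control $p(S^*\setminus\wt S)$, and both use the minimality $\wb p(\wt S)\le \wb p(S^*)$ together with a triangle inequality through an intermediate truncated Gaussian.

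The one genuine difference is in how minimality is exploited at the end. You bound $p(\wt S\setminus S^*)$ explicitly and then invoke the generic inequality $\dtv{q(\cdot\mid A)}{q(\cdot\mid B)}\le q(A\triangle B)/\min(q(A),q(B))$, triangulating through $\normal(\vec\mu,\matr\Sigma,\wt S)$. The paper instead observes that $\wb p(\wt S)\le\wb p(S^*)$ forces the \emph{density} of $\normal(\wb\mu,\wb\Sigma,\wt S)$ to pointwise dominate that of $\normal(\wb\mu,\wb\Sigma,S^*)$ on $\wt S\cap S^*$; hence the set $\{\wt\normal_{S^*}>\wt\normal_{\wt S}\}$ is exactly $S^*\setminus\wt S$, and $\dtv{\wt\normal_{\wt S}}{\wt\normal_{S^*}}=\wb p(S^*\setminus\wt S)/\wb p(S^*)$ directly---the ``outside'' mass $\wt S\setminus S^*$ never enters. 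Triangulating through $\normal(\wb\mu,\wb\Sigma,S^*)$ then gives the stated $3\eps/(2\alpha)$. Your route picks up an extra contribution from $p(\wt S\setminus S^*)\le 2\eps+\eps\alpha^*$, so you would land at $O(\eps/\alpha)$ with a somewhat larger constant rather than the precise $3\eps/(2\alpha)$; the argument is otherwise sound.
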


This is because the total variation distance between two densities $f$
and $g$ can be written as $\int (f(x) - g(x) ) \1{f(x) > g(x)} dx$.
Note that by choosing the set of the smallest mass consistent with the
samples, we guarantee that the guess will have higher density at every
point apart from those outside the support $\wt S$. However, as we
argued the outside mass is at most $\eps$ with respect to the true
distribution which gives the bound in the total variation distance.

To remove the assumption that the true parameters are known, we build a
cover of all possible mean and covariance matrices that the underlying
Gaussian might have and run the tournament from \cite{DK14} to identify
the best one (Lemma~\ref{lem:tournament}). While there are
$(d/\eps)^{O(d^2)}$ such parameters, the number of samples needed for
running the tournament is only logarithmic which shows that an
additional $\wt O(d^2/\eps^2)$ are sufficient to find a hypothesis in
total variation distance $\eps$ (Lemma~\ref{lem:learningTVD}). The
proof of Lemma~\ref{lem:learningTVD} can be found in
Appendix~\ref{app:vc}.

\begin{lemma}\label{lem:learningTVD}
  Let $S \in \mcal{S}$ be a subset of $\R^d$ and $\normal(\vec \mu, \matr
  \Sigma, S)$ be the corresponding truncated normal distribution.  Then \(
  \wt{O}\lp(\VC(\mcal{S})/\eps + d^2/\eps^2  \rp) \) samples are
  sufficient to find parameters $\wt{\vec \mu}, \wt {\matr \Sigma}, \wt S$
  such that $\dtv{\normal(\vec \mu, \matr \Sigma, S)}{\normal(\wt{\vec
  \mu}, \wt {\matr \Sigma}, \wt S)} \leq \eps$ with probability at least
  $99\%$.
\end{lemma}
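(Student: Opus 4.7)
The plan is to reduce to Lemma~\ref{lem:tvdVCBound} by enumerating an $\eps$-cover of the unknown parameters $(\vec \mu, \matr \Sigma)$ and then running a Scheffé-style tournament (Lemma~\ref{lem:tournament}, \cite{DK14}) over the resulting family of candidate truncated distributions. Lemma~\ref{lem:tvdVCBound} already handles the case where a good guess $(\wb \mu, \wb \Sigma)$ for the parameters is available; the main work here is to remove that assumption without inflating the sample complexity beyond $\wt O(\VC(\mcal{S})/\eps + d^2/\eps^2)$.

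First, I would draw a small preliminary batch of truncated samples and use the empirical mean and covariance of the observations together with the lower bound $\alpha^* \ge \alpha$ on the surviving mass to localize $(\vec \mu, \matr \Sigma)$ inside a compact region $\mcal{R}$ of parameter space; standard truncated-moment arguments (as in \cite{DGTZ18}) give $\mcal{R}$ of diameter bounded by $\poly(1/\alpha)$ in the natural Mahalanobis metric. Inside $\mcal{R}$ I would build an $\eps$-cover $\mcal{N}$ in total variation distance of size $|\mcal{N}| = (d/\eps)^{O(d^2)}$, so that for some $(\wb \mu^*, \wb \Sigma^*) \in \mcal{N}$ we have $\dtv{\normal(\wb \mu^*, \wb \Sigma^*)}{\normal(\vec \mu, \matr \Sigma)} \le \eps$. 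Then, using a single batch of $N_1 = O(\VC(\mcal{S})\log(1/\eps)/\eps + \log(|\mcal{N}|/\delta))$ truncated samples, for every $(\wb \mu_i, \wb \Sigma_i) \in \mcal{N}$ I would solve the min-mass program of Lemma~\ref{lem:tvdVCBound} to obtain a set $\wt S_i \in \mcal{S}$ containing all samples. A union bound over $\mcal{N}$ upgrades the guarantee of Lemma~\ref{lem:tvdVCBound} to a simultaneous statement: with probability at least $1 - \delta$, for every $i$, the candidate truncated distribution $\normal(\wb \mu_i, \wb \Sigma_i, \wt S_i)$ is close to the true truncated distribution whenever $(\wb \mu_i, \wb \Sigma_i)$ is close to $(\vec \mu, \matr \Sigma)$. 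Since $\log|\mcal{N}| = O(d^2 \log(d/\eps))$, this only adds $\wt O(d^2)$ to $N_1$, which is absorbed.

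Second, I would feed the $|\mcal{N}|$ candidate distributions into the tournament of Lemma~\ref{lem:tournament}: one of them, namely the one indexed by $(\wb \mu^*, \wb \Sigma^*)$, is within TV distance $3\eps/(2\alpha)$ of the truth by Lemma~\ref{lem:tvdVCBound}, so the tournament returns a hypothesis within a small constant factor of that using an additional $N_2 = O(\log|\mcal{N}|/\eps^2) = \wt O(d^2/\eps^2)$ samples from the true truncated distribution. Rescaling $\eps$ by a $\poly(1/\alpha)$ factor at the start absorbs the constants and gives the claimed total sample complexity $\wt O(\VC(\mcal{S})/\eps + d^2/\eps^2)$. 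The main obstacle I expect is the first step: to build a meaningful cover of parameter space we need an a priori bound on $(\vec \mu, \matr \Sigma)$, but we only observe truncated samples, so I have to carefully relate the empirical moments of the truncated samples to the true parameters using only the mass lower bound $\alpha$. The combinatorial steps (cover size accounting and tournament invocation) are, in comparison, routine.
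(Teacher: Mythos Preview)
Your proposal is correct and follows essentially the same route as the paper: localize $(\vec\mu,\matr\Sigma)$ using the truncated-moment bounds of \cite{DGTZ18} (the paper invokes Lemma~\ref{lem:conditionalParameterDistance} and then affinely normalizes), build a grid/cover of size $(d/\eps)^{O(d^2)}$, apply Lemma~\ref{lem:tvdVCBound} to each grid point, and run the tournament of Lemma~\ref{lem:tournament}. One small simplification: you do not need the union bound over the cover when invoking Lemma~\ref{lem:tvdVCBound}, because the VC uniform-convergence event in its proof is stated with respect to the \emph{true} distribution and is already uniform over all sets in $\mcal{S}$; the guess $(\wb\mu,\wb\Sigma)$ enters only through the minimization step and the final TV comparison, so a single high-probability event suffices for every grid point simultaneously.
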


We finally argue that the $\eps$ error in total variation of the truncated distributions
translates to an $O(\eps)$ bound in total variation distance of the untruncated distributions (Lemma~\ref{lem:tvdLowerBound}). We
show that this is true in general and does not depend on the complexity of
the set.  To prove this statement, we consider two Gaussians with
parameters that are far from each other and construct the worst possible
set to make their truncated distributions as close as possible. We show
that under the requirement that the set contains at least $\alpha$ mass,
the total variation distance of the truncated distributions will be large.

\begin{lemma}[Total Variation of Truncated Normals] \label{lem:tvdLowerBound}
  Let $D_1 = \normal(\vec \mu_1, \matr \Sigma_1, S_1)$ and $D_2 = \normal(\vec
  \mu_2, \matr \Sigma_2, S_2)$ be two truncated Normal distributions
  such that $\normal(\vec \mu_1, \matr \Sigma_1; S_1),
  \normal(\vec \mu_2, \matr \Sigma_2; S_2) \geq \alpha$.
  Then
  \begin{align*}
    \dtv{D_1}{D_2}
  &\geq C_{\alpha}\ \dtv{\normal(\vec \mu_1, \matr \Sigma_1)}{\normal(\vec \mu_2, \matr \Sigma_2)}
\end{align*}
where $C_\alpha < \alpha/8$ is a positive constant that only depends
on $\alpha$, $C_\alpha = \Omega(\alpha^3)$.
  \end{lemma}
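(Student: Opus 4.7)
My approach is a two-step reduction: first to the case of identical truncation sets, then to an algebraic identity driven by the Neyman--Pearson witness set.

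\emph{Step 1 (reducing to a common truncation set $T$).} I first check the overlap of $S_1$ and $S_2$. If $N_1(S_1 \setminus S_2) \geq \alpha/4$, then the test set $B := S_1 \setminus S_2$ satisfies $D_1(B) = N_1(B)/\alpha_1 \geq \alpha/4$ while $D_2(B) = 0$, giving $\dtv{D_1}{D_2} \geq \alpha/4$, which already dominates the target $\Omega(\alpha^3)\eta$ since $\eta \leq 1$; symmetrically for $N_2(S_2 \setminus S_1) \geq \alpha/4$. Otherwise, $T := S_1 \cap S_2$ satisfies $\tilde\alpha_i := N_i(T) \geq 3\alpha/4$ for both $i$, and writing $\tilde D_i := N_i(\cdot \cap T)/\tilde\alpha_i$, a triangle-inequality calculation applied to $\int_T |d_1 - d_2|\,dx$ (combined with $D_i(T) \geq 3\alpha/4$ and $|D_1(T) - D_2(T)| \leq \dtv{D_1}{D_2}$) yields $\dtv{D_1}{D_2} \geq (\alpha/2)\dtv{\tilde D_1}{\tilde D_2}$. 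It thus suffices to prove $\dtv{\tilde D_1}{\tilde D_2} = \Omega(\alpha^2)\eta$.

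\emph{Step 2 (common truncation set).} Let $A := \{x : f_1(x) \geq f_2(x)\}$, so that $N_1(A) - N_2(A) = \eta$. Writing $a_i^T := N_i(T\cap A)$ and $b_i^T := N_i(T\cap A^c)$ (so $a_i^T + b_i^T = \tilde\alpha_i$), and setting $g := \tilde\alpha_2 f_1 - \tilde\alpha_1 f_2$, the identity $\int_T g\,dx = 0$ gives
\[
\int_T |g|\,dx \;\geq\; 2\Bigl|{\textstyle \int_{T\cap A}} g\,dx\Bigr| \;=\; 2(a_1^T b_2^T - a_2^T b_1^T) \;=\; 2\bigl(p\, b_2^T + q\, a_2^T\bigr),
\]
where $p := a_1^T - a_2^T \geq 0$ and $q := b_2^T - b_1^T \geq 0$ by pointwise domination of $f_1, f_2$ on $A, A^c$. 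With $r_1 := \int_{T^c \cap A}(f_1 - f_2)$ and $r_2 := \int_{T^c \cap A^c}(f_2 - f_1)$, one has $p = \eta - r_1$ and $q = \eta - r_2$. In the easy subcase $r_1, r_2 \leq \eta/2$, we get $p, q \geq \eta/2$; and since $a_2^T + b_2^T = \tilde\alpha_2 \geq 3\alpha/4$ forces $\max(a_2^T, b_2^T) \geq 3\alpha/8$, we obtain $\int_T|g|\,dx = \Omega(\alpha\eta)$, yielding $\dtv{\tilde D_1}{\tilde D_2} = \Omega(\alpha\eta)$, even stronger than the target.

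\emph{The hard subcase} is $\max(r_1, r_2) > \eta/2$: most of the untruncated discrimination sits outside $T$. Decomposing $T^c$ as $U_1 \cup U_2 \cup W$ with $U_i := S_i \setminus T$ and $W := (S_1 \cup S_2)^c$, at least one of the three corresponding summands of $r_1$ (or $r_2$) must be $\geq \eta/6$. If the dominant contribution sits on $U_1$ (resp.\ $U_2$ for $r_2$), the Step~1-style test set gives $\dtv{D_1}{D_2} \geq \eta/6$ directly. The main obstacle is when the dominant residual lies on $U_2$ or $W$ for $r_1$, because both regions sit in $S_1^c$ and are invisible to $D_1$; this is where the proof is most delicate. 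The resolution combines the Step~1 bounds $N_i(U_i) \leq \alpha/4$, the mass constraints $N_i(S_i) \geq \alpha$, and the symmetric statements for $r_2$ and $A^c$, forcing the residual to manifest as an observable $\Omega(\alpha)$-sized gap in one of the asymmetric regions $S_1 \triangle S_2$. Conservatively chaining this with the Step~1 factor of $\Omega(\alpha)$ loses one more factor of $\alpha$ relative to the easy subcase, producing the stated constant $C_\alpha = \Omega(\alpha^3)$.
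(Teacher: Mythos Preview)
Your argument has a genuine gap in the ``hard subcase'' of Step~2, and the gap cannot be filled along the lines you sketch because your entire approach is purely measure-theoretic and never uses that $f_1,f_2$ are Gaussian densities. But the statement is \emph{false} for general densities: take $S_1=S_2=T$ and any pair of (non-Gaussian) densities $f_1,f_2$ with $f_1|_T=f_2|_T$ and $N_1(T)=N_2(T)=\alpha$; then $D_1=D_2$ exactly while $\dtv{N_1}{N_2}$ can be as large as $1-\alpha$. So any correct proof must invoke a property specific to Gaussians, and yours does not.

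Concretely, run your own argument in the case $S_1=S_2=T$. Then $U_1=U_2=\emptyset$, $T^c=W$, and in the hard subcase $r_1>\eta/2$ all of the discriminating mass $\int_{T^c\cap A}(f_1-f_2)$ sits in $W$, which is invisible to both $D_1$ and $D_2$. Your stated resolution, ``forcing the residual to manifest as an observable $\Omega(\alpha)$-sized gap in one of the asymmetric regions $S_1\triangle S_2$,'' is vacuous here since $S_1\triangle S_2=\emptyset$. The same obstruction persists whenever $U_1,U_2$ carry negligible mass compared to $W$. The paper's proof handles exactly this difficulty by exploiting the Gaussian structure: the log-ratio $p(\vec x)=\log\bigl(\tfrac{\alpha}{\beta}\,\normal(\vec\mu,\mat\Lambda;\vec x)/\normal(\vec 0,\mat I;\vec x)\bigr)$ is a degree-$2$ polynomial, and Carbery--Wright anticoncentration (their Theorem~\ref{thm:GaussianMeasurePolynomialThresholdFunctions}) shows that $\{\vec x:|p(\vec x)|\le\gamma\}$ has Gaussian mass at most $\alpha/4$ for $\gamma$ of order $\alpha^2\sqrt{\Var p}$. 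Since $\Var p$ is (up to constants) the squared parameter distance between the two Gaussians, this forces $|1-e^{p(\vec x)}|$ to be bounded away from zero on a subset of $T$ of mass $\geq\alpha/4$, regardless of how adversarially $T$ is chosen. That anticoncentration step is the missing idea; without it your Step~2 cannot be completed.
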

  \begin{proof}
    Without loss of generality we assume that $D_1 = \normal(\vec 0, \matr
    I, S_1)$ and $D_2 = \normal(\vec \mu, \matr \Lambda, S_2)$, where
    $\matr \Lambda$ is a diagonal matrix.  We want to find the worst sets
    $S_1, S_2$ so that $\dtv{D_1}{D_2}$ is small.  If $D_1(S_1\setminus
    S_2) \geq \alpha/2$ then the statement holds.  Therefore, we consider
    the set $S = S_1 \cap S_2$ and relax the constraint that the truncated
    Gaussian $D_2$ integrates to $1$.  Taking into account the fact that
    the set $S = S_1 \cap S_2$ must have at least some mass $\alpha/2$
    with respect to $\normal(\vec 0, \matr I)$,
    the following optimization problem provides a lower bound
    on the total variation distance of $D_1$ and $D_2$.
    \begin{align*}
      \min_{S \in\mcal{S}, \beta>0} \qquad \quad & \frac 1 \alpha
      \int |\normal(\vec 0, \matr I; \vec x)
      - \frac \alpha \beta \normal(\vec \mu, \matr \Lambda; \vec x)
      |\ \1{S}(\vec x) \d x
      \\
      \mrm{subj.}\ \mrm{ to }\qquad &
      \int \normal(\vec 0, \matr I; \vec x) \ \1{S}(\vec x) \d \vec x
      \geq \alpha/2,
    \end{align*}
    For any fixed $\beta>0$ this is a fractional knapsack problem and
    therefore we should include in the set the points $x$ in order of
    increasing ratio of weight that is contribution to the $L_1$ error
    $
    |\normal(\vec 0, \matr I; \vec x) -
    \frac \alpha \beta \normal(\vec \mu, \matr \Lambda; \vec x) |
    $,
    over value, that is density $\normal(\vec 0, \matr I; \vec x)$ until we
    reach some threshold $T$.  Therefore, the set is defined to be
    \[
      S = \lp\{ \vec x \in \R^d:
        \frac{
          |\normal(\vec 0, \matr I; \vec x) -
        \frac \alpha \beta \normal(\bm \mu, \matr \Lambda; \vec x)|}
        {\normal(\vec 0, \matr I; \vec x)}
        \leq T
      \rp\}
      = \lp\{ \vec x \in \R^d:
        \lp| 1 - \exp(p(\vec x)) \rp|
        \leq T
      \rp\},
    \]
    where $p(\vec x) = -\frac12 (\vec \mu - \vec x)^T \matr
    \Lambda^{-1}(\vec \mu - \vec x) + \frac 12 \vec x^T \vec x +
    \log(\alpha/(\sqrt{|\matr \Lambda|}\beta)) $.  Using
    Theorem~\ref{thm:GaussianMeasurePolynomialThresholdFunctions} for the
    degree $2$ polynomial $p(\vec x)$ and setting $q=4$, $\gamma =
    \alpha^2 (\E_{x \sim \normal_0}p^2(x))^{1/2}/(256 C^2)$, where
    $C$ is the absolute constant of
    Theorem~\ref{thm:GaussianMeasurePolynomialThresholdFunctions}, we get that
    \[
      \normal_0(\{\vec z : |p(\vec z)| \leq \gamma\}) \leq \frac{\alpha}{4}.
    \]
    To simplify notation set $Q = \{\vec  z : |p(\vec z)|\leq \gamma \}$.
    Therefore, for any $\vec x$ in the remaining $\alpha/4$ mass of the
    set $S$ we know that $|p(\vec x)| \geq \gamma$.  Next, we lower bound
    $\gamma$ in terms of the distance of the parameters of the two
    Gaussians.  We have
    \begin{align*}
      \E_{\vec x \sim \normal_0}[p^2(\vec x)]
    &\geq \Var_{\vec x \sim \normal_0}[p(\vec x)]
    =\Var_{\vec x \sim \normal_0}
    \lp[
    -\frac12 (\vec \mu - \vec x)^T \matr \Lambda^{-1}(\vec \mu - \vec x)
    + \frac 12 \vec x^T \vec x
    \rp]
 \\ &=
 \Var_{\vec x \sim \normal_0}
 \lp[
 \sum_{i=1}^d \lp(\frac{\mu_i}{\lambda_i} x + x^2\frac{(1-1/\lambda_i)}{2}
 \rp)
 \rp]
 =
 \sum_{i=1}^d
 \Var_{x \sim \normal(0,1)}
 \lp[ \frac{\mu_i}{\lambda_i} x + x^2 \frac{(1-1/\lambda_i)}{2} \rp]
 \\
    &=
    \sum_{i=1}^d
    \frac 12
    \lp( \frac{1}{\lambda_i} - 1 \rp)^2 + \frac{\mu_i^2}{\lambda_i^2}
    = \frac 12 \norm{\matr \Lambda^{-1} - \matr I}_F^2 +
    \norm{\matr \Lambda^{-1/2} \vec \mu}_2^2
  \end{align*}
  Therefore, using the inequality $\sqrt{2}\sqrt{x+y} \geq \sqrt{x} +
  \sqrt{y}$ we obtain
  \[
    \gamma \geq \frac{\alpha^2}{256\sqrt 2 C^2}
    \lp(
    \frac{1}{\sqrt{2}} \norm{\matr \Lambda^{-1} - \matr I}_F +
    \norm{\matr \Lambda^{-1/2} \vec \mu}_2
    \rp)
    \geq \frac{\alpha^2}{256 C^2}
     \dtv{\normal(\vec \mu_1, \matr \Sigma_1)}{\normal(\vec \mu_2, \matr \Sigma_2)},
  \]
  where we used Lemma~\ref{lem:tvdParameter}.
  Assume first that $\gamma \leq 1$. We have that the $L_1$ distance
  between the functions $f(\vec x) = \normal(\vec 0, \matr I; \vec
  x)\1{S}(\vec x)$ and $g(\vec x) = \frac \alpha \beta \normal(\vec \mu,
  \matr \Lambda; \vec x) \1{S}(\vec x)$ is
  \begin{align*}
    \int | f(\vec x) - g(\vec x) | \d \vec x
    &=
    \Exp_{\vec x \sim \normal_0}[|1 - \exp(p(\vec x))| \1{S}(\vec x)]
    \geq
    \Exp_{\vec x \sim \normal_0}
    \lp[ \frac{|p(x)|}{2} \1{S\setminus Q}(\vec x) \rp]
    \\
    &\geq
    \gamma \Exp_{\vec x \sim \normal_0}
    \lp[\1{S\setminus Q}(\vec x) \rp]
    \geq
    \frac{\alpha \gamma}{4}
    \geq C_{\alpha} \dtv{\normal(\vec \mu_1, \matr \Sigma_1)}{\normal(\vec \mu_2, \matr \Sigma_2)},
  \end{align*}
  where for the first inequality we used the inequality $|1-\me^{x}|\geq |x|/2$
  for $|x| \leq 1$.  Note that $C_a = \Omega(\alpha^3)$.
  If $\gamma > 1$ we have
  \[
    \int | f(\vec x) - g(\vec x) | \d \vec x
    =
    \Exp_{\vec x \sim \normal_0}[|1 - \exp(p(\vec x))| \1{S}(\vec x)]
    \geq
    \Exp_{\vec x \sim \normal_0}
    \lp[ \frac{1}{2} \1{S\setminus Q}(\vec x) \rp]
    \geq \alpha/8,
    \\
  \]
  where we used the inequality $|1-\me^x | \geq 1/2$ for $|x| > 1$.
\end{proof}

 \section{Estimation Algorithm for bounded Gaussian Surface Area}\label{sec:algorithm}

In this section, we present the main steps of our estimation algorithm. In
later sections, we provide details of the individual components. The
algorithm can be thought of in 3 stages.

\paragraph{First Stage} In the first stage, our goal is to learn a
weighted characteristic function of the underlying set. Even though we
cannot access the underlying set directly, for any given function $f$ we
can evaluate the expectation $\E_{\vec x \sim \normal(\vec \mu^*, \mat
\Sigma^*, S)}[f(\vec x)]$ using truncated samples.

This expectation can be equivalently written as $\E_{\vec x \sim
  \normal(\vec 0, \mat I)}[f(\vec x) \tind(\vec x)]$ for the function
  $$\tind(\vec x) \triangleq \frac{\1{S}(\vec x)}{\alpha^*}
  \frac{\normal(\vec \mu^*, \mat \Sigma^*; \vec x)}{\normal(\vec 0,\mat I;
  \vec x)} = \frac{\1{S}(\vec x)}{\alpha^*} \frac{\normal^*(\vec
x)}{\normal_0(\vec x)}.  $$

By evaluating the above expectation for different functions $f$
corresponding to the Hermite polynomials $H_V(\vec x)$, we can recover
$\tind(\vec x)$, through its Hermite expansion: $$\tind(\vec x) = \sum_{V
\in \nats^d} \E_{\vec x \sim \normal_0}[H_V(\vec x) \tind(\vec x)]
H_V(\vec x) = \sum_{V \in \nats^d} \E_{\vec x \sim \normal^*_S}[H_V(\vec
x)] H_V(\vec x).$$

Of course, it is infeasible to calculate the Hermite expansion for any $V
\in \nats^d$. In Section~\ref{sec:weighted_learning}, we show that by
estimating only terms of degree at most $k$, we can achieve a good
approximation to $\psi$ where the error depends on the Gaussian surface
area of the underlying set $S$.  To do this, we show that most of the mass
of the coefficients $c_V = \E_{\vec x \sim \normal_0}[H_V(\vec x)
\tind(\vec x)]$ is concentrated on low degree terms, i.e. $\sum_{|V| > k}
c^2_V$ is significantly small. Moreover, we show that even though we can
only estimate the coefficients $c_V$ through sampling, the sampling error
is significantly small.

Overall, after the first stage, we obtain a non-negative function
$\tind_k$ that is close to $\tind$. The approximation error guarantees are
given in Theorem~\ref{thm:estimationApproximationError}.

\paragraph{Second Stage} Given the function $\tind_k$ that was recovered
in the first stage, our goal is to decouple the influence of the set
$\frac{\1{S}(\vec x)}{\alpha^*}$ and the influence of the underlying
Gaussian distribution which corresponds to the multiplicative term
$\frac{\normal(\vec \mu^*, \mat \Sigma^*; \vec x)}{\normal(\vec 0,\mat I;
\vec x)}$. This would be easy if we had the exact function $\tind$ in
hand. In contrast, for the polynomial function $\tind_k$ the problem is
significantly challenging as it is only close to $\tind$ on average but
not pointwise.

To perform the decoupling and identify the underlying Gaussian we
explicitly multiply the function $\tind_k$ with a corrective term of the
form $\frac{\normal(\vec 0,\mat I; \vec x)}{\normal(\vec \mu, \mat \Sigma;
\vec x)}$.  We set up an optimization problem seeking to minimize the
function $C(\vec \mu, \mat \Sigma) \E_{\vec x \sim \normal^*_S}[
	\frac{\normal(\vec 0,\mat I; \vec x)}{\normal(\vec \mu, \mat
	\Sigma; \vec x)} \tind_k(\vec x) ]$ with an appropriate choice of
$C(\vec \mu, \mat \Sigma)$ so that the unique solution corresponds to
$(\vec \mu, \mat \Sigma) = (\vec \mu^*, \mat \Sigma^*)$. Under a
reparameterization of $(\vec u, \mat B) = (\mat \Sigma^{-1} \vec \mu, \mat
\Sigma^{-1})$, we show that the corresponding problem is \emph{strongly}
convex. Still, optimizing it directly is non-trivial as it involves taking
the expectation with respect to the unknown truncated Gaussian.  Instead,
we perform stochastic gradient descent (SGD) and show that it quickly
converges in few steps to point close to the true minimizer
(Algorithm~\ref{alg:projectedSGD}).

This allows us to recover parameters $(\hat {\vec \mu}, \hat {\mat
\Sigma})$ so that the total variation distance between the recovered and
the true (untruncated) Gaussian is very small, i.e. $d_{TV}\lp(
\normal(\hat{\vec \mu}, \hat{\mat \Sigma}), \normal(\vec \mu^*, \mat
\Sigma^*) \rp) \le \eps$.  Theorem~\ref{thm:mainTheoremLocalConvergence}
describes the guarantees of the second stage. Further details are provided
in Section~\ref{sec:optimization}.

\paragraph{Third Stage} Given the weighted indicator function $\tind_k$
and the recovered Gaussian $\normal(\hat{\vec \mu}, \hat{\mat \Sigma})$,
we move on to recover the underlying set $S$. To do this, we compute the
function $\frac{\normal(\vec 0,\mat I; \vec x)}{\normal(\hat{\vec \mu},
\hat{\mat \Sigma}; \vec x)} \tind_k(\vec x)$ and set a threshold at $1/2$.
It is easy to check that if there were no errors, i.e. $\tind_k = \tind$
and $d_{TV}\lp( \normal(\hat{\vec \mu}, \hat{\mat \Sigma}), \normal(\vec
\mu^*, \mat \Sigma^*) \rp)=0$, that this thresholding step would correctly
identify the set. In Section~\ref{sec:set_recover} we bound the error
guarantees of this approach.  We show that it is possible to obtain an
estimate $\hat S$ of the underlying set so that the mass of the symmetric
difference with the true Gaussian is small, i.e. $\normal(\vec \mu^*, \mat
\Sigma^*; S \triangle \hat S) < \eps$.  Overall, our algorithm requires at
most $d^{\poly(1/\alpha,1/\eps) \Gamma^2(S)}$, where $\Gamma(S)$ is the
Gaussian surface area of the set $S$ and $\alpha$ is a lower-bound on the
mass that is assigned by the true Gaussian on the set $S$. The running
time of our algorithm is linear in the number of samples.

\paragraph{The guarantees of the algorithm}

We first show our algorithmic results under the assumption that the untruncated Gaussian
$\normal^*$ is known to be in near-isotropic position.

\begin{definition}[Near-Isotropic Position]\label{def:isotropic}
	Let $\vec{\mu} \in \R^d$, $\mat{\Sigma} \in \R^{d \times d}$ be a
	positive semidefinite symmetric matrix and $a, b > 0$.  We say
	that $(\vec{\mu}, \mat{\Sigma})$ is in $(a, b)$\textit{-isotropic
	position} if the following hold.  $$\norm{\vec{\mu}}_2^2 \le a,
	\quad \norm{\mat{\Sigma} - \mat{I}}^2_F \le a, \quad (1-b) \mat{I}
	\preceq \mat{\Sigma} \preceq \frac 1 {1-b} \mat{I}$$
\end{definition}

We later transform the more interesting case with an unknown mean and an
unknown diagonal covariance matrix to the isotropic case.

\begin{theorem} \label{thm:mainTheoremLocalConvergence}
  Let ${\cal N}(\vec{\mu}^*, \matr{\Sigma}^*)$ be a $d$-dimensional
	Gaussian distribution that is in $(O(\log(1/\alpha^*),
	1/16)$-isotropic position and consider a set $S$ such that ${\cal
	N}(\vec{\mu}^*, \matr{\Sigma}^*; S) \ge \alpha$. There exists an
	algorithm such that for all $\epsilon > 0$, the algorithm uses $n
	> d^{\poly(1/\alpha) \frac{\Gamma^2(S)}{\eps^8}}$ samples and
	produces, in $\poly(n)$ time, estimates that, with probability at
	least $99\%$, satisfy $d_{TV}(\N(\mt, \St), \N(\hat{\m}, \hat{\S})) \le \eps$.
\end{theorem}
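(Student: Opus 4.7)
\emph{Proof proposal.} The theorem follows by composing the three stages described in Section~\ref{sec:algorithm}: first learn a weighted indicator $\tind_k$ of the truncation set, then decouple the set from the Gaussian parameters by solving a convex optimization problem, and finally convert parameter recovery into a total variation bound. Each stage contributes a controlled error that the proof must track to certify the final $\eps$-guarantee.

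\textbf{Stage~1.} Set $k = \poly(1/\alpha)\,\Gamma^2(S)/\eps^8$ and empirically estimate the Hermite coefficients $c_V = \Exp_{\vec x \sim \normal^*_S}[H_V(\vec x)]$ for every multi-index $V$ with $|V|\le k$. There are $\binom{d+k}{k} = d^{O(k)}$ such coefficients, which matches the promised sample budget $n \ge d^{\poly(1/\alpha)\Gamma^2(S)/\eps^8}$. The resulting polynomial $\tind_k(\vec x) = \sum_{|V|\le k} \hat c_V H_V(\vec x)$ approximates $\tind(\vec x) = \1{S}(\vec x)\,\normal^*(\vec x)/(\alpha^*\normal_0(\vec x))$ in $L_2(\normal_0)$: the tail $\sum_{|V|>k} c_V^2$ is small because $\tind$ inherits, up to polynomial factors in $1/\alpha$ coming from the bounded density ratio on the isotropic domain, the low-degree Hermite concentration that holds for indicators of sets with Gaussian surface area $\Gamma$, in the spirit of Klivans--O'Donnell--Servedio; and the per-coefficient sampling error is controlled by a union bound over the $d^{O(k)}$ terms.

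\textbf{Stage~2.} Reparameterize by $(\vec u, \mat B) = (\mat \Sigma^{-1}\vec \mu, \mat \Sigma^{-1})$ and set up the objective
\[
  L(\vec u, \mat B) \;=\; C(\vec u, \mat B)\cdot \Exp_{\vec x \sim \normal^*_S}\!\lp[\tind_k(\vec x)\,\frac{\normal_0(\vec x)}{\normal(\vec\mu,\mat\Sigma;\vec x)}\rp],
\]
with a log-partition-like normalizer $C(\vec u, \mat B)$ chosen so that in the idealized case $\tind_k=\tind$ the unique minimizer is $(\vec u^*, \mat B^*)$. The algebraic facts to verify are: (i) $L$ is convex in $(\vec u, \mat B)$ by log-concavity of the Gaussian in its natural parameters; (ii) $L$ is strongly convex on the near-isotropic domain with modulus $\poly(\alpha)$, where the $(O(\log(1/\alpha^*)),1/16)$-isotropic hypothesis prevents the corrective density ratio from blowing up; and (iii) unbiased stochastic gradients of $L$ can be built from fresh samples $\vec x \sim \normal^*_S$ with variance bounded on the same domain. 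I then invoke the projected SGD routine (Algorithm~\ref{alg:projectedSGD}) to obtain $(\hat{\vec u}, \hat{\mat B})$ close to $(\vec u^*, \mat B^*)$.

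\textbf{Main obstacle and closing.} The delicate step is controlling how the $L_2$ error $\norm{\tind_k - \tind}_{L_2(\normal_0)}$ from Stage~1 propagates as bias in the Stage~2 gradients. I would bound this bias by Cauchy--Schwarz: the gap between the ideal and the realized stochastic gradient is an $\normal_0$-inner product of $(\tind - \tind_k)$ with a score-function-times-density-ratio factor whose $L_2(\normal_0)$ norm stays uniformly bounded on the near-isotropic domain (this is precisely why the isotropic mean-norm bound is $O(\log(1/\alpha^*))$ and not a constant, since the density ratio exponentiates mean norms). Choosing $k$ large enough that $\norm{\tind_k-\tind}_{L_2(\normal_0)} \le \poly(\alpha)\eps$ renders the bias negligible next to the strong-convexity signal and yields $\norm{\mat\Sigma^{*-1/2}(\hat{\vec\mu}-\vec\mu^*)}_2$ and $\norm{\mat I - \mat\Sigma^{*-1/2}\hat{\mat\Sigma}\mat\Sigma^{*-1/2}}_F$ both at most $\eps$. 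A standard parameter-to-TV conversion (Lemma~\ref{lem:tvdParameter}) then gives $\dtv{\normal(\vec\mu^*, \mat\Sigma^*)}{\normal(\hat{\vec\mu}, \hat{\mat\Sigma})} \le \eps$, completing the proof.
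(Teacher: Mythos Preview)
Your overall three-stage strategy matches the paper's approach: learn $\tind_k$ via low-degree Hermite expansion, minimize the convex surrogate $M_{\tind_k}$ via projected SGD, and convert parameter closeness to total variation. The sample budget accounting and the role of the near-isotropic assumption in bounding density ratios are correct.

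There are two points where your sketch diverges from the paper, one a stylistic difference and one a genuine gap.

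\emph{Error propagation.} You propose to control the \emph{gradient bias} induced by $\tind_k\neq\tind$ via Cauchy--Schwarz. The paper instead proves that the \emph{function values} $M_{\tind_k}$ and $M'_{\tind}$ are uniformly close on $\Domain$ (Lemma~\ref{lem:pointwiseApproximationOfTheObjectiveFunction}), and then combines this with strong convexity of $M_{\tind_k}$ to conclude that the minimizer of $M_{\tind_k}$ is close to the minimizer of $M'_{\tind}$, which by Claim~\ref{clm:consistency} is exactly $({\St}^{-1}\mt,{\St}^{-1})$. SGD is then analyzed as converging to the minimizer of $M_{\tind_k}$, not to a biased version of the true optimum. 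Your gradient-bias route would also work, but the paper's decoupling into ``pointwise closeness of objectives'' plus ``SGD converges to the approximate objective's minimizer'' is cleaner and avoids having to track bias and variance simultaneously in the SGD analysis.

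\emph{Strong convexity.} Your justification for (ii) is incomplete: you attribute strong convexity to the isotropic hypothesis keeping density ratios bounded, but that only shows the Hessian is \emph{finite}, not bounded \emph{below}. The Hessian of $M_{\tind_k}$ is an expectation of a rank-one matrix times a nonnegative weight, and lower-bounding $\vec z^T\mathcal H\vec z$ requires showing that the quadratic form $p(\vec z;\vec x)$ in the integrand is not too concentrated near zero under $\Nt$. The paper does this via the Carbery--Wright polynomial anticoncentration bound (Theorem~\ref{thm:GaussianMeasurePolynomialThresholdFunctions}): one carves out a small-mass set where $|p|$ is tiny, and on the remaining $\Omega(\alpha^*)$ mass the integrand is uniformly bounded below. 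Without this ingredient your step (ii) does not go through, and since strong convexity is what converts closeness of objectives (or smallness of gradient bias) into closeness of \emph{parameters}, this is the load-bearing piece you are missing.
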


We can apply this theorem to estimate the parameters of any Gaussian
distribution with an unknown mean and an unknown diagonal covariance
matrix by bringing the Gaussian to an $(O(\log(1/\alpha^*),
1/16)$-isotropic position.  Lemma~\ref{lem:conditionalParameterDistance}
shows that with high probability, we can obtain initial estimates
$\wt{\vec{\mu}}_S$ and $\mat{\wt{\Sigma}}_S$ so that
$\|\mat{\Sigma}^{-1/2} (\wt{\vec{\mu}}_S - \vec{\mu}^*)\|^2_2 \leq O({\log
\frac{1}{\alpha}})$ and $${\wt{\mat{\Sigma}}}_S \succeq \Omega(\alpha^2)
\mat{\Sigma}^*, \quad \text{and}\quad \snorm{F} {\mat {\Sigma}^*{^{-1/2}}
\mat{\wt{\Sigma}}_S \mat {\Sigma}^*{^{-1/2}}  - \mat I}^2 \le O({\log
\frac{1}{\alpha}} ).$$

Given these estimates, we can transform the space so that
$\wt{\vec{\mu}}_S = 0$, and $\wt{\mat{\Sigma}}_S = \mat I$.  We note that
after this transformation, the mean will be at the right distance from
$0$, while the eigenvalues $\lambda_i$ of $\mat \Sigma^*$ will all be
within the desired range $\frac{15}{16} \le \lambda_i \le \frac{16}{15}$
apart from at most $O( \log({1/\alpha})  )$. This is because the condition
$\snorm{F}{\mat \Sigma^{*-1/2} \mat{\wt{\Sigma}}_S \mat \Sigma^{*-1/2}  -
\mat I}^2 \le O({\log \frac{1}{\alpha}} )$ implies that $\sum_i
(1-\frac{1}{\lambda_i})^2 \le O( \log(1/\alpha) )$.  With this
observation, since we know of the eigenvectors of $\Sigma^*$, we would be
able to search over all possible corrections to the eigenvalues to bring
the Gaussian in $(O(\log(1/\alpha)), \frac{1}{16})$-isotropic position as
required by Theorem~\ref{thm:mainTheoremLocalConvergence}. We only need to
correct $O(\log(1/\alpha))$ of them.

We can form a space of candidate hypotheses for the underlying
distribution, for each choice of $O(\log(1/\alpha))$ out of the $d$
vectors along with the all possible scalings. These hypotheses are at most
$d^{O(\log(1/\alpha))}$ times $(\log(1/\alpha))^{O(\log(1/\alpha))}$ for
all possible scalings. Thus, there are at most $d^{O(\log(1/\alpha))}$
hypotheses. Running the algorithm for each one of them, we would learn at
least one distribution and one set that is accurate according to the
guarantees of Theorems~\ref{thm:mainTheoremLocalConvergence}. Running the
generic hypothesis testing algorithm of Lemma~\ref{lem:tournament}, we can
identify one that is closest in total variation distance to the true
distribution $\Nt_S$. The sample complexity and runtime would thus only
increase by at most $d^{O(\log(1/\alpha))}$. As we showed in
Lemma~\ref{lem:tvdLowerBound}, knowing the truncated Gaussian in total
variation distance suffices to learn in accuracy $\eps$ the parameters of
the untruncated distribution. We thus obtain as corollary, that we can
estimate the parameters when the covariance is spherical or diagonal. The
same results hold when one wants to recover the underlying set in these
cases.
 \subsection{Learning a Weighted Characteristic Function}\label{sec:weighted_learning}

Our goal in this section is to recover using conditional samples from
$\normal^*_S$ a weighted characteristic function of the set $S$.  In
particular, we will show that it is possible to learn a good approximation to
the function
\begin{align} \label{eq:definitionOfShiftedIndicator}
  \tind(\vec x) =
  \frac{\1{S}(\vec x)}{\alpha^*}
  \frac{\normal(\vec \mu^*, \mat \Sigma^*; \vec x)}{\normal(\vec 0,\mat I; \vec x)}
  = \frac{\1{S}(\vec x)}{\alpha^*} \frac{\normal^*(\vec x)}{\normal_0(\vec x)}.
\end{align}

We will later use the knowledge of this function to extract the unknown parameters and learn the set $S$.

\subsubsection{Hermite Concentration}\label{subsec:concentration}

We start by showing that the function $\tind(\vec x)$
admits strong Hermite concentration.  This means that we can well-approximate
$\tind(\vec x)$ if we ignore the higher order terms in the Hermite expansion of $\tind(\vec x)$.

\begin{theorem}\label{thm:polynomialApproximation}\textsc{(Low Degree Approximation)}
  Let $S_k\tind$ denote the degree $k$ Hermite expansion of function $\tind$ defined in
  \eqref{eq:definitionOfShiftedIndicator}. We have that
  \[
    \E_{\vec x \sim \normal_0}
    \lp[ (S_k\tind(\vec x) - \tind(\vec x))^2 \rp]=
    \sum_{|V| \geq k} \hat{\tind}(V)^2
    \leq \poly(1/\alpha) \lp(\frac{\sqrt{\Gamma(S)}}{k^{1/4}} + \frac{1}{k} \rp).
  \]
  where $\Gamma(S)$ is the Gaussian surface area of $S$, and $a< \alpha^*$ is the absolute
  constant of \eqref{eq:globalLowerBound}.
\end{theorem}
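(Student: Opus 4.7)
The plan is to adapt the Klivans--O'Donnell--Servedio noise-stability argument from the indicator case to the weighted function $\tind = g\cdot \1{S}/\alpha^*$, where $g(\vec x)=\normal^*(\vec x)/\normal_0(\vec x)$. Writing $T_\rho$ for the Ornstein--Uhlenbeck operator so that $T_\rho\tind = \sum_V \rho^{|V|}\hat\tind(V) H_V(\vec x)$, and letting $(\vec x, \vec y)$ be a $\rho$-correlated pair of standard Gaussians, I would use the identity
\[
  \sum_V (1-\rho^{2|V|})\,\hat\tind(V)^2 \;=\; \tfrac12\, \E\bigl[(\tind(\vec x) - \tind(\vec y))^2\bigr]
\]
together with the uniform lower bound $1-\rho^{2|V|} \ge 1-\rho^{2k}$ for $|V|\ge k$ to reduce the problem to bounding the $\rho$-correlated $L^2$ increment of $\tind$.

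The heart of the proof is to split this increment according to whether $\vec x,\vec y$ lie inside $S$. Since $\tind(\vec x)-\tind(\vec y)$ vanishes when both points are outside $S$, equals $(g(\vec x)-g(\vec y))/\alpha^*$ when both are inside, and equals $\pm g(\cdot)/\alpha^*$ on the asymmetric event, I would decompose
\[
  \E\bigl[(\tind(\vec x)-\tind(\vec y))^2\bigr] = \tfrac{1}{(\alpha^*)^2}\Bigl(\E\bigl[(g(\vec x)-g(\vec y))^2 \1{S}(\vec x)\1{S}(\vec y)\bigr] + 2\,\E\bigl[g(\vec x)^2 \1{S}(\vec x)\1{S^c}(\vec y)\bigr]\Bigr).
\]
For the \emph{interior} piece I would exploit that $g=\exp(q)$ with $q$ a quadratic whose coefficients are $\poly(1/\alpha)$-bounded under the near-isotropic hypothesis; combining the Gaussian integration-by-parts identity $\sum_V |V|\hat g(V)^2 = \E[\|\nabla g\|_2^2]$ with $1-\rho^{2|V|}\le 2|V|(1-\rho)$ gives $\E[(g(\vec x)-g(\vec y))^2] = O(\poly(1/\alpha)\,(1-\rho))$. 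For the \emph{boundary} piece, Cauchy--Schwarz yields $\sqrt{\E_{\normal_0}[g^4]}\cdot\sqrt{\Pr[\vec x\in S,\,\vec y\notin S]}$; the fourth moment of $g$ is a $\chi^2$-type divergence between $\normal^*$ and $\normal_0$, which is $\poly(1/\alpha)$ in the near-isotropic regime, while Ledoux's Gaussian-perimeter inequality gives $\Pr[\vec x\in S,\vec y\notin S] \le C\sqrt{1-\rho}\,\Gamma(S)$ for an absolute constant $C$.

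Finally, I would choose $\rho = 1-c/k$ so that $1-\rho^{2k} = \Omega(1)$. The interior piece then contributes $O(\poly(1/\alpha)/k)$ and the boundary piece contributes $O\bigl(\poly(1/\alpha)\,(1-\rho)^{1/4}\sqrt{\Gamma(S)}\bigr) = O(\poly(1/\alpha)\sqrt{\Gamma(S)}/k^{1/4})$, which add to give the stated bound. The main obstacle is the boundary term: the Cauchy--Schwarz step that separates the weight $g$ from the set indicator is precisely what converts the sharper indicator-only rate $\Gamma(S)/\sqrt{k}$ from KOS into the weaker $\sqrt{\Gamma(S)}/k^{1/4}$ seen here, and one must carefully track the moments of the Gaussian ratio $g$ throughout so that the accumulated constants remain $\poly(1/\alpha)$ and do not depend on the dimension $d$.
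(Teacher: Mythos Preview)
Your proposal is correct and follows essentially the same route as the paper: the paper also reduces to the noise-sensitivity quantity via the tail bound $\sum_{|V|\ge 1/\rho}\hat\tind(V)^2 \le 2\,\E[\tind(\vec x)^2 - \tind(\vec x)T_{1-\rho}\tind(\vec x)]$, performs the identical interior/boundary decomposition of $\tind=\1{S}\,r/\alpha^*$ with $r=\normal^*/\normal_0$, bounds the boundary term by Cauchy--Schwarz against $\sqrt{\E[r^4]}\cdot\sqrt{\NS_\rho[S]}\le \poly(1/\alpha)\sqrt{\Gamma(S)}\,\rho^{1/4}$, and bounds the interior term by the Dirichlet-form inequality $\tfrac12\E[(r(\vec x)-r(\vec z))^2]\le \rho\,\E[\|\nabla r\|_2^2]=\poly(1/\alpha)\,\rho$. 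The one slip in your write-up is that for a $\rho$-correlated pair the identity reads $\tfrac12\E[(\tind(\vec x)-\tind(\vec y))^2]=\sum_V(1-\rho^{|V|})\hat\tind(V)^2$, not $1-\rho^{2|V|}$; this does not affect the argument.
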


We note that the Hermite expansion of $\tind$ is well-defined as $\psi(\vec x) \in L_2(\R^d, \normal_0).$ This can be seen from the following lemma
which will be useful in many calculations throughout the paper.

\begin{lemma}\label{lem:ratioBound}
  Let $\normal(\vec{\mu}_1, \mat{\Sigma}_1)$ and $\normal(\vec{\mu}_2, \mat{\Sigma}_2)$ be two $(B, \frac{1-\delta}{2k})$-isotropic Gaussians for some parameters $B,\delta>0$ and $k \in \nats$. It holds
  \[
    \exp\lp(- \frac{13 k^2}{\delta} B\rp) \le \E_{x \sim \normal_0}
    \lp[ \lp(\frac{\normal(\vec{\mu}_1, \mat{\Sigma}_1; \vec x)}
    {\normal(\vec{\mu}_2, \mat{\Sigma}_2; \vec x)}\rp)^k
    \rp] \le \exp\lp( \frac{13 k^2}{\delta} B\rp).
  \]
\end{lemma}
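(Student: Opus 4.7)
The plan is to recognize that the ratio of the two Gaussian densities is $\rho(\vec x) = e^{q(\vec x)}$ for an explicit quadratic $q$, so $\rho^k = e^{k q}$ and the required expectation is the Gaussian moment generating function of a quadratic form in $\vec x$. Combining $e^{k q(\vec x)}$ with the $\normal_0$ density gives an integrand of the form $\exp(-\tfrac{1}{2}\vec x^\top A \vec x + \vec b^\top \vec x + c_0)$, up to the multiplicative constant $|\mat\Sigma_2|^{k/2}/|\mat\Sigma_1|^{k/2}$, with
\[
A = \mat I + k\,\mat\Sigma_1^{-1} - k\,\mat\Sigma_2^{-1}, \qquad \vec b = k\mat\Sigma_1^{-1}\vec\mu_1 - k\mat\Sigma_2^{-1}\vec\mu_2, \qquad c_0 = \tfrac{k}{2}\vec\mu_2^\top\mat\Sigma_2^{-1}\vec\mu_2 - \tfrac{k}{2}\vec\mu_1^\top\mat\Sigma_1^{-1}\vec\mu_1.
\]
The Gaussian integral evaluates in closed form to $|A|^{-1/2}\exp(\tfrac{1}{2}\vec b^\top A^{-1}\vec b)$, provided $A\succ 0$.

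My first step would be to verify $A \succ 0$ using the isotropic parameter $b_0 = \tfrac{1-\delta}{2k}$. Each $\mat\Sigma_i$ has eigenvalues in $[1-b_0,\tfrac{1}{1-b_0}]$, hence so does $\mat\Sigma_i^{-1}$, giving $\|k(\mat\Sigma_1^{-1} - \mat\Sigma_2^{-1})\|_{\mathrm{op}} \le k\bigl(\tfrac{1}{1-b_0} - (1-b_0)\bigr) = \tfrac{k b_0 (2-b_0)}{1-b_0}$. Substituting $b_0 = \tfrac{1-\delta}{2k}$ yields a bound of at most $1 - \delta/2$ up to lower-order terms, so $A \succeq (\delta/2)\mat I$ and $\|A^{-1}\|_{\mathrm{op}} \le 2/\delta$.

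For the upper bound, I would take the logarithm of the closed-form expression and bound each piece separately. The quadratic-in-$\vec\mu$ term $\tfrac{1}{2}\vec b^\top A^{-1}\vec b$ satisfies $\|\vec b\|_2 = O(k\sqrt{B})$ (using $\|\vec\mu_i\|\le\sqrt B$ and $\|\mat\Sigma_i^{-1}\|_{\mathrm{op}} = O(1)$), so this term contributes $O(k^2 B/\delta)$. The affine-in-$\vec\mu$ constant $c_0$ is $O(k B)$, dominated by the previous. The log-determinant combination $\tfrac{k}{2}\log(|\mat\Sigma_2|/|\mat\Sigma_1|) - \tfrac{1}{2}\log|A|$ is where the main cancellation occurs: using the second-order identity $\log|\mat\Sigma_i| + \tr(\mat\Sigma_i^{-1}) = d + O(\|\mat\Sigma_i - \mat I\|_F^2)$ together with the Taylor expansion of $\log|A|$ around $k(\mat\Sigma_1^{-1}-\mat\Sigma_2^{-1})$ shows that the potentially $\sqrt{d}$-sized linear-trace pieces cancel, leaving only Frobenius-norm-squared contributions of order $O(k^2 B/\delta)$. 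Summing produces the upper bound $\exp(13\,k^2 B/\delta)$, with the constant $13$ absorbing all the $O(\cdot)$ constants.

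For the lower bound I would invoke Jensen's inequality on the convex map $y \mapsto e^{ky}$ to obtain
\[
\E_{\vec x \sim \normal_0}\!\bigl[e^{k q(\vec x)}\bigr] \ge \exp\!\bigl(k\,\E_{\normal_0}[q(\vec x)]\bigr),
\]
and compute $\E_{\normal_0}[q(\vec x)]$ directly via $\E[(\vec x-\vec\mu)^\top M(\vec x-\vec\mu)] = \tr(M) + \vec\mu^\top M\vec\mu$; applying the same cancellation identity shows $|\E[q]| = O(B)$, so $k|\E[q]| = O(kB) \le 13\,k^2 B/\delta$, matching the stated lower bound. The hard part throughout will be the log-determinant/trace cancellation: naively each of $\log|\mat\Sigma_i|$ and $\tr(\mat\Sigma_i^{-1})$ is only controlled to order $O(\sqrt{dB})$ by the isotropic bound $\|\mat\Sigma_i - \mat I\|_F^2 \le B$, yet the final estimate must carry no $d$-dependence. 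Executing this requires keeping the correct second-order expansion $\log|\mat\Sigma| + \tr(\mat\Sigma^{-1}) = d + O(\|\mat\Sigma - \mat I\|_F^2)$, rather than the naive first-order one, and likewise for the expansion of $\log|A|$.
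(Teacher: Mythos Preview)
Your approach is sound and takes a genuinely different route from the paper. The paper does not integrate $(\normal_1/\normal_2)^k$ directly; it first applies Cauchy--Schwarz,
\[
\E_{\normal_0}\!\Bigl[(\normal_1/\normal_2)^k\Bigr]\;\le\;\Bigl(\E_{\normal_0}\bigl[(\normal_1/\normal_0)^{2k}\bigr]\Bigr)^{1/2}\Bigl(\E_{\normal_0}\bigl[(\normal_0/\normal_2)^{2k}\bigr]\Bigr)^{1/2},
\]
so that each factor involves only a single non-identity covariance, which can then be diagonalized and the integral factored into a product of one-dimensional Gaussian moment generating functions $\E[\exp(ax^2+bx)]$; the resulting log-terms are handled by a pair of tailor-made scalar inequalities (their Lemma~\ref{lem:logInequalities}). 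For the lower bound the paper applies Jensen to $t\mapsto t^{-1}$ and invokes the already-established upper bound on the swapped ratio $(\normal_2/\normal_1)^k$, exploiting the symmetry of the statement rather than recomputing $\E_{\normal_0}[q]$. Your direct matrix-Gaussian evaluation is conceptually cleaner and avoids the factor-of-two blowup in the exponent from Cauchy--Schwarz, at the price of the log-determinant/trace cancellation you correctly identify as the crux; the paper's split buys it commuting matrices (hence scalar calculus) throughout.

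One caution on your positivity step: with $b_0=(1-\delta)/(2k)$ one has $k\bigl(\tfrac{1}{1-b_0}-(1-b_0)\bigr)=\tfrac{4k-1}{4k-2}>1$ at $\delta=0$, so your claimed bound ``$1-\delta/2$ up to lower-order terms'' and hence $A\succeq(\delta/2)\mat I$ does \emph{not} follow for small $\delta$; indeed the expectation can genuinely diverge in that regime. The paper's proof carries the same hidden constraint---finiteness of $\E[(\normal_0/\normal_2)^{2k}]$ already needs $\delta>1/(2k+1)$---and in every application of the lemma $\delta$ is a fixed constant comfortably above this threshold, so neither argument is affected in practice.
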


\noindent Lemma~\ref{lem:ratioBound} applied for $\normal_0$ and $\normal^*$ for $k=2$ implies that $\psi(\vec x) \in L_2(\R^d, \normal_0)$.

To get the desired bound for Theorem~\ref{thm:polynomialApproximation} we use the following lemma, which allows us to bound the Hermite concentration of a
function $f$ through its noise stability.
\begin{lemma}\label{lem:realSensitivityConcentration}
  For any function $f: \reals^d \mapsto \reals$ and parameter $\rho \in (0,1)$, it holds
  \[
    \sum_{|V| \geq 1/\rho} \hat{f}(V)^2
    \leq 2 \E_{\bm{x} \sim \mcal{N}(\bm{0}, \mat{I})}
    \lp[f(\bm{x})^2 - f(\bm{x}) T_{1-\rho} f(\bm{x}) \rp]
  \]
\end{lemma}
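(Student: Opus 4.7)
The plan is to exploit the spectral action of the Ornstein--Uhlenbeck operator $T_{1-\rho}$ in the Hermite basis. Concretely, if $f = \sum_V \hat f(V)\,H_V$ is the Hermite expansion of $f \in L_2(\R^d,\normal_0)$, then by definition $T_{1-\rho} f = \sum_V (1-\rho)^{|V|}\,\hat f(V)\,H_V$. Using orthonormality of the Hermite basis (Plancherel), both $\E[f(\vec x)^2]$ and $\E[f(\vec x)\,T_{1-\rho}f(\vec x)]$ become pure sums over Hermite coefficients, so
\[
\E_{\vec x \sim \normal_0}\!\bigl[f(\vec x)^2 - f(\vec x)\,T_{1-\rho}f(\vec x)\bigr]
 = \sum_{V} \bigl(1 - (1-\rho)^{|V|}\bigr)\,\hat f(V)^2.
\]

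The second step is the elementary inequality $(1-\rho)^{1/\rho} \le e^{-1} < 1/2$ for $\rho \in (0,1)$. Thus for every multi-index $V$ with $|V| \ge 1/\rho$,
\[
1 - (1-\rho)^{|V|} \;\ge\; 1 - (1-\rho)^{1/\rho} \;\ge\; 1 - e^{-1} \;>\; \tfrac{1}{2}.
\]
Restricting the displayed equality above to the tail $|V| \ge 1/\rho$ and using this pointwise lower bound on the weights $1 - (1-\rho)^{|V|}$ inside the sum (and nonnegativity of the weights for the remaining indices) yields
\[
\tfrac12 \sum_{|V|\ge 1/\rho} \hat f(V)^2
\;\le\;
\sum_V \bigl(1 - (1-\rho)^{|V|}\bigr)\hat f(V)^2
\;=\;
\E_{\vec x \sim \normal_0}\!\bigl[f(\vec x)^2 - f(\vec x)\,T_{1-\rho}f(\vec x)\bigr],
\]
which rearranges to the stated bound.

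There is essentially no obstacle here; the only thing to be slightly careful about is ensuring $f \in L_2(\R^d,\normal_0)$ so that the Hermite expansion converges in $L_2$ and Plancherel applies, but this is granted as a hypothesis of the applications (for the specific $f = \tind$ used in Theorem~\ref{thm:polynomialApproximation}, square-integrability follows from Lemma~\ref{lem:ratioBound}). The constant $2$ in the conclusion is simply the reciprocal of $1 - e^{-1}$ rounded up, and is the source of the factor $2$ appearing on the right-hand side.
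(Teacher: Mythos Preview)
Your proof is correct and follows essentially the same argument as the paper: expand in the Hermite basis, use that $T_{1-\rho}$ acts by multiplying the $V$-th coefficient by $(1-\rho)^{|V|}$, apply Parseval to rewrite the right-hand side as $\sum_V (1-(1-\rho)^{|V|})\hat f(V)^2$, restrict to $|V|\ge 1/\rho$, and invoke $(1-\rho)^{1/\rho}\le e^{-1}$ so that each surviving weight is at least $1-1/e>1/2$. Your remark that $f\in L_2(\R^d,\normal_0)$ is needed for the expansion to be valid is a welcome clarification that the paper's statement leaves implicit.
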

Lemma~\ref{lem:realSensitivityConcentration} was originally shown in \cite{KKMS05} for indicator functions of sets, but their proof extends to arbitrary real functions. We provide the proof in the appendix for completeness.

Using Lemma~\ref{lem:realSensitivityConcentration}, we can obtain Theorem~\ref{thm:polynomialApproximation} by bounding the noise sensitivity of the function $\tind$. The following lemma directly gives the desired result.

\begin{lemma}\label{lem:realSensitivity} For any $\rho \in (0,1)$,
  $
    \E_{\bm{x} \sim \normal_0}
    \lp[\tind(\bm{x})^2 - \tind(\bm{x}) T_{1-\rho} \tind(\bm{x}) \rp]
    \leq
    \poly(1/\alpha)\lp({\sqrt{\Gamma(S)}}{\rho^{1/4}} + \rho\rp)
  $.
\end{lemma}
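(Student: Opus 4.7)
The plan is to exploit the product structure $\tind(\vec x) = \1{S}(\vec x)\, g(\vec x)$, where $g(\vec x) \triangleq \normal^*(\vec x)/(\alpha^* \normal_0(\vec x))$ is a smooth ratio factor. By Plancherel applied to the Hermite expansion, the noise sensitivity rewrites as a coupled expectation
\[
\E_{\vec x \sim \normal_0}\lp[\tind(\vec x)^2 - \tind(\vec x)\, T_{1-\rho}\tind(\vec x) \rp]
= \tfrac{1}{2}\, \E_{(\vec x, \vec y)}\lp[(\tind(\vec x) - \tind(\vec y))^2 \rp],
\]
where $(\vec x, \vec y)$ is jointly standard Gaussian with per-coordinate correlation $1-\rho$. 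Using $\tind(\vec x) - \tind(\vec y) = (\1{S}(\vec x) - \1{S}(\vec y))\, g(\vec y) + \1{S}(\vec x)\,(g(\vec x) - g(\vec y))$ together with $(a+b)^2 \le 2a^2 + 2b^2$, I would split the proof into a discontinuous ``surface-area'' term and a smooth ``gradient'' term.

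For the surface-area term, Cauchy-Schwarz combined with the identity $(\1{S}(\vec x) - \1{S}(\vec y))^4 = (\1{S}(\vec x) - \1{S}(\vec y))^2$ yields
\[
\E\lp[(\1{S}(\vec x) - \1{S}(\vec y))^2\, g(\vec y)^2 \rp]
\le \sqrt{\Pr[\1{S}(\vec x) \neq \1{S}(\vec y)]} \cdot \sqrt{\E[g(\vec y)^4]}.
\]
The first factor is the classical Gaussian noise sensitivity of an indicator, and by Ledoux's surface-area inequality (the key input of \cite{KOS08}) it is bounded by $O(\sqrt{\rho}\,\Gamma(S))$. The second factor is $\poly(1/\alpha)$ by Lemma~\ref{lem:ratioBound} with $k = 4$ under the near-isotropic hypothesis on $(\vec \mu^*, \mat \Sigma^*)$. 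Together these produce the claimed $\poly(1/\alpha)\, \sqrt{\Gamma(S)}\, \rho^{1/4}$ contribution.

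For the gradient term I would use a Gaussian-Poincar\'e-type identity obtained from the Hermite expansion: since $1-(1-\rho)^{|V|} \le \rho |V|$,
\[
\E\lp[(g(\vec x) - g(\vec y))^2 \rp] = 2 \sum_V (1 - (1-\rho)^{|V|})\, \hat g(V)^2 \le 2\rho\, \E[|\nabla g(\vec x)|^2].
\]
Since $\log g$ is a quadratic polynomial in $\vec x$, its gradient $\nabla \log g(\vec x) = (\mat I - (\mat \Sigma^*)^{-1})\vec x + (\mat \Sigma^*)^{-1}\vec \mu^*$ is linear. Writing $\nabla g = g\, \nabla \log g$ and applying Cauchy-Schwarz, $\E[|\nabla g|^2] \le \sqrt{\E[g^4]}\sqrt{\E[|\nabla \log g|^4]}$; the first factor is $\poly(1/\alpha)$ by Lemma~\ref{lem:ratioBound}, and the second is the fourth moment of a linear form in a Gaussian whose operator norm and bias are $O(\log(1/\alpha))$ under the near-isotropic assumption, hence also $\poly(1/\alpha)$. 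This contributes $\rho\,\poly(1/\alpha)$, and adding it to the surface-area term gives the stated bound.

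The main obstacle is the mismatch between the two factors composing $\tind$: $\1{S}$ is discontinuous, so its Gaussian noise-sensitivity is only $O(\sqrt{\rho})$ and must be tied to the surface area, while $g$ is smooth but unbounded, so its moments and those of its gradient must be controlled through the near-isotropic hypothesis via Lemma~\ref{lem:ratioBound}. The Cauchy-Schwarz split decouples these two difficulties cleanly, at the cost of weakening the surface-area rate from $\sqrt{\rho}$ to $\rho^{1/4}$, which is precisely the rate demanded by Theorem~\ref{thm:polynomialApproximation}.
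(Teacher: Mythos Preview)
Your proposal is correct and follows essentially the same approach as the paper. The only cosmetic difference is in the initial algebraic decomposition: you pass to the symmetrized form $\tfrac{1}{2}\E[(\tind(\vec x)-\tind(\vec y))^2]$ and split the difference $\tind(\vec x)-\tind(\vec y)$ additively, whereas the paper splits the asymmetric expression $\tind(\vec x)^2-\tind(\vec x)\tind(\vec z)$ directly into $\1{S}(\vec x)(1-\1{S}(\vec z))r^2(\vec x)$ and $\1{S}(\vec x)\1{S}(\vec z)(r^2(\vec x)-r(\vec x)r(\vec z))$; both routes land on the same two terms and bound them with the same tools (Cauchy--Schwarz plus the surface-area noise-sensitivity bound for the indicator piece, and the Hermite/Poincar\'e gradient inequality of Lemma~\ref{lem:noiseDerivative} plus Lemma~\ref{lem:ratioBound} for the smooth ratio piece).
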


To prove Lemma~\ref{lem:realSensitivity}, we will require the following lemma whose proof is provided in the appendix.

\begin{lemma}\label{lem:noiseDerivative}
  Let $r(\vec x) \in L_2(\R^d, \normal(\vec 0, \mat I))$
  be differentiable at every $\vec x \in \R^d$.
  Then
  \[
    \frac{1}{2}
    \E_{(x, z) \sim D_\rho}[(r(\vec x) - r(\vec z))^2]
    \leq \rho \E_{x \sim \normal(\vec 0, \mat I)}
    \lp[ \snorm{2}{\nabla r(\vec x)}^2 \rp]
  \]
\end{lemma}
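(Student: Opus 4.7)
My plan is to diagonalize both sides in the Hermite basis and reduce the bound to an elementary scalar inequality. It is harmless to assume $\E_{\vec x \sim \normal_0}[\snorm{2}{\nabla r(\vec x)}^2] < \infty$, since otherwise the inequality is vacuous; together with $r \in L_2(\R^d, \normal_0)$ this places both $r$ and each $\partial_i r$ in $L_2(\normal_0)$ and lets me expand them in the orthonormal Hermite basis $\{H_V\}_{V \in \nats^d}$.

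For the left-hand side, write $r = \sum_V \hat r(V)\, H_V$. Since the noise operator $T_{1-\rho}$ acts diagonally with eigenvalues $(1-\rho)^{|V|}$ and $\E_{(\vec x,\vec z) \sim D_\rho}[r(\vec x)\, r(\vec z)] = \E_{\vec x \sim \normal_0}[r(\vec x)\, T_{1-\rho} r(\vec x)]$, Parseval gives
\[
\frac12\, \E_{(\vec x,\vec z) \sim D_\rho}\bigl[(r(\vec x) - r(\vec z))^2\bigr] = \sum_{V \in \nats^d} \bigl(1 - (1-\rho)^{|V|}\bigr)\, \hat r(V)^2.
\]
For the right-hand side, Gaussian integration by parts combined with the Hermite recurrences $\partial_i H_V = \sqrt{V_i}\, H_{V-e_i}$ and $x_i H_V - \partial_i H_V = \sqrt{V_i+1}\, H_{V+e_i}$ yields $\widehat{\partial_i r}(V) = \sqrt{V_i+1}\, \hat r(V+e_i)$. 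Summing $\E[(\partial_i r)^2]$ over $i$ and reindexing produces
\[
\E_{\vec x \sim \normal_0}\bigl[\snorm{2}{\nabla r(\vec x)}^2\bigr] = \sum_{V \in \nats^d} |V|\, \hat r(V)^2.
\]

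To finish I invoke Bernoulli's inequality $1 - (1-\rho)^k \le k\rho$ (valid for $\rho \in [0,1]$ and every integer $k \ge 0$) applied termwise with $k = |V|$, which directly compares the two Parseval expressions and gives the claim. The only mildly delicate point is justifying the integration-by-parts identity for $\nabla r$ under the bare assumption of pointwise differentiability: since the $H_V$ are Schwartz functions and both $r$ and each $\partial_i r$ belong to $L_2(\normal_0)$, the Hermite coefficient formula follows by approximating $r$ in $L_2(\normal_0)$ by its partial Hermite sums (which are polynomials, so the identity holds exactly) and passing to the limit. This is the only step requiring care; everything else is a direct Parseval plus Bernoulli computation.
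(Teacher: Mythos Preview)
Your proposal is correct and follows essentially the same route as the paper: expand $r$ in the Hermite basis, use Parseval and the diagonal action of $T_{1-\rho}$ to write the left-hand side as $\sum_V (1-(1-\rho)^{|V|})\hat r(V)^2$, compute the Hermite coefficients of $\partial_i r$ via the derivative identity for Hermite polynomials to obtain $\E_{\normal_0}[\|\nabla r\|_2^2] = \sum_V |V|\,\hat r(V)^2$, and finish with Bernoulli's inequality $1-(1-\rho)^{|V|}\le |V|\rho$. The only cosmetic difference is that you phrase the derivative identity via integration by parts and reindexing, whereas the paper differentiates the Hermite series term by term; these give the same coefficient formula.
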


We now move on to the proof of Lemma~\ref{lem:realSensitivity}.

\begin{prevproof}{Lemma}{lem:realSensitivity}
  For ease of notation we define the following distribution
  \[
    D_\rho =
    \normal\lp(\vec 0,
    \begin{pmatrix}
      \mat I & (1-\rho) \mat I \\
      (1-\rho) \mat I & \mat I
    \end{pmatrix}
    \rp).
  \]
  We also denote by $r(x) = \normal^*(\vec x)/ \normal_0(\vec x)$
  We can now write
  \begin{align*}{2}
    \E_{\vec x \sim \normal_0}
    \lp[\tind(\bm{x})^2 - \tind(\bm{x}) T_{1-\rho} \tind(\bm{x}) \rp]
    &=
    \E_{(\vec x, \vec z) \sim D_\rho}
    \lp[\tind(\bm{x})^2 - \tind(\bm{x}) \tind(\bm{z}) \rp]
  \\&=
  \frac{1}{\alpha^*{^2}}
  \E_{(\vec x, \vec z) \sim D_\rho}
  [
  \1{S}(\vec x) r^2(\vec x)
  - \1{S}(\vec x) \1{S}(\vec z) r^2(\vec x)]
  +
  \\
    &\E_{(\vec x, \vec z) \sim D_\rho}
    [
    \1{S}(\vec x) \1{S}(\vec z) r^2(\vec x)
    - \1{S}(\vec x) \1{S}(\vec z) r(\vec x) r(\vec z)
    ]
  \end{align*}
  We bound each of the two terms separately. For the first term,
  using Schwarz's inequality we get
  \begin{align*}
    \E_{(\vec x, \vec z) \sim D_\rho}
    [ \1{S}(\vec x) r^2(\vec x)
    - \1{S}(\vec x) \1{S}(\vec z) r^2(\vec x)]
    &\leq
    \Big(
      \E_{(\vec x, \vec z) \sim D_\rho}
      [ \1{S}(\vec x) \1{\bar {S}}(\vec z)]
    \Big)^{1/2}
    \Big(
      \E_{(\vec x, \vec z) \sim D_\rho}
      [r^4(\vec x)]
    \Big)^{1/2}
    \nonumber \\ &\le
    (\NS[S])^{1/2} \poly(1/\alpha) \le
        {\sqrt{\Gamma(S)}}{\rho^{1/4}} \poly(1/\alpha)
  \end{align*}
  where the bound on the expectation of $r^4(\vec x)$ follows from Lemma~\ref{lem:ratioBound} and the last inequality follows from
  Lemma~\ref{lem:KOS08NoiseSurface}.

  For the second term, we have that
  \begin{align*}
    \E_{(\vec x, \vec z) \sim D_\rho} [
    \1{S}(\vec x) \1{S}(\vec z) (r^2(\vec x) - r(\vec x) r(\vec z))
    ]
    &=
    \E_{(\vec x, \vec z) \sim D_\rho} \lp[
    \1{S}(\vec x) \1{S}(\vec z)
    \lp(\frac{r^2(\vec x)}{2} + \frac{r^2(\vec z)}{2} - r(\vec x) r(\vec z)\rp)
    \rp]
    \nonumber \\&=
    \E_{(\vec x, \vec z) \sim D_\rho} \lp[
    \1{S}(\vec x) \1{S}(\vec z)
    \frac12 (r(\vec x) - r(\vec z))^2
    \rp]
    \nonumber \\&\leq
    \frac{1}{2}
    \E_{(\vec x, \vec z) \sim D_\rho} \lp[
    (r(\vec x) - r(\vec z))^2
    \rp] \leq
    \rho \E_{\vec x \sim N_0}[\snorm{2}{\nabla r(\vec x)}^2],
  \end{align*}
  where the last inequality follows from
  Lemma~\ref{lem:noiseDerivative}.
  It thus suffices to bound the expectation of the gradient
  of $r$.  We have
  \begin{align*}
    \E_{\vec x \sim N_0}[\snorm{2}{\nabla r(\vec x)}^2]
   &=
   \E_{\vec x \sim N_0}\lp[
   \snorm{2}{ -\mat \Sigma^*{^{-1}}(\vec x - \vec \mu^*) +\vec x}^2 r^2(\vec x)\rp]
    \nonumber \\ &\leq
    2 \E_{\vec x \sim N_0}[
    \snorm{2}{ (\mat I -\mat \Sigma^*) {^{-1}}\vec x}^2 r^2(\vec x) ] +
    2 \snorm{2}{{\Sigma^*}^{-1} \vec \mu^*}^2 \E_{\vec x \sim N_0}[ r^2(\vec x)]
     \\ &\leq
    2 \sqrt{ \E_{\vec x \sim N_0}[
    \snorm{2}{ (\mat I -\mat \Sigma^*{^{-1}}) \vec x}^4 ] \E_{\vec x \sim N_0}[ r^4(\vec x) ] } +
    2 \snorm{2}{{\Sigma^*}^{-1} \vec \mu^*}^2 \E_{\vec x \sim N_0}[ r^2(\vec x)] \le \poly(1/\alpha)
  \end{align*}
  where the bound on the expectation of $r^4(\vec x)$ and $r^2(\vec x)$ follows
  from Lemma~\ref{lem:ratioBound} and the expectation
  $$\E_{\vec x \sim N_0}\lp[
    \snorm{2}{ (\mat I -\mat \Sigma^*{^{-1}}) \vec x}^4 \rp]
    = \E_{\vec x \sim N_0}\lp[ \lp( \sum_i (1-\lambda_i)^2 x_i^2 \rp)^2 \rp]
    \le 3 \lp( \sum_i (1-\lambda_i)^2 \rp)^2
    \le 3 \log^2(1/\alpha)
    \le \poly(1/\alpha)
    $$
\end{prevproof}

\subsubsection{Learning the Hermite Expansion}\label{subsec:learningHermite}
In this section we deal with the sample complexity of estimating the
coefficients of the Hermite expansion.
We have
\[
  c_V = \E_{ \vec x \sim \normal(\vec \mu, \mat \Sigma, S)}[H_V(\vec x)]
\]
Using samples $\vec x_i$ from $\normal(\vec \mu, \mat \Sigma, S)$, we can
estimate this expectation empirically with the unbiased estimate
\[
  \wt c_V = \frac{\sum_{i=1}^N H_V(\vec x_i)}{N}.
\]

We now show an upper bound for the variance of the above estimate.
The proof of this lemma can be found in Appendix~\ref{app:weighted_learning}.
\begin{lemma}\label{lem:hermiteCoefficientVariance}
  Let $\normal(\vec \mu^*, \mat \Sigma^*, S)$ be the unknown truncated Gaussian.
  The variance of the following unbiased estimator of the Hermite coefficients
  \(
  \wt c_V = \frac{\sum_{i=1}^N H_V(\vec x_i)}{N},
  \)
  is upper bounded
  \[
    \E_{\vec x \sim \normal(\vec \mu, \mat \Sigma, S)} [(\wt c_V - c_V)^2]
    \leq \poly(1/\alpha) \frac{5^{|V|}}{N}.
  \]
\end{lemma}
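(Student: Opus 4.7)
My plan is to reduce the variance of $\wt c_V$ to a second moment computation under the standard Gaussian, where the orthonormality of the Hermite basis can be exploited.

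First, since $\wt c_V$ is the empirical mean of $N$ i.i.d.\ copies of $H_V(\vec x)$ with $\vec x \sim \normal(\vec \mu^*, \mat \Sigma^*, S)$, we immediately have
\[
\E[(\wt c_V - c_V)^2] = \frac{1}{N}\Var_{\vec x \sim \normal^*_S}[H_V(\vec x)] \leq \frac{1}{N}\E_{\vec x \sim \normal^*_S}[H_V(\vec x)^2],
\]
so the task reduces to bounding the second moment under the truncated measure. Next I would drop the truncation at the cost of a factor $1/\alpha^*$: by definition of conditioning,
\[
\E_{\vec x \sim \normal^*_S}[H_V(\vec x)^2] = \frac{1}{\alpha^*}\int_S H_V^2(\vec x)\,\normal^*(\vec x)\,\d\vec x \leq \frac{1}{\alpha^*}\E_{\vec x \sim \normal^*}[H_V(\vec x)^2].
\]
Then I would change measure to $\normal_0$, writing $r(\vec x) = \normal^*(\vec x)/\normal_0(\vec x)$ so that $\E_{\normal^*}[H_V^2] = \E_{\normal_0}[H_V^2\, r]$, and apply Cauchy--Schwarz to split this into $\sqrt{\E_{\normal_0}[H_V^4]}\cdot \sqrt{\E_{\normal_0}[r^2]}$.

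The two resulting factors are both standard. For the first, the multivariate Hermite polynomial $H_V$ has total degree $|V|$, so Gaussian hypercontractivity ($\|f\|_4 \leq 3^{\deg(f)/2}\|f\|_2$ for polynomials) gives $\E_{\normal_0}[H_V^4] \leq 9^{|V|}$, hence $\sqrt{\E_{\normal_0}[H_V^4]} \leq 3^{|V|}$. For the second, Lemma~\ref{lem:ratioBound} applied to $\normal^*$ and $\normal_0$ with $k=2$ under the assumed near-isotropic position yields $\E_{\normal_0}[r^2] \leq \poly(1/\alpha)$. Combining everything,
\[
\E_{\vec x \sim \normal^*_S}[H_V(\vec x)^2] \leq \poly(1/\alpha)\cdot 3^{|V|} \leq \poly(1/\alpha)\cdot 5^{|V|},
\]
which after dividing by $N$ gives the claimed bound.

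The main obstacle is really just the fourth-moment bound on $H_V$; everything else is mechanical. Hypercontractivity is the cleanest path, but one could alternatively expand $H_V^2$ in the Hermite basis using the product formula $H_a H_b = \sum_c \binom{a}{c}\binom{b}{c} c!\, H_{a+b-2c}$ coordinatewise, then use orthonormality under $\normal_0$ together with the change-of-measure control on $r$; this yields a combinatorial factor of the form $C^{|V|}$ as well. In either case, the factor $5^{|V|}$ in the statement is a slack constant chosen to absorb the hypercontractive constant $9^{|V|}$ after the square root, while the $\poly(1/\alpha)$ factor absorbs both the $1/\alpha^*$ from the truncation and the $\E_{\normal_0}[r^2]$ bound from Lemma~\ref{lem:ratioBound}.
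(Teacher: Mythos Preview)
Your proposal is correct and follows essentially the same approach as the paper: drop the truncation at cost $1/\alpha$, pass to $\normal_0$ via the density ratio, apply Cauchy--Schwarz, and bound $\E_{\normal_0}[H_V^4]$ by $9^{|V|}$ together with $\E_{\normal_0}[r^2]\le\poly(1/\alpha)$ from Lemma~\ref{lem:ratioBound}. The only cosmetic differences are that the paper bounds $\bigl|\E_{\normal^*}[H_V^2]-1\bigr|$ rather than $\E_{\normal^*}[H_V^2]$ directly, and obtains the fourth-moment bound via the explicit Hermite product formula (Lemma~\ref{lem:fourthHermitePower}) rather than hypercontractivity; both routes yield the same $3^{|V|}$ factor.
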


\begin{theorem}\label{thm:estimationApproximationError}
  Let $S$ be an arbitrary (Borel) subset of $\R^d$. Let $\alpha$ be
  the constant of \eqref{eq:globalLowerBound}.
  Let $\normal(\vec \mu^*, \mat \Sigma^*, S)$ be the corresponding
  truncated Gaussian in $(O\log(1/\alpha), 1/16)$-isotropic position
  (see Definition~\ref{def:isotropic}),
  Then, for the estimate
  \[
    \tind_{k}(\vec x) = \max\lp(0, \sum_{V: 0 \leq |V| \leq k} \wt c_V
    H_V(\vec x)\rp), \quad \wt c_V = \frac{\sum_{i=1}^N H_V(\vec x_i)}{N}
  \]
  it holds for $k \ll d$, $\Gamma(S) > 1$,
  \begin{align*}
    \E_{\vec x_1,\ldots, \vec x_N \sim \normal(\vec \mu^*, \mat \Sigma^*, S)}
    \lp[
    \E_{\vec x \sim \normal(\vec 0, \mat I)}
    \lp[ (\tind_{k}(\vec x)- \tind(\vec x))^2 \rp] \rp]
&\leq
      \poly(1/\alpha)
      \lp(
      \frac{\sqrt{\Gamma(S)}}{k^{1/4}}
      +
      \frac{(5d)^k}{N}
      \rp).
  \end{align*}
  Alternatively, for $k = \poly(1/\alpha) \Gamma(S)^2/\eps^4$  we
  obtain that with $N=d^{\poly(1/\alpha) \Gamma(S)^2/\eps^4}$ samples,
  with probability at least $9/10$, it holds
  $\E_{\vec x \sim \normal_0}[(\tind_{N,k}(\vec x) - \tind(\vec x))^2] \leq \eps$.
\end{theorem}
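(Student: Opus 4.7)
The plan is a bias--variance decomposition of the $L_2(\normal_0)$ error of the degree-$k$ truncated estimator, with the non-negativity clip treated as a harmless contraction. Since $\tind(\vec x) \geq 0$ everywhere, for any $y \in \R$ the pointwise inequality $(\max(0,y) - \tind(\vec x))^2 \leq (y - \tind(\vec x))^2$ holds: the map $y \mapsto \max(0,y)$ is the metric projection onto $[0,\infty)$, which contains $\tind(\vec x)$. Integrating against $\normal_0$, it therefore suffices to bound the $L_2(\normal_0)$ error of the unclipped estimator $\hat{\tind}_k(\vec x) = \sum_{|V| \leq k} \wt c_V H_V(\vec x)$.

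Next, I would expand $\tind = \sum_V c_V H_V$ in the Hermite basis and split $\hat{\tind}_k - \tind$ into a sampling part $\sum_{|V| \leq k}(\wt c_V - c_V) H_V$ and a tail part $-\sum_{|V| > k} c_V H_V$. These lie in orthogonal subspaces of $L^2(\normal_0)$, so by Parseval
\[
\E_{\vec x \sim \normal_0}\!\left[(\hat{\tind}_k(\vec x) - \tind(\vec x))^2\right] \;=\; \sum_{|V| \leq k}(\wt c_V - c_V)^2 \;+\; \sum_{|V| > k} c_V^2 .
\]
The tail term is exactly the quantity controlled by Theorem~\ref{thm:polynomialApproximation}, giving a deterministic bound of $\poly(1/\alpha)(\sqrt{\Gamma(S)}/k^{1/4} + 1/k)$, which under $\Gamma(S) \geq 1$ is dominated by $\poly(1/\alpha)\sqrt{\Gamma(S)}/k^{1/4}$. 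For the sampling term, taking expectation over $\vec x_1,\dots,\vec x_N$ and applying Lemma~\ref{lem:hermiteCoefficientVariance} coefficient-wise yields
\[
\E\!\left[\sum_{|V|\leq k}(\wt c_V - c_V)^2\right] \;\leq\; \frac{\poly(1/\alpha)}{N}\sum_{|V| \leq k} 5^{|V|}.
\]

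Finally I would control the combinatorial sum: the number of multi-indices $V \in \nats^d$ with total degree at most $k$ is $\binom{d+k}{k}$, so for $k \ll d$ one has $\sum_{|V| \leq k} 5^{|V|} \leq 5^k \binom{d+k}{k} \leq (5d)^k$ up to universal constants, producing the variance term $\poly(1/\alpha)(5d)^k/N$. Combining with the bias bound gives the main inequality. The \emph{Alternatively} clause then follows by tuning: choosing $k = \poly(1/\alpha)\Gamma(S)^2/\eps^4$ forces the bias to be at most $\eps/2$, and taking $N = d^{\Theta(k)}$ with a matching polynomial factor of $1/\alpha$ and $1/\eps$ in the exponent forces the expected sampling term to be at most $\eps/2$ as well; a final application of Markov's inequality converts the expected $L_2$ bound into the claimed $9/10$-probability statement. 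The only nontrivial input is the bias bound from Theorem~\ref{thm:polynomialApproximation}; every other step is a routine Parseval decomposition, variance sum, and multi-index count.
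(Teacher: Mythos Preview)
Your proof is correct and follows the same bias--variance decomposition as the paper's own argument. Your treatment is in fact slightly cleaner: you justify the clip via the metric-projection contraction explicitly (the paper glosses over this step), and you use the exact orthogonal Parseval split rather than the cruder inequality $(a-b)^2 \le 2(a-c)^2 + 2(c-b)^2$ that the paper invokes, thereby avoiding an unnecessary factor of~$2$.
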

\begin{proof}
  Instead of considering the positive part of the Hermite expansion, we will
  prove the claim for the empirical Hermite expansion of degree $k$ and $N$
  samples
  \[
    p_{N, k} = \sum_{V:0 \leq |V| \leq k} \wt{c}_V H_V(\vec x).
  \]
  As usual we denote by $S_k \tind(\vec x)$ the true (exact) Hermite expansion
  of degree $k$ of $\tind(\vec x)$.
  Using the inequality $(a-b)^2 \leq 2 (a-c)^2 + 2 (c-b)^2$ we
  obtain
  \[
    \E_{\vec x \sim \normal_0}
    \lp[
    (p_{N,k}(\vec x)- f(\vec x))^2
    \rp]
    \leq
    2 \E_{\vec x \sim \normal_0}
    \lp[
    (p_{N,k}(\vec x)- S_k\tind(\vec x))^2
    \rp]
    +
    2 \E_{\vec x \sim \normal_0}
    \lp[
    (S_k \tind (\vec x)- \tind(\vec x))^2
    \rp]
  \]
  Since Hermite polynomials form an orthonormal system with respect to
  $\normal_0$, we obtain
  \[
    \E_{\vec x \sim \normal_0}
    \lp[
    (p_{N,k}(\vec x)- S_k \tind(\vec x))^2
    \rp]
    =
    \E_{\vec x \sim \normal_0}
    \lp[ \lp(
    \sum_{V: 0\leq |V| \leq k} (\wt c_V - c_V ) H_V(\vec x)
    \rp)^2 \rp]
    =
    \sum_{V: 0\leq |V| \leq k} (\wt c_V - c_V)^2.
  \]
  Using Lemma~\ref{lem:hermiteCoefficientVariance} we obtain
  \begin{align*}
    \E_{\vec x_1,\ldots, \vec x_N \sim \normal^*}
    \lp[
    \sum_{V: 0\leq |V| \leq k} (\wt c_V - c_V)^2
    \rp]
    \leq
    \frac{\poly(1/\alpha)}{N} \sum_{V: 0\leq |V| \leq k} 5^{|V|}
    \leq \frac{\poly(1/\alpha)}{N} \binom{d+k}{k} 5^k,
  \end{align*}
  where we used the fact that the number of all multi-indices $V$ of $d$ elements
  such that $0 \leq |V| \leq k$ is $\binom{d + k}{k}$.
  Moreover, from Theorem~\ref{thm:polynomialApproximation} we obtain that
  \[
    \E_{\vec x \sim \normal_0}
    \lp[ (S_{k}\tind (\vec x)- \tind(\vec x))^2 \rp]
    \leq \poly(1/\alpha)
    \lp(
    \frac{\sqrt{\Gamma(S)}}{k^{1/4}} + \frac{1}{k}
    \rp).
  \]
  The theorem follows.
\end{proof}
 \subsection{Optimization of Gaussian Parameters} \label{sec:optimization}

In this section we show that we can formulate a convex objective function that
can be optimized to yield the unknown parameters $\vec{\mu}^*, \matr{\Sigma}^*$
of the truncated Gaussian. Let $S$ be the unknown (Borel) subset of $\R^d$ such
that $\normal(\vec{\mu}^*, \mat{\Sigma}^*; S) = \alpha^*$ and let $\Nt_S =
\normal(\vec{\mu}^*, \mat{\Sigma}^*, S)$ be the corresponding truncated
Gaussian.

To find the parameters $\mt, \St$, we define the function
\begin{align} \label{eq:optimizationFunctionDefinition}
M_f (\vec u, \mat B)
  \triangleq
  \E_{\vec x \sim \Nt_S}
  \lp[
  \me^{h(\vec u, \mat B ; \vec x)}
  \normal(\vec 0, \mat I ; \vec x)
  f(\vec x)
  \rp]
\end{align}
\noindent where $h(\vec u, \mat B ; \vec x) = \frac{\vec x^T \mat B \vec x}{2} -
  \frac{\tr( (\mat B - \mat I) (\wt{\S}_S + \wt{\m}_S \wt{\m}_S^T) )}{2} -
  \vec u^T (\vec x-\wt{\m}_S) + \frac d 2 \log{2\pi}$.

We will show that the minimizer of $M_f(\vec u, \mat B)$ for the polynomial
function $f = \tind_k$, will satisfy
$(\mat B^{-1} \vec u, \mat B^{-1}) \approx (\mt, \St)$.
Note that $M_f(\vec u, \mat B)$ can be estimated through samples. Our goal will
be to optimize it through stochastic gradient descent.

In order to make sure that SGD algorithm for $M_{\tind_k}$ converges fast in the
parameter space we need to project after every iteration to some subset of the
space as we will see in more details later in this Section. Assuming that the
pair $(\mt, \St)$ is in $(\sqrt{\log(1/\alpha^*)}, 1/16)$-isotropic position we
define the following set
\begin{align} \label{eq:projectionSetDefinition}
  \Domain = \left\{ (\vec{u}, \mat{B}) \mid (\mat{B}^{-1} \vec{u}, \mat{B}^{-1})
  ~~\text{ is in $\left( c \cdot \log(1/\alpha^*), 1/16 \right)$-isotropic
  position} \right\}
\end{align}
\noindent Where $c$ is the universal constant guaranteed to exist from Section
\ref{sec:problemFormulation} so that
\[ \max\left\{ \norm{\mt - \tim}_{\St}, \norm{\St - \tiS}_F \right\} \le c \cdot
\log(1/\alpha^*). \]
It is not hard to see that $\Domain$ is a convex set and that
for any $(\vec{u}, \mat{B})$ the projection to $\Domain$ can be done
efficiently. For more details we refer to Lemma 8 of \cite{DGTZ18}. Since after
every iteration of our algorithm we project to $\Domain$ we will assume for the
rest of this Section that $(\vec{u}, \mat{B}) \in \Domain$.

An equivalent formulation of $M_f (\vec u, \mat B)$ that will be useful for
the analysis of the SGD algorithm is
\begin{align}
  M_f (\vec u, \mat B) &=
e^{
    -\frac12 \lp(
    \tr( (\mat B - \mat I) (\wt{\S}_S + \wt{\m}_S \wt{\m}_S^T) ) ) + \vec u^T \mat B^{-1} \vec u - \vec u^T \wt{\m}_S
    \rp)} \sqrt{|\matr{B}|}
\E_{\vec x \sim \Nt_S}
  \lp[
  \frac{
    \normal(\vec 0, \mat I ; \vec x)
  }{
    \normal(\mat B^{-1} \vec u, \mat B^{-1};\vec x)
  }
  f(\vec x)
  \rp] \nonumber \\
  & \triangleq  C_{\vec u, \mat B}
    \E_{\vec x \sim \Nt_S}
    \lp[
    \frac{
      \N_0(\vec x)
    }{
      \N_{\vec u, \mat B}(\vec x)
    }
    f(\vec x)
    \rp] \label{eq:optimizationFunctionEquivalentExpression}
\end{align}

\begin{lemma} \label{lem:cubBound}
  For $(\vec{u}, \mat{B}) \in \Domain$, we have that
  $\poly(\alpha) \le C_{\vec u, \mat B} \le \poly(1/\alpha)$.
\end{lemma}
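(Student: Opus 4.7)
The plan is to take the logarithm of $C_{\vec u, \mat B}$ and show it is $O(\log(1/\alpha))$ in absolute value; exponentiating then yields the claimed $\poly(\alpha)$ and $\poly(1/\alpha)$ bounds. Writing out
\[
\log C_{\vec u, \mat B} = -\tfrac12 \tr\bigl((\mat B - \mat I)(\wt\Sigma_S + \wt\mu_S \wt\mu_S^T)\bigr) - \tfrac12 \vec u^T \mat B^{-1} \vec u + \tfrac12 \vec u^T \wt\mu_S + \tfrac12 \log|\mat B|,
\]
I would bound each of the four terms using the isotropic-position constraints coded into $\Domain$, which say that $\mat B^{-1}$ has eigenvalues in $[15/16,16/15]$, $\|\mat B^{-1} - \mat I\|_F^2 \le c\log(1/\alpha^*)$, and $\|\mat B^{-1} \vec u\|_2^2 \le c\log(1/\alpha^*)$. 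From these one immediately derives $\mat B$ has eigenvalues in $[15/16, 16/15]$, $\|\mat B - \mat I\|_F^2 = O(\log(1/\alpha))$, and $\|\vec u\|_2^2 \le \|\mat B\|_{\mathrm{op}}^2 \|\mat B^{-1}\vec u\|_2^2 = O(\log(1/\alpha))$. In the setup preceding the lemma the space has been transformed so that $\wt\mu_S = \vec 0$ and $\wt\Sigma_S = \mat I$ (or at worst satisfy analogous $O(\sqrt{\log(1/\alpha)})$ bounds inherited from Lemma~\ref{lem:conditionalParameterDistance}), so the cross term $\vec u^T \wt\mu_S$ is controlled by Cauchy–Schwarz.

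The only term that is not individually $O(\log(1/\alpha))$ is $\tfrac12 \log|\mat B|$, which can a priori scale like $d$. The hardest step, and the whole point of the careful combination, is to show a cancellation between $\tfrac12 \log|\mat B|$ and the trace term. When $\wt\Sigma_S = \mat I$ and $\wt\mu_S = \vec 0$, the trace contribution collapses to $-\tfrac12\tr(\mat B-\mat I) = -\tfrac12 \sum_i (\beta_i - 1)$, where $\beta_i$ are the eigenvalues of $\mat B$. Combined with $\tfrac12\log|\mat B| = \tfrac12 \sum_i \log\beta_i$, the sum becomes
\[
-\tfrac12 \sum_{i=1}^d \bigl((\beta_i - 1) - \log\beta_i\bigr).
\]
Since every $\beta_i \in [15/16, 16/15]$, a second-order Taylor expansion of $\log$ around $1$ gives $0 \le (\beta_i - 1) - \log\beta_i \le (\beta_i - 1)^2$, so this combined sum is bounded in absolute value by $\tfrac12 \|\mat B - \mat I\|_F^2 = O(\log(1/\alpha))$. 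If one needs to handle the more general case where $\wt\mu_S, \wt\Sigma_S$ are not exactly $\vec 0, \mat I$, one isolates the extra contributions $-\tfrac12\tr((\mat B-\mat I)(\wt\Sigma_S - \mat I))$ and $-\tfrac12 \wt\mu_S^T(\mat B-\mat I)\wt\mu_S$, each bounded by $\|\mat B-\mat I\|_F$ times the corresponding Frobenius/$\ell_2$ norm, all of which are $O(\sqrt{\log(1/\alpha)})$ by the guarantees carried over from the initial estimate.

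Putting the four bounds together yields $|\log C_{\vec u, \mat B}| = O(\log(1/\alpha))$, from which the two-sided estimate $\poly(\alpha) \le C_{\vec u, \mat B} \le \poly(1/\alpha)$ follows by exponentiating. The essential step is the determinant-trace cancellation: any naive bound that treats $\log|\mat B|$ separately would leak a factor depending on the ambient dimension $d$, so the proof must exploit that $\Domain$ controls $\|\mat B - \mat I\|_F$ (not just the spectrum individually) in order to keep the dependence purely in terms of $\alpha$.
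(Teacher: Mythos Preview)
Your proposal is correct and follows essentially the same route as the paper: take $\log C_{\vec u,\mat B}$, split off $\tr(\mat B-\mat I)$ from the trace term to pair it with $\log|\mat B|$, bound $\sum_i\bigl((\beta_i-1)-\log\beta_i\bigr)$ by $\|\mat B-\mat I\|_F^2$ (the paper uses the inequality $0\le x-1-\log x\le (x-1)^2/x$), and handle the remaining pieces $\tr((\mat B-\mat I)(\wt\Sigma_S-\mat I))$ and $\vec u^T\mat B^{-1}\vec u$ by Cauchy--Schwarz and the eigenvalue bounds from $\Domain$. Your identification of the determinant--trace cancellation as the crux, and your warning that bounding $\log|\mat B|$ in isolation would leak a $d$-dependence, are exactly the point of the paper's grouping.
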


\begin{proof}
  We have that
  \begin{align*}
    |2 \log{C_{\vec u, \mat B}}| &= \lp|
    \tr( (\mat B - \mat I) (\wt{\S}_S + \wt{\m}_S \wt{\m}_S^T) ) ) + \vec u^T
    \mat B^{-1} \vec u - \vec u^T \wt{\m}_S - \log |\matr{B}|\rp|
    \\
    &= \lp|
    \tr( \mat B - \mat I )  +
    \tr( (\mat B - \mat I) (\wt{\S}_S - \mat I) ) + \vec u^T \mat B^{-1} \vec u
    - \log |\matr{B}|\rp|\\
    &\le
    \lp| \tr( \mat B - \mat I ) - \log |\matr{B}| \rp| +
    \lp| \tr( (\mat B - \mat I) (\wt{\S}_S - \mat I) ) \rp| +
    \lp| \vec u^T \mat B^{-1} \vec u \rp|
  \end{align*}

  We now bound each of the terms separately. Let $\lambda_1,...,\lambda_d$ be the eigenvalues of $\mat B$.

  \begin{enumerate}
  \item For the first term, we have that
  $$
     | \tr( \mat B - \mat I ) - \log |\matr{B}| | = | \sum_{i=1}^d ( \lambda_i-1-\log{\lambda_i} ) | \le \sum_{i=1}^d \frac{( \lambda_i-1)^2}{\lambda_i} \le \frac{\|\mat B - \mat I\|^2_{F}}{\lambda_{min}} $$
  where we used the fact that $0 \le x-1-\log{x} \le \frac {(x-1)^2}{x}$ for all $x > 0$.
  \item For the second term, we have that $\lp| \tr( (\mat B - \mat I) (\wt{\S}_S - \mat I) ) \rp| \le \|\mat B - \mat I\|_{F} \|\wt{\S}_S - \mat I\|_{F} $
  \item For the third term, we have that $\lp|\vec u^T \mat B^{-1} \vec u \rp| = \vec u^T \mat B^{-1} \mat B \mat B^{-1} \vec u \le \lambda_{max} \|\mat B^{-1} \vec u\|_2^2$
  \end{enumerate}

  Now from the assumption $(\vec{u}, \mat{B}) \in \Domain$ we have that
  $\norm{\mat{B} - \mat{I}}_F \le O( \sqrt{\log(1/\alpha^*)} )$,
  $\norm{\mat{B}^{-1} \vec{u}}_2 \le O(\sqrt{\log(1/\alpha^*)})$, $\lambda_{min} \ge 15/16$
  and $\lambda_{max} \le 17/16$. Also from Lemma~\ref{lem:conditionalParameterDistance} we get that
  $\norm{\tiS_S - \mat{I}}_F \le O( \sqrt{  \log(1/\alpha^*) } )$ and hence $|2 \log{C_{\vec u, \mat B}}| \le O(\log(1/\alpha^*))$.
  This means that $C_{\vec u, \mat B} = \poly(1/\alpha)$ and the lemma follows.
\end{proof}

\subsubsection{The Objective Function and its Approximation}
\label{sec:objectivesDefinition}

To show that the minimizer of the function $M_{\tind_k}$ is a good estimator for
the unknown parameters $\vec{\mu}^*, \matr{\Sigma}^*$, we consider the function
$M'_{f}$, defined as $
M_f (\vec u, \mat B)
  =
  \E_{\vec x \sim \Nt_S}
  \lp[
  \me^{h'(\vec u, \mat B ; \vec x)}
  \normal(\vec 0, \mat I ; \vec x)
  f(\vec x)
  \rp]
$ for $
  h'(\vec u, \mat B ; \vec x) = \frac{\vec x^T \mat B \vec x}{2} -
  \frac{\tr( (\mat B - \mat I) (\S_S + {\m}_S {\m}_S^T) )}{2} -
  \vec u^T (\vec x-{\m}_S) + \frac d 2 \log{2\pi}
$.
This function corresponds to an ideal situation where we know the parameters
${\m}_S, \S_S$ exactly. Similarly to
\eqref{eq:optimizationFunctionEquivalentExpression}, we can write $M'_{f}$ as
$C'_{\vec u, \mat B}
    \E_{\vec x \sim \Nt_S}
    \lp[
    \frac{
      \N_0(\vec x)
    }{
      \N_{\vec u, \mat B}(\vec x)
    }
    f(\vec x)
    \rp]$.
We argue that both $M_f$ and $M'_f$ are convex.

\begin{claim} \label{clm:convex}
  For any function $f: \R^d \mapsto \R_{\ge 0}$, $M_f(\vec u, \mat B)$ and
  $M'_f(\vec u, \mat B)$ are convex functions of the parameters
  $(\vec u, \mat B)$.
\end{claim}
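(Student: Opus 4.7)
The plan is to reduce convexity of $M_f$ and $M'_f$ to the classical fact that a non-negative weighted integral of exponentials of affine functions is convex. The key observation is that for each fixed $\vec x$, the map $(\vec u, \mat B) \mapsto h(\vec u, \mat B;\vec x)$ is \emph{affine}. Indeed, inspecting each term of
\[
h(\vec u, \mat B;\vec x) = \tfrac{1}{2}\vec x^T \mat B \vec x - \tfrac{1}{2}\tr\bigl((\mat B - \mat I)(\wt{\S}_S + \wt{\m}_S\wt{\m}_S^T)\bigr) - \vec u^T(\vec x - \wt{\m}_S) + \tfrac{d}{2}\log 2\pi,
\]
we see: $\tfrac{1}{2}\vec x^T \mat B \vec x$ is linear in the entries of $\mat B$ with $\vec x$ held fixed; the trace term is linear in $\mat B$; the term $-\vec u^T(\vec x - \wt{\m}_S)$ is linear in $\vec u$; and the last term is a constant. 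So $h(\cdot,\cdot;\vec x)$ is affine in $(\vec u,\mat B)$.

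Next I would use the standard fact that if $g:\R^m\to\R$ is affine, then $e^{g}$ is convex on $\R^m$ (it is the composition of the convex nondecreasing function $t\mapsto e^t$ with an affine function, and hence convex). Applying this with $g = h(\cdot,\cdot;\vec x)$, the map $(\vec u,\mat B)\mapsto e^{h(\vec u,\mat B;\vec x)}$ is convex for every fixed $\vec x$. Since $\normal(\vec 0,\mat I;\vec x)\ge 0$ and $f(\vec x)\ge 0$ by hypothesis, multiplying by the non-negative constant $\normal(\vec 0,\mat I;\vec x)f(\vec x)$ preserves convexity, so the integrand $(\vec u,\mat B)\mapsto e^{h(\vec u,\mat B;\vec x)}\normal(\vec 0,\mat I;\vec x)f(\vec x)$ is convex in $(\vec u,\mat B)$ for each $\vec x$.

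Finally, expectation with respect to $\Nt_S$ is a non-negative linear operator on functions of $(\vec u,\mat B)$ (pointwise integration against a probability measure), and such operators preserve convexity: a convex combination of convex functions is convex, and this extends to integrals by monotone/dominated convergence applied to the defining inequality $\phi(\lambda a + (1-\lambda) b)\le \lambda \phi(a)+(1-\lambda)\phi(b)$ pointwise in $\vec x$. Hence $M_f(\vec u,\mat B) = \E_{\vec x\sim \Nt_S}[e^{h(\vec u,\mat B;\vec x)}\normal(\vec 0,\mat I;\vec x)f(\vec x)]$ is convex on its domain. The argument for $M'_f$ is identical: $h'$ differs from $h$ only by replacing the constants $(\wt{\m}_S,\wt{\S}_S)$ with $(\m_S,\S_S)$, which does not affect affinity in $(\vec u,\mat B)$. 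There is no real obstacle here; the only subtle point is confirming that the integrability needed to justify pushing convexity through the expectation holds, which follows from the exponential moment bounds (e.g.\ via Lemma~\ref{lem:ratioBound}) already used throughout the paper to ensure $M_f$ is well-defined on $\Domain$.
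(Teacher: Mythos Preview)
Your argument is correct and rests on the same key observation as the paper: for each fixed $\vec x$, the map $(\vec u,\mat B)\mapsto h(\vec u,\mat B;\vec x)$ is affine, so the integrand is a non-negative multiple of an exponential of an affine function, hence convex, and integrating against a probability measure preserves convexity. The paper reaches the same conclusion by explicitly computing the Hessian and noting it is an expectation of a rank-one outer product $\nabla h(\nabla h)^T$ times the non-negative weight $e^{h}\,\normal_0\, f$, hence positive semidefinite. Your route is the cleaner, more conceptual version of that same computation; the paper's explicit Hessian has the minor side benefit that the formula is reused downstream in the strong-convexity lemma, but for the claim itself the two arguments are equivalent.
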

\begin{proof}
We show the statement for $M_f$. The proof for $M'_f$ is identical. The proof
follows by computing the Hessian of $M_f$ and arguing that it is positive
semidefinite.

The gradient with respect to $(\vec u, \mat B)$ is
\begin{align}
  \nabla M_f(\vec u, \mat B)
  & =
  \E_{\vec x \sim \normal(\vec \mu^*, \mat \Sigma^*, S)}
  \lp[
  \nabla h(\vec u, \mat B; \vec x)
  \me^{h(\vec u, \mat B; \vec x)}
  \normal(\vec 0, \mat I; \vec x)
  f(\vec x)
  \rp] \nonumber \\
  & =
  \E_{\vec x \sim \normal(\vec \mu^*, \mat \Sigma^*, S)}
  \lp[
  \begin{pmatrix}
    \frac12 \lp(\vec x \vec x^T - \tiS_S - {\tim}_S {\tim}_S^T \rp)^\flat
    \\
    {\tim}_S - \vec x
  \end{pmatrix}
  \me^{h(\vec u, \mat B; \vec x)}
  \normal(\vec 0, \mat I; \vec x)
  f(\vec x)
  \rp] \label{eq:objectiveGradientComputation}
\end{align}
Moreover, the Hessian is
\begin{align*}
  \mathcal{H}_{M_f} (\vec u, \mat B)
  =
  \E_{\vec x \sim \normal(\vec \mu^*, \mat \Sigma^*, S)}
  \lp[
  \begin{pmatrix}
    \frac12 \lp(\vec x \vec x^T - \tiS_S - {\tim}_S {\tim}_S^T \rp)^\flat
    \\
    {\tim}_S - \vec x
  \end{pmatrix}
  \begin{pmatrix}
    \frac12 \lp(\vec x \vec x^T - \tiS_S - {\tim}_S {\tim}_S^T \rp)^\flat
    \\
    {\tim}_S - \vec x
  \end{pmatrix}^T
  \me^{h(\vec u, \mat B; \vec x)}
  \normal(\vec 0, \mat I; \vec x)
  f(\vec x)
  \rp]
\end{align*}
which is clearly positive semidefinite since for any $\vec z \in \R^{d \times d
+ d}$ we have
\[
  \vec z^T
  \mathcal{H}_{M_f}(\vec u, \mat B)
  \vec z
  =
  \E_{\vec x \sim \normal(\vec \mu^*, \mat \Sigma^*, S)}
  \lp[
  \lp(\vec z^T
  \begin{pmatrix}
    \frac12 \lp(\vec x \vec x^T - \tiS_S - {\tim}_S {\tim}_S^T \rp)^\flat
    \\
    {\tim}_S - \vec x
  \end{pmatrix}
  \rp)^2
  \me^{h(\vec u, \mat B; \vec x)}
  \normal(\vec 0, \mat I; \vec x)
  f(\vec x)
  \rp]
  \geq 0.
\]
\end{proof}

\noindent We now argue that the minimizer of the convex function $M'_{\psi}$ for
the weighted characteristic function
$ \tind(\vec x) =
  \frac{\1{S}(\vec x)}{\alpha^*}
  \frac{\normal(\vec{\mu}^*, \mat{\Sigma}^*; \vec x)}{\normal(\vec{0},\matr{I}; \vec x)}
$
is $(\vec u, \mat B) = ({\St}^{-1}, {\St}^{-1} \mt)$.

\begin{claim} \label{clm:consistency}
    The minimizer of $M'_{\tind}(\vec u, \mat B)$ is $(\vec u, \mat B) =
  ({\St}^{-1}, {\St}^{-1} \mt)$.
\end{claim}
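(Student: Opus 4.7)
The plan is to invoke convexity and then check vanishing of the gradient. By Claim~\ref{clm:convex}, $M'_{\tind}$ is convex in $(\vec u, \mat B)$, so it suffices to verify that the point $(\vec u, \mat B) = ({\St}^{-1}\mt, {\St}^{-1})$ is a critical point. Reusing the gradient computation from the proof of Claim~\ref{clm:convex}, with $h$ and $(\tim_S, \tiS_S)$ replaced by $h'$ and $(\m_S, \S_S)$, we have
\[
\nabla M'_f(\vec u, \mat B) = \E_{\vec x \sim \Nt_S} \lp[
\begin{pmatrix}
\frac12 \lp(\vec x \vec x^T - \S_S - \m_S \m_S^T \rp)^\flat \\
\m_S - \vec x
\end{pmatrix}
\me^{h'(\vec u, \mat B;\vec x)} \N_0(\vec x) f(\vec x)
\rp].
\]

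The key step is a scalar identity: at the candidate minimizer and with $f=\tind$, the weight multiplying the vector above becomes a constant independent of $\vec x$ (times $\1_S$). Since $\tind(\vec x) = \frac{\1_S(\vec x)}{\alpha^*}\frac{\Nt(\vec x)}{\N_0(\vec x)}$, I have $\me^{h'}\N_0 \tind = \frac{\1_S}{\alpha^*}\me^{h'}\Nt$. Substituting $\vec u = \St^{-1}\mt$ and $\mat B = \St^{-1}$ into the definition of $h'$ and adding $\log \Nt(\vec x)$, the $\vec x$-dependent contributions cancel exactly: the quadratic piece $\frac12\vec x^T\St^{-1}\vec x$ coming from $h'$ cancels the $-\frac12\vec x^T\St^{-1}\vec x$ in $\log \Nt$, and the linear piece $-\mt^T\St^{-1}\vec x$ in $h'$ cancels the $+\mt^T\St^{-1}\vec x$ in $\log \Nt$. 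Consequently $\me^{h'(\St^{-1}\mt,\St^{-1};\vec x)}\Nt(\vec x) = \me^K$ for some constant $K$ depending only on $\mt,\St,\m_S,\S_S$.

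Plugging this back, the gradient at the candidate point equals
\[
\frac{\me^K}{\alpha^*}\,\E_{\vec x \sim \Nt_S}\lp[
\begin{pmatrix}
\frac12 \lp(\vec x \vec x^T - \S_S - \m_S \m_S^T \rp)^\flat \\
\m_S - \vec x
\end{pmatrix}
\1_S(\vec x)
\rp] = \frac{\me^K}{\alpha^*}\,\E_{\vec x \sim \Nt_S}\lp[
\begin{pmatrix}
\frac12 \lp(\vec x \vec x^T - \S_S - \m_S \m_S^T \rp)^\flat \\
\m_S - \vec x
\end{pmatrix}
\rp],
\]
since $\1_S \equiv 1$ almost surely under $\Nt_S$. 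By the very definition of $\m_S$ and $\S_S$ as the mean and covariance of $\Nt_S$, we have $\E_{\Nt_S}[\vec x] = \m_S$ and $\E_{\Nt_S}[\vec x\vec x^T] = \S_S + \m_S\m_S^T$, so both blocks of the gradient vanish. Combined with convexity, this identifies the claimed critical point as the (unique up to the null directions of the Hessian) minimizer.

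The only mildly delicate step is the scalar cancellation $h'+\log\Nt \equiv K$ at the candidate minimizer; this is the sense in which $h'$ was engineered precisely to ``undo'' the Gaussian weight at the true parameters. Once this is in hand, everything else is a direct application of the gradient formula and the definition of truncated moments. I would not expect a separate uniqueness argument to be necessary here, since in Section~\ref{sec:optimization} the authors establish strong convexity on $\Domain$, which implies uniqueness of the minimizer.
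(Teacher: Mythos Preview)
Your proposal is correct and follows essentially the same argument as the paper: compute the gradient of $M'_{\tind}$, substitute $(\vec u,\mat B)=(\St^{-1}\mt,\St^{-1})$, observe that the weight $\me^{h'}\N_0\,\tind$ collapses to a constant times $\1_S$ (the paper phrases this via the equivalent form $C'_{\vec u,\mat B}\,\N_0/\N_{\vec u,\mat B}$, you via directly checking $h'+\log\Nt$ is constant), and then use that $\m_S,\S_S$ are the first two moments of $\Nt_S$ to make the remaining expectation vanish. Your explicit appeal to convexity and remark on uniqueness via strong convexity are welcome but not substantively different from what the paper does.
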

\begin{proof}
  The gradient of $M'_{\tind}$ with respect to $(\vec u, \mat B)$ is
  \begin{align*}
    \nabla M'_{\tind}(\vec u, \mat B)
     &=
    \E_{\vec x \sim \Nt_S}
    \lp[
    \begin{pmatrix}
      \frac12 \lp(\vec x \vec x^T - \S_S - {\m}_S {\m}_S^T \rp)^\flat
      \\
      {\m}_S - \vec x
    \end{pmatrix}
    \me^{h(\vec u, \mat B; \vec x)}
    \normal(\vec 0, \mat I; \vec x)
    \frac{\1{S}(\vec x)}{\alpha^*}
    \frac{\normal(\vec{\mu}^*, \mat{\Sigma}^*; \vec x)}{\normal(\vec{0},\matr{I}; \vec x)}
    \rp]\\
     &=
    \E_{\vec x \sim \Nt_S}
    \lp[
    \begin{pmatrix}
      \frac12 \lp(\vec x \vec x^T - \S_S - {\m}_S {\m}_S^T \rp)^\flat
      \\
      {\m}_S - \vec x
    \end{pmatrix}
    \me^{h(\vec u, \mat B; \vec x)}
    \frac{\normal(\vec{\mu}^*, \mat{\Sigma}^*; \vec x)}{\alpha^*}
    \rp]
  \end{align*}
  For $(\vec u, \mat B) = ({\St}^{-1} \mt, {\St}^{-1})$, this is equal to
  \begin{align*}
  \nabla M'_{\tind}({\St}^{-1} \mt, {\St}^{-1}) & =
  C_{\vec u, \mat B} \cdot \E_{\vec x \sim \Nt_S}
  \lp[
  \begin{pmatrix}
    \frac12 \lp(\vec x \vec x^T - \S_S - {\m}_S {\m}_S^T \rp)^\flat
    \\
    {\m}_S - \vec x
  \end{pmatrix}
  \frac{1}{\normal(\vec{\mu}^*, \mat{\Sigma}^*; \vec x)}
  \frac{\normal(\vec{\mu}^*, \mat{\Sigma}^*; \vec x)}{\alpha^*}
  \rp] \\
  & = \frac{C_{\vec u, \mat B}}{\alpha^*}
   \cdot \E_{\vec x \sim \Nt_S}
  \lp[
  \begin{pmatrix}
    \frac12 \lp(\vec x \vec x^T - \S_S - {\m}_S {\m}_S^T \rp)^\flat
    \\
    {\m}_S - \vec x
  \end{pmatrix}
  \rp]
  \end{align*}
  where $C_{\vec u, \mat B}$ that does not depend on $x$. This is equal to 0 by
  definition of ${\m}_S$ and $\S_S$.
\end{proof}

We want to show that the minimizer of $M_{\tind_k}$ is close to that of
$M'_{\tind}$. To do this, we bound the difference of the two functions
pointwise.  The proof of the following lemma is technical and can be
found in Appendix~\ref{app:optimization}.
\begin{lemma}[\textsc{Pointwise Approximation of the Objective Function}]
\label{lem:pointwiseApproximationOfTheObjectiveFunction}
  Assume that we use Lemma \ref{lem:conditionalEstimation} to estimate
  $\tim_S, \tiS_S$ with $\eps = \frac{1}{\poly(1/\alpha^*)} \eps'$
  and Theorem \ref{thm:estimationApproximationError}
  with $\eps = \frac{1}{p(1/\alpha^*)} \eps'^2$ then
  \[ \lp|
    M_{\tind_k}(\vec u, \mat B) -
    M'_{\tind}(\vec u, \mat B)
    \rp|
    \leq \eps'.
  \]
\end{lemma}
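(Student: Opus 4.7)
The plan is to decouple the two sources of error by inserting the hybrid quantity $M'_{\tind_k}$ (the ``ideal'' objective evaluated on the polynomial approximation of $\tind$) and writing
\[
\bigl|M_{\tind_k}(\vec u,\mat B) - M'_{\tind}(\vec u,\mat B)\bigr|
\le
\bigl|M_{\tind_k}(\vec u,\mat B) - M'_{\tind_k}(\vec u,\mat B)\bigr|
+ \bigl|M'_{\tind_k}(\vec u,\mat B) - M'_{\tind}(\vec u,\mat B)\bigr|.
\]
Using the representation \eqref{eq:optimizationFunctionEquivalentExpression}, the first term equals $|C_{\vec u,\mat B} - C'_{\vec u,\mat B}|\cdot \E_{\vec x\sim \Nt_S}[(\N_0/\N_{\vec u,\mat B})\,\tind_k]$ and captures only the perturbation from using the estimates $\tim_S,\tiS_S$ in place of the true $\m_S,\S_S$. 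The second term equals $C'_{\vec u,\mat B}\cdot \bigl|\E_{\vec x\sim \Nt_S}[(\N_0/\N_{\vec u,\mat B})(\tind_k-\tind)]\bigr|$ and captures only the Hermite truncation error. Each will be bounded by $\eps'/2$.

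For the first term, the closed form of $C_{\vec u,\mat B}$ derived in the proof of Lemma~\ref{lem:cubBound} gives
\[
\log\!\frac{C_{\vec u,\mat B}}{C'_{\vec u,\mat B}}
= -\tfrac12 \tr\!\bigl((\mat B-\mat I)[(\tiS_S-\S_S)+(\tim_S\tim_S^T-\m_S\m_S^T)]\bigr) + \vec u^T(\tim_S-\m_S).
\]
Since $(\vec u,\mat B)\in\Domain$, the isotropic constraints bound $\|\mat B-\mat I\|_F$ and $\|\vec u\|_2$ by $O(\sqrt{\log(1/\alpha^*)})$, and Lemma~\ref{lem:cubBound} gives $C'_{\vec u,\mat B}\le \poly(1/\alpha^*)$. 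Feeding the accuracy $\eps=\eps'/\poly(1/\alpha^*)$ into Lemma~\ref{lem:conditionalEstimation} makes $\|\tim_S-\m_S\|_2$ and $\|\tiS_S-\S_S\|_F$ small enough to control the log-ratio, and then $|C-C'|\le C'\cdot O(|\log(C/C')|)\le \poly(1/\alpha^*)\cdot \eps$. It remains to bound the prefactor expectation: after the change of measure $\Nt_S = \tind\cdot\N_0$ it becomes $\E_{\N_0}[\tind\cdot(\N_0/\N_{\vec u,\mat B})\cdot\tind_k]$, and two applications of Cauchy--Schwarz reduce it to products of $\E_{\N_0}[\tind^4]^{1/4}$, $\E_{\N_0}[(\N_0/\N_{\vec u,\mat B})^4]^{1/4}$, and $\E_{\N_0}[\tind_k^2]^{1/2}$. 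The first two are $\poly(1/\alpha^*)$ by Lemma~\ref{lem:ratioBound}, and the last is bounded using $\E_{\N_0}[\tind_k^2]\le 2\E_{\N_0}[\tind^2]+2\E_{\N_0}[(\tind_k-\tind)^2]$, where the first summand is $\poly(1/\alpha^*)$ by Lemma~\ref{lem:ratioBound} and the second is at most $1$ by Theorem~\ref{thm:estimationApproximationError}. Combining everything, the first term is $\le \eps'/2$.

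For the second term, the same change of measure rewrites the expectation as $\E_{\vec x\sim\N_0}[\tind\cdot(\N_0/\N_{\vec u,\mat B})(\tind_k-\tind)]$, and one Cauchy--Schwarz gives
\[
\bigl|\E_{\Nt_S}[(\N_0/\N_{\vec u,\mat B})(\tind_k-\tind)]\bigr|
\le \sqrt{\E_{\N_0}\bigl[\tind^2(\N_0/\N_{\vec u,\mat B})^2\bigr]}\cdot \sqrt{\E_{\N_0}[(\tind_k-\tind)^2]}.
\]
The first factor is $\poly(1/\alpha^*)$ by the argument above. Invoking Theorem~\ref{thm:estimationApproximationError} with accuracy $\eps'^2/\poly(1/\alpha^*)$ makes the second factor at most $\eps'/\poly(1/\alpha^*)$. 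Multiplying by $C'_{\vec u,\mat B}\le \poly(1/\alpha^*)$ and choosing the polynomials appropriately yields $\le \eps'/2$, which completes the proof after combining the two bounds.

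\textbf{Main obstacle.} The hard part is not any single estimate but the bookkeeping of the $\poly(1/\alpha^*)$ factors accumulated through each invocation of Lemma~\ref{lem:ratioBound} (whose bound is $\exp(\Theta(k^2 B/\delta))$), through the repeated Cauchy--Schwarz steps that promote $L_2$-control to $L_4$-control, and through the prefactor $C_{\vec u,\mat B}$. One must also verify that every pair of Gaussians appearing in the argument --- $\N_0$, $\Nt$, $\N_{\vec u,\mat B}$ --- satisfies the $(B,(1-\delta)/2k)$-isotropic hypothesis of Lemma~\ref{lem:ratioBound} uniformly over $(\vec u,\mat B)\in\Domain$, so that the bounds are actually uniform in the SGD iterate. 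Once this is done, the two input accuracies in Lemma~\ref{lem:conditionalEstimation} and Theorem~\ref{thm:estimationApproximationError} are chosen so that all the accumulated polynomial factors in $1/\alpha^*$ are absorbed into the $\poly(1/\alpha^*)$ denominators advertised in the statement.
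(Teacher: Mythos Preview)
Your proposal is correct and follows essentially the same approach as the paper. The only difference is the choice of hybrid: the paper inserts $M_{\tind}$ (the \emph{estimated} constants $C_{\vec u,\mat B}$ paired with the \emph{true} $\tind$) and writes
\[
|M_{\tind_k}-M'_{\tind}| \le |M_{\tind_k}-M_{\tind}| + |M_{\tind}-M'_{\tind}|,
\]
whereas you insert $M'_{\tind_k}$ (the \emph{true} constants $C'_{\vec u,\mat B}$ paired with the \emph{approximate} $\tind_k$). The two decompositions are symmetric; each isolates the Hermite truncation error in one term and the $C$-versus-$C'$ perturbation in the other, and both are handled by the same Cauchy--Schwarz / Lemma~\ref{lem:ratioBound} / Lemma~\ref{lem:cubBound} machinery. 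The paper's choice is marginally cleaner only because its ``constant--perturbation'' term carries the prefactor $\E_{\Nt_S}[(\N_0/\N_{\vec u,\mat B})\tind]=\E_{\Nt_S}[\Nt/(\alpha^*\N_{\vec u,\mat B})]$, which is immediately bounded by Lemma~\ref{lem:ratioBound}, while yours carries $\E_{\Nt_S}[(\N_0/\N_{\vec u,\mat B})\tind_k]$, forcing the extra step $\E_{\N_0}[\tind_k^2]\le 2\E_{\N_0}[\tind^2]+2\E_{\N_0}[(\tind_k-\tind)^2]$ that you correctly include.
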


  Now that we have established that $M_{\tind_k}$ is a good approximation of
$M'_{\tind}$ we will prove that we can optimize $M_{\tind_k}$ and get a solution
that is very close to the optimal solution of $M'_{\tind}$.

\subsubsection{Optimization of the Approximate Objective Function}
\label{sec:approxOptimization}

  Our goal in this section is to prove that using sample access to
$\N(\mt, \St, S)$ we can find the minimum of the function $M_{\tind_k}$ defined
in the previous section. First of all recall that $M_{\tind_k}$ can be written
as an expectation over $\N(\mt, \St, S)$ in the following way
\begin{align*}
M_{\tind_k} (\vec u, \mat B)
  \triangleq
  \E_{\vec x \sim \Nt_S}
  \lp[
  \me^{h(\vec u, \mat B ; \vec x)}
  \normal(\vec 0, \mat I ; \vec x)
  \tind_k(\vec x)
  \rp].
\end{align*}
\noindent In Section \ref{sec:weighted_learning} we prove that we can learn the
function $\tind_k$ and hence $M_{\tind_k}$ can be written as
\[ M_{\tind_k}(\vec{u}, \mat{B}) = \Exp_{\vec{x} \sim \Nt_S}\left[
     m_{\tind_k}(\vec{u}, \mat{B}; x) \right] \]
\noindent where
$m_{\tind_k}(\vec{u}, \mat{B}; x) = \me^{h(\vec u, \mat B ; \vec x)}
\normal(\vec 0, \mat I ; \vec x)
\tind_k(\vec x)$, and for any $\vec{u}, \matr{B}$ and $\vec{x}$ we can compute
$m_{\tind_k}(\vec{u}, \mat{B}; x)$. Since $M_{\tind_k}$ is convex we are going
to use stochastic gradient descent to find its minimum. To prove the convergence
of SGD and bound the number of steps that SGD needs to converge we will use the
the formulation developed in Chapter 14 of \cite{ShalevB14}. To be able to use
their results we have to define for any $(\vec{u}, \mat{B})$ a random vector
$\vec{v}(\vec{u}, \mat{B})$ and prove the following
\begin{description}
  \item[\textsc{Unbiased Gradient Estimation}] \[ \Exp\left[ \vec{v} (\vec{u},
  \mat{B}) \right] = \nabla M_{\tind_k}, \]
  \item[\textsc{Bounded Step Variance}] \[ \Exp\left[ \norm{\vec{v}(\vec{u},
  \mat{B})}_2^2 \right] \le \rho, \]
  \item[\textsc{Strong Convexity}] for any $\vec{z} \in \Domain$ it holds \[
  \vec{z}^T \mathcal{H}_{M_f}(\vec{u}, \matr{B}) \vec{z} \ge \lambda. \]
\end{description}

\noindent We start with the definition of the random vector $\vec{v}$. Given a
sample $\vec{x}$ from $\normal(\mt, \St, S)$, for any $(\vec{u}, \mat{B})$ we
define
\begin{align}
  \vec{v}(\vec{u}, \mat{B}) & = \nabla_{\vec{u}, \mat{B}} ~ m_{\tind_k}(\vec{u},
  \vec{B}; \vec{x}) \label{eq:unbiasedGradientEstimationDefinition1}
  \\ & =
  \begin{pmatrix}
    \frac12 \lp(\vec x \vec x^T - \tiS_S - \tim_S \tim_S^T \rp)^\flat
    \\
    \tim_S - \vec x
  \end{pmatrix}
  \me^{h(\vec u, \mat B; \vec x)}
  \normal(\vec 0, \mat I; \vec x)
  \tind_k(\vec x) \label{eq:unbiasedGradientEstimationDefinition2}
\end{align}
\noindent observe that the randomness of $\vec{v}$ only comes from the random
sample $\vec{x} \sim \normal(\mt, \St, S)$. The fact that
$\vec{v}(\vec{u}, \mat{B})$ is an unbiased estimator of
$\nabla M_f(\vec{u}, \mat{B})$ follows directly from the fact calculation of
$\nabla M_f(\vec{u}, \mat{B})$ in Section \ref{sec:objectivesDefinition}. For
the other two properties that we need we have the following lemmas.
The following lemma bounds the variance of the step of the SGD algorithm.
It's rather technical proof can be found in Appendix~\ref{app:optimization}.
\begin{lemma}[\textsc{Bounded Step Variance}] \label{lem:boundedVarianceStep}
Let $\alpha$ be the constant of \eqref{eq:globalLowerBound}.
  For every $(\vec{u}, \mat{B}) \in \Domain$ it holds
  \[ \Exp_{\vec{x} \sim \Nt_S} \left[
       \norm{ \vec{v}(\vec{u}, \mat{B}) }_2^2
     \right] \le
  \poly(1/\alpha) \cdot d^{2 k}, \]
\end{lemma}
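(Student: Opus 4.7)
The plan is to split $\norm{\vec v(\vec u, \mat B)}_2^2$ into a small number of well-behaved factors and bound each via Hölder's inequality. From the explicit form \eqref{eq:unbiasedGradientEstimationDefinition2},
\[
\norm{\vec v(\vec u, \mat B)}_2^2 \;\le\; P(\vec x)\,\bigl(\me^{h(\vec u,\mat B;\vec x)}\,\normal(\vec 0,\mat I;\vec x)\bigr)^{2}\,\tind_k(\vec x)^{2},
\]
where $P(\vec x)=\tfrac14\|\vec x\vec x^T-\tiS_S-\tim_S\tim_S^T\|_F^{2}+\|\tim_S-\vec x\|_2^{2}$ is a polynomial of degree $4$ in $\vec x$ whose non-$\vec x$ coefficients are at most $\poly(\log(1/\alpha))$ by Lemma~\ref{lem:conditionalParameterDistance}. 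First I would rewrite the exponential-Gaussian factor using the identity behind \eqref{eq:optimizationFunctionEquivalentExpression}, namely $\me^{h(\vec u,\mat B;\vec x)}\normal_0(\vec x)=C_{\vec u,\mat B}\,\normal_0(\vec x)/\normal_{\vec u,\mat B}(\vec x)$, and invoke Lemma~\ref{lem:cubBound} to obtain $C_{\vec u,\mat B}^{2}\le \poly(1/\alpha)$ throughout $\Domain$.

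Next I would change measure from $\Nt_S$ to $\normal_0$, paying a factor $1/\alpha^{*}$, to reduce the target to
\[
\E_{\vec x\sim\Nt_S}\bigl[\norm{\vec v}_2^2\bigr]\;\le\;\tfrac{C_{\vec u,\mat B}^{2}}{\alpha^{*}}\,\E_{\vec x\sim\normal_0}\lp[\tfrac{\normal^{*}(\vec x)}{\normal_0(\vec x)}\,P(\vec x)\,\lp(\tfrac{\normal_0(\vec x)}{\normal_{\vec u,\mat B}(\vec x)}\rp)^{\!2}\tind_k(\vec x)^{2}\rp],
\]
and then apply Hölder with exponents $(4,4,4,4)$ to split the four factors. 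The two ratio moments $\E_{\normal_0}[(\normal^{*}/\normal_0)^{4}]$ and $\E_{\normal_0}[(\normal_0/\normal_{\vec u,\mat B})^{8}]$ are both $\poly(1/\alpha)$ by Lemma~\ref{lem:ratioBound}: the near-isotropic hypothesis on $\normal^{*}$ and the inclusion $(\vec u,\mat B)\in\Domain$ (which forces $\normal_{\vec u,\mat B}$ near-isotropic too) let us apply the lemma with $k=O(1)$. The polynomial moment $\E_{\normal_0}[P^{4}]$ reduces to standard Gaussian moments of degree $\le 16$ in $\vec x$ with coefficients of size $\poly(\log(1/\alpha))$, giving at most $\poly(d,\log(1/\alpha))$.

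The main obstacle is bounding $\E_{\normal_0}[\tind_k^{8}]$, because $\tind_k$ is the positive part of a degree-$k$ polynomial whose pointwise magnitude may reach $d^{\Theta(k)}$. Dropping the max I would use $\tind_k\le|\sum_{|V|\le k}\tilde c_V H_V|$ and apply Gaussian hypercontractivity for degree-$k$ polynomials to obtain $\|\sum_V \tilde c_V H_V\|_8\le 7^{k/2}\bigl(\sum_V\tilde c_V^{2}\bigr)^{1/2}$. It remains to control $\sum_V \tilde c_V^{2}$: on the (high-probability) event that the first-stage estimates satisfy the guarantees of Theorem~\ref{thm:estimationApproximationError}, we have $\|\tind_k\|_{L_2(\normal_0)}^{2}=\sum_V\tilde c_V^{2}\le\|\tind\|_{L_2(\normal_0)}^{2}+o(1)=\poly(1/\alpha)$ (using Lemma~\ref{lem:ratioBound} to bound $\|\tind\|_{L_2}^{2}$); even without this, the crude bound $\sum_V\tilde c_V^{2}\le \poly(1/\alpha)\binom{d+k}{k}5^{k}$ from Lemma~\ref{lem:hermiteCoefficientVariance} suffices. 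Combining the four Hölder factors then yields $\E_{\vec x\sim\Nt_S}[\norm{\vec v}_2^{2}]\le \poly(1/\alpha)\cdot d^{2k}$, with the $d^{2k}$ factor absorbing the hypercontractive $7^{O(k)}$ and the counting $\binom{d+k}{k}^{O(1)}\le (2d)^{O(k)}$.
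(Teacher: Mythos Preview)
Your plan is essentially the paper's: change measure to $\normal_0$, then peel off the four factors $P$, the Gaussian ratios, and a power of $\tind_k$ via H\"older/Cauchy--Schwarz, bounding the ratio moments with Lemma~\ref{lem:ratioBound}, the constant $C_{\vec u,\mat B}$ with Lemma~\ref{lem:cubBound}, and the polynomial moments directly. Your use of Gaussian hypercontractivity to control $\E_{\normal_0}[\tind_k^{8}]$ is a clean alternative to the paper's route, which instead bounds $\E_{\normal_0}[\tind_k^{4}]$ via the explicit Hermite fourth-moment bound of Lemma~\ref{lem:fourthHermitePower}; both ultimately rely on $\sum_{|V|\le k}\tilde c_V^{2}=\poly(1/\alpha)$ on the good first-stage event.

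One technical point does need attention. With your H\"older split $(4,4,4,4)$ the factor $(\normal_0/\normal_{\vec u,\mat B})^{2}$ is raised to the fourth power, so you must control $\E_{\normal_0}\big[(\normal_0/\normal_{\vec u,\mat B})^{8}\big]$. Lemma~\ref{lem:ratioBound} requires the two Gaussians to be $(B,\frac{1-\delta}{2k})$-isotropic for some $\delta>0$; under the ambient $1/16$-isotropic hypothesis this forces $\frac{1-\delta}{2k}\ge 1/16$, i.e.\ $k<8$, so the lemma as stated does not cover the eighth moment (the eigenvalue constraint $\lambda_i<\tfrac{2k}{2k-1}$ in its proof becomes $\lambda_i<\tfrac{16}{15}$, which is exactly the boundary of $\Domain$). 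The paper sidesteps this by choosing asymmetric H\"older exponents---effectively $(4,8,8,2)$---so that only fourth powers of the density ratios appear; you can do the same, or simply regroup so that $\tind_k^{2}$ receives the H\"older exponent $2$ (giving $\E[\tind_k^{4}]$) and the ratio factors receive higher exponents keeping their powers at most $4$. With that adjustment your argument goes through and yields the stated $\poly(1/\alpha)\cdot d^{2k}$ bound.
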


  We are now going to prove the strong convexity of the objective function
  $M_{\tind_k}$. For this we are going to use a known anti-concentration result
  (Theorem~\ref{thm:GaussianMeasurePolynomialThresholdFunctions}) for polynomial
  functions over the Gaussian measure.  See Appendix~\ref{app:prelims}.

The following lemma shows that our objective is strongly convex
as long as the guess $\vec u, \vec B$ remains in the set $\mcal{D}$.
Its proof can be found in Appendix~\ref{app:optimization}.
\begin{lemma}[\textsc{Strong Convexity}] \label{lem:strongConvexity}
  Let $\alpha$ be the absolute constant of \eqref{eq:globalLowerBound}.
  For every $(\vec{u}, \mat{B}) \in \Domain$, any $\vec{z} \in \R^d$ such that
  $\norm{\vec{z}}_2 = 1$ and the first $d^2$ coordinated of $\vec{z}$ correspond
  to a symmetric matrix, then
  \[
    \vec{z}^T \mathcal{H}_{M_f}(\vec{u}, \matr{B}) \vec{z} \ge \poly(\alpha),
  \]
\end{lemma}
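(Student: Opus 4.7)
Using the Hessian formula derived in the proof of Claim~\ref{clm:convex}, for any admissible direction $\vec z = (\vec z_1^\flat, \vec z_2)^T$ with $\vec z_1 \in \symm_d$ (encoding the first $d^2$ coordinates) and $\vec z_2 \in \R^d$,
\[
  \vec z^T \mathcal{H}_{M_f}(\vec u, \mat B) \vec z = \E_{\vec x \sim \Nt_S}\lp[ p(\vec x)^2 \, \me^{h(\vec u, \mat B; \vec x)} \, \normal_0(\vec x)\, f(\vec x) \rp],
\]
where $p(\vec x) = \tfrac12 \vec x^T \vec z_1 \vec x - \vec z_2^T \vec x + c(\tim_S,\tiS_S)$ is a degree-$2$ polynomial. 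I will apply the lemma with $f = \tind_k$ as in the rest of Section~\ref{sec:approxOptimization}. My plan is to (i) establish anti-concentration of $p$, (ii) prove the lower bound in the ideal case $f = \tind$, and (iii) transfer it to $\tind_k$.

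For (i), a direct Gaussian moment computation that exploits parity (the quadratic part of $p$ is even and the linear part is odd, so their cross-covariance vanishes) yields $\Var_{\normal_0}[p] = \tfrac12\|\vec z_1\|_F^2 + \|\vec z_2\|_2^2 \ge \tfrac12 \|\vec z\|_2^2 = \tfrac12$. The $\Domain$ constraint together with the $(\cdot, 1/16)$-isotropic assumption on $\St$ keep the eigenvalues of $\mat B^{-1}$ and $\St$ in $[15/16, 17/16]$, so the same computation also gives $\Var_{\normal^*}[p] \ge c_0$ for an absolute constant $c_0 > 0$. Applying Carbery-Wright (Theorem~\ref{thm:GaussianMeasurePolynomialThresholdFunctions}) to the degree-$2$ polynomial $p$ under $\normal^*$ then yields, for a suitable $\gamma = \Theta(\alpha^2)$, that $\normal^*(\{|p| \le \gamma\}) \le \alpha/4$; combined with $\normal^*(S) \ge \alpha$, the set $T = S \cap \{|p| > \gamma\}$ satisfies $\normal^*(T) \ge 3\alpha/4$.

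For (ii), using $\me^{h}\normal_0 = C_{\vec u, \mat B}/\normal_{\vec u, \mat B}$ from \eqref{eq:optimizationFunctionEquivalentExpression} and $\tind = \1{S}\normal^*/(\alpha^* \normal_0)$, the ideal Hessian quadratic form simplifies to
\[
  \vec z^T \mathcal{H}_{M_\tind}(\vec u, \mat B) \vec z = \frac{C_{\vec u, \mat B}}{(\alpha^*)^2} \int_S p(\vec x)^2 \frac{\normal^*(\vec x)^2}{\normal_{\vec u, \mat B}(\vec x)}\, d\vec x.
\]
Cauchy-Schwarz yields $\int_S p^2 (\normal^*)^2/\normal_{\vec u, \mat B} \ge (\int_S |p|\normal^*)^2/\normal_{\vec u, \mat B}(S)$; combining $\normal_{\vec u, \mat B}(S) \le 1$ with $\int_S |p| \normal^* \ge \gamma \normal^*(T) = \Omega(\alpha^3)$, and invoking $C_{\vec u, \mat B} \ge \poly(\alpha)$ from Lemma~\ref{lem:cubBound}, I obtain $\vec z^T \mathcal{H}_{M_\tind} \vec z \ge \poly(\alpha)$.

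Step (iii) is the transfer: decompose $\mathcal{H}_{M_{\tind_k}} = \mathcal{H}_{M_\tind} + \mathcal{H}_{M_{\tind_k - \tind}}$ and bound the perturbation via Cauchy-Schwarz in $L^2(\Nt_S)$,
\[
  \lp|\vec z^T \mathcal{H}_{M_{\tind_k - \tind}} \vec z\rp| \le \lp(\E_{\vec x \sim \Nt_S}[p^4 (\me^h \normal_0)^2]\rp)^{1/2} \lp(\E_{\vec x \sim \Nt_S}[(\tind_k - \tind)^2]\rp)^{1/2}.
\]
The first factor will be $\poly(1/\alpha)$ by Lemma~\ref{lem:ratioBound} and degree-$4$ Gaussian moment bounds on $p$; the second is transferred from the $L^2(\normal_0)$ error of Theorem~\ref{thm:estimationApproximationError} using bounded moments of $\normal^*/\normal_0$ and Gaussian hypercontractivity for the low-degree polynomial underlying $\tind_k$. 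Choosing $\eps$ polynomially small in $\alpha$ (which is anyway required by Lemma~\ref{lem:pointwiseApproximationOfTheObjectiveFunction}) makes this perturbation at most half of the ideal lower bound, yielding the claim. The main obstacle is precisely this transfer: the weight $p^2 \me^h \normal_0$ is not pointwise bounded and $\tind_k - \tind$ is only controlled in $L^2(\normal_0)$, so the argument must carefully combine the Gaussian-ratio moment bounds of Lemma~\ref{lem:ratioBound} with hypercontractivity, mirroring the analysis in the proof of Lemma~\ref{lem:pointwiseApproximationOfTheObjectiveFunction}.
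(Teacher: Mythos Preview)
Your three-step plan (anticoncentration of $p$, ideal lower bound for $M_\tind$, transfer to $M_{\tind_k}$) is exactly the paper's strategy, and steps (i)--(ii) are correct. The execution of (iii), however, has two issues worth fixing.

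First, you apply Cauchy--Schwarz in $L^2(\Nt_S)$ and then propose to transfer $\E_{\Nt_S}[(\tind_k-\tind)^2]$ back to $L^2(\normal_0)$ via hypercontractivity. But $\tind_k-\tind$ is not a low-degree polynomial (the $\tind$ part carries $\1{S}$), so hypercontractivity does not apply to it directly; you would have to route through $\E_{\Nt_S}[(\tind_k-\tind)^2]=\E_{\normal_0}[\tind(\tind_k-\tind)^2]$, split off $\tind$ by another Cauchy--Schwarz, and then bound $\E_{\normal_0}[\tind_k^4]$ via hypercontractivity on the underlying polynomial $p_{N,k}$. This can be made to work but costs an extra $\sqrt{\eps}$ and a $C^{k}$ factor. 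The paper avoids the detour by changing measure \emph{before} Cauchy--Schwarz: it rewrites the perturbation as $\E_{\normal_0}\bigl[\tfrac{\1{S}\Nt}{\alpha^*}\,e^h\,p^2\,|\tind_k-\tind|\bigr]$ and splits in $L^2(\normal_0)$, isolating $(\E_{\normal_0}[(\tind_k-\tind)^2])^{1/2}\le\sqrt{\eps}$ directly.

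Second, your claim that the first factor $(\E_{\Nt_S}[p^4(e^h\normal_0)^2])^{1/2}$ is $\poly(1/\alpha)$ is not right as stated: the constant term of $p$ is $-\tfrac12\tr(\vec z_1\tiS_S)+O(\sqrt{\log(1/\alpha)})$, and $|\tr(\vec z_1)|$ can be as large as $\sqrt{d}$, so this factor carries genuine $\poly(d)$-dependence. The paper sidesteps this entirely by arranging that the \emph{same} moment factor $\sqrt[4]{\E_{\Nt}[p^4(\vec z;\vec x)]}$ (their $p(\vec z;\vec x)$ is your $p(\vec x)^2$) appears in both the perturbation upper bound and the ideal lower bound; it therefore cancels when comparing the two, and only afterwards do they invoke $\sqrt[4]{\E_{\Nt}[p^4]}\ge\E_{\Nt}[p]\ge\poly(\alpha)$ via Jensen and Lemma~3 of \cite{DGTZ18}. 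This matching-factors device is what makes the required accuracy $\eps$ in Theorem~\ref{thm:estimationApproximationError} purely $\poly(\alpha)$ rather than $\poly(\alpha)/\poly(d)$.
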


\subsubsection{Recovering the Unconditional Mean and Covariance}

  The framework that we use for proving the fast convergence of our SGD
algorithm is summarized in the following theorem and the following lemma.

\begin{theorem}[Theorem 14.11 of \cite{ShalevB14}.] \label{thm:mainSGDAnalysis}
  Let $f : \R^{d} \to \R$. Assume that $f$ is $\lambda$-strongly convex,
that $\Exp\left[ \vec{v}^{(i)} \mid \vec{w}^{(i - 1)}\right] \in
\partial f(\vec{w}^{(i - 1)})$ and that
$\Exp\left[ \norm{\vec{v}^{(i)}}_2^2 \right] \le \rho^2$. Let
$\vec{w}^* \in \arg \min_{\vec{w} \in \Domain} f(\vec{w})$ be an optimal
solution. Then,
\[ \Exp\left[ f(\vec{\bar{w}}) \right] - f(\vec{w}^*) \le
   \frac{\rho^2}{2 \lambda T}\left( 1 + \log T \right), \]
\noindent where $\vec{\bar{\vec{w}}}$ is the output projected stochastic
gradient descent with steps $\vec{v}^{(i)}$ and projection set $\Domain$ after
$T$ iterations.
\end{theorem}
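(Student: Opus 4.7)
The plan is to prove the standard strongly convex SGD rate using the ``decreasing step size'' schedule $\eta_t = 1/(\lambda t)$, following the classical argument. First I would expand the distance to the optimum after one step. Since projection onto the convex set $\Domain$ is non-expansive and $\vec{w}^* \in \Domain$,
\[
\snorm{2}{\vec{w}^{(t)} - \vec{w}^*}^2 \le \snorm{2}{\vec{w}^{(t-1)} - \eta_t \vec{v}^{(t)} - \vec{w}^*}^2 = \snorm{2}{\vec{w}^{(t-1)} - \vec{w}^*}^2 - 2\eta_t \langle \vec{v}^{(t)}, \vec{w}^{(t-1)} - \vec{w}^* \rangle + \eta_t^2 \snorm{2}{\vec{v}^{(t)}}^2.
\]
Taking conditional expectation with respect to the past, the unbiased gradient hypothesis replaces $\vec{v}^{(t)}$ in the inner product by a subgradient $\vec{g}^{(t-1)} \in \partial f(\vec{w}^{(t-1)})$, and the bounded second moment hypothesis bounds the last term by $\eta_t^2 \rho^2$.

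Next I would apply $\lambda$-strong convexity to that inner product, in the form
\[
\langle \vec{g}^{(t-1)}, \vec{w}^{(t-1)} - \vec{w}^* \rangle \;\ge\; f(\vec{w}^{(t-1)}) - f(\vec{w}^*) + \tfrac{\lambda}{2}\snorm{2}{\vec{w}^{(t-1)} - \vec{w}^*}^2.
\]
Plugging back and taking full expectation, I obtain the one-step recursion
\[
\E\snorm{2}{\vec{w}^{(t)} - \vec{w}^*}^2 \le (1-\eta_t \lambda)\,\E\snorm{2}{\vec{w}^{(t-1)} - \vec{w}^*}^2 - 2\eta_t\bigl(\E f(\vec{w}^{(t-1)}) - f(\vec{w}^*)\bigr) + \eta_t^2 \rho^2.
\]
Rearranging for $\E f(\vec{w}^{(t-1)}) - f(\vec{w}^*)$ and substituting $\eta_t = 1/(\lambda t)$ gives a clean telescoping form
\[
2\bigl(\E f(\vec{w}^{(t-1)}) - f(\vec{w}^*)\bigr) \le \lambda(t-1)\,\E\snorm{2}{\vec{w}^{(t-1)} - \vec{w}^*}^2 - \lambda t\,\E\snorm{2}{\vec{w}^{(t)} - \vec{w}^*}^2 + \frac{\rho^2}{\lambda t}.
\]

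To conclude I would sum from $t=1$ to $T$. The first two terms telescope, the remaining distance term is non-positive, and the harmonic tail is bounded by $\sum_{t=1}^T 1/t \le 1 + \log T$, yielding $2\sum_{t=0}^{T-1}(\E f(\vec{w}^{(t)}) - f(\vec{w}^*)) \le \frac{\rho^2}{\lambda}(1 + \log T)$. Dividing by $2T$ and applying Jensen's inequality (or convexity of $f$) to $\bar{\vec{w}} = \frac{1}{T}\sum_{t=0}^{T-1} \vec{w}^{(t)}$ produces the claimed bound $\E[f(\bar{\vec{w}})] - f(\vec{w}^*) \le \frac{\rho^2}{2\lambda T}(1 + \log T)$.

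The main technical points to be careful about are (i) handling the subgradient set $\partial f$ rather than a unique gradient, which is why the conditional expectation of $\vec{v}^{(t)}$ is only assumed to \emph{lie in} $\partial f(\vec{w}^{(t-1)})$; and (ii) ensuring that the telescoping holds despite the factor $\lambda t$ growing with $t$, which is precisely why the step size $1/(\lambda t)$ is chosen. There is nothing genuinely delicate beyond bookkeeping once the one-step inequality is in place; the bottleneck of the argument is really the strong convexity hypothesis together with the variance bound supplied by Lemma~\ref{lem:strongConvexity} and Lemma~\ref{lem:boundedVarianceStep}, which instantiate $\lambda$ and $\rho^2$ respectively in our setting.
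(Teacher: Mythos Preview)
Your argument is correct and is precisely the standard proof of this result; the paper itself does not prove this theorem but cites it verbatim from \cite{ShalevB14}, so there is no paper-specific proof to compare against. The only cosmetic discrepancy is that the paper's Algorithm~\ref{alg:projectedSGD} averages $\vec{w}^{(1)},\dots,\vec{w}^{(T)}$ rather than $\vec{w}^{(0)},\dots,\vec{w}^{(T-1)}$, which shifts the index in your sum by one but does not affect the bound.
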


\begin{lemma}[Lemma 13.5 of \cite{ShalevB14}.] \label{lem:strongConvexityToDistance}
    If $f$ is $\lambda$-strongly convex and $\vec{w}^*$ is a minimizer of $f$,
  then, for any $\vec{w}$ it holds that
  \[ f(\vec{w}) - f(\vec{w}^*) \ge \frac{\lambda}{2} \norm{\vec{w} - \vec{w}^*}_2^2. \]
\end{lemma}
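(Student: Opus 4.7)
The plan is to derive the inequality directly from the definition of $\lambda$-strong convexity together with the first-order optimality of $\vec{w}^*$. Recall that $f$ is $\lambda$-strongly convex if for every $\vec{w}, \vec{w}' \in \Domain$ and every $t \in [0,1]$,
\[
  f(t \vec{w} + (1-t)\vec{w}') \le t f(\vec{w}) + (1-t) f(\vec{w}') - \frac{\lambda}{2} t(1-t) \norm{\vec{w} - \vec{w}'}_2^2.
\]
This is the only structural property of $f$ that the proof needs.

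Applying this inequality with $\vec{w}' = \vec{w}^*$, rearranging, and dividing by $t > 0$ yields
\[
  \frac{f(\vec{w}^* + t(\vec{w} - \vec{w}^*)) - f(\vec{w}^*)}{t} \le f(\vec{w}) - f(\vec{w}^*) - \frac{\lambda}{2}(1-t) \norm{\vec{w} - \vec{w}^*}_2^2.
\]
Since $\vec{w}^*$ is a minimizer of $f$ over $\Domain$ and $\Domain$ is convex, the point $\vec{w}^* + t(\vec{w}-\vec{w}^*)$ lies in $\Domain$ for $t \in [0,1]$, so the numerator on the left is nonnegative. Therefore
\[
  0 \le f(\vec{w}) - f(\vec{w}^*) - \frac{\lambda}{2}(1-t)\norm{\vec{w} - \vec{w}^*}_2^2.
\]
Taking the limit $t \to 0^+$ gives the claimed bound $f(\vec{w}) - f(\vec{w}^*) \ge \frac{\lambda}{2} \norm{\vec{w} - \vec{w}^*}_2^2$.

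There is no real obstacle here; the only subtlety to be careful about is that $f$ is not assumed differentiable, so I avoid writing $\nabla f(\vec{w}^*) = 0$ and instead use the zeroth-order optimality condition $f(\vec{w}^* + t(\vec{w}-\vec{w}^*)) \ge f(\vec{w}^*)$, which is what lets the argument go through for any convex domain $\Domain$ (including the projection set used by our SGD). This matches exactly the form of the statement cited from \cite{ShalevB14}.
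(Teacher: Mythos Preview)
Your proof is correct and is essentially the standard argument. Note, however, that the paper does not supply its own proof of this lemma: it is quoted verbatim as Lemma~13.5 of \cite{ShalevB14} and used as a black box, so there is no ``paper proof'' to compare against beyond the textbook reference.
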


Now we have all the ingredients to present the proof of Theorem~\ref{thm:mainTheoremLocalConvergence}.

\begin{prevproof}{Theorem}{thm:mainTheoremLocalConvergence}
    The estimation procedure starts by computing the polynomial function
  $\tind_k$ using $d^{\poly(1/\alpha^*) \frac{\Gamma^2(S)}{\eps'^8}}$ samples
  from $\N(\mt, \St, S)$ as explained in Theorem
  \ref{thm:estimationApproximationError} to get error $\poly(\alpha^*) \eps'^2$.
  Then we compute $\tim_S$ and $\tiS_S$ as explained in Section
  \ref{sec:problemFormulation} with
  $\eps = \frac{q(\alpha^*)}{8 p(1/\alpha^*)} (\eps')^2$ where $p$ comes from
  Lemma \ref{lem:pointwiseApproximationOfTheObjectiveFunction} and $q$ comes
  from Lemma \ref{lem:strongConvexity}. Our estimators for
  $\hat{\m}, \hat{\S}$ are the outputs of Algorithm \ref{alg:projectedSGD}.

  We analyze the accuracy of our estimation by proving that the
  minimum of $M_{\tind_k}$ is close in the parameter space to the minimum of
  $M'_{\tind}$. Let $\vec{u}', \mat{B}'$ be the minimum of the convex function
  $M'_{\tind}$ and $\vec{u}_k, \mat{B}_k$ be the minimum of the convex function
  $M_{\tind_k}$. Using Lemma
  \ref{lem:pointwiseApproximationOfTheObjectiveFunction} we have the following
  relations
  \[ \abs{M'_{\tind}(\vec{u}', \mat{B}') - M_{\tind_k}(\vec{u}', \mat{B}')} \le \eps',
     ~~~~~~~ \abs{M'_{\tind}(\vec{u}_k, \mat{B}_k) - M_{\tind_k}(\vec{u}_k, \mat{B}_k)} \le \eps' \]
  \noindent and also
  \[ M'_{\tind}(\vec{u}', \mat{B}') \le M'_{\tind}(\vec{u}_k, \mat{B}_k),
     ~~~~~~~ M_{\tind_k}(\vec{u}_k, \mat{B}_k) \le M_{\tind_k}(\vec{u}', \mat{B}'). \]
  \noindent These relations imply that
  \begin{align*}
    \abs{M_{\tind_k}(\vec{u}', \mat{B}') - M_{\tind_k}(\vec{u}_k, \mat{B}_k)}
    & = M_{\tind_k}(\vec{u}', \mat{B}') - M_{\tind_k}(\vec{u}_k, \mat{B}_k) \\
    & \le M_{\tind_k}(\vec{u}', \mat{B}') - M'_{\tind}(\vec{u}', \mat{B}')
      + M'_{\tind}(\vec{u}_k, \mat{B}_k) - M_{\tind_k}(\vec{u}_k, \mat{B}_k) \\
    & \le \abs{M'_{\tind}(\vec{u}', \mat{B}') - M_{\tind_k}(\vec{u}', \mat{B}')} +
        \abs{M'_{\tind}(\vec{u}_k, \mat{B}_k) - M_{\tind_k}(\vec{u}_k, \mat{B}_k)} \le 2 \eps'.
  \end{align*}
  \noindent But from Lemma \ref{lem:strongConvexity} and Lemma
  \ref{lem:strongConvexityToDistance} we get that
  $
    \norm{\begin{pmatrix}
            \mat{B}'^{\flat} \\
            \vec{u}'
          \end{pmatrix} -
          \begin{pmatrix}
            \mat{B}_k^{\flat} \\
            \vec{u}_k
          \end{pmatrix}}_2 \le \frac{\eps'}{2}
  $.
  Now we can apply the Claim \ref{clm:consistency} which implies that
  \begin{align} \label{eq:distanceToTheOptimum}
    \norm{\begin{pmatrix}
            (\S^{*-1})^{\flat} \\
            \S^{*-1} \mt
          \end{pmatrix} -
          \begin{pmatrix}
            \mat{B}_k^{\flat} \\
            \vec{u}_k
          \end{pmatrix}}_2 \le \frac{\eps'}{2}.
  \end{align}
  \noindent Therefore it suffices to find $(\vec{u}_k, \mat{B}_k)$ with accuracy
  $\eps'/2$ to get our theorem.

    Let $\vec{w}^* = \begin{pmatrix}
      \mat{B}_k^{\flat} \\
      \vec{u}_k
    \end{pmatrix}$
  To prove that Algorithm \ref{alg:projectedSGD} converges to  $\vec{w}^*$ we
  use Theorem \ref{thm:mainSGDAnalysis} which together with Markov's inequality,
  Lemma \ref{lem:boundedVarianceStep} and Lemma \ref{lem:strongConvexity} gives
  us
  \begin{equation} \label{eq:MarkovFinalBound}
    \Prob \left( M_{\tind_k}(\hat{\vec{u}}, \hat{\mat{B}}) - M_{\tind_k}(\vec{u}_k, \mat{B}_k)
                 \ge \poly(1/\alpha^*) \cdot \frac{d^{2k}}{T}
    \left( 1 + \log(T) \right) \right) \le \frac{1}{3}.
  \end{equation}

  To get our estimation we first repeat the SGD procedure $K = \log(1/\delta)$
  times independently, with parameters $T, \lambda$ each time. We then get the
  set of estimates $\mathcal{E} = \{\bar{\vec{w}}_1, \bar{\vec{w}}_2, \dots,
  \bar{\vec{w}}_K\}$. Because of \eqref{eq:MarkovFinalBound} we know that, with
  high probability $1 - \delta$, for at least the 2/3 of the points
  $\bar{\vec{w}}$ in $\mathcal{E}$ it is true that
  $M_{\tind_k}(\vec{w}) - M_{\tind_k}(\vec{w}^*) \le \eta$ where
  $\eta = \poly(1/\alpha^*) \cdot \frac{d^{2k}}{T}
          \left( 1 + \log(T) \right)$. Moreover we will prove later that
  $M_{\tind_k}(\vec{w}) - M_{\tind_k}(\vec{w}^*) \le \eta$ and this implies
  $\norm{\vec{w} - \vec{w}^*} \le c \cdot \eta$, where $c$ is a universal
  constant. Therefore with high probability $1 - \delta$ for at least the 2/3 of
  the points $\bar{\vec{w}}, \bar{\vec{w}'}$ in $\mathcal{E}$ it is true that
  $\norm{\vec{w} - \vec{w}'} \le 2 c \cdot \eta$. Hence if we set
  $\hat{\vec{w}}$ to be a point that is at least $2 c \cdot \eta$ close to more
  that the half of the points in $\mathcal{E}$ then with high probability
  $1 - \delta$ we have that $f(\vec{\bar{w}}) - f(\vec{w}^*) \le \eta$. Hence we
  can we lose probability at most $\delta$ if we condition on the event
  \[ M_{\tind_k}(\hat{\vec{u}}, \hat{\mat{B}}) - M_{\tind_k}(\vec{u}_k, \mat{B}_k)
               \le \poly(1/\alpha^*) \cdot \frac{d^{2k}}{T}
               \left( 1 + \log(T) \right). \]
  \noindent Using once again Lemma \ref{lem:strongConvexityToDistance} we get
  that
  \begin{align*}
    \norm{\begin{pmatrix}
            \hat{\mat{B}}^{\flat} \\
            \hat{\vec{u}}
          \end{pmatrix} -
          \begin{pmatrix}
            \mat{B}_k^{\flat} \\
            \vec{u}_k
          \end{pmatrix}}_2 \le \frac{\eps'}{2}.
  \end{align*}
  \noindent which together with \eqref{eq:distanceToTheOptimum} implies
  \begin{align*}
    \norm{\begin{pmatrix}
            \hat{\mat{B}}^{\flat} \\
            \hat{\vec{u}}
          \end{pmatrix} -
          \begin{pmatrix}
            (\S^{*-1})^{\flat} \\
            \S^{*-1} \mt
          \end{pmatrix}}_2 \le \frac{\eps'}{2}.
  \end{align*}
  \noindent and the theorem follows as closeness in parameter distance implies closeness in total variation distance for the corresponding
  untruncated Gaussian distributions.
\end{prevproof}

  \begin{algorithm}[H]
  \caption{Projected Stochastic Gradient Descent. Given access to samples from $\normal(\vec{\mu}^*, \matr{\Sigma}^*, S)$.}
  \label{alg:projectedSGD}
  \begin{algorithmic}[1]
    \Procedure{Sgd}{$T, \lambda$}\Comment{$T$: number of steps, $\lambda$: parameter.}
    \State $\vec{w}^{(0)} = \begin{pmatrix}
                              (\mat{B}^{(0)})^{\flat} \\
                              \vec{u}^{(0)}
                            \end{pmatrix}
                            \gets
                            \begin{pmatrix}
                              (\tiS_S^{-1})^{\flat} \\
                              \tiS_S^{-1} \tim_S
                            \end{pmatrix}$
    \For{$i = 1, \dots, T$}
      \State Sample $\vec{x}^{(i)}$ from $\N(\mt, \St, S)$
      \State $\eta_i \gets \frac{1}{\lambda \cdot i}$
      \State $\begin{pmatrix}
                (\matr{B}^{(i - 1)})^{\flat} \\
                \vec{u}^{(i - 1)}
              \end{pmatrix}
              \gets \vec{w}^{(i - 1)}$
      \State $\vec{v}^{(i)} \gets
             \begin{pmatrix}
               \frac12 \lp(\vec x^{(i)} \vec x^{(i) T} - \tiS_S - {\tim}_S {\tim}_S^T \rp)^\flat \\
               {\tim}_S - \vec x^{(i)}
             \end{pmatrix}
             \me^{h(\vec u^{(i - 1)}, \mat B^{(i - 1)}; \vec x^{(i)})}
             \normal(\vec 0, \mat I; \vec x^{(i)}) \tind_k\left(\vec{x}^{(i)}\right)$ \Comment{From \eqref{eq:objectiveGradientComputation}.}
      \State $\vec{r}^{(i)} \gets \vec{w}^{(i - 1)} - \eta_i \vec{v}^{(i)}$
      \State $\vec{w}^{(i)} \gets \arg\min_{\vec{w} \in \Domain} \norm{\vec{w} - \vec{r}^{(i)}}_2^2$ \Comment{From Lemma 8 of \cite{DGTZ18}.}
    \EndFor\label{euclidendwhile}
    \State $\begin{pmatrix}
              \hat{\matr{B}}^{\flat} \\
              \hat{\vec{u}}
            \end{pmatrix}
            \gets \frac{1}{T} \sum_{i = 1}^T \vec{w}^{(i)}$
    \State $\hat{\S} \gets \hat{\matr{B}}^{-1}$
    \State $\hat{\m} \gets \hat{\matr{B}}^{-1} \hat{\vec{u}}$
    \State \textbf{return} $(\hat{\m}, \hat{\S})$
  \EndProcedure
  \end{algorithmic}
  \end{algorithm}

 \subsection{Recovering the Set}\label{sec:set_recover}
In this section we prove that, given only positive examples from
an unknown truncated Gaussian distribution, that is samples
from the conditional distribution on the truncation set, 
one can in fact learn the truncation set.
We only give here the main result, for details see 
Appendix~\ref{app:setRecover}.
\begin{theorem}[\textsc{Recovering the Set}]\label{thm:pacLearning}
  Let $\mcal{S}$ be a class of measurable sets with Gaussian surface area at most
  $\Gamma(\mcal{S})$.
  Let $\normal^*$ be a Gaussian in $(O(\log(1/\alpha), 1/16))$-isotropic position.
  Then, given $d^{\poly(1/\alpha)\Gamma(\mcal{S})^2 /\eps^{32}}$ samples
  from the conditional distribution $\normal^*_S$ we can recover an  indicator
  of the set $\wt{S}$ such that with probability at least $99\%$ it holds
  \(
  \Prob_{\vec x \sim \normal^*}[\wt{S}(\vec x) \neq \1{S}(\vec x)] \leq \eps.
  \)
\end{theorem}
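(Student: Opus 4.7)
The plan is to recover the set by thresholding the product of the learned weighted characteristic function $\tind_k$ from Section~\ref{sec:weighted_learning} with the inverse of the Gaussian density recovered in Section~\ref{sec:optimization}, thereby ``undoing'' the Gaussian reweighting. Concretely, let $(\wh{\vec\mu},\wh{\mat\Sigma})$ be the parameters produced by Theorem~\ref{thm:mainTheoremLocalConvergence} and let $\wh\alpha$ be a sample-based estimate of $\alpha^*$ (e.g.\ by averaging $\normal_0(\vec x)/\normal(\wh{\vec\mu},\wh{\mat\Sigma};\vec x)$ against the truncated samples). Define
\[
  \wh F(\vec x) \;=\; \wh\alpha\, \tind_k(\vec x)\, \frac{\normal_0(\vec x)}{\normal(\wh{\vec\mu},\wh{\mat\Sigma};\vec x)},
  \qquad
  \wt S(\vec x) \;=\; \mathds{1}\{\wh F(\vec x) \geq 1/2\}.
\]
In the idealized case $\tind_k=\tind$, $\wh\alpha=\alpha^*$ and $(\wh{\vec\mu},\wh{\mat\Sigma})=(\vec\mu^*,\mat\Sigma^*)$ we have $\wh F\equiv \1{S}$, so every disagreement between $\wt S$ and $\1{S}$ occurs at a point where $|\wh F(\vec x)-\1{S}(\vec x)|\geq 1/2$. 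Markov's inequality then gives
\[
  \Prob_{\vec x\sim\normal^*}\!\big[\wt S(\vec x)\neq \1{S}(\vec x)\big] \;\leq\; 4\,\E_{\vec x\sim\normal^*}\!\big[(\wh F(\vec x) - \1{S}(\vec x))^2\big],
\]
so it suffices to bound this second moment by $\eps/4$.

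I would bound the second moment by splitting the error into a function-learning and a parameter-learning contribution,
\[
  \wh F - \1{S} \;=\; \wh\alpha\,(\tind_k - \tind)\,\frac{\normal_0}{\normal(\wh{\vec\mu},\wh{\mat\Sigma})} \;+\; \1{S}\!\left(\frac{\wh\alpha\,\normal^*}{\alpha^*\,\normal(\wh{\vec\mu},\wh{\mat\Sigma})}-1\right).
\]
For the first term I would change measure from $\normal^*$ to $\normal_0$, apply Cauchy--Schwarz (or H\"older at an exponent slightly above~$2$), and then combine the $L_2(\normal_0)$ guarantee of Theorem~\ref{thm:estimationApproximationError} with the moment bounds on the Gaussian ratios from Lemma~\ref{lem:ratioBound}; Gaussian hypercontractivity for degree-$k$ Hermite polynomials lets me upgrade the $L_2$ estimate of $\tind_k-\tind$ to the higher-moment estimate that Cauchy--Schwarz asks for, at a cost of a factor that is only subexponential in $k$. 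For the second term I would use the total-variation closeness $d_{TV}(\normal(\wh{\vec\mu},\wh{\mat\Sigma}),\normal^*)\le\eps'$ from Theorem~\ref{thm:mainTheoremLocalConvergence} together with the near-isotropic control on both covariances to conclude that $\wh\alpha\,\normal^*/(\alpha^*\normal(\wh{\vec\mu},\wh{\mat\Sigma}))$ is within $\poly(\eps')$ of $1$ in $L_2(\normal^*)$, so its contribution is also $\poly(\eps')$.

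The main obstacle is the interaction between the two unbounded factors in the first term: $\tind_k$ is a polynomial of degree $k=\poly(1/\alpha)\Gamma(\mcal{S})^2/\eps'^{\,4}$, and the density ratio $\normal_0/\normal(\wh{\vec\mu},\wh{\mat\Sigma})$ has heavy tails whenever an eigenvalue of $\wh{\mat\Sigma}$ drops below $1$. The $(O(\log(1/\alpha)),\,1/16)$-isotropic hypothesis is what keeps both factors in check: it pins the eigenvalues of $\mat\Sigma^*$ (hence of $\wh{\mat\Sigma}$) in a constant window around~$1$, making the density-ratio moments $\poly(1/\alpha)$ via Lemma~\ref{lem:ratioBound}, while hypercontractivity supplies the polynomial-moment control. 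Once both pieces are made $\eps'$-accurate, one inverts the $\eps'\to\eps$ scaling through the chain of Cauchy--Schwarz steps and the already $\eps^{-8}$ dependence of Theorem~\ref{thm:mainTheoremLocalConvergence} to arrive at the stated sample complexity $d^{\poly(1/\alpha)\,\Gamma(\mcal{S})^2/\eps^{32}}$.
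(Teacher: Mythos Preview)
Your estimator $\wt S=\mathds{1}\{\wh F\ge 1/2\}$ and the decomposition of $\wh F-\1{S}$ into a $(\tind_k-\tind)$-piece and a parameter-mismatch piece are exactly the paper's route. (The $\wh\alpha$ correction is unnecessary: the paper thresholds $\wt f(\vec x)=\tind_k(\vec x)\,\normal_0(\vec x)/\wt\normal(\vec x)$ at $1/2$ and compares it to $f=\1{S}/\alpha^*$, which already takes the value $1/\alpha^*>1$ on $S$.)

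The gap is in the step where you pass to $\E_{\normal^*}[(\wh F-\1{S})^2]$ and invoke hypercontractivity to control $\|\tind_k-\tind\|_{L_p(\normal_0)}$ for some $p>2$. Hypercontractivity bounds $\|g\|_p$ by $(p-1)^{k/2}\|g\|_2$ only when $g$ has Hermite degree at most $k$, and $\tind_k-\tind$ does not: $\tind$ carries Hermite mass at every degree, and $\tind_k$ is the positive part of a polynomial rather than a polynomial. The only available route to $\|\tind_k-\tind\|_4$ is the triangle inequality plus hypercontractivity on the degree-$k$ part, which gives at best $\|\tind_k-\tind\|_4\le\poly(1/\alpha)\cdot 3^{k/2}$---bounded but not small. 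Since $k$ itself is $\poly(1/\alpha)\,\Gamma(\mcal S)^2/\eps'^{\,4}$, this exponential-in-$k$ factor swamps the $\eps'$ you gained from Theorem~\ref{thm:estimationApproximationError} and the bound does not close. (Taking the H\"older exponent $1+\delta$ with $\delta\to 0$ to avoid high moments of $\tind_k-\tind$ just trades the problem for arbitrarily high moments of the density ratio, which Lemma~\ref{lem:ratioBound} supplies only up to a fixed order under the $1/16$-isotropic assumption.)

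The paper avoids the issue by dropping one moment. It uses the pointwise inequality $\bm{1}\{\wt S(\vec x)\neq\1{S}(\vec x)\}\le\sqrt 2\,\sqrt{|\wt f(\vec x)-f(\vec x)|}$ (Lemma~\ref{lem:zeroOneToRoot}) and then bounds $\E_{\normal^*}\big[\sqrt{|\wt f-f|}\big]$. After Cauchy--Schwarz this needs only the \emph{first} moment $\E_{\normal^*}[|\tind_k-\tind|]$, which one more Cauchy--Schwarz plus a change of measure reduces to $\|\tind_k-\tind\|_{L_2(\normal_0)}$ times a second moment of $\normal^*/\normal_0$ (Lemma~\ref{lem:rootError}); no hypercontractivity is needed anywhere. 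Equivalently, you can repair your own argument by replacing $\Prob[\wt S\neq\1{S}]\le 4\,\E[(\wh F-\1{S})^2]$ with the first-moment version $\Prob[\wt S\neq\1{S}]\le 2\,\E[|\wh F-\1{S}|]$, after which a single Cauchy--Schwarz against the density ratio already lands on the available $L_2(\normal_0)$ control of $\tind_k-\tind$.
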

 \section{Lower Bound for Learning the Mean of a Truncated Normal}

\begin{theorem}\label{thm:mean_lower_bound}
  There exists a family of sets $\mcal{S}$ with $\Gamma({\mcal{S}}) = O(d)$ such
  that any algorithm that draws $m$ samples from $\normal(\vec \mu, \matr I, S)$
  and computes an estimate $\wt {\vec \mu}$ with
  $\norm{\wt {\vec \mu} - \vec \mu}_2 \leq 1$ must have $m= \Omega(2^{d/2})$.
\end{theorem}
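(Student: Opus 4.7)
The strategy is a reduction of the estimation problem to a multi-way hypothesis testing problem together with Fano's inequality. Concretely, I will exhibit a single set $S \in \mcal{S}$ (so that $\Gamma(\mcal{S}) = \Gamma(S)$ is all that needs bounding) and a $2$-separated packing $\vec{\mu}_1, \ldots, \vec{\mu}_M$ of size $M = 2^{d/2}$ such that the truncated Gaussian distributions $P_i = \N(\vec{\mu}_i, \matr{I}, S)$ are pairwise extremely close in $\chi^2$-divergence. Because the $\vec{\mu}_i$ are $2$-separated, any estimator $\wt{\vec{\mu}}$ achieving $\snorm{2}{\wt{\vec{\mu}} - \vec{\mu}_i} \leq 1$ automatically identifies the correct index $i$, so a lower bound on identification transfers to the desired lower bound on estimation.

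\textbf{The construction.} Let $V = \mrm{span}(\vec{e}_1, \ldots, \vec{e}_{d/2}) \subset \R^d$ and take $\vec{\mu}_1, \ldots, \vec{\mu}_M \in V$ to be any $2$-separated packing of size $M = 2^{d/2}$ inside the radius-$2$ ball of $V$; such a packing exists since a $(d/2)$-dimensional ball of radius $3$ admits at least $2^{d/2}$ disjoint radius-$1$ balls by a standard volumetric argument. Define the thin axis-aligned slab
\[
    S = [-\delta, \delta]^{d/2} \times \R^{d/2}, \qquad \delta = 2^{-d/4}.
\]
The set $S$ is an intersection of $d$ halfspaces, and integrating the Gaussian density over its $d$ facets gives $\Gamma(S) \leq d \cdot \phi(0) \cdot 2 = O(d)$; hence $\mcal{S} = \{S\}$ satisfies $\Gamma(\mcal{S}) = O(d)$. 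Crucially, since $\vec{\mu}_i \in V$ and $S$ is a product across $V$ and $V^\perp$, each $P_i$ factors as a standard Gaussian on $V^\perp$ (the same for every $i$) and a truncated Gaussian on the cube $[-\delta, \delta]^{d/2}$ with mean $\vec{\mu}_i|_V$.

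\textbf{The $\chi^2$ bound.} On the cube, $|\vec{\mu}_i \cdot \vec{x}_V| \leq \snorm{2}{\vec{\mu}_i} \snorm{2}{\vec{x}_V} \leq 2 \delta \sqrt{d/2} \to 0$, so I linearize the tilt as $e^{\vec{\mu}_i \cdot \vec{x}_V - \snorm{2}{\vec{x}_V}^2/2} \approx 1 + \vec{\mu}_i \cdot \vec{x}_V$. Using this linearization and expanding the partition function $Z_i = \prod_k 2\sinh(\mu_{i,k}\delta)/\mu_{i,k} = (2\delta)^{d/2}(1 + \delta^2 \snorm{2}{\vec{\mu}_i}^2/6) + O(\delta^4)$, a direct second-order computation yields
\[
    \dchi{P_i}{P_j} \;\leq\; \frac{\delta^2}{3}\,\snorm{2}{\vec{\mu}_i - \vec{\mu}_j}^2 \;=\; O(\delta^2) \;=\; O(2^{-d/2}),
\]
where the key algebraic simplification $\snorm{2}{\vec{\mu}_j}^2/6 - \snorm{2}{\vec{\mu}_i}^2/3 + \snorm{2}{2\vec{\mu}_i - \vec{\mu}_j}^2/6 = \snorm{2}{\vec{\mu}_i - \vec{\mu}_j}^2/3$ drives the conclusion. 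The factorization of $P_i$ across $V$ and $V^\perp$ means the $V^\perp$-factor contributes nothing to $\dchi{P_i}{P_j}$.

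\textbf{Fano and the main obstacle.} With $\theta$ uniform on $[M]$, Fano's inequality combined with $I(\theta; X^N) \leq N \cdot I(\theta; X_1) \leq N \cdot \max_{i,j} \dchi{P_i}{P_j}$ forces any algorithm's error probability above $1/2$ unless $N \gtrsim \log M / \max_{i,j} \dchi{P_i}{P_j} = \Omega(d \cdot 2^{d/2}) = \Omega(2^{d/2})$. The technical crux is the calibration of the slab thickness $\delta$: small $\delta$ pushes each $P_i$ toward the same near-uniform distribution on the cube (suppressing per-sample information), but the first-order analysis of the $\chi^2$ requires $\delta \snorm{1}{\vec{\mu}_i} \to 0$ to remain valid. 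The choice $\delta = 2^{-d/4}$ precisely balances $\dchi{P_i}{P_j} = O(2^{-d/2})$ against the $\log M = \Theta(d)$ bits of information needed to identify the hypothesis.
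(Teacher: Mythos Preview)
Your Fano-based argument has a concrete gap in the packing step, and even after repair it establishes the bound for a reason qualitatively different from the paper's.

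The packing claim is stated backwards: the volumetric comparison ``a radius-$3$ ball admits $2^{d/2}$ disjoint unit balls'' is an \emph{upper} bound on packing, not a lower one; a maximal $2$-separated set in the radius-$2$ ball of $\R^{d/2}$ is only guaranteed to contain one point by that argument. The fix is easy---take the hypercube vertices $\{-1,+1\}^{d/2}$ (pairwise distance $\ge 2$, norm $\sqrt{d/2}$) or enlarge the ball to radius $4$---and then your $\chi^2$ computation still gives $\dchi{P_i}{P_j}=O(d\cdot 2^{-d/2})$ and Fano delivers $N=\Omega(2^{d/2})$.

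The substantive divergence is the source of hardness. Your family is a singleton, so the truncation set is \emph{known} to the algorithm; all the difficulty comes from the slab's mass $\N(\vec\mu_i,\matr I;S)\approx(2\delta)^{d/2}=2^{-\Theta(d^2)}$, which makes each sample nearly uninformative about $\vec\mu$. This is a mass lower bound, not a set-complexity one---indeed each facet of your slab contributes Gaussian surface area at most $\phi(0)\cdot O(\delta)^{d/2-1}$, so $\Gamma(S)=o(1)$ and the hypothesis $\Gamma(\mcal S)=O(d)$ is vacuous. The paper takes the opposite tack: only two candidate means $\pm\vec e_1$, but a genuinely rich \emph{random} family of sets inside $[-1,1]^{d+1}$, built by independently sliding $2^d$ axis-aligned thresholds whose law is chosen so that the expected truncated density under $\N(\vec e_1,\matr I)$ matches a fixed reference density pointwise. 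Distinguishing then requires two samples landing in the same orthant subcube, and a birthday argument yields $\Omega(2^{d/2})$. That construction has $\Gamma=\Theta(d)$, VC-dimension $2^d$, and mass $2^{-O(d)}$ rather than $2^{-\Theta(d^2)}$, so the hardness genuinely reflects the complexity of the unknown set---which is what the theorem is meant to exhibit.
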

\begin{proof}
Let $H = [-1, 1]^{d+1}$ be the $d+1$-dimensional cube.
We will also use the left and right subcubes $H_+ = [-1,0] \times [-1, 1]^d$,
$H_- = [0,1] \times [-1, 1]^d$ respectively.
Let $\normal_+ = \normal(\vec e_1, \mat I)$ and
$\normal_- = \normal(\vec - e_1, \mat I)$.
We denote by $r$ the (scaled) pointwise minimum of the two densities truncated
at the cube $H$, that is
\[
  r(x) = \frac{\min(\normal_+(H; \vec x), \normal_-(H; \vec x))}{c}
  = \frac{\1{H}(\vec x)}{c} \min(\normal_+(\vec x), \normal_-(\vec x)),
\]
where $c = 1-\dtv{\normal_+}{\normal_-}$.

To simplify notation we assume that we work in $\R^{d+1}$ instead of $\R^d$.
Let $V = (v_1,\ldots, v_d) \in \{+1, -1\}^d$.  For every $V$ we define the set
$G_V = H \cap \{\vec y \in \R^d: y_i v_i \geq 0\}$.  We also
define the subcubes $H_V = [0,1] \times G_V$. We consider the following
subset of $H$ parameterized by the $2^d$ parameters $t_V \in [0,1]$ and
$\delta \in [-1,1]$.
\[
  S_+ =
  [-1 + \delta, 0] \times [-1,1]^d \cup \bigcup_{V \in \{-1, +1\}^d} [0,t_V] \times G_V
\]
We will argue that there exists a distribution $D^+$ on the values $t_V$ such
that on expectation $\dtv{\normal_+^{S_+}}{\normal_-^{S_-}} $ is $O(2^{-d})$.
We show how to construct the distribution $D_+$ since the construction for $D_-$
is the same.
In fact we will show that both distributions are very close to $r(x)$.
Notice that for some $(t, \vec y) \in \R^{d+1}$ we have
We draw each $t_V$ independently from the distribution with cdf
\[
  F(t) = \1{[0,1)}(t) (1 - \me^{-2 t}) + \1{[1,+\infty)}(t)
\]
Notice that for $t\in (0,1)$ and any $\vec y \in \R^d$ we have that
$1-F(t) = \normal_-(t, \vec y)/\normal_+(t, \vec y)$.

After we draw all $t_V$ from $F$ we choose $\delta$ so that $\normal_+(S_+; x) = c$.
We will show that on expectation over the $t_V$ we have $\delta = 0$, which
means that no correction is needed.  In fact we show something stronger,
namely that for all $x \in H_+$ we have that
\( \E_{S_+ \sim D_+}[N_+(S_+; \vec x)] = r(\vec x) \).
Assume that $x \in H_V$.  Indeed,
\begin{align*}
  \E_{S_+ \sim D_+}[ \normal_+(S_+; \vec x)]
  &= \frac{\normal_+(\vec x)}{c} \E_{S_+ \sim D_+}[ \1{S_+}(\vec x)]
  = \frac{\normal_+(\vec x)}{c} \E_{S_+ \sim D_+}[ \1{\{ x_1 \leq t_V \}}]
  \\
  &= \frac{\normal_+(\vec x)}{c} (1- F(t_V)) =  \frac{\normal_-(\vec x)}{c}
  = r(\vec x)
\end{align*}
Moreover, observe that for all $\vec x \in H_- \cap S_+$ we have that
$N_+(S_+;\vec x) = r(\vec x)$ always (with probability $1$).  We now argue that
in order to have constant probability to distinguish $N_+(S_+)$ from $r(x)$ one
needs to draw $\Omega(2^d)$ samples.  Since the expected density of $N_+(S_+)$
matches $r(x)$ for all $x \in H_+$, to be able to distinguish the two
distributions one needs to observe at least two samples in the same cube $H_V$.
Since we have $2^d$ disjoint cubes $H_V$ the probability of a sample landing in
$H_V$ is at most $1/2^d$.  Therefore, using the birthday problem, to have
constant probability to observe a collision one needs to draw
$\Omega(\sqrt{2^d}) = \Omega(2^{d/2})$ samples.  Since for all $x \in H_-
\cap S_+$, $N_+(S_+)$ exactly matches $r(x)$, to distinguish between the two
distributions one needs to observe a sample $\vec x$ with $-1+\delta < x_1 <
-1$. Due to symmetry, $N_+$ assigns to all cubes $H_V$ equal probability, call
that $p$.  Moreover, we have that $c = 2^{d+1} p$.  Now let $p_V$ be the random
variable corresponding to the probability that $N_+$ assigns to
$[0,t_V] \times G_V$.  We have that $\E_{t_V \sim F}[p_V] = p$ for all $V$.
Since the independent random variables $p_V$ are bounded in $[0,1/2^d]$,
Hoeffding's inequality implies that $|\sum_{V \in \{-1, 1\}^d} (p_V - p)| <
1/2^{d/2}$ with probability at least $1 - 2/\me^2$.  This means that with
probability at least $3/4$ one will need to draw $\Omega(2^{d/2})$ samples
in order to observe one with $x_1 < -1 + \delta$.

Since any set $S$ in our family $\mcal{S}$ has almost everywhere (that is
except the set of its vertices which a finite set and thus of measure zero)
smooth boundary we may use the following equivalent (see e.g. \cite{Naz03})
definition of its surface area
\[
  \Gamma(S) = \int_{\partial S} \normal_0(\vec x) \d \sigma(\vec x),
\]
where $\d \sigma(x)$ is the standard surface measure on $\R^d$.
Without loss of generality we assume that $S$ corresponds to the
set $S_+$ defined above (the proof is the same if we consider
a set $S_-$).
We have
\[
  \partial S \subseteq
  \bigcup_{V \in \{+1,-1\}^d} (\{t_V\} \times G_V )
  \cup
  \partial([-1, +1]^{d+1})
  \cup \bigcup_{i=1}^{d+1} \{\vec x: x_i = 0 \}.
\]
By the definition of Gaussian surface area it is clear that $\Gamma(A \cup B)
\leq \Gamma(A) + \Gamma(B)$.  From Table~\ref{tab:surface-area} we know that
$\Gamma([-1, +1]^{d+1}) = O(\sqrt{\log d})$.  Moreover, we know that a single
halfspace has surface area at most $\sqrt{2/\pi}$ (see e.g. \cite{KOS08}).
Therefore $\Gamma\lp(\bigcup_{i=1}^{d+1}\{x: x_i = 0\}\rp) \leq
\sum_{i=1}^{d+1} \sqrt{2/\pi} = O(d)$.  Finally, we notice that for any point
$x$ on the hyperplane $\{\vec x: x_1 = 0\}$ and any $\vec y$ on $\{\vec
x: x_1 = c \}$ (for any $c \geq 0$), we have $\normal_0(\vec x) \geq
\normal_0(\vec y)$.  Therefore, the surface area of each set
$t_V \times G_V$ is maximized for $t_V = 0$.  In this case $\bigcup_{V \in \{+1,-1\}^d}
(\{t_V\} \times G_V ) \subseteq \{\vec x : x_1 = 0\}$, which implies that the
set $\bigcup_{V \in \{+1,-1\}^d} (\{t_V\} \times G_V )$ contributes at most
$\sqrt{2/\pi}$ to the total surface area.  Putting everything together, we have
that $\Gamma(S) = O(d)$.

\begin{figure}
  \caption{The set $S_+$ when $d=1$.}
  \centering
  \begin{tikzpicture}[scale=2]
    \draw[-](-1,1) -- (1,1);
    \draw[-](-1,-1) -- (1,-1);
    \draw[-](-1,-1) -- (-1,1);
    \draw[-](1,-1) -- (1,1);
    \draw[-](0, -1) -- (0,1);
    \draw[-](-1, 0) -- (1,0);
    \draw[-](1/3, 1) -- (1/3,0);
    \draw[-](7/9, 0) -- (7/9,-1);
    \draw[draw=none,fill=gray,opacity=0.4] (-1,1) -- (1/3, 1) -- (1/3, 0) -- (7/9,0) -- (7/9,-1) -- (-1,-1);
    \draw (7/9,-0.9) node [label=below:\small$t_{-1}$]{};
    \draw (1/3,0.9) node [label=above:\small $t_{+1}$]{};
    \draw (1/2,0.29) node [label=$H_{+1}$]{};
    \draw (1/2,-0.71) node [label=$H_{-1}$]{};
  \end{tikzpicture}
\end{figure}
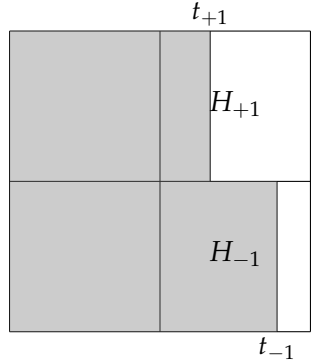
\end{proof}
 \section{Identifiability with bounded Gaussian Surface Area}
\label{sec:IdentifiabilityGaussian}

In this section we investigate the sample complexity of the problem of
estimating the parameters of a truncated Gaussian using a different approach
that does not depend on the VC dimension of the family $\mcal{S}$ of the
truncation sets to be finite.  For example, we settle the sample complexity of
learning the parameters of a Gaussian distribution truncated at an unknown
convex set (recall that the class of convex sets has infinite VC dimension).
Our method relies on finding a tuple $(\wb \mu, \wb \Sigma, \wt S)$ of
parameters so that the moments of the corresponding truncated Gaussian
$\normal(\wb \mu, \wb \Sigma, \wt S)$ are all close to the moments of the
unknown truncated Gaussian distribution, for which we have unbiased estimates
using samples.  The main question that we need to answer to determine the
sample complexity of this problem is how many moments are needed to be matched
in order to be sure that our guessed parameters are close to the parameters of
the unknown truncated Gaussian.  We state now the main result.
Its proof is based on Lemma~\ref{lem:TVDPolynomial} and can be found in
Appendix~\ref{app:IdentifiabilityGaussian}.

\begin{theorem}[Moment Matching] \label{thm:momentMatching}
  Let $\mcal{S}$ be a family of subsets of $\R^d$ of bounded Gaussian surface
  area $\Gamma(\mcal{S})$.  Moreover, assume that if $T$ is an affine map and
  $T(\mcal{S}) = \{ T(S): S\in\mcal{S} \}$ is the family of the images of the
  sets of $\mcal{S}$, then it holds $\Gamma(T(\mcal{S})) =
  O(\Gamma(\mcal{S}))$.  For some $S \in \mcal{S}$, let $\normal(\vec \mu,
  \matr \Sigma, S)$ be an unknown truncated Gaussian.
  $d^{O(\Gamma(\mcal{S})/\eps^4)}$ samples are sufficient to find parameters
  $\wt{\vec \mu}, \wt {\matr \Sigma}, \wt S$ such that $\dtv{\normal(\vec \mu,
  \matr \Sigma, S)}{\normal(\wt{\vec \mu}, \wt {\matr \Sigma}, \wt S)} \leq
  \eps$.
\end{theorem}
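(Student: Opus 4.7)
The plan is to reduce to moment (equivalently low-degree Hermite) matching for the weighted indicator $\psi$, exploit Hermite concentration given by Theorem~\ref{thm:polynomialApproximation}, and convert the resulting $L_2$-closeness of weighted indicators into TV-closeness of the untruncated Gaussians via a lemma of the type of Lemma~\ref{lem:TVDPolynomial}.

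First, I would bring the problem into a near-isotropic frame. Using $\poly(1/\alpha)$ samples and Lemma~\ref{lem:conditionalParameterDistance}, I would estimate the conditional mean and covariance $\tim_S,\tiS_S$ and apply the affine transformation $x \mapsto \tiS_S^{-1/2}(x-\tim_S)$. By the hypothesis that $\Gamma(T(\mcal{S}))=O(\Gamma(\mcal{S}))$ for every affine $T$, the transformed family still has bounded Gaussian surface area, and the transformed true Gaussian is in $(O(\log(1/\alpha)),1/16)$-isotropic position, so Lemma~\ref{lem:ratioBound} and Theorem~\ref{thm:polynomialApproximation} apply to the transformed $\psi$. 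From here I assume WLOG the truncated Gaussian is in the near-isotropic frame.

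Second, fix the degree $k = \poly(1/\alpha)\,\Gamma(\mcal{S})^2/\eps^4$ dictated by Theorem~\ref{thm:polynomialApproximation}. I would draw $N = d^{O(k)}$ truncated samples and form, for every multi-index $V$ with $|V|\le k$, the empirical Hermite coefficient $\wt c_V = \frac1N\sum_i H_V(x_i)$. By Lemma~\ref{lem:hermiteCoefficientVariance}, each $\wt c_V$ has variance $\poly(1/\alpha)\,5^{|V|}/N$, so for $N=d^{O(k)}$ all of the $\binom{d+k}{k}\le d^{O(k)}$ coefficients are simultaneously estimated with total squared error at most, say, $\eps^2$ with probability $99\%$. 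Then, information-theoretically, I would search over candidate tuples $(\wt{\vec\mu},\wt{\matr\Sigma},\wt S)$ with $\wt S\in\mcal{S}$ for one whose true Hermite coefficients $\wt c_V'= \E_{x\sim\normal(\wt{\vec\mu},\wt{\matr\Sigma},\wt S)}[H_V(x)]$ match the $\wt c_V$ up to the same error; the true tuple is one such candidate, so a matching hypothesis exists.

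Third, I would convert matching moments to $L_2$-closeness of the weighted indicators. Write $\psi,\wt\psi$ for the weighted indicators of the true and the candidate truncated Gaussians, and let $S_k\psi,S_k\wt\psi$ denote their degree-$k$ Hermite truncations. Matching the first $d^{O(k)}$ Hermite coefficients gives $\|S_k\psi-S_k\wt\psi\|_{L_2(\normal_0)}\le \eps$, while Theorem~\ref{thm:polynomialApproximation} applied to both $\psi$ and $\wt\psi$ bounds each tail $\|\psi-S_k\psi\|_{L_2(\normal_0)}$ and $\|\wt\psi-S_k\wt\psi\|_{L_2(\normal_0)}$ by $\poly(1/\alpha)(\sqrt{\Gamma}/k^{1/4}+1/k)^{1/2}\le\eps$ by the choice of $k$. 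The triangle inequality then yields $\|\psi-\wt\psi\|_{L_2(\normal_0)}=O(\eps)$.

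Finally, I would invoke Lemma~\ref{lem:TVDPolynomial} (or an analog of it) to turn this $L_2$-closeness of the weighted indicators into a TV bound between the truncated distributions, and then Lemma~\ref{lem:tvdLowerBound} to lift the TV bound on the truncated distributions to a TV bound on the untruncated Gaussians, losing only a $\poly(1/\alpha)$ factor. This last step is the main obstacle: the function $\psi$ entangles the set and the Gaussian density, so being $L_2(\normal_0)$-close only controls an averaged discrepancy and one must argue that the masses $\normal^*(S)$, $\wt{\normal}(\wt S)$ can be compared and that the pointwise density ratios stay under control on the support. Lemma~\ref{lem:ratioBound}, together with the near-isotropy reduction, is what keeps these ratios bounded and makes the conversion go through; combined with the previous bookkeeping it gives the claimed $d^{O(\Gamma(\mcal{S})/\eps^4)}$ sample complexity.
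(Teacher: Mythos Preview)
Your route is different from the paper's, and mostly sound, but two steps need fixing.

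The paper proves Theorem~\ref{thm:momentMatching} via Lemma~\ref{lem:TVDPolynomial}, which does \emph{not} go through the weighted indicator $\psi$. It considers the Scheff\'e set $W=\{f_1>f_2\}$ and approximates the \emph{pure} indicator $\1{W}$ by a low-degree polynomial: since $W$ is built from $S_1,S_2\in\mcal{S}$ and a degree-$2$ polynomial threshold function, $\Gamma(W)=O(\Gamma(\mcal{S}))$, and Lemmas~\ref{lem:realSensitivityConcentration} and~\ref{lem:KOS08NoiseSurface} give $\|\1{W}-p_k\|_{L_2}^2=O(\Gamma/\sqrt{k})$. Then $\dtv{f_1}{f_2}=\int\1{W}(f_1-f_2)\approx\int p_k(f_1-f_2)$, with the error controlled through a reference Gaussian (Lemma~\ref{lem:chiSquaredExistence}); bounding the monomial coefficients of $p_k$ (Lemma~\ref{lem:polynomialInfinityNormBound}) yields the contrapositive ``all moments close $\Rightarrow$ small TV''. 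Your approach instead matches Hermite coefficients of $\psi$ and $\wt\psi$ and invokes Theorem~\ref{thm:polynomialApproximation} on each separately; this is a legitimate alternative that bypasses Lemmas~\ref{lem:chiSquaredExistence} and~\ref{lem:polynomialInfinityNormBound}.

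Regarding the gaps: your ``main obstacle'' is not one. Since $\psi(\vec x)\,\normal_0(\vec x)$ \emph{is} the truncated density, Cauchy--Schwarz gives directly
\[
2\,\dtv{\normal^*_S}{\normal(\wt{\vec\mu},\wt{\matr\Sigma},\wt S)}
=\E_{\vec x\sim\normal_0}\bigl|\psi(\vec x)-\wt\psi(\vec x)\bigr|
\le\|\psi-\wt\psi\|_{L_2(\normal_0)},
\]
so you need neither Lemma~\ref{lem:TVDPolynomial} (that is the paper's \emph{alternative} mechanism, not a tool to bolt onto yours) nor Lemma~\ref{lem:tvdLowerBound} (the theorem only asks for TV between truncated Gaussians). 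The real gap is quantitative: Theorem~\ref{thm:polynomialApproximation} gives $\|\psi-S_k\psi\|_{L_2}^2\le\poly(1/\alpha)\,\sqrt{\Gamma}/k^{1/4}$, so making $\|\psi-S_k\psi\|_{L_2}\le\eps$ requires $k\ge\poly(1/\alpha)\,\Gamma^2/\eps^8$, not $\Gamma^2/\eps^4$ as you set. The paper gets the better $\eps^4$ exponent precisely because the Hermite tail of the \emph{unweighted} indicator $\1{W}$ decays like $\Gamma/\sqrt{k}$; for $\psi$ the ratio $\normal^*/\normal_0$ forces a Cauchy--Schwarz inside Lemma~\ref{lem:realSensitivity} and costs a square root in the rate. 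Finally, for Theorem~\ref{thm:polynomialApproximation} to apply to $\wt\psi$ you must restrict the candidate search to $(\wt{\vec\mu},\wt{\matr\Sigma})$ in near-isotropic position with $\normal(\wt{\vec\mu},\wt{\matr\Sigma};\wt S)\ge\alpha/2$; you only argued near-isotropy for the true parameters.
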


The key lemma of this section is Lemma~\ref{lem:TVDPolynomial}. It shows that
if two truncated normals are in total variation distance $\eps$ then there
exists a moment where they differ.   The main idea is to prove that there
exists a polynomial that approximates well the indicator of the set $\{f_1 >
f_2\}$.  Notice that the total variation distance between two densities can be
written as $\int \1{\{f_1 > f_2\}}(\vec x) f_1(\vec x) - f_2(\vec x)\d x$.  In
our proof we use the chi squared divergence, which for two distributions with
densities $f_1, f_2$ is defined as
\[
  \dchi{f_1}{f_2} = \int \frac{(f_1(\vec x) - f_2(\vec x))^2}{f_2(\vec x)} \d x
\]
To prove it we need the following nice
fact about chi squared divergence between Gaussian distributions.
In general chi squared divergence may be infinite for some pairs of Gaussians.
In the following lemma we prove that for any pair of Gaussians, there
exists another Gaussian $N$ such that $\dchi{N_1}{N}$ $\dchi{N_2}{N}$ are
finite even if $\dchi{N_1}{N_2} = \infty$.
\begin{lemma}\label{lem:chiSquaredExistence}
  Let $N_1 = \normal(\vec \mu_1, \matr \Sigma_1)$, and $N_2 = \normal(\vec
  \mu_1, \matr \Sigma_2)$ be two Normal distributions that satisfy the
  conditions of Lemma~\ref{lem:conditionalParameterDistance}.
  Then there exists
  a Normal distribution $N$ such that
  \[
    \dchi{N_1}{N}, \dchi{N_2}{N} \leq
    \exp\lp(2 \norm{\matr \Sigma_1^{-1/2}(\vec \mu_1 - \vec \mu_2)}_2
    + \frac 12 \max(1, \norm{\matr \Sigma_1}_2) \norm{\matr \Sigma_1^{-1/2} \matr \Sigma_2 \matr\Sigma_1^{-1/2}
    - \matr I}_F^2
    \rp)
  \]
\end{lemma}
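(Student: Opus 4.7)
The plan is to exhibit an explicit Gaussian $N$ centered at $\mu_1$ and evaluate both chi-squared divergences by the closed-form formula for $\chi^2$ between Gaussians. The first move I would make is to whiten: apply the affine change of variables $y = \matr\Sigma_1^{-1/2}(\vec x - \vec \mu_1)$, under which chi-squared divergence is invariant. After this step $N_1$ becomes $\normal(\vec 0, \matr I)$, and $N_2$ becomes $\normal(\tilde{\vec\mu}, \tilde{\matr\Sigma})$ with $\tilde{\vec\mu} = \matr\Sigma_1^{-1/2}(\vec\mu_2 - \vec\mu_1)$ and $\tilde{\matr\Sigma} = \matr\Sigma_1^{-1/2}\matr\Sigma_2\matr\Sigma_1^{-1/2}$. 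The quantities appearing in the target bound correspond exactly to $\|\tilde{\vec\mu}\|_2$ and $\|\tilde{\matr\Sigma} - \matr I\|_F$, which is why the bound is naturally expressed in whitened coordinates.

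Next I would take $N$ in whitened coordinates to be $\normal(\vec 0, \matr I)$ itself, so that $\dchi{N_1}{N} = 0$ automatically. The nontrivial calculation is then $\dchi{N_2}{N}$, which by the standard square-completion identity
\[
  \int \frac{\normal(\tilde{\vec\mu}, \tilde{\matr\Sigma}; \vec x)^2}{\normal(\vec 0, \matr I;\vec x)}\,\d\vec x
  = \frac{1}{|\tilde{\matr\Sigma}|\,|2\tilde{\matr\Sigma}^{-1} - \matr I|^{1/2}}
    \exp\!\lp(\tfrac{1}{2}\vec b^T \vec A^{-1} \vec b + c\rp),
\]
with $\vec A = 2\tilde{\matr\Sigma}^{-1} - \matr I$, $\vec b = 2\tilde{\matr\Sigma}^{-1}\tilde{\vec\mu}$, $c = -\tilde{\vec\mu}^T\tilde{\matr\Sigma}^{-1}\tilde{\vec\mu}$, is finite whenever $\tilde{\matr\Sigma}\prec 2\matr I$. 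Diagonalizing $\tilde{\matr\Sigma}$ with eigenvalues $1+\delta_i$, the prefactor collapses to $\prod_i(1-\delta_i^2)^{-1/2}$, whose logarithm is at most $\tfrac{1}{2}\sum_i\delta_i^2 = \tfrac{1}{2}\|\tilde{\matr\Sigma}-\matr I\|_F^2$, while the exponent $\tfrac{1}{2}\vec b^T \vec A^{-1} \vec b + c$ simplifies (after cancellation in the eigenbasis) to $\sum_i \tilde\mu_i^2/(1-\delta_i)$, which is bounded by $\|\tilde{\vec\mu}\|_2^2$ up to a benign factor and, via the elementary bound $t^2 \leq 2t$ for $t\in[0,1]$ or a Cauchy-Schwarz argument combined with the condition on $\|\tilde{\matr\Sigma}-\matr I\|_F$, by $2\|\tilde{\vec\mu}\|_2$ in the regime where the claimed bound is meaningful.

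If the hypothesis $\tilde{\matr\Sigma}\prec 2\matr I$ fails, i.e.\ some eigenvalue $1+\delta_i$ is too large, I would instead choose $N = \normal(\vec\mu_1, \beta \matr\Sigma_1)$ with a slightly inflated scale $\beta>1$ (chosen so that $\beta\matr I$ dominates every $\lambda_i/2$). This costs a nonzero $\dchi{N_1}{N}$, but both divergences remain controlled by the same type of eigenvalue product and this is exactly where the factor $\max(1,\|\matr\Sigma_1\|_2)$ enters: reshaping the operator-norm constraint on $\tilde{\matr\Sigma}$ back in terms of $\matr\Sigma_1$ absorbs a multiplicative $\|\matr\Sigma_1\|_2$ into the Frobenius term of the bound, while the case $\|\matr\Sigma_1\|_2\le 1$ is covered by $\beta=1$. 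Finally, combining the two regimes via a simple case split and re-expressing the whitened-space bound in the original coordinates yields the statement.

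The main obstacle is the handling of the exponent $\sum_i \tilde\mu_i^2/(1-\delta_i)$: its natural scale is $\|\tilde{\vec\mu}\|_2^2$, whereas the target bound is linear, $2\|\tilde{\vec\mu}\|_2$. Bridging this gap is exactly what forces the $\max(1,\|\matr\Sigma_1\|_2)$ factor and motivates invoking Lemma~\ref{lem:conditionalParameterDistance}, whose parameter guarantees are what keep $\|\tilde{\matr\Sigma}-\matr I\|_F$ (hence each $|\delta_i|$) in the safe regime $<1/2$, thereby controlling the $1/(1-\delta_i)$ denominators and allowing the quadratic-to-linear trade-off in $\|\tilde{\vec\mu}\|_2$ to be absorbed into the $\exp$ bound without introducing additional dimension-dependent losses.
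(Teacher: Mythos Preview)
Your whitening step and the explicit chi-squared computation are the right starting moves, and indeed match the paper. The substantive divergence is in how you choose $N$. You propose a dichotomy: either take $N=N_1$ when $\tilde{\matr\Sigma}\prec 2\matr I$, or uniformly inflate to $N=\normal(\vec\mu_1,\beta\matr\Sigma_1)$ when some eigenvalue is too large. The paper instead makes a \emph{coordinate-wise} choice in the eigenbasis of $\tilde{\matr\Sigma}$: writing $\tilde{\matr\Sigma}=\matr\Lambda=\diag(\lambda_1,\dots,\lambda_d)$, it sets $N=\normal(\vec 0,\matr R)$ with $r_i=\max(1,\lambda_i)$. This is the missing idea.

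Your uniform-$\beta$ fallback does not work as stated. If a single eigenvalue $\lambda_j$ exceeds $2$ you must take $\beta>\lambda_j/2>1$, but then in every direction $i$ with $\lambda_i\approx 1$ the per-coordinate contribution to $\dchi{N_1}{N}+1$ is $\beta/\sqrt{2\beta-1}>1$, and the product over $d$ coordinates blows up as $(\beta/\sqrt{2\beta-1})^{d}$. The target bound is dimension-free (it depends only on $\norm{\tilde{\vec\mu}}_2$ and $\norm{\tilde{\matr\Sigma}-\matr I}_F$), so this exponential-in-$d$ loss is fatal. The coordinate-wise choice $r_i=\max(1,\lambda_i)$ avoids this entirely: directions with $\lambda_i\le 1$ contribute exactly a factor $1$ to $\dchi{N_1}{N}+1$, and only the finitely many directions with $\lambda_i>1$ contribute, each bounded via $\log\bigl(\lambda_i/(2-1/\lambda_i)\bigr)\le\lambda_i(1-1/\lambda_i)^2$, summing to $\max(1,\norm{\matr\Lambda}_2)\norm{\matr\Lambda^{-1}-\matr I}_F^2$. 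Symmetrically, for $\dchi{N_2}{N}$ only directions with $\lambda_i<1$ contribute to the determinant factor, controlled by $\norm{\matr\Lambda^{-1}-\matr I}_F^2$, and the mean term reduces cleanly to $2\norm{\matr\Lambda^{-1/2}\vec\mu}_2^2$ via $\tfrac{2r_i}{2r_i\lambda_i-\lambda_i^2}\le\tfrac{2}{\lambda_i}$.

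A secondary point: you spend effort trying to pass from a quadratic bound $\norm{\tilde{\vec\mu}}_2^2$ to the linear $2\norm{\tilde{\vec\mu}}_2$ that appears in the statement, and attribute the $\max(1,\norm{\matr\Sigma_1}_2)$ factor to this trade-off. In the paper's argument the mean contribution is in fact quadratic, $2\norm{\matr\Lambda^{-1/2}\vec\mu}_2^2$, and the $\max(1,\norm{\matr\Lambda}_2)$ factor arises purely from the covariance term in $\dchi{N_1}{N}$, not from any quadratic-to-linear conversion. So that portion of your outline is chasing what appears to be a typo in the exponent of the stated bound rather than a real phenomenon.
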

Now we state the main lemma of this section.  We give here a sketch of its
proof.  It's full version can be found in
Appendix~\ref{app:IdentifiabilityGaussian}.
\begin{lemma} \label{lem:TVDPolynomial}
  Let $\mcal{S}$ be a family of subsets of $\R^d$ of bounded Gaussian surface
  area $\Gamma(\mcal{S})$.  Moreover, assume that if $T$ is an affine map and
  $T(\mcal{S}) = \{ T(S): S\in\mcal{S} \}$ is the family of the images of the
  sets of $\mcal{S}$, then it holds $\Gamma(T(\mcal{S})) =
  O(\Gamma(\mcal{S}))$.  Let $\normal(\vec \mu_1, \matr \Sigma_1, S_1)$ and
  $\normal(\vec \mu_2, \matr \Sigma_2, S_2)$ be two truncated Gaussians with
  densities $f_1, f_2$ respectively.  Let $ k = O(\Gamma(\mcal{S})/\eps^4)$.
  If $\dtv{f_1}{f_2} \geq \eps$, then there exists a $V\in \nats^d$ with $|V|
  \leq k$ such that
  \[
    \lp| \E_{\vec x \sim \normal(\vec \mu_1, \matr \Sigma_1, S_1) }
    [ \vec x^V]
    -
    \E_{\vec x \sim \normal(\vec \mu_2, \matr \Sigma_2, S_2)}
    [ \vec x^V ]
    \rp|
    \geq \eps/d^{O(k)}.
  \]
\end{lemma}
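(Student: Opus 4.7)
The plan is to approximate the indicator of the set $A = \{\vec x : f_1(\vec x) > f_2(\vec x)\}$ by a low-degree polynomial in the Hermite basis with respect to a well-chosen reference Gaussian, then translate this polynomial approximation into a statement about low-degree moments via a pigeonhole argument over the monomial basis.

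First, I would reduce to a standard reference measure. By Lemma~\ref{lem:chiSquaredExistence} there is a Gaussian $N$ for which $\dchi{\normal_1}{N}$ and $\dchi{\normal_2}{N}$ are bounded by an absolute quantity depending on the parameter distances; applying the affine map $T$ that sends $N$ to the standard Gaussian $\normal_0$, the transported family $T(\mcal{S})$ still has Gaussian surface area $O(\Gamma(\mcal{S}))$ by hypothesis, and both $d_{\mathrm{TV}}$ and moment differences are preserved (each affine-transformed moment of degree at most $k$ is a linear combination of the original moments of degree at most $k$, with coefficients of size at most $d^{O(k)}$). So I may assume $N = \normal_0$ from now on.

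Next, rewrite $\dtv{f_1}{f_2} = \int \1{A}(\vec x)(f_1(\vec x) - f_2(\vec x))\,d\vec x \ge \eps$. The set $A$ decomposes into the pieces $S_1\setminus S_2$, $(S_2 \setminus S_1)^c$, and, on $S_1\cap S_2$, the sub-level set of $\log(\normal_1/\alpha_1) - \log(\normal_2/\alpha_2)$, which is a degree-$2$ polynomial threshold function. Thus $\Gamma(A) \le \Gamma(S_1) + \Gamma(S_2) + \Gamma(\text{PTF}_2) = O(\Gamma(\mcal{S}))$, using the known $O(1)$ bound for degree-$2$ PTFs (see Table~\ref{tab:surface-area}). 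Let $p = \sum_{|V|\le k}\hat{\chara_A}(V) H_V$ be the degree-$k$ Hermite truncation of $\chara_A$. By the standard noise-sensitivity / surface-area bound (Lemma~\ref{lem:KOS08NoiseSurface} combined with Lemma~\ref{lem:realSensitivityConcentration}), taking $k = C\,\Gamma(\mcal{S})^2/\eps^4$ for a suitable $C$ gives $\|\chara_A - p\|_{L_2(\normal_0)}^2 = O(\Gamma(\mcal{S})/\sqrt{k}) \le \eps^2/M$, where $M$ is the chi-squared bound from the previous step.

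Now I would control the approximation error against the signed measure $f_1 - f_2$ using Cauchy--Schwarz with respect to $\normal_0$:
\[
\lp|\int (\chara_A - p)(\vec x)(f_1(\vec x) - f_2(\vec x))\,d\vec x\rp|
\le \|\chara_A - p\|_{L_2(\normal_0)}\,\lp(\|f_1/\normal_0\|_{L_2(\normal_0)} + \|f_2/\normal_0\|_{L_2(\normal_0)}\rp).
\]
Each $\|f_i/\normal_0\|_{L_2(\normal_0)}^2 = (1/\alpha_i^2)\int_{S_i}\normal_i^2/\normal_0\,d\vec x \le (1/\alpha_i^2)(1 + \dchi{\normal_i}{\normal_0})$, which is $O(M)$ by the choice of $\normal_0$ as the reference measure. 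Consequently the error term is at most $\eps/2$, and therefore $|\int p(\vec x)(f_1 - f_2)(\vec x)\,d\vec x| \ge \eps/2$.

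Finally, I would translate this polynomial inequality into a single monomial witness. Expand $p$ in the monomial basis, $p(\vec x) = \sum_{|V|\le k} a_V \vec x^V$. Since the $L_2(\normal_0)$-Hermite coefficients of $\chara_A$ are bounded by $1$ in absolute value (Parseval), the change-of-basis from Hermite to monomials introduces coefficients of size at most $d^{O(k)}$. The number of multi-indices with $|V|\le k$ is $\binom{d+k}{k} = d^{O(k)}$. Hence $\lp|\sum_V a_V(\E_{f_1}[\vec x^V] - \E_{f_2}[\vec x^V])\rp| \ge \eps/2$ forces some $V$ with $|V|\le k$ to satisfy $|\E_{f_1}[\vec x^V] - \E_{f_2}[\vec x^V]| \ge \eps / d^{O(k)}$, which is the claim. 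The main technical obstacle is keeping the various blow-ups tight: verifying that $\Gamma(A)$ is indeed $O(\Gamma(\mcal{S}))$ after including the quadratic piece, and tracking the $d^{O(k)}$ factors through both the affine reduction and the Hermite-to-monomial change of basis so that everything collapses into the single $d^{O(k)}$ denominator in the statement.
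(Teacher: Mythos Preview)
Your proposal is correct and follows essentially the same route as the paper: both find a reference Gaussian via Lemma~\ref{lem:chiSquaredExistence}, use Cauchy--Schwarz to reduce to approximating $\chara_{\{f_1>f_2\}}$ in $L_2$ of that Gaussian, bound the GSA of this set by subadditivity plus the $O(1)$ bound for the quadratic threshold piece, take the degree-$k$ Hermite truncation, and then pass to the monomial basis with a $d^{O(k)}$ coefficient blow-up. The only cosmetic difference is that you transform to the standard Gaussian up front and carry the $d^{O(k)}$ affine blow-up through the moment reduction at the end, whereas the paper keeps the original coordinates and pulls the polynomial back via Lemma~\ref{lem:polynomialInfinityNormBound}; these are the same computation. (Minor slip: your listed piece ``$(S_2\setminus S_1)^c$'' is not part of $A$ --- the decomposition is just $A=(S_1\setminus S_2)\cup(S_1\cap S_2\cap\{h_1>h_2\})$ --- but your GSA bound is unaffected.)
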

\begin{proof}[Proof sketch.]
  Let $W = S_1 \cap S_2 \cap \{f_1 > f_2\} \cup S_1\setminus S_2$,
  that is the set of points where the first density is larger than the
  second.  We now write the $L_1$ distance between $f_1, f_2$ as
  \[
    \int |f_1(\vec x) - f_2(\vec x)| \d \vec x =
    \int \1{W}(\vec x)
    (f_1(\vec x) - f_2(\vec x)) \d \vec x
  \]
  Denote $p(\vec x)$ the polynomial that will do the approximation
  of the $L_1$ distance.
  From Lemma~\ref{lem:chiSquaredExistence} we know that there exists
  a Normal distribution within small chi-squared divergence of both
  $\normal(\vec \mu_1, \matr \Sigma_1)$ and $\normal(\vec \mu_2, \matr \Sigma_2)$.
  Call the density function of this distribution $g(\vec x)$.
  We have
  \begin{align}\label{eq:tvd_apx}
    \Big| \int |f_1(\vec x) &- f_2(\vec x)|\d \vec x -
    \int p(\vec x) (f_1(\vec x) - f_2(\vec x))
    \Big|
                            \\
                            &\leq  \int |\1{W}(\vec x) - p(\vec x)|\ |f_1(\vec x) - f_2(\vec x)| \d \vec x \nonumber
                            \\
                            &\leq  \int |\1{W}(\vec x) - p(\vec x)| \sqrt{g(\vec x)}\ \frac{|f_1(\vec x) - f_2(\vec x)|}{\sqrt{g(\vec x)}} \d x \nonumber
                            \\
                            &\leq  \sqrt{\int (\1{W}(\vec x) - p(\vec x))^2 g(\vec x) \d \vec x}
                            \sqrt{\int \frac{(f_1(\vec x) - f_2(\vec x))^2}{g(\vec x)} \d \vec x},
  \end{align}
  where we use Schwarzs' inequality.  From Lemma~\ref{lem:chiSquaredExistence}
  we know that
  \[
    \int \frac{f_1(\vec x)^2}{g(\vec x)} \d \vec x
    \leq \int \frac{\normal(\vec \mu_1, \matr \Sigma_1; \vec x)^2}{g(\vec x)}
    \d \vec x
    = \exp(\poly(1/\alpha)).
  \]
  Similarly, $\int \frac{f_2(\vec x)^2}{g(\vec x)} \d x = \exp(\poly(1/\alpha))$.
  Therefore we have,
  \[
    \Big|
    \int |f_1(\vec x) - f_2(\vec x)|\d \vec x - \int p(\vec x) (f_1(\vec x) - f_2(\vec x))
    \Big|
    \leq \exp(\poly(1/\alpha)) \sqrt{\int (\1{W}(\vec x) - p(\vec x))^2 g(\vec x) \d \vec x}
  \]
  Recall that $g(\vec x)$ is the density function of a Gaussian distribution, and
  let $\vec \mu, \matr \Sigma$ be the parameters of this Gaussian.  Notice that
  it remains to show that there exists a good approximating polynomial $p(\vec
  x)$ to the indicator function $\1{W}$.  We can now transform the space so that
  $g(\vec x)$ becomes the standard normal.  Notice that this is an affine
  transformation that also transforms the set $W$;  Since the Gaussian
  surface area is "invariant" under linear transformations

  Since $\1{W} \in L^2(\R^d, \N_0)$ we can approximate it using Hermite
  polynomials.  For some $k \in \N$ we set $p(\vec x) = S_k \1{W}(x)$, that is
  \[
    p_k(\vec x) = \sum_{V: |V| \leq k} \wh{\1{W}} H_V(\vec x).
  \]
  Combining Lemma~\ref{lem:realSensitivityConcentration} and
  Lemma~\ref{lem:KOS08NoiseSurface} we obtain
  \[
    \E_{\vec x \sim \normal_0}[(\1{W}(\vec x) - p_k(x))^2] =
    O\lp( \frac{\Gamma(\mcal{S})} {k^{1/2}} \rp).
  \]
  Therefore,
  \(
    \Big|
    \int |f_1(\vec x) - f_2(\vec x)|\d \vec x - \int p_k(\vec x) (f_1(\vec x) - f_2(\vec x))
    \Big|
    = \exp(\poly(1/\alpha)) \frac{\Gamma(\mcal{S})^{1/2}}{k^{1/4}}.
    \)
  Ignoring the dependence on the absolute constant $\alpha$, to achieve
  error $O(\eps)$ we need degree $k = O(\Gamma(\mcal{S})^2/\eps^4)$.

  To complete the proof, it remains to obtain a bound for the coefficients
  of the polynomial $q(\vec x) = p_k(\matr \Sigma^{-1/2} (\vec x-\vec
  \mu))$.  Using known facts about the coefficients of Hermite polynomials
  we obtain that
  \(
    \norm{q(\vec x)}_\infty \leq
    \binom{d+k}{k}^2 (4d)^{k/2} (O(1/\alpha^2))^{k}.
  \)
  To conclude the proof we notice that we can pick the degree $k$ so that
  \[
    \lp| \int q(\vec x) (f_1(\vec x)- f_2(\vec x)) \rp| =
    \lp| \sum_{V: |V| \leq k} \vec x^V (f_1(\vec x) - f_2(\vec x)) \rp|
    \geq \eps /2.
  \]
  Since the maximum coefficient of $q(\vec x)$ is bounded by $d^{O(k)}$
  we obtain the result.
\end{proof}

 \section{VC-dimension vs Gaussian Surface Area}
\label{sec:vcGsa}

We use two different complexity measures of the truncation set to get sample
complexity bounds, the VC-dimension and the Gaussian Surface Area (GSA) of the
class of the sets.  As we already mentioned in the introduction, there are
classes, for example convex sets, that have bounded Gaussian surface area but
infinite VC-dimension.  However, this is not the main difference between the
two complexity measures in our setting.  Having a class with bounded
VC-dimension means that the empirical risk minimization needs finite samples.
To get an efficient algorithm we still need to \emph{implement the ERM for this
specific class}.  Therefore, it is not clear whether it is possible to get an
algorithm that works for all sets of bounded VC-dimension.  On the other hand,
bounded GSA means that we can approximate the weighted indicator of the set
using its low order Hermite coeffients.  This approximation works for all sets
of bounded GSA and does not depend on the specific class of sets.  Therefore,
using GSA we manage to get a unified approach that learns the parameters of the
underlying Gaussian distribution using only the assumption that the truncation
set has bounded GSA.  In other words, our approach uses the information of the
class that the truncation set belongs only to decide how large the degree of
the approximating polynomial should be.  Having said that, it is an interesting
open problem to design algorithms that learn the parameters of the Gaussian and
use the information that the truncation set belongs to some class (e.g.
intersection of $k$-halfspaces) to beat the runtime of our generic approach
that only depends on the GSA of the class.
 
\clearpage

\bibliographystyle{alpha}
\bibliography{refs}

\appendix
\section{Additional Preliminaries and Notation}
\label{app:prelims}
We first state the following simple lemma that connects the total
variation distance of two Normal distributions with their parameter
distance.  For a proof see e.g. Corollaries 2.13 and 2.14 of \cite{DKK+16b}.
\begin{lemma}\label{lem:tvdParameter}
  Let $N_1 = \normal(\vec \mu_1, \matr \Sigma_1)$ ,
  $N_2 = \normal(\vec \mu_2, \matr \Sigma_2)$ be two Normal distributions.
Then
  \[
    \dtv{N_1}{N_2} \leq
    \frac1{2} \snorm{2}{\matr \Sigma_1^{-1/2}(\vec \mu_1 - \vec \mu_2)}
    +
    \sqrt{2}\snorm{F}
    {\matr I - \matr \Sigma_1^{-1/2} \matr \Sigma_2 \matr \Sigma_1^{-1/2}}
  \]
\end{lemma}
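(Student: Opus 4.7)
The plan is to use the standard strategy of splitting $\dtv{N_1}{N_2}$ into a mean-shift and a covariance-shift contribution via the triangle inequality, then bounding each term separately. Introducing the hybrid $\normal(\vec\mu_2, \matr\Sigma_1)$,
\[ \dtv{N_1}{N_2} \leq \dtv{\normal(\vec\mu_1,\matr\Sigma_1)}{\normal(\vec\mu_2,\matr\Sigma_1)} + \dtv{\normal(\vec\mu_2,\matr\Sigma_1)}{\normal(\vec\mu_2,\matr\Sigma_2)}. \]

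For the mean-shift term, I would invoke Pinsker's inequality $\dtv{P}{Q} \leq \sqrt{\dkl{P}{Q}/2}$ together with the closed form $\dkl{\normal(\vec\mu_1,\matr\Sigma_1)}{\normal(\vec\mu_2,\matr\Sigma_1)} = \tfrac{1}{2}\snorm{2}{\matr\Sigma_1^{-1/2}(\vec\mu_1-\vec\mu_2)}^2$, easily verified by the change of variables $\vec y = \matr\Sigma_1^{-1/2}(\vec x - \vec\mu_2)$ and a direct calculation against the standard Gaussian density. This produces exactly the $\tfrac{1}{2}\snorm{2}{\matr\Sigma_1^{-1/2}(\vec\mu_1-\vec\mu_2)}$ contribution appearing in the claimed bound.

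For the covariance-change term, I would first use the affine invariance of total variation distance under $\vec x \mapsto \matr\Sigma_1^{-1/2}(\vec x - \vec\mu_2)$ to reduce to $\dtv{\normal(\vec 0,\matr I)}{\normal(\vec 0,\matr M)}$ with $\matr M = \matr\Sigma_1^{-1/2}\matr\Sigma_2\matr\Sigma_1^{-1/2}$, and then use rotational invariance of the standard Gaussian to further reduce to $\matr M = \diag(\lambda_1,\dots,\lambda_d)$, noting that $\snorm{F}{\matr I - \matr M}$ is preserved under orthogonal changes of basis. Both distributions are now product measures, so I would apply the sub-additivity of squared Hellinger distance $H^2(\otimes_i P_i, \otimes_i Q_i) \leq \sum_i H^2(P_i, Q_i)$ together with the one-dimensional estimate $H^2(\normal(0,1),\normal(0,\lambda)) \leq (\lambda-1)^2/4$, and then convert back via $\dtv{\cdot}{\cdot} \leq \sqrt{2}\,H$. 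The upshot is $\dtv{\normal(\vec 0,\matr I)}{\normal(\vec 0,\matr M)} \leq \tfrac{1}{\sqrt{2}}\snorm{F}{\matr I - \matr M}$, which is comfortably below the claimed coefficient $\sqrt{2}$.

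The main obstacle is the one-dimensional Hellinger estimate, since the closed form $H^2(\normal(0,1),\normal(0,\lambda)) = 1 - \sqrt{2\sqrt{\lambda}/(1+\lambda)}$ is not bounded by $C(\lambda-1)^2$ uniformly in $\lambda > 0$ (it approaches $1$ as $\lambda \to 0$). To circumvent this, I would first dispose of the trivial regime $\sqrt{2}\snorm{F}{\matr I - \matr M} \geq 1$, where the claimed bound already dominates $\dtv{\cdot}{\cdot} \leq 1$. In the complementary regime, each $|1-\lambda_i| \leq 1/\sqrt{2}$, so every eigenvalue lies in a fixed bounded interval around $1$, and a direct Taylor expansion of the one-dimensional Hellinger formula around $\lambda = 1$ supplies the required quadratic bound with an absolute constant.
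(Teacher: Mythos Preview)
Your argument is correct. The paper does not actually prove this lemma: it simply states it in the appendix with the remark ``For a proof see e.g.\ Corollaries 2.13 and 2.14 of \cite{DKK+16b}.'' So there is nothing to compare against beyond noting that you have supplied a self-contained proof where the paper defers to an external reference.

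One minor simplification: the case split on $\sqrt{2}\snorm{F}{\matr I - \matr M} \gtrless 1$ is not needed. Writing $f(\lambda) = \dfrac{2\sqrt{\lambda}}{1+\lambda} \in (0,1]$, one has $H^2(\normal(0,1),\normal(0,\lambda)) = 1 - \sqrt{f(\lambda)} \le 1 - f(\lambda) = \dfrac{(\sqrt{\lambda}-1)^2}{1+\lambda} \le (\sqrt{\lambda}-1)^2 = \dfrac{(\lambda-1)^2}{(\sqrt{\lambda}+1)^2} \le (\lambda-1)^2$ for all $\lambda>0$. Summing and applying $\dtv{}{} \le \sqrt{2}\,H$ gives the covariance term directly with the stated constant $\sqrt{2}$, uniformly in the eigenvalues. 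Your route via a Taylor expansion on a bounded interval is fine too, but this avoids the detour.
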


We readily use the following two lemmas from \cite{DGTZ18}. The first suggests
that we can accurately estimate the parameters $(\mu_S,\Sigma_S)$.

\begin{lemma}\label{lem:conditionalEstimation}
  Let $(\vec{\mu}_S, \mat{\Sigma}_S)$ be the mean and covariance of the truncated
  Gaussian $\normal(\vec \mu,\mat \Sigma,S)$ for a set $S$ such that
  $\normal(\vec \mu,\mat \Sigma;S) \ge \alpha$.  Using $\tilde O(\frac{d}{\eps^2}
  \log(1/\alpha) \log^2(1/\delta))$ samples, we can compute estimates $
  \wt{\vec{\mu}}_S$ and $\wt{\mat{\Sigma}}_S$ such that
  ,with probability at least $1-\delta$,
  $$\|\mat{\Sigma}^{-1/2}
  (\wt{\vec{\mu}}_S - \vec{\mu}_S)\|_2 \le \eps \quad \text{ and } \quad
  (1-\eps) \mat{\Sigma}_S \preceq \wt{\mat{\Sigma}}_S \preceq (1+\eps)
  \mat{\Sigma}_S$$
\end{lemma}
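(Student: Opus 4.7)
}

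The plan is to use the empirical mean and empirical covariance of the truncated samples as estimators, and control their error via standard high-dimensional concentration, crucially exploiting the fact that conditioning on a set of mass at least $\alpha$ inflates any nonnegative moment by at most a factor of $1/\alpha$. First, by applying the affine transformation $\vec y = \mat\Sigma^{-1/2}(\vec x - \vec\mu)$, I would reduce to the case in which the underlying untruncated distribution is $\normal(\vec 0, \mat I)$ and the truncation set is $S' = \mat\Sigma^{-1/2}(S - \vec\mu)$, still with mass at least $\alpha$. Both error bounds in the lemma are invariant (for the mean in the $\mat\Sigma^{-1/2}$-norm, and for the covariance as a PSD relative inequality), so it suffices to prove the conclusions in the isotropic setting with $\mat\Sigma = \mat I$.

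Next, I would establish the key moment bound: for any unit vector $\vec v$ and any even integer $2k$,
\[
\E_{\vec y \sim \normal(\vec 0, \mat I, S')}\!\left[(\vec v^T \vec y)^{2k}\right] \le \frac{1}{\alpha}\,\E_{\vec y \sim \normal(\vec 0, \mat I)}\!\left[(\vec v^T \vec y)^{2k}\right] = \frac{(2k)!}{\alpha\,2^k k!},
\]
so the truncated distribution is sub-Gaussian along every direction, with variance proxy inflated by only an $O(\log(1/\alpha))$ factor. Similarly $\|\vec y\|_2$ is sub-exponential with comparable parameters. With this tail control in hand, a vector Bernstein inequality applied to the empirical mean $\wt{\vec y}_{S'} = \frac{1}{N}\sum_i \vec y_i$ yields $\|\wt{\vec y}_{S'} - \vec y_{S'}\|_2 \le \eps$ with probability $1-\delta/2$ from $N = \tilde O\bigl(d\log(1/\alpha)\log^2(1/\delta)/\eps^2\bigr)$ samples, which is exactly the first conclusion after undoing the whitening.

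For the covariance, I would next whiten a second time by $\mat\Sigma_{S'}^{-1/2}$ (in analysis only) so that the target covariance is the identity, and then apply a matrix Bernstein / truncation argument to $\frac{1}{N}\sum_i \vec z_i \vec z_i^T$ with $\vec z_i = \mat\Sigma_{S'}^{-1/2}(\vec y_i - \vec y_{S'})$. The sub-Gaussian directional bounds above carry through the whitening (since $\mat\Sigma_{S'} \succeq \alpha^{O(1)}\mat I$ on the portion of space the truncated distribution sees) and give a spectral-norm relative error $\le \eps$ with the same sample complexity, which is equivalent to the PSD sandwich $(1-\eps)\mat\Sigma_{S'} \preceq \wt{\mat\Sigma}_{S'} \preceq (1+\eps)\mat\Sigma_{S'}$. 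A union bound over the two events and a final rescaling by $\mat\Sigma^{1/2}$ complete the proof.

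The main obstacle is the \emph{relative} (not additive) spectral bound on the covariance: one must rule out the possibility that $\mat\Sigma_{S'}$ has tiny eigenvalues in some direction, which would make a naive additive Frobenius bound useless. The resolution is to observe that any such direction would still have moments controlled by the $1/\alpha$-blowup of a standard Gaussian, so the whitened samples $\vec z_i$ are sub-Gaussian with parameters polynomial in $1/\alpha$, and matrix Bernstein then gives the correct $\tilde O(d/\eps^2)$ sample complexity with the $\log(1/\alpha)$ overhead absorbed into the $\tilde O(\cdot)$.
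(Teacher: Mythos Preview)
The paper does not actually prove this lemma; it is imported verbatim from \cite{DGTZ18} (see the preamble to Lemma~\ref{lem:conditionalEstimation} in Appendix~A: ``We readily use the following two lemmas from \cite{DGTZ18}''). So there is no proof in the present paper to compare against, and your proposal is the natural reconstruction: whiten by $\mat\Sigma^{-1/2}$, use the $1/\alpha$ moment-inflation bound to obtain sub-Gaussian tails with variance proxy $O(\log(1/\alpha))$, and apply standard vector and matrix concentration.

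One point where your sketch is quick is the claim that the sub-Gaussian bounds ``carry through the whitening'' by $\mat\Sigma_{S'}^{-1/2}$ with the \emph{same} sample complexity. Whitening inflates the sub-Gaussian norm in direction $\vec v$ by $\|\mat\Sigma_{S'}^{-1/2}\vec v\|_2$, and Lemma~\ref{lem:conditionalParameterDistance} only guarantees $\mat\Sigma_{S'} \succeq \Omega(\alpha^2)\mat I$, so a direct matrix-Bernstein application picks up a $\poly(1/\alpha)$ factor rather than $\log(1/\alpha)$ for the relative covariance bound. This does not affect any downstream use in the present paper (every argument invoking this lemma already tolerates $\poly(1/\alpha)$ overhead), but if you want the precise $\log(1/\alpha)$ dependence as stated you would need to consult the argument in \cite{DGTZ18} directly rather than rely on the generic second-whitening route.
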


The second lemma suggests that the empirical estimates are close to the true
parameters of underlying truncated Gaussian.

\begin{lemma}\label{lem:conditionalParameterDistance}
  The empirical mean and covariance $\wt{\vec{\mu}}_S$ and
  $\wt{\mat{\Sigma}}_S$ computed using $\tilde O(d^2 \log^2(1/\alpha\delta))$
  samples from a truncated Normal $\normal(\vec \mu,\mat \Sigma, S)$ with
  $\normal(\vec \mu,\mat \Sigma;S) \ge \alpha$ satisfies with probability
  $1-\delta$ that:
  $$\|\mat{\Sigma}^{-1/2} (\wt{\vec{\mu}}_S - \vec{\mu})\|^2_2 \leq O({\log
  \frac{1}{\alpha}}), \quad {\wt{\mat{\Sigma}}}_S \succeq \Omega(\alpha^2)
  \mat{\Sigma}, \quad \snorm{F}{\mat \Sigma^{-1/2} \mat{\wt{\Sigma}}_S \mat
  \Sigma^{-1/2}  - \mat I}^2 \le O({\log \frac{1}{\alpha}} ).$$
  Moreover, $\Omega(\alpha^2) \leq
  \norm{\wb \Sigma_S^{-1/2} \matr \Sigma \wb \Sigma_S^{-1/2}}_2
  \leq O(1/\alpha^2)$.
\end{lemma}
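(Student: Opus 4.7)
The plan is to first establish analogous bounds at the population level for the true conditional mean $\vec \mu_S = \E_{X \sim \normal(\vec \mu, \matr \Sigma, S)}[X]$ and conditional covariance $\matr \Sigma_S$, and then lift them to the empirical estimates $\wt{\vec \mu}_S, \wt{\matr \Sigma}_S$ via the concentration guarantee of Lemma~\ref{lem:conditionalEstimation}. Since all stated quantities are invariant under the whitening $\vec x \mapsto \matr \Sigma^{-1/2}(\vec x - \vec \mu)$, I may assume throughout that $\vec \mu = \vec 0$ and $\matr \Sigma = \matr I$, so that $\normal_0 = \normal(\vec 0, \matr I)$ is the untruncated Gaussian and the mass condition reads $\normal_0(S) = \alpha^* \geq \alpha$.

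The heart of the argument is a KL-Pythagorean identity within the Gaussian exponential family. Because the density ratio of $\normal(\vec 0, \matr I, S)$ to $\normal_0$ is the constant $1/\alpha^*$ on $S$, we have $\dkl{\normal(\vec 0, \matr I, S)}{\normal_0} = \log(1/\alpha^*) \leq \log(1/\alpha)$. Since $\normal(\vec \mu_S, \matr \Sigma_S)$ is the moment-matching ($I$-)projection of $\normal(\vec 0, \matr I, S)$ onto the Gaussian family, the generalized Pythagorean decomposition yields
\[
\dkl{\normal(\vec \mu_S, \matr \Sigma_S)}{\normal_0} \leq \dkl{\normal(\vec 0, \matr I, S)}{\normal_0} \leq \log(1/\alpha).
\]
Writing $\lambda_1,\dots,\lambda_d$ for the eigenvalues of $\matr \Sigma_S$, the closed-form Gaussian KL equals $\tfrac{1}{2}\|\vec \mu_S\|_2^2 + \tfrac{1}{2}\sum_i(\lambda_i - 1 - \log \lambda_i)$. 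Bounding this by $\log(1/\alpha)$ immediately gives $\|\vec \mu_S\|_2^2 = O(\log(1/\alpha))$. To turn the second summand into the Frobenius bound $\sum_i(\lambda_i - 1)^2 = O(\log(1/\alpha))$ I also need a priori two-sided eigenvalue bounds. The upper bound $\lambda_i \leq 1/\alpha$ follows from $\vec v^T \matr \Sigma_S \vec v \leq \E_{\normal(\vec 0, \matr I, S)}[(\vec v^T X)^2] = (\alpha^*)^{-1}\E_{\normal_0}[(\vec v^T X)^2 \chara_S(X)] \leq 1/\alpha$ for any unit $\vec v$. The lower bound $\lambda_i \geq \Omega(\alpha^2)$ follows from the observation that the one-dimensional marginal density of $\vec v^T X$ under $\normal(\vec 0, \matr I, S)$ is pointwise at most $1/(\alpha \sqrt{2\pi})$, and a real random variable with density bounded by $C$ has variance at least $\Omega(1/C^2)$. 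On the interval $[\Omega(\alpha^2), 1/\alpha]$ the ratio $(\lambda-1)^2/(\lambda - 1 - \log \lambda)$ is uniformly bounded by a constant depending only on $\alpha$, converting the KL-type bound into the Frobenius one.

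Finally, Lemma~\ref{lem:conditionalEstimation} applied with a small constant accuracy $\eps_0$ and $\tilde O(d^2 \log^2(1/(\alpha\delta)))$ samples produces $\wt{\vec \mu}_S, \wt{\matr \Sigma}_S$ such that $\snorm{2}{\matr\Sigma^{-1/2}(\wt{\vec \mu}_S - \vec \mu_S)} \leq \eps_0$ and $(1-\eps_0)\matr \Sigma_S \preceq \wt{\matr \Sigma}_S \preceq (1+\eps_0) \matr \Sigma_S$. Combining with the population bounds via the triangle inequality in the Mahalanobis and Frobenius norms, together with the multiplicative sandwich for the spectral inequalities, yields the three displayed conclusions for the empirical estimates. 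The last claim $\Omega(\alpha^2) \leq \snorm{2}{\wb \Sigma_S^{-1/2} \matr \Sigma \wb \Sigma_S^{-1/2}} \leq O(1/\alpha^2)$ follows immediately from the two-sided eigenvalue control just transferred to $\wt{\matr\Sigma}_S$: the upper inequality is equivalent to $\wt{\matr\Sigma}_S \succeq \Omega(\alpha^2) \matr \Sigma$ (which is the second conclusion), while the lower inequality is equivalent to $\wt{\matr\Sigma}_S \not\succeq (1/\alpha^2) \matr \Sigma$ and follows from the transferred upper bound $\wt{\matr\Sigma}_S \preceq O(1/\alpha) \matr \Sigma$. The hard part is the Frobenius bound: the KL Pythagorean naturally controls $\lambda - 1 - \log\lambda$ rather than $(\lambda-1)^2$, so bridging the two requires the non-trivial uniform two-sided eigenvalue bounds established above.
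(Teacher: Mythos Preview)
The paper does not prove this lemma; it is imported verbatim from \cite{DGTZ18} (``We readily use the following two lemmas from \cite{DGTZ18}''), so there is no in-paper argument to compare against. Your route via the KL Pythagorean identity is correct and pleasant: $\normal(\vec\mu_S,\matr\Sigma_S)$ is the moment-matching (M-)projection of the truncated law onto the Gaussian family---a minor terminology slip on your part, but the Pythagorean decomposition you use is the right one---and together with $\dkl{\normal(\vec 0,\matr I,S)}{\normal_0}=\log(1/\alpha^*)$ it gives $\tfrac12\|\vec\mu_S\|_2^2+\tfrac12\sum_i(\lambda_i-1-\log\lambda_i)\le\log(1/\alpha)$. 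The bounded-density argument for $\lambda_i\ge\Omega(\alpha^2)$ is also correct, as is the transfer to empirical estimates via Lemma~\ref{lem:conditionalEstimation}.

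There is one quantitative gap in the Frobenius step. Converting $\sum_i(\lambda_i-1-\log\lambda_i)\le 2\log(1/\alpha)$ into $\sum_i(\lambda_i-1)^2\le O(\log(1/\alpha))$ would require the ratio $(\lambda-1)^2/(\lambda-1-\log\lambda)$ to be bounded by an absolute constant on the relevant range, but this ratio grows like $\lambda$ for large $\lambda$. With your stated upper bound $\lambda_i\le 1/\alpha$ you therefore only obtain $\snorm{F}{\matr\Sigma_S-\matr I}^2\le O\!\big((1/\alpha)\log(1/\alpha)\big)$; even with the sharper (and true) bound $\lambda_i\le O(\log(1/\alpha))$ one gets $O(\log^2(1/\alpha))$. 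In fact the example $S=\{|x_1|>t\}$ with $\normal_0(S)=\alpha$ already gives $(\lambda_1-1)^2=\Theta(\log^2(1/\alpha))$, so the displayed $O(\log(1/\alpha))$ cannot hold with a universal constant. Since every downstream use of the lemma in this paper only needs a $\poly(1/\alpha)$ box around the parameters, your argument is more than sufficient for all applications here; but you should be aware that, as written, you have not established the Frobenius inequality exactly as stated.
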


In particular, the mean and covariance $\wt{\vec{\mu}}_S$ and
$\wt{\mat{\Sigma}}_S$ that satisfy the conditions of
Lemma~\ref{lem:conditionalParameterDistance}, are in
$(O(\log(1/\alpha)),1-O(\alpha^2))$-near isotropic position.

We will use the following very useful anti-concentration result about the
Gaussian mass of sets defined by polynomials.
\begin{theorem}[Theorem 8 of \cite{CarberyW01}] \label{thm:GaussianMeasurePolynomialThresholdFunctions}
  Let $q, \gamma \in \reals_+$, $\vec{\mu} \in \reals^d$,
  $\matr{\Sigma} \in \R^{d \times d}$ such that $\S$ is symmetric positive
  semidefinite and $p : \reals^d \to \reals$ be a multivariate
  polynomial of degree at most $\ell$, we define
  \[ \bar{Q} = \left\{ \vec{x} \in \reals^d \mid \abs{p(\vec{x})} \le \gamma
  \right\}, \]
  then there exists an absolute constant $C$ such that
  \[ \normal(\vec{\mu}, \matr{\Sigma}; \bar{Q}) \le \frac{C q \gamma^{1/\ell}}
    {\left( \Exp_{\vec{z} \sim \normal(\vec{\mu}, \matr{\Sigma})}
  \left[ \abs{p(\vec{z})}^{q/\ell} \right] \right)^{1/q}}. \]
\end{theorem}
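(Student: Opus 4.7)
This is the classical Carbery--Wright anti-concentration inequality. My plan is to follow their original strategy, which converts the anti-concentration bound into a negative moment bound on $|p|$ and then applies Markov.

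First, I would reduce to the standard Gaussian. Applying the affine change of variables $\vec{y}=\matr{\Sigma}^{-1/2}(\vec{x}-\vec{\mu})$, one replaces $p$ by $\tilde p(\vec{y})=p(\matr{\Sigma}^{1/2}\vec{y}+\vec{\mu})$, which is still a polynomial of degree at most $\ell$. The change of variables preserves the $L^{q/\ell}$ norm appearing in the bound, so it suffices to prove the statement for $\normal(\vec{0},\matr{I})$. Then by the scaling $p\mapsto cp$ I may normalize $(\E[|p|^{q/\ell}])^{1/q}=1$, reducing the target inequality to $\mathrm{Pr}[|p(X)|\le \gamma]\le Cq\gamma^{1/\ell}$.

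Second, I would handle the one-dimensional case. Factor $p(t)=c\prod_{i=1}^{\ell}(t-z_i)$ with $z_i\in\mathbb{C}$. The key uniform estimate is
\[
  \sup_{z\in\mathbb{C}}\ \int_{\R} |t-z|^{-s}\,e^{-t^2/2}\,dt \;<\; \infty
  \qquad\text{for any }s\in(0,1),
\]
which one obtains by splitting the integral into the region near $\mathrm{Re}(z)$ (where the singularity is controlled because $s<1$) and the region away from it (where the Gaussian kills the growth). Applying Hölder's inequality to the product over the $\ell$ linear factors yields a bound on $\E|p|^{-\tau}$ for any $\tau<1/\ell$, with the bound depending on $|c|$, which after using the normalization translates into a bound in terms of $\|p\|_{L^{q/\ell}}$.

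Third, I would lift to dimension $d$. Using the rotational invariance of the standard Gaussian, write $X=\xi+T\eta$ along a one-dimensional slice. Since the restriction of $p$ to any affine line is still a polynomial of degree at most $\ell$, one can apply the 1D negative-moment bound on each slice and integrate over the orthogonal complement using Fubini. An averaging argument over directions (together with Jensen) allows the constant and the $L^{q/\ell}$ normalization to transfer correctly from 1D to $d$D without losing dimension-dependent factors.

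Finally, I would convert the negative moment bound into the probability bound by Markov: for $s<1/\ell$,
\[
  \mathrm{Pr}[|p|\le\gamma] \;=\; \mathrm{Pr}\!\left[|p|^{-s}\ge \gamma^{-s}\right] \;\le\; \gamma^{s}\,\E|p|^{-s},
\]
and optimize $s$ close to $1/\ell$. The main obstacle is the endpoint $s=1/\ell$, where the 1D Hölder argument just barely fails to converge; the linear $q$-dependence of the constant $C q$ in the theorem emerges precisely from how close one can push $s$ to $1/\ell$ while keeping the moment bound finite, controlled by Paley--Zygmund/$L^{q/\ell}$ normalization. Carefully tracking this interpolation, together with verifying that the dimension-reduction step does not introduce a dimension-dependent loss, is the technical heart of the argument; both are done in \cite{CarberyW01} using their one-dimensional lemma and a hypercontractive/log-concavity input.
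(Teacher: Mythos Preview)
The paper does not prove this theorem at all: it is quoted verbatim as Theorem~8 of \cite{CarberyW01} and used as a black box (in the proofs of Lemma~\ref{lem:tvdLowerBound} and Lemma~\ref{lem:strongConvexity}). So there is no ``paper's own proof'' to compare against; your proposal is really a sketch of Carbery and Wright's original argument, and at that level it is accurate --- the reduction to a negative-moment bound, the one-dimensional factoring lemma with H\"older across the $\ell$ linear factors, the dimension reduction by slicing a log-concave (here Gaussian) measure, and the final Markov step are exactly their ingredients.

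One small gap in your reduction step: the statement only assumes $\matr{\Sigma}$ is positive \emph{semidefinite}, so $\matr{\Sigma}^{-1/2}$ need not exist. You should first restrict to the range of $\matr{\Sigma}$ (on which the Gaussian is supported) and then do the affine change of variables there; the restricted polynomial still has degree at most $\ell$, so the rest of your argument goes through unchanged.
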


\subsection{Hermite Polynomials, Ornstein-Uhlenbeck Operator, and Gaussian Surface Area.}
We denote by $L^2(\R^d, \normal_0)$ the vector space of all functions $f:\R^d
\to \R$ such that $\E_{\vec x \sim \normal_0}[f^2(x)] < \infty$.  The usual
inner product for this space is
$\E_{\vec x \sim \normal_0}[f(\vec x) g(\vec x)]$.
While, usually one considers the probabilists's or physicists' Hermite polynomials,
in this work we define the \emph{normalized} Hermite polynomial of degree $i$ to be
\(
H_0(x) = 1, H_1(x) = x, H_2(x) = \frac{x^2 - 1}{\sqrt{2}},\ldots,
H_i(x) = \frac{He_i(x)}{\sqrt{i!}}, \ldots
\)
where by $He_i(x)$ we denote the probabilists' Hermite polynomial of degree $i$.
These normalized Hermite polynomials form a complete orthonormal basis for the
single dimensional version of the inner product space defined above. To get an
orthonormal basis for $L^2(\R^d, \normal_0)$, we use a multi-index $V\in \nats^d$
to define the $d$-variate normalized Hermite polynomial as
$H_V(\vec x) = \prod_{i=1}^d H_{v_i}(x_i)$.  The total degree of $H_V$ is
$|V| = \sum{v_i \in V} v_i$.
Given a function $f \in L^2$ we compute its Hermite coefficients as
\(
\hat{f}(V) = \E_{\vec x\sim \normal_0} [f(\vec x) H_V(\vec x)]
\)
and express it uniquely as
\(
\sum_{V \in \nats^d} \hat{f}(V) H_V(\vec x).
\)
We denote by $S_k f(x)$ the degree $k$ partial sum of the Hermite expansion of $f$,
$S_kf (\vec x) = \sum_{|V| \leq k} \hat{f}(V) H_V(\vec x)$.
Then, since the basis of Hermite polynomials is complete, we have
\(
\lim_{k \to \infty} \E_{x \sim \normal_0}[\lp(f(\vec x) - S_k f(\vec x) \rp)^2] = 0.
\)
We would like to quantify the convergence rate of $S_k f$ to $f$.
Parseval's identity states that
\[
  \E_{x \sim \normal_0}[ \lp(f(\bm{x}) - S_kf(\bm{x}) \rp)^2 ]
  = \sum_{|V| = k}^{\infty} \hat{f}(V)^2.
\]
\begin{definition}[\textsc{Hermite Concentration}]\label{def:hermiteConcentration}
  Let $\gamma(\eps, d)$ be a function $\gamma : (0,1/2) \times \nats \mapsto \nats$.
  We say that a class of functions $\mcal{F}$ over $\R^d$ has a Hermite
  concentration bound of $\gamma(\eps, d)$, if for all $d \geq 1$, all $ \eps
  \in (0,1/2)$, and $f \in \mcal{F}$ it holds
  \(
  \sum_{|V| \geq \gamma(\eps, d)} \hat{f}(V)^2 \leq \eps.
  \)
\end{definition}

We now define the Gaussian Noise Operator as in \cite{Don14}.
Using a different parametrization, which is not convenient for
our purposes, these operators are also known as the
Ornstein-Uhlenbeck semigroup, or the Mehler transform.
\begin{definition}
  The Gaussian Noise operator $T_{\rho}$ is the linear operator defined on the
  space of functions $L^1(\R^d, \normal_0)$ by
  \[
    T_{\rho} f(\bm{x}) = \E_{\bm{y} \sim \normal_0}
    \lp[f( \rho \bm{x} + \sqrt{1- \rho^2} \bm{y}) \rp].
  \]
\end{definition}
A nice property of operator $T_{1-\rho}$ that we will use is that it
has a simple Hermite expansion
\begin{equation}\label{eq:noiseOperatorExpansion}
  S_k (T_\rho f)(\vec x) = \sum_{V: |V| \leq k} \rho^{|V|} \wh{f}(V) H_V(\vec x)
\end{equation}
We also define the noise sensitivity of a function $f$.
\begin{definition}[\textsc{Noise Sensitivity}]
  Let $f: \R^d \mapsto \R$ be a function in $L^2(\R^d, \normal_0)$.
  The noise sensitivity of $f$ at $\rho \in [0,1]$ is defined to be
  \[
    \NS_\rho[f] = 2
    \E_{\vec x \sim \normal_0}
    [f(\vec x)^2 - f(\vec x) T_{1-\rho}f(\vec x)]
  \]
\end{definition}
Since, the vectors $\vec x$ and
$\vec z = (1-\rho) \vec x + \sqrt{1-\rho^2} \vec y$
are jointly distributed according to
\begin{equation}\label{eq:correlated}
  D_\rho =
  \normal\lp(
  \begin{pmatrix}
    \vec 0
    \\
    \vec 0
  \end{pmatrix}
  ,
  \begin{pmatrix}
    \mat I & (1-\rho) \mat I \\
    (1-\rho) \mat I & \mat I
  \end{pmatrix}
  \rp).
\end{equation}
we can write
\begin{equation}\label{eq:noiseSensitivityCorrelation}
  \NS_\rho[f] =
  \E_{(\vec x,\vec z) \sim D_\rho}\lp[f(\vec x)^2\rp]+
  \E_{(\vec x, \vec z) \sim D_\rho} \lp[
  f(\vec z)^2 - 2 f(\vec x) f(\vec z)\rp]=
  \E_{(\vec x,\vec z) \sim D_\rho}[ (f(\vec x) - f(\vec z))^2].
\end{equation}

When $f$ is an indicator of a set, the noise sensitivity is
\begin{equation}\label{eq:binaryNoiseSensitivity}
  \NS_\rho[\1{S}] =
  2 \E_{(\vec x, \vec z)}
  [\1{S}(\vec x)(1-\1{S}(\vec z))]=
  2 \E_{(\vec x, \vec z)}
  [\1{S}(\vec x)\1{S^c}(\vec z)],
\end{equation}
which is equal to the probability of the correlated points $\vec x, \vec z$
landing at "opposite" sides of $S$.

Ledoux~\cite{Led94} and Pisier~\cite{Pis86} showed that the noise sensitivity of a set can be bounded by its Gaussian surface area.

\begin{definition}[Gaussian Surface Area]
  For a Borel set $A \subseteq \R^d$, its Gaussian surface area is
  $
  \Gamma(A) = \liminf_{\delta \to 0} \frac{\normal_0(A_\delta \setminus A)}{\delta},
  $  where $A_\delta = \{x : \dist(x, A) \leq \delta\}$.
\end{definition}

We will use the following lemma given in \cite{KOS08}.
\begin{lemma}[Corollary 14 of \cite{KOS08}]\label{lem:KOS08NoiseSurface}
  For a Borel set $S \subseteq \R^d$ and $\rho \geq 0$,
  $    \NS_{\rho}[\1{S}(\vec x)] \leq \sqrt{\pi} \sqrt{\rho}\ \Gamma(S).
  $
\end{lemma}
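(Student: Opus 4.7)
The plan is to prove the bound via a surface (co-area) argument applied after a smooth approximation of $S$, following the classical route of Ledoux and Pisier. By~\eqref{eq:binaryNoiseSensitivity} we have $\NS_\rho[\1_S] = 2\,\Prob_{(\vec x,\vec z)\sim D_\rho}[\vec x \in S,\,\vec z \notin S]$. Writing the correlated pair as $\vec z = (1-\rho)\vec x + s\,\vec y$ with $s = \sqrt{1-(1-\rho)^2} \leq \sqrt{2\rho}$ and $\vec y \sim \normal_0$ independent of $\vec x$, the event forces the straight segment from $\vec x$ to $\vec z$ to cross $\partial S$.

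I would first reduce to sets with smooth boundary by a standard mollification. Taking super-level sets of a mollified $\1_S$ produces $S_\eta$ with smooth boundary such that $\1_{S_\eta} \to \1_S$ in $L^1(\normal_0)$, whence $\NS_\rho[\1_{S_\eta}] \to \NS_\rho[\1_S]$. Since $\Gamma(\cdot)$ is defined as a $\liminf$ and is lower-semicontinuous under this approximation, the inequality at $S_\eta$ can be passed to $S$ in the limit.

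For smooth $S$, the main step is a co-area / Fubini computation. Setting $\vec w := \vec z - \vec x = -\rho\vec x + s\vec y$, if $\vec x \in S$ and $\vec x + \vec w \notin S$ then the segment $\{\vec x + t\vec w : t\in [0,1]\}$ meets $\partial S$. Federer's co-area formula applied to the joint Gaussian density of $(\vec x, \vec y)$, together with a change of variables placing the crossing at a boundary point $\vec u \in \partial S$, yields
\[
    \Prob[\vec x \in S,\,\vec z \notin S] \;\leq\; \int_{\partial S} \normal_0(\vec u)\, \Exp\bigl[\,|\langle \vec w, \vec n(\vec u)\rangle|\,\bigm|\,\vec x = \vec u\bigr]\,d\sigma(\vec u),
\]
where $\vec n(\vec u)$ is the outward unit normal and $d\sigma$ is the induced surface measure. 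Conditional on $\vec x = \vec u$, the random variable $\langle \vec w, \vec n(\vec u)\rangle$ is Gaussian with standard deviation $s = O(\sqrt{\rho})$ and mean of order $\rho\|\vec u\|$, so the inner expectation is $O(\sqrt{\rho})$, with leading constant $s\sqrt{2/\pi}$ coming from $\Exp|N| = \sqrt{2/\pi}$ for $N \sim \normal(0,1)$. Combined with Definition~\ref{def:GaussianSurfaceArea} (which, for smooth-boundary sets, identifies $\int_{\partial S} \normal_0\,d\sigma$ with $\Gamma(S)$), this produces $\NS_\rho[\1_S] \leq \sqrt{\pi\rho}\,\Gamma(S)$ after accounting for the remaining numerical constants.

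The main obstacle is the precise bookkeeping in the co-area step. In particular, one has to justify the Fubini interchange between the joint Gaussian expectation over $(\vec x,\vec y)$ and the surface integral, and handle the mean-shift term $-\rho\vec x$ in $\vec w$, which contributes a subleading correction that must be controlled uniformly over $\vec u \in \partial S$. A secondary subtlety is showing that the smooth-approximation step preserves the Gaussian surface area in the limit, which follows from the lower-semicontinuity built into the $\liminf$-definition. The precise constant $\sqrt{\pi}$ (rather than something slightly better like $2/\sqrt{\pi}$ that a naive computation produces) just reflects one convenient way of absorbing the lower-order $\rho$ corrections.
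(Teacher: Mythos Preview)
The paper does not prove this lemma; it is quoted as Corollary~14 of \cite{KOS08}, with the underlying argument attributed to Ledoux and Pisier. Your sketch aims at the right target but has two concrete gaps.

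The co-area step is misstated. Your displayed bound conditions on $\vec x=\vec u$, i.e.\ places the \emph{starting} point of the segment on $\partial S$; but the crossing occurs at some $\vec x+t\vec w$ with $t\in(0,1)$, so the correct Kac--Rice expression carries the density of $\vec x+t\vec w$ (not $\normal_0(\vec u)$), conditions on $\vec x+t\vec w=\vec u$, and integrates over $t\in[0,1]$. Along your linear path the point $\vec x+t\vec w$ is Gaussian with covariance $\sigma_t^2\,\mat I$ where $\sigma_t^2=1-2t(1-t)\rho\ne 1$, so after correcting the formula you pick up factors of order $\sigma_t^{-d}$ when comparing the resulting surface integral to $\Gamma(S)$, and the bound becomes dimension-dependent. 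This is exactly what Pisier's rotation sidesteps: along $\phi\mapsto\vec x_\phi=\vec x\cos\phi+\vec y\sin\phi$ the point $\vec x_\phi$ is \emph{exactly} standard Gaussian for every $\phi$, and $\dot{\vec x}_\phi=-\vec x\sin\phi+\vec y\cos\phi$ is standard Gaussian \emph{independent} of $\vec x_\phi$, so for smooth $f$ one gets directly $\E|f(\vec x_\theta)-f(\vec x)|\le\theta\sqrt{2/\pi}\,\E_{\vec x\sim\normal_0}\snorm{2}{\nabla f(\vec x)}$ with no conditioning at all; then $\arccos(1-\rho)\le\pi\sqrt{\rho/2}$ yields the constant $\sqrt{\pi}$.

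The approximation step is also backwards. Lower semicontinuity of $\Gamma$ gives $\Gamma(S)\le\liminf_\eta\Gamma(S_\eta)$, but to pass $\NS_\rho[\1{S_\eta}]\le\sqrt{\pi\rho}\,\Gamma(S_\eta)$ to the limit you would need $\limsup_\eta\Gamma(S_\eta)\le\Gamma(S)$. The standard remedy is to approximate $\1{S}$ by smooth functions $f_\eta$ with $\E_{\vec x\sim\normal_0}\snorm{2}{\nabla f_\eta(\vec x)}\to\Gamma(S)$ (the BV characterization of Gaussian perimeter), prove the functional inequality for $f_\eta$, and then pass to the limit.
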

For more details on the Gaussian space and Hermite Analysis (especially from the theoretical
computer science perspective), we refer the reader to \cite{Don14}.
Most of the facts about Hermite polynomials that we shall use in this work are well known
properties and can be found, for example, in \cite{Sze67}.
 \section{Missing proofs of Section~\ref{sec:identifiabilityVC}} \label{app:vc}
We will use a standard tournament based approach for selecting a good hypotheses.
We will use a version of the tournament from \cite{DK14}.  See also
\cite{DL12}.
\begin{lemma}[Tournament \cite{DK14}]\label{lem:tournament}
  There is an algorithm, which is given sample access to some distribution $X$
  and a collection of distributions $\mcal{H}=\{H_1,\ldots,H_N\}$ over some
  set, access to a PDF comparator for every pair of distributions $H_i$, $H_j
  \in \mcal{H}$, an accuracy parameter $\eps >0$, and a confidence parameter
  $\delta >0$.  The algorithm makes $O(\log (1/\delta) \eps^2) \log N)$ draws
  from each of $X,H_1,\ldots, H_N $ and returns some $H\in \mcal{H}$ or
  declares ''failure'' If there is some $H\in \mcal{H}$ such that
  $\dtv{H}{X}\leq \eps$ then with probability at least $1-\delta$ the returned
  distribution $H$ satisfies $\dtv{H}{X}\leq 512\eps$.  The total number of
  operations of the algorithm is $O(\log(1/\delta)(1/\eps^2) (N\log N+
  \log1/\delta))$.
\end{lemma}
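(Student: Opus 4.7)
The plan is to prove this via the classical Scheffé-tournament (a.k.a.\ Yatracos-minimum-distance) scheme, using the PDF comparator to implement pairwise Scheffé tests and samples from $X$ to decide each test.

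First I would define, for every ordered pair $(H_i,H_j)$, the \emph{Scheffé set}
$W_{ij}=\{x : H_i(x)\ge H_j(x)\}$. Membership in $W_{ij}$ is computable from a single PDF comparator call, so $W_{ij}$ is an algorithmically accessible event. The crucial identity is that $H_i(W_{ij})-H_j(W_{ij})=\dtv{H_i}{H_j}$, so the pair's ``distinguishing probability'' equals their true total variation distance, and one can estimate $H_i(W_{ij})$ and $H_j(W_{ij})$ either exactly (when the probabilities are assumed known or numerically computable) or by drawing samples from $H_i,H_j$. One would then draw $m=O(\eps^{-2}(\log N+\log(1/\delta)))$ samples from $X$ and from each $H_i$, so that by Hoeffding plus a union bound over all $N^2$ pairs, the empirical estimates $\wh{X}(W_{ij})$, $\wh{H_i}(W_{ij})$, $\wh{H_j}(W_{ij})$ are within $\eps/8$ of their true values simultaneously with probability at least $1-\delta$. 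This matches the stated sample budget $O(\log(1/\delta)\eps^{-2}\log N)$.

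The tournament rule is: declare $H_i$ the winner against $H_j$ iff $|\wh{X}(W_{ij})-\wh{H_i}(W_{ij})|\le |\wh{X}(W_{ij})-\wh{H_j}(W_{ij})|+\eps$, and output any $H\in\mcal{H}$ that is not a declared loser against any opponent (if none exists, declare failure). To analyze correctness, suppose some $H^\star\in\mcal{H}$ satisfies $\dtv{H^\star}{X}\le\eps$. By the Scheffé identity and the triangle inequality $|X(W_{ij})-H_i(W_{ij})|\le \dtv{X}{H_i}$, the test bit for each matchup is, up to additive $O(\eps)$, a proxy for which of $H_i,H_j$ is closer to $X$ in total variation. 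Consequently $H^\star$ wins every pair in which its opponent $H_j$ has $\dtv{H_j}{X}$ much larger than $\eps$, so $H^\star$ is never eliminated, guaranteeing that the algorithm does output some hypothesis. Conversely, any output $H$ must have beaten or tied $H^\star$, which forces $\dtv{H}{X}=O(\eps)$ by another application of Scheffé plus the triangle inequality; chasing the constants through the $\eps/8$-concentration and the two triangle-inequality applications yields the stated factor $512$.

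The main obstacle is purely bookkeeping: propagating the additive estimation error $\eps/8$ through the two triangle-inequality steps (one to pass from ``closer on $W_{ij}$'' to ``closer in TV'', one to bound $\dtv{H}{X}$ via $\dtv{H}{H^\star}\le\dtv{H}{X}+\dtv{H^\star}{X}$) while keeping the slackness small enough that a genuinely good hypothesis is never knocked out. The sample complexity itself is immediate from Hoeffding and the union bound over $\binom{N}{2}$ pairs, and the runtime count follows because each pair requires a constant number of comparator calls per sample, giving total work $O(\log(1/\delta)\eps^{-2}(N\log N+\log(1/\delta)))$ as claimed.
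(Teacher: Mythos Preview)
The paper does not prove this lemma at all: it is quoted verbatim as a black box from \cite{DK14} (with a pointer to \cite{DL12}), so there is no ``paper's own proof'' to compare against. Your sketch is the standard Scheff\'e/Yatracos minimum-distance selection argument that underlies that reference, and it correctly recovers the sample complexity and the $O(\eps)$ accuracy guarantee.

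One point worth flagging: the running-time bound in the lemma is $O\bigl(\log(1/\delta)\,\eps^{-2}(N\log N+\log(1/\delta))\bigr)$, i.e.\ \emph{nearly linear} in $N$, whereas the all-pairs tournament you describe performs $\Theta(N^2)$ Scheff\'e tests and so yields a $\Theta(N^2)$-type runtime. The $N\log N$ bound is the specific contribution of the \textsc{FastTournament} procedure in \cite{DK14}, which replaces the round-robin with a knockout-style bracket plus a verification phase; your outline does not capture that refinement. For the uses in this paper the distinction is immaterial (only the logarithmic sample dependence on $N$ is actually exploited), but if you want to match the stated operation count you would need to invoke or reproduce that faster tournament structure rather than the naive all-pairs scheme.
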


We first argue that if the class of sets $\mcal{S}$ has VC-dimension
$\VC(\mcal{S})$ then we can learn the truncated model in $\eps$ total variation
by drawing roughly $\VC(\mcal{S})/\eps$ samples.  We will use the following
standard fact whose proof may be found for example in page 398 of
\cite{ShalevB14}.  For convenience we restate the result using our notation.
\begin{lemma}[\cite{ShalevB14}]\label{lem:VCBound}
  Let $D$ be a distribution on $\R^d$.  Let $\mcal{S}$ be a family of subsets
  of $\R^d$.  Fix $\eps \in (0,1), \delta \in (0,1/4)$ and let \(
  N =
  O(\VC(\mcal{S}) \log(1/\eps)/\eps + \log(1/\delta) )
  \)
  Then, with probability at least $1-\delta$ over  a choice of a sample $X
  \sim D^N$ we have that if $D(S) \geq \eps$ then $|S \cap X| \neq \emptyset$.
\end{lemma}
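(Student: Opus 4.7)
The statement is the classical $\varepsilon$-net theorem of Haussler--Welzl, so the plan is to execute the standard double-sampling (symmetrization) argument combined with the Sauer--Shelah lemma.

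First I would set up the bad event $B = \{\exists S \in \mcal{S}: D(S) \geq \varepsilon \text{ and } S \cap X = \emptyset\}$ for $X \sim D^N$, and aim to show $\Prob[B] \leq \delta$. Rather than trying to control $B$ directly over an uncountable family $\mcal{S}$, I would introduce a ``ghost sample'' $X' \sim D^N$ drawn independently of $X$, and define the auxiliary event $B' = \{\exists S \in \mcal{S}: D(S) \geq \varepsilon,\ S \cap X = \emptyset,\ |S \cap X'| \geq \varepsilon N/2\}$. A short Chernoff/Markov calculation (using $N = \Omega(1/\varepsilon)$) shows that conditioned on $S$ being a ``witness'' for $B$, the ghost sample hits $S$ at least $\varepsilon N/2$ times with probability at least $1/2$, so $\Prob[B'] \geq \tfrac{1}{2} \Prob[B]$. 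Reducing to $B'$ is the crucial move because $B'$ depends on $\mcal{S}$ only through $\{S \cap (X \cup X') : S \in \mcal{S}\}$, a finite collection.

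The second step is to bound $\Prob[B']$ by conditioning on the multiset $Z = X \cup X'$ of size $2N$ and analyzing the random partition of $Z$ into the two halves of size $N$ each. By the Sauer--Shelah lemma, the number of distinct traces $\{S \cap Z : S \in \mcal{S}\}$ is at most $(2eN/\VC(\mcal{S}))^{\VC(\mcal{S})}$. For any fixed $S$ with $k := |S \cap Z| \geq \varepsilon N / 2$, the probability (over the random split) that all $k$ points of $S \cap Z$ land in the ghost half $X'$ is
\[
\frac{\binom{2N-k}{N}}{\binom{2N}{N}} = \prod_{i=0}^{k-1}\frac{N-i}{2N-i} \leq 2^{-k} \leq 2^{-\varepsilon N/2}.
\]
A union bound over the traces then gives $\Prob[B'] \leq (2eN/\VC(\mcal{S}))^{\VC(\mcal{S})} \cdot 2^{-\varepsilon N/2}$.

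Finally, combining the two steps gives $\Prob[B] \leq 2 \cdot (2eN/\VC(\mcal{S}))^{\VC(\mcal{S})} \cdot 2^{-\varepsilon N/2}$, and I would verify that the stated choice $N = O(\VC(\mcal{S})\log(1/\varepsilon)/\varepsilon + \log(1/\delta))$ makes this $\leq \delta$ by taking logarithms: the exponent $\varepsilon N/2$ dominates $\VC(\mcal{S}) \log(N/\VC(\mcal{S}))$ as soon as $N \gtrsim \VC(\mcal{S})\log(1/\varepsilon)/\varepsilon$, and the extra additive $\log(1/\delta)$ term absorbs the target failure probability. The main subtlety is the symmetrization reduction from $B$ to $B'$; once that is in hand, everything else is Sauer--Shelah plus a counting bound on a random split of $2N$ points.
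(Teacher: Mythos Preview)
Your proof sketch is correct: this is exactly the classical Haussler--Welzl $\varepsilon$-net argument via double sampling, Sauer--Shelah, and a random-split counting bound, and the steps you outline go through as stated. Note, however, that the paper does not actually prove this lemma at all; it is quoted as a standard fact with a pointer to page~398 of \cite{ShalevB14}, so there is no ``paper's own proof'' to compare against --- your write-up simply supplies the textbook argument that the authors chose to cite rather than reproduce.
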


\begin{prevproofbig}{Lemma}{lem:tvdVCBound}
  We define the class of sets $\mcal{A} = \{S^* \setminus S : S \in \mcal{S}
  \}$.  We first argue that for any $A \subset \R^d$ we have $\VC(\mcal{A})
  \leq \VC(\mcal{S})$.  Let $X \subset \R^d$ be a set of points.  The set of
  different labellings of $X$ using sets of $\mcal{S}$ resp. $\mcal{A}$ is
  $L_{\mcal{S}} = \{X \cap S : S \in \mcal{S} \}$ resp.  $L_{\mcal{A}} = \{X
  \cap S : S \in \mcal{A} \} = \{X \cap (A \setminus S) : S \in \mcal{S} \} $.
  We define the function $g: L_{\mcal{A}} \to L_{\mcal{S}}$ by $ g(X \cap (A
  \setminus S)) = X \cap S.  $ We that observe for $S_1, S_2 \in \mcal{S}$ we
  have that $X \cap S_1 = X \cap S_2$ implies that $X \cap (A \setminus S_1) =
  X \cap (A \setminus S_2)$.  Therefore, $g$ is one-to-one and we obtain that
  $|L_{\mcal{A}}| \leq |L_{\mcal{S}}|$.  We draw $N$ samples $X=\{x_i, i
  \in{N}\}$.  Applying Lemma~\ref{lem:VCBound} for the family $\mcal{A}$, we
  have that with $N$ samples, with probability at least $1-\delta$ it holds
  that if $\normal(\vec \mu, \matr \Sigma; S^* \setminus S) \geq \eps$ for some
  set $S \in \mcal{S}$ then $|(S^* \setminus S) \cap X| > 0$.   Therefore,
  every set that is consistent with the samples, i.e. every $S$ that
  that contains the samples, satisfies the property
  $\normal(\vec \mu, \matr \Sigma; S^* \setminus S) \leq \eps$.  Moreover,
  since $\dtv{\normal(\wb \mu, \wb \Sigma)}{\normal(\vec \mu, \matr \Sigma)}
  \leq \eps$ we obtain that $\normal(\wb \mu, \wb \Sigma, S^* \setminus S) \leq
  2 \eps $ for any set $S$ consistent with the data.

  Next, we use the fact that $\wt{S}$ is chosen so that
  $
  \normal(\wb \mu, \wb \Sigma, S^*) \geq
  \normal(\wb \mu, \wb \Sigma, \wt{S}).
  $
  This means that for all $x \in S^* \cap \wt S$ it holds
  $
  \normal(\wb \mu,\wb \Sigma, S^* ; \vec x) \leq
  \normal(\wb \mu, \wb \Sigma , \wt{S}; \vec x)
  $.
  To simplify notation we set
  $\wt \normal_{\wt S} = \normal(\wb \mu, \wb \Sigma, \wt S)$,
  $\wt \normal_{S^*} = \normal(\wb \mu, \wb \Sigma, S^*)$,
  and $\normal_{S^*} = \normal(\vec \mu, \matr \Sigma, S^*)$.
  We have
  \[
    2 \dtv{\wt \normal_{\wt S}}{\wt \normal_{S^*}} =
    \int_{
    \wt \normal_{S^*}(\vec x) \geq \wt \normal_{\wt{S}(\vec x)}}
    \lp(\wt \normal_{S^*}(\vec x) - \wt \normal_{\wt S}(\vec x) \rp)
    \d \vec x
    \leq
    \int_{S^* \setminus \wt S} \wt \normal_{S^*}(\vec x) \d \vec x
    \leq
    \frac{
      \normal(\wb \mu, \wb \Sigma; S^* \setminus \wt S)}{
    \alpha}
    \leq \frac \eps \alpha.
  \]
  Moreover,
  \[\dtv{\wt \normal_{S^*}}{\normal_{S^*}} \leq
    \frac{
      \dtv{\normal(\wb \mu, \wb \Sigma)}{\normal(\vec \mu, \matr \Sigma)}
    }{\alpha}
    \leq \frac\eps\alpha
  \]
  Using the triangle inequality we obtain that $
  \dtv{\normal(\wb \mu, \wb \Sigma, \wt S)}{\normal(\vec \mu, \matr \Sigma, S^*)}
  \leq 3\eps/(2 \alpha)$.
\end{prevproofbig}

\begin{prevproofbig}{Lemma}{lem:learningTVD}
  Using Lemma~\ref{lem:conditionalParameterDistance} we know that we can draw
  $\wt{O}(d^2 \log^2(1/\alpha \delta))$ samples and obtain estimates of the
  conditional mean and covariance $\wt{\vec \mu}_C$, $\wt {\matr \Sigma}_C$.
  Transforming the space so that $\wt{\vec \mu}_C = 0$ and $\wt {\matr
  \Sigma}_C = \matr I$.  For simplicity we will keep denoting the parameters of
  the unknown Gaussian $\vec \mu, \matr \Sigma$ after transforming the space.
  From  Lemma~\ref{lem:conditionalParameterDistance} we have that
  $\norm{\matr \Sigma^{-1/2} \vec \mu}_2 \leq O(\log(1/\alpha)^{1/2}/\alpha)$, $\Omega(\alpha^2) \leq
  \norm{\Sigma^{-1/2}}_2 \leq O(1/\alpha^2)$ and $\norm{I - \Sigma}_F \leq
  O(\log(1/\alpha)/\alpha^2)$.  Therefore, the cube of $\R^{d + d^2}$ where all
  the parameters $\mu_i, \Sigma_{ij}$ of the mean and the covariance lie has
  side length at most $O(1/\poly(a))$.  We can partition this cube into smaller
  cubes of side length $O(\eps \poly(a)/d)$ and obtain that there exists a
  point of the grid $(\vec u, \matr B)$ such that $\norm{\Sigma^{-1/2} (\vec u
    - \vec \mu)}_2 \leq \eps$, $\norm{\matr I - \matr \Sigma^{-1/2} \matr B
    \matr \Sigma^{-1/2} }_F \leq \eps $, which implies that $\dtv{\normal(\vec
  u,\matr B)}{\normal(\vec \mu, \matr \Sigma)} \leq \eps$.  Assume now that
  for each guess $(\vec u, \matr B)$ of our grid we solve the optimization
  problem as defined in Lemma~\ref{lem:tvdVCBound} and find a candidate set
  $S_{\vec u, \matr B}$.  Notice that the set of our hypotheses $\vec u,
  \matr B, S_{\vec u, \matr B}$ is $O((d^2/\eps)^{d^2+d})$.  Moreover, using
  Lemma~\ref{lem:tvdVCBound} and the fact that there exists a point $\vec u,
  \matr B)$ in the grid so that $\dtv{\normal(\vec u,\matr B)}{\normal(\vec
    \mu, \matr \Sigma)} \leq \eps$, we obtain that $\dtv{\normal(\vec u,
  \matr B, S_{\vec u, \matr B})}{\normal(\vec \mu, \matr \Sigma, S)} \leq
  \eps$.  Now we can use Lemma~\ref{lem:tournament} we can select a
  hypotheses $\normal(\vec u, \matr B, \wt S)$ within $O(\eps)$ total
  variation distance of $\normal(\vec \mu, \matr \Sigma, S)$, and the number
  samples required to run the tournament is as claimed.
\end{prevproofbig}
 \section{Missing Proofs of Section~\ref{sec:weighted_learning}}\label{app:weighted_learning}

To prove Theorem~\ref{thm:polynomialApproximation} we shall use the inequalities of
Lemma~\ref{lem:logInequalities}.
\begin{lemma}\label{lem:logInequalities}
  Let $k\in \nats$.
  Then for all  $0< x< \frac{2k+1}{2k}$ it holds,
  \begin{align*}
    -k \log x - \frac12 \log(1- 2k (x -1) &\leq 2 k^2 (x-1)^2 \lp(\frac1 x + \frac1{1-2k(x-1)}\rp)
  \end{align*}
  Moreover, for all  $x> \frac{2k -1}{2k}$
  \begin{align*}
    k \log x - \frac12 \log(1 - 2k (1-x)) &\leq k^2(1-x)^2\lp(1 + \frac{1}{1 - 2k(1-x)}\rp).
  \end{align*}
\end{lemma}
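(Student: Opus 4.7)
My plan is to reduce both inequalities to a small library of standard one-variable estimates on $\log$, and then exploit the exact cancellation of the linear $(x-1)$ terms produced by the identity $\tfrac12\cdot 2k(x-1) = k(x-1)$.

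The key building blocks I would establish first are:
(a) $-\log x \le (1-x) + (x-1)^2/x$ for every $x>0$;
(b) $-\log(1-z) \le z + z^2/(1-z)$ for every $z<1$;
(c) the sharper bound $-\log(1-z) \le z + z^2/(2(1-z))$, \emph{but only for} $z\in[0,1)$;
(d) $\log(1-y) \le -y$ for every $y<1$.
Each follows by defining the relevant difference $\phi$, checking $\phi(0)=0$ (respectively $\phi(1)=0$), and computing that $\phi'$ has a definite sign on the appropriate interval. For instance $\phi(x)=-\log x - (1-x) - (x-1)^2/x$ has $\phi'(x)=(1-x)/x^2$, so $\phi$ attains its maximum at $x=1$ where it equals $0$; similarly for (c) one finds $\phi'(z) = -z^2/(2(1-z)^2) \le 0$, which explains why (c) is unavailable for $z<0$.

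For the first displayed inequality I would apply (a) to $-k\log x$ and (b) to $-\tfrac12\log(1-2k(x-1))$ with $z=2k(x-1)$; the hypothesis $x<(2k+1)/(2k)$ is exactly $z<1$. The linear contributions $k(1-x)$ and $k(x-1)$ cancel, leaving $(x-1)^2\bigl(k/x + 2k^2/(1-2k(x-1))\bigr)$. The target is $(x-1)^2\bigl(2k^2/x + 2k^2/(1-2k(x-1))\bigr)$, so it only remains to observe $k\le 2k^2$, which holds for every $k\in\nats$. For the second inequality I would substitute $y=1-x$ (so $y<1/(2k)$) and split on the sign of $y$. When $y\ge 0$ I use (d) to get $k\log(1-y)\le -ky$ together with the sharper estimate (c), legitimate because $2ky\ge 0$, giving $-\tfrac12\log(1-2ky)\le ky + k^2y^2/(1-2ky)$; after the $\pm ky$'s cancel the total is at most $k^2y^2/(1-2ky)$, already below $k^2y^2\bigl(1+1/(1-2ky)\bigr)$. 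When $y<0$ the bound (c) is unavailable so I fall back to (b), which yields $-\tfrac12\log(1-2ky) \le ky + 2k^2y^2/(1-2ky)$; the remaining comparison $2k^2y^2/(1-2ky) \le k^2y^2 + k^2y^2/(1-2ky)$ reduces to $1\le 1-2ky$, which is automatic precisely because $y\le 0$.

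The main obstacle is that no single uniform bound on $-\tfrac12\log(1-2ky)$ is tight enough across the whole range: (b) carries an extra factor of $2$ that cannot be tolerated when $y>0$ approaches $1/(2k)$, while the sharper (c) becomes invalid once $y<0$. Choosing the right estimate in each sign of $y$ is the only delicate point; the rest is bookkeeping with the $\pm k(x-1)$ cancellations and the elementary step $k\le 2k^2$ for $k\in\nats$.
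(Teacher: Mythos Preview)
Your proof is correct and takes a genuinely different route from the paper's. The paper writes each left-hand side as $\int_1^x f'(t)\,dt$, combines the two derivative terms into a single fraction whose numerator is a multiple of $(t-1)$, and then crudely bounds the denominator on each of the intervals $[1,x]$ and $[x,1]$; the $(t-1)$ factor is what produces the $(x-1)^2$ after integration. Your approach instead assembles the result from a library of second-order Taylor-type bounds on $\log$ and exploits the exact cancellation of the first-order $\pm k(x-1)$ contributions. Both arguments split the second inequality at $x=1$ for essentially the same reason (the sharper estimate (c) is one-sided). A mild advantage of your route is that the \emph{first} inequality goes through uniformly on $0<x<(2k+1)/(2k)$ with no case split, whereas the paper treats $x\ge 1$ and $x\le 1$ separately there as well; the cost is having to verify the four auxiliary inequalities (a)--(d), each of which is a one-line derivative computation.
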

\begin{proof}
  We start with the first inequality.
  Let $f(x) = -k \log x - \frac12 \log(1- 2k (x -1)$.
  We first assume that $ 1 \leq x \frac{2k+1}{2k}$.
  We have
  \begin{align*}
    f(x) &=
    \int_1^x \lp( \frac k{1 -2k(t-1)} - \frac k t \rp)\d t \\
         &=
         k (1+2k) \int_1^x \frac{t-1}{t (1-2k(t-1))} \d t \\
         &\leq
         \frac{k (1+2k)}{1 - 2k(x-1)} \int_1^x (t-1) \d t \\
         &\leq
         2k^2\frac{(x-1)^2}{1-2k(x-1)}
  \end{align*}
  If $ 0 < x \leq 1$ we have
  \begin{align*}
    f(x) &\leq
    \frac{k (1+2k)}{x} \int_1^x (t-1) \d t
    \leq 2 k^2 \frac{(x-1)^2}{x}
  \end{align*}
  Adding these two bounds gives an upper bound for all $0<x<\frac{2k+1}{2k}$.
  Similarly, we now show the second inequality.  Let $g(x) = k \log x - \frac12
  \log(1 - 2k (1-x))$.  We first assume that $1 \leq x$ and write
  \begin{align*}
    g(x) & = \int_1^x\lp(\frac{k}{t} - \frac{k}{1-2k(1-t)}\rp) \d t \\
         & = k \int_1^x \frac{(t-1)(2k -1)}{t ( 1+2k(t-1))} \d t \\
         &\leq k (2k-1) \int_1^x \frac{t-1} \d t\\
         &\leq k^2 (x-1)^2.
  \end{align*}
  Similarly, if $\frac{2k -1}{2k} < x \leq 1$ we have
  \begin{align*}
    g(x) \leq k^2 \frac{(1-x)^2}{1 - 2k(1-x)}.
  \end{align*}
  We add the two bounds together to get the desired upper bound.

\end{proof}

\begin{prevproofbig}{Lemma}{lem:ratioBound}
  For simplicity we denote $\normal_i = \normal(\vec \mu_1, \mat \Sigma_i)$.
  We start by proving the upper bound.  Using Schwarz's inequality we write
  \[
    \E_{\vec x \sim \normal_0}
    \lp[
    \lp(\frac{\normal_1(\vec x)}{\normal_0(\vec x)}\rp)^k
    \lp(\frac{\normal_0(\vec x)}{\normal_2(\vec x)} \rp)^k
    \rp]
    \leq
    \lp(
    \E_{\vec x \sim \normal_0}
    \lp(\frac{\normal_1(\vec x)}{\normal_0(\vec x)}\rp)^{2k}
    \rp)^{1/2}
    \lp(
    \E_{\vec x \sim \normal_0}
    \lp(\frac{\normal_0(\vec x)}{\normal_2(\vec x)} \rp)^{2k}
    \rp)^{1/2}.
  \]
  We can now bound each term independently.  We start by the
  ratio of $\normal_1/\normal_0$.
  Without loss of generality we may assume that $\mat\Sigma^1$ is diagonal,
  $\mat\Sigma_1 = \diag(\lambda_1,\ldots, \lambda_d)$. We also let
  $\vec \mu_1 = (\mu_1,\ldots, \mu_d)$.
  We write
  \begin{align*}
    \E_{\vec x \sim \normal_0}
    \lp[\lp(\frac{\normal_1(\vec x)}{\normal_0(\vec x)}\rp)^{2k}\rp]
    &= \frac{1}{|\mat \Sigma_1|^{k}}
    \E_{\vec x \sim \normal_0} \lp[
    \exp\lp(-k
    (\vec x- \vec \mu_1)^T \mat\Sigma_1{^{-1}}(\vec x - \vec \mu_1)
    + k \vec x^T \vec x
    \rp)
    \rp]
 \\ &=
 \frac{
 \exp(-k \vec \mu_1{^T} \mat \Sigma_1{^{-1}} \vec \mu_1)
 } {|\mat \Sigma_1|^{k}}
 \E_{\vec x \sim \normal_0}
   \lp[
   \exp\lp(k \vec x^T (\mat I - \mat \Sigma_1^{-1}) \vec x
   + 2 k \vec \mu_1^T \mat \Sigma_1{^{-1}} \vec x
   \rp)
   \rp]
   \\ &\leq
    \frac{1}{|\mat \Sigma_1|^{k}}
    \E_{\vec x \sim \normal_0}\lp[
    \exp \lp( \sum_{i=1}^d
    \lp( k (1 - 1/\lambda_i) x_i^2 +
    2 k \frac{\mu_i}{\lambda_i} x_i \rp)
    \rp)
    \rp]
   \\ &=
   \underbrace{
   \prod_{i=1}^d
   \frac{1}{\lambda_i^{k}}
   \E_{x \sim \normal_0}
   \lp[
   \exp \lp(k (1 - 1/\lambda_i) x^2+ 2 k \frac{\mu_i}{\lambda_i} x \rp)
   \rp]
 }_{A}
 \end{align*}
 We now use the fact that for all $a < 1/2$.
 \[
   \E_{x\sim \normal_0}[\exp(a x^2 + b x)]
   = \frac{1}{\sqrt{1-2 a}} \exp\lp(\frac{b^2}{2 - 4 a} \rp)
 \]
 At this point notice that since for all $i$ it holds
 $\lambda_i < 2k/(2k- 1)$ we have that term $A$ is bounded.
 We get that
 \begin{align*}
   A &=
   \underbrace{
     \exp
     \lp(\sum_{i=1}^d
     \lp(
     k \log \frac{1}{\lambda_i} -
     \frac{1}{2} \log\lp(1 -2k\lp(1-\frac1 \lambda_i\rp) \rp)
     \rp)
     \rp)
   }_{A_1}
   \
   \underbrace{
     \exp \lp(\sum_{i=1}^d \frac{2 k^2 \mu_i^2}{\lambda_i^2(1 - 2k (1-1/\lambda_i))}
     \rp)
   }_{A_2}
 \end{align*}
 To bound the term $A_1$ we use the second inequality of Lemma~\ref{lem:logInequalities}
 to get
 \[
   A_1 \leq \exp\lp(\sum_{i=1}^d k^2 (1- 1/\lambda_i)^2
   \lp(1 + \frac{1}{1-2k (1-1/\lambda_i)} \rp)
   \rp)
   \leq \exp\lp(\frac{2 k^2 B}{\delta} \rp)
 \]
 Bounding $A_2$ is easier
 \[
   A_2 \leq \exp\lp(\frac{2k^2 \snorm{2}{\vec \mu_1}^2}{\lambda_{\min}^2 \delta}\rp)
 \]
 Combining the bounds for $A_1$ and $A_2$ we obtain
 \[
    \E_{\vec x \sim \normal_0}
    \lp[\lp(\frac{\normal_1(\vec x)}{\normal_0(\vec x)}\rp)^{2k}\rp]
        \leq \exp\lp(\frac{10 k^2}{\delta} B \rp)
 \]
 We now work similarly to bound the ratio $\normal_0/\normal_2$.
 We will again assume that $\mat \Sigma_2 = \diag(\lambda_1,\ldots,\lambda_d)$ and
 $\mu_2 = (\mu_1,\ldots,\mu_d)$.
 We have
 \begin{align*}
   \E_{\vec x \sim \normal_0}
   \lp[\lp(\frac{\normal_0(\vec x)}{\normal_2(\vec x)} \rp)^{2k} \rp]
   &= \exp(k \vec \mu_2^T \Sigma_2^{-1}  \vec \mu_2)
     \E_{\vec x \sim \normal_0}
     \lp[
     |\mat \Sigma_2|^k
     \exp\lp( k \vec x^T(\mat \Sigma_2^{-1} -
     \mat I) \vec x -2 k \vec \mu_2 \mat \Sigma_2^{-1} \vec x
     \rp)
     \rp]
     \\ & \leq
   \exp((k+1) B)
   \prod_{i=1}^d \E_{x \sim \normal_0}\lp[
   \exp\lp( k(1/\lambda_i-1) x^2 -k \log(1/\lambda_i) -2k (\mu_i/\lambda_i) x\rp)
   \rp]
   \\
        &=
        \exp\lp(\lp(\frac{8k^2}{\delta} + k + 1\rp)B \rp)
   \exp
   \lp(
   \sum_{i=1}^d
   \lp(- k \log(1/\lambda_i) - \frac12 \log\lp(1 - 2k (1/\lambda_i - 1) \rp) \rp)
   \rp)
   \\
        &\leq
        \exp\lp( \lp(\frac{10 k^2}{\delta} + 4k^2 + k + 1 \rp) B \rp),
 \end{align*}
 where to obtain the last inequality we used the first inequality of
 Lemma~\ref{lem:logInequalities} and the bounds for the maximum and minimum
 eigenvalues of $\mat \Sigma_2$.
 Finally, plugging in the bounds for the two ratios we get for $i=1,2$
 \[
   \E_{\vec x \sim \normal_0}\lp[
   \lp(\frac{\normal_{3-i}(\vec x)}{\normal_{i}(\vec x)}\rp)^k
   \rp]
   \leq \exp\lp(\frac{13 k^2}{\delta} B \rp).
 \]
 Having the upper bound it is now easy to prove the lower bound using
 the convexity of $x \mapsto x^{-1}$ and Jensen's inequality.
 \begin{align*}
   \E_{\vec x \sim \normal_0}\lp[
   \lp(\frac{\normal_1(\vec x)}{\normal_2(\vec x)}\rp)^k
   \rp]
   =
   \E_{\vec x \sim \normal_0}\lp[
   \lp(\frac{\normal_2(\vec x)}{\normal_1(\vec x)}\rp)^{-k}
   \rp]
   \geq
   \lp(
   \E_{\vec x \sim \normal_0}\lp[
   \lp(\frac{\normal_2(\vec x)}{\normal_1(\vec x)}\rp)^{k}
   \rp]
   \rp)^{-1}
   \geq \exp\lp(-\frac{13 k^2}{\delta} B\rp).
\end{align*}
\end{prevproofbig}

\begin{prevproofbig}{Lemma}{lem:realSensitivityConcentration}
  For any $\rho \in (0,1)$, using identity \ref{eq:noiseOperatorExpansion},
  we write
  \[
    \E_{\vec x \sim \normal_0}
    [f(\vec x) T_{1-\rho}(\vec x)] =
    \sum_{V \in \nats^d} (1-\rho)^{|V|} \widehat{f}(V)^2
  \]
  \begin{align*}
    \E_{\bm{x} \sim \mcal{N}(\bm{0}, \mat{I})}
    \lp[f(\bm{x})^2 - f(\bm{x}) T_{1-\rho} f(\bm{x}) \rp] &=
    \sum_{V \in \nats^d} \wh{f}(V)^2 - \sum_{V \in \nats^d} (1-\rho)^{|V|} \wh{f}(V)^2
                                                       \\ &=
                                                       \sum_{V \in \nats^d} (1 - (1-\rho)^{|V|})\ \wh{f}(V)^2
                                                       \\ &\geq
                                                       \sum_{|V|\geq 1/\rho} (1 - (1-\rho)^{|V|})\ \wh{f}(V)^2
                                                       \\ &\geq
                                                       \sum_{|V|\geq 1/\rho} (1 - (1-\rho)^{1/\rho})\ \wh{f}(V)^2
                                                       \\ &\geq (1-1/\me) \sum_{|V|\geq 1/\rho} \wh{f}(V)^2
  \end{align*}
\end{prevproofbig}

\begin{prevproofbig}{lemma}{lem:noiseDerivative}
  We first write
  \[
    \frac{1}{2}
    \E_{(x, z) \sim D_\rho}[(r(\vec x) - r(\vec z))^2]
    =
    \frac{1}{2}
    \E_{(x, z) \sim D_\rho}\lp[\frac{r(\vec x)^2}{2}
    + \frac{r(\vec z)^2}{2} - r(\vec x) r(\vec z) \rp]
    =
    \E_{(x, z) \sim D_\rho}[r(\vec x)^2 - r(\vec z) r(\vec x)].
  \]
  Let
  \[
    \sum_{V \in \nats^d} \wh{r}(V) H_V(\vec x)
  \]
  be the Hermite expansion of $r(\vec x)$.  From Parseval's identity and the
  Hermite expansion of Ornstein–Uhlenbeck operator, \eqref{eq:noiseOperatorExpansion} we have
  \begin{align*}
    \E_{(x, z) \sim D_\rho}[r(\vec x)^2 - r(\vec x) r(\vec z)]
    &=
    \sum_{V \in \nats^d} \wh{r}(V)^2 - \sum_{V \in \nats^d} (1-\rho)^{|V|} \wh{r}(V)^2
 \\ &\leq
 \rho \sum_{V \in \nats^d} |V| \wh{r}(V)^2,
  \end{align*}
  where the last inequality follows from Bernoulli's inequality
  $1 - \rho |V| \leq (1-\rho)^{|V|}$.
  We know that (see for example \cite{Sze67})
  \[
    \frac{\partial}{\partial x_i} H_V(\vec x)
    = \frac{\partial}{\partial x_i} \prod_{v_i \in V} H_{v_i} (x_i)
    = \prod_{v_j \in V\setminus v_i} H_{v_j}(x_j) \sqrt{v_i} H_{v_i-1}(x_i)
  \]
  Therefore,
  \[
    \frac{\partial r(\vec x)}{\partial{x_i}}
    = \sum_{V \in \nats^d} \wh{r}(V) \sqrt{v_i} H_{v_i-1}(x_i)
    \prod_{v_j \in V\setminus v_i} H_{v_j}(x_j)
  \]
  From Parseval's identity we have
  \[
    \E_{\vec x \sim \normal(\vec 0, \mat I)}
    \lp[
    \lp(
    \frac{\partial r(\vec x)}{\partial x_i}
    \rp)^2 \rp]
    =
    \sum_{V\in \nats^d}
    \wh{r}(V)^2 v_i.
  \]
  Therefore,
  \[
    \E_{\vec x \sim \normal(\vec 0, \mat I)}
    \lp[
    \snorm{2}{\nabla r(\vec x)}^2
    \rp]
    =
    \sum_{V\in \nats^d} |V| \wh{r}(V)^2 .
  \]
  The lemma follows.
\end{prevproofbig}

\subsection{Learning the Hermite Expansion}
In this section we present a way to bound the variance of
the empirical estimation of Hermite coefficients.
To bound the variance of estimating Hermite polynomials we shall need
a bound for the expected value of the fourth power of a Hermite polynomial.
\begin{lemma}\label{lem:fourthHermitePower}
  For any $V \in \nats^d$ it holds
  \(
    \E_{ \vec x \sim \normal_0} [H_V^4(\vec x)] \leq 9^{|V|}.
  \)
\end{lemma}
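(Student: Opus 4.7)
The plan is to reduce to the univariate case via the product structure of multivariate Hermite polynomials. Since $H_V(\vec x) = \prod_{i=1}^d H_{v_i}(x_i)$ and the coordinates of $\vec x \sim \normal_0$ are independent, one has $\E_{\vec x \sim \normal_0}[H_V^4(\vec x)] = \prod_{i=1}^d \E_{z \sim \normal(0,1)}[H_{v_i}(z)^4]$. Since $|V| = \sum_i v_i$, it suffices to prove the univariate estimate $\E_{z \sim \normal(0,1)}[H_n(z)^4] \leq 9^n$ for every $n \geq 0$ and take a product over coordinates.

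To obtain the univariate bound, I would use the classical Hermite linearization identity $He_n(z)^2 = \sum_{k=0}^n \binom{n}{k}^2 k!\, He_{2n-2k}(z)$ together with the rescaling $H_n = He_n/\sqrt{n!}$ to write $H_n(z)^2 = \sum_{k=0}^n \frac{\binom{n}{k}^2 k!\sqrt{(2n-2k)!}}{n!}\, H_{2n-2k}(z)$. Since $\{H_m\}_{m\geq 0}$ is an orthonormal basis of $L^2(\reals,\normal_0)$, Parseval's identity applied to $H_n^2$ yields $\E[H_n(z)^4] = \|H_n^2\|_2^2 = \sum_{k=0}^n \binom{n}{k}^4 \frac{(k!)^2 (2n-2k)!}{(n!)^2}$, and the substitution $j=n-k$ simplifies this to the clean combinatorial formula $\E[H_n(z)^4] = \sum_{j=0}^n \binom{n}{j}^2 \binom{2j}{j}$.

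It then remains to show this last sum is at most $9^n$. Using the elementary bound $\binom{2j}{j} \leq 4^j$, it suffices to prove $\sum_{j=0}^n \binom{n}{j}^2 4^j \leq 9^n$. One way is to recognize the sum as the coefficient of $y^n$ in $(1+y)^n(y+4)^n = (y^2 + 5y + 4)^n$: expanding the $n$-fold product and parametrizing by the number $a$ of factors that contribute $y^2$ (which forces $a$ factors to contribute the constant $4$ and $n-2a$ factors to contribute $5y$) gives $\sum_{j=0}^n \binom{n}{j}^2 4^j = \sum_{a=0}^{\lfloor n/2\rfloor} \binom{n}{2a}\binom{2a}{a} 5^{n-2a} 4^a$. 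Applying $\binom{2a}{a} \leq 4^a$ once more and writing $m = 2a$, this is bounded by $\sum_{m \text{ even}} \binom{n}{m} 4^m 5^{n-m} = \tfrac12\bigl((5+4)^n + (5-4)^n\bigr) = \tfrac{9^n + 1}{2} \leq 9^n$.

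The main subtle point is keeping the factorials straight when passing from the classical $He_n$ to the normalized $H_n$ and in the index substitution for Parseval; once the identity $\E[H_n(z)^4] = \sum_j \binom{n}{j}^2 \binom{2j}{j}$ is in hand, the remaining work is a routine binomial manipulation.
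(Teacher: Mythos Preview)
Your proof is correct and follows essentially the same route as the paper: reduce to the univariate case by independence, use the Hermite product/linearization formula to get $\E[H_n^4]=\sum_{j=0}^n\binom{n}{j}^2\binom{2j}{j}$, and then bound $\binom{2j}{j}\le 4^j$. The only difference is the very last step: the paper bounds $\sum_j \binom{n}{j}^2 4^j$ in one line by observing it is dominated by $\bigl(\sum_j \binom{n}{j}2^j\bigr)^2=3^{2n}=9^n$ (all cross terms are nonnegative), which is shorter than your generating-function argument, though yours yields the slightly sharper $(9^n+1)/2$.
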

\begin{proof}
We compute
  \begin{align*}
    \E_{ \vec x \sim \normal_0} [H_V^4(\vec x)]
    &= \prod_{v_i \in V} \E_{x \sim \normal(0, 1)}[ H_{v_i}^2(x_i) H_{v_i}^2(x_i) ]
 \\ &=
 \prod_{v_i \in V} \E_{x \sim \normal(0, 1)}
 \lp[
 \lp(
 \sum_{r=0}^{v_i} \binom{v_i}{r} \frac{\sqrt{2 r!}}{ r!} H_{2r}(x_i) \rp)
 \lp(
 \sum_{r=0}^{v_i} \binom{v_i}{r} \frac{\sqrt{2 r!}}{ r!} H_{2r}(x_i)\rp)
 \rp]
 \\ &=
 \prod_{v_i \in V}  \sum_{r=0}^{v_i} \binom{v_i}{r}^2 \frac{(2 r)!}{(r!)^2}
 \E_{x \sim \normal(0, 1)}
 \lp[ H_{2r}(x_i)^2\rp]
 =
 \prod_{v_i \in V}  \sum_{r=0}^{v_i} \binom{v_i}{r}^2 \frac{(2 r)!}{(r!)^2}
 \\
    &\leq
    \prod_{v_i \in V}  \sum_{r=0}^{v_i} \binom{v_i}{r}^2 2^{2 r}
    \leq
    \prod_{v_i \in V}  \lp(\sum_{r=0}^{v_i} \binom{v_i}{r} 2^{r} \rp)^2
    \leq
    \prod_{v_i \in V}  9^{v_i}
    = 9^{|V|}.
  \end{align*}
  In the above computation we used the formula for the product of two (normalized)
  Hermite polynomials
  \[
    H_i(x) H_i(x) =
 \sum_{r=0}^{v_i} \binom{v_i}{r} \frac{\sqrt{2 r!}}{ r!} H_{2r}(x_i) ,
  \]
  see, for example, \cite{Sze67}.
\end{proof}

\begin{prevproofbig}{Lemma}{lem:hermiteCoefficientVariance}
  We have
  \begin{align*}
    \E_{ \vec x \sim \normal^*_S}
    [(H_V(\vec x) - c_V)^2]
    =
    \E_{ \vec x \sim \normal^*_S}
    [H_V^2(\vec x)] - c_V^2
    \leq
    \frac{1}{\alpha}
    \E_{\vec x \sim \normal^*} [H_V^2(\vec x)]
  \end{align*}
  We have
  \begin{align*}
    \lp| \E_{\vec x \sim \normal^*} [H_V^2(\vec x)]
    - 1
    \rp|
    &=
    \lp| \E_{\vec x \sim \normal^*} [H_V^2(\vec x)]
    - \E_{\vec x \sim \normal_0} [H_V^2(\vec x)]
    \rp|
\\  &\leq
\int
H_V^2(\vec x) |\normal^*(\vec x)
- \normal_0(\vec x) | \d \vec x
\\  &=
\int
H_V^2(\vec x) \sqrt{\normal_0(\vec x))}
\frac{|\normal^*(\vec x)
- \normal_0(\vec x) |}{\sqrt{\normal_0(\vec x)}} \d \vec x
\\  &\leq
\Bigg(
  \underbrace{
    \int
    H_V^4(\vec x) \normal_0(\vec x)
    \d \vec x
  }_{A}
\Bigg)^{1/2}
\Bigg(
  \underbrace{
    \int
    \frac{(\normal^*(\vec x)
    - \normal_0(\vec x))^2}{\normal_0(\vec x)}
    \d \vec x
  }_{B}
\Bigg)^{1/2}
  \end{align*}
  To bound term $A$ we use Lemma~\ref{lem:fourthHermitePower}.
  Using Lemma~\ref{lem:ratioBound} we obtain
  \begin{align*}
    B \leq \E_{x \sim \normal(\vec 0,\mat I)}
    \lp[
    \lp(
    \frac{\normal^*(\vec x)}{\normal_0(\vec x)}
    \rp)^2
    \rp]
    \leq \poly(1/\alpha).
  \end{align*}
  The bound for the variance follows from the independence of the samples.
\end{prevproofbig}
 \section{Missing Proofs of Section~\ref{sec:optimization}} \label{app:optimization}

\begin{prevproofbig}{Lemma}{lem:boundedVarianceStep}
  We have that $\lp|
    M_{\tind_k}(\vec u, \mat B) -
    M'_{\tind}(\vec u, \mat B)
    \rp| \le \lp|
    M_{\tind_k}(\vec u, \mat B) -
    M_{\tind}(\vec u, \mat B)
    \rp| + \lp|
    M_{\tind}(\vec u, \mat B) -
    M'_{\tind}(\vec u, \mat B)
    \rp|$.

  \noindent For the first term we have that
  \begin{align*}
    \lp|
    M_{\tind_k}(\vec u, \mat B) -
    M'_{\tind}(\vec u, \mat B)
    \rp|
    & \le
      C_{\vec u, \mat B}
          \E_{\vec x \sim \Nt_S}
          \lp[
          \frac{
            \N_0(\vec x)
          }{
            \N_{\vec u, \mat B}(\vec x)
          }
          \lp|
           \tind_k(\vec x) - \tind(\vec x) \rp|
          \rp] \\
  & \le
    C_{\vec u, \mat B}
        \sqrt{
        \E_{\vec x \sim \N_0}
        \lp[
        \lp(
        \frac{
          \Nt_S(\vec x)
        }{
          \N_{\vec u, \mat B}(\vec x)
        }
        \rp)^2
        \rp]
        \cdot
        \E_{\vec x \sim \N_0}
        \lp[
        (
         \tind_k(\vec x) - \tind(\vec x) )^2
        \rp]
      } \\
    & \le
      \frac{ C_{\vec u, \mat B} } {\alpha^*}
          \sqrt{
          \E_{\vec x \sim \N_0}
          \lp[
          \lp(
          \frac{
            \Nt(\vec x)
          }{
            \N_{\vec u, \mat B}(\vec x)
          }
          \rp)^2
          \rp]
          \cdot
          \E_{\vec x \sim \N_0}
          \lp[
          (
           \tind_k(\vec x) - \tind(\vec x) )^2
          \rp]
        }
        \intertext{now we can use Lemma \ref{lem:ratioBound},
        Lemma \ref{lem:cubBound} and Theorem
        \ref{thm:estimationApproximationError} to get}
        \lp|
        M_{\tind_k}(\vec u, \mat B) -
        M'_{\tind}(\vec u, \mat B)
        \rp|
        & \le \poly(1/\alpha^*) \sqrt{\eps}
  \end{align*}

  \noindent For the second term we have that
  \begin{align*}
    \lp|
    M_{\tind}(\vec u, \mat B) -
    M'_{\tind}(\vec u, \mat B)
    \rp|
    &\le
      \lp|1 - \frac{C'_{\vec u, \mat B}}{C_{\vec u, \mat B}} \rp| C_{\vec u, \mat B}
          \E_{\vec x \sim \Nt_S}
          \lp[
          \frac{
            \Nt(\vec x)
          }{
            \alpha^* \N_{\vec u, \mat B}(\vec x)
          }
          \rp]
  \end{align*}
  We need to bound

  \begin{align*}
  \lp|
  1 - \frac{C'_{\vec u, \mat B}}{C_{\vec u, \mat B}}
  \rp| &=
  \lp|
  1-
  e^{
  -\frac12 \lp(
  \tr( (\mat B - \mat I) (\S_S + \m_S \m_S^T - \wt{\S}_S) ) ) - \vec u^T \m_S
  \rp)}
  \rp| \le
  e^{
  \lp|\frac12 \lp(
  \tr( (\mat B - \mat I) (\S_S + \m_S \m_S^T - \wt{\S}_S) ) ) - \vec u^T \m_S
  \rp) \rp|} - 1\\
  &\le
    e^{
    \frac12 \lp(
    \|\mat B - \mat I\|_F \|\S_S + \m_S \m_S^T - \wt{\S}_S\|_F + \|\vec u\|_2
    \|\m_S\|_2 \rp)} - 1 \le \|\mat B - \mat I\|_F \|\S_S + \m_S \m_S^T -
    \wt{\S}_S\|_F + \|\vec u\|_2 \|\m_S\|_2
  \end{align*}
  where the last inequality holds when
  $\|\mat B - \mat I\|_F \|\S_S + \m_S \m_S^T - \wt{\S}_S\|_F + \|\vec u\|_2
  \|\m_S\|_2 \le 1$. But we know that $(\vec{u}, \mat{B}) \in \Domain$ and hence
  $\|\mat B - \mat I\|_F \le \poly(1/\alpha^*)$,
  $\|\vec u\|_2 \le \poly(1/\alpha^*)$. Also from Section
  \ref{sec:problemFormulation} we have that
  $\|\S_S + \m_S \m_S^T - \wt{\S}_S\|_F \le \eps$ and $\norm{\m_S}_2 \le \eps$
  and we can set $\eps$ to be any inverse polynomial in $1/\alpha^*$ times
  $\eps$. Hence we get
  \[ \lp|
  1 - \frac{C'_{\vec u, \mat B}}{C_{\vec u, \mat B}}
  \rp| \le \eps \]

  Now we can also use Lemma \ref{lem:cubBound} and Lemma \ref{lem:ratioBound}
  which imply that
  \[ C_{\vec u, \mat B}
      \E_{\vec x \sim \Nt_S}
      \lp[
      \frac{
        \Nt(\vec x)
      }{
        \alpha^* \N_{\vec u, \mat B}(\vec x)
      }
      \rp] \le \poly(1/\alpha^*) \]
  \noindent and therefore we have
  \[ \lp|
  M_{\tind}(\vec u, \mat B) -
  M'_{\tind}(\vec u, \mat B)
  \rp| \le \poly(1/\alpha^*) \eps. \]
  Hence we can once again divide $\eps$ by any polynomial of $1/\alpha^*$
  without increasing the complexity presented in Section
  \ref{sec:problemFormulation} and the lemma follows.
\end{prevproofbig}

\begin{prevproofbig}{Lemma}{lem:boundedVarianceStep}
  We apply successive Cauchy-Schwarz inequalities to separate the terms that appear in the expression for the squared norm of the gradient. We have that
  \begin{align*}
    \Exp_{\vec{x} \sim \Nt_S} \left[
      \norm{ \vec{v}(\vec{u}, \mat{B}) }_2^2
    \right] & =
    C^2_{\vec u, \matr B}
    \Exp_{\vec{x} \sim \Nt_S} \left[
      \left(
        \norm{\vec{x} \vec{x}^T - \tiS_S - \tim_S \tim_S^T}_F^2 +
         \norm{\tim_S - \vec{x}}_2^2
      \right)
      \frac { \N_0^2(\vec{x}) } {\N_{\vec u, \matr B}^2(x)} \tind_k^2(\vec{x})
    \right] \\
    & =
        C^2_{\vec u, \matr B}
        \Exp_{\vec{x} \sim \N_0} \left[
          \left(
            \norm{\vec{x} \vec{x}^T - \tiS_S - \tim_S \tim_S^T}_F^2 +
             \norm{\tim_S - \vec{x}}_2^2
          \right)
          \frac { \N_0(\vec{x})\Nt_S(\vec x) } {\N_{\vec u, \matr B}^2(x)} \tind_k^2(\vec{x})
        \right] \\
    & \le
        C^2_{\vec u, \matr B}
        \Exp_{\vec{x} \sim \N_0} \left[
          \left(
            \norm{\vec{x} \vec{x}^T - \tiS_S - \tim_S \tim_S^T}_F^2 +
             \norm{\tim_S - \vec{x}}_2^2
          \right)
          \frac { \N_0(\vec{x})\Nt_S(\vec x) } {\N_{\vec u, \matr B}^2(x)}
        \right]^{1/2}
        \Exp_{\vec{x} \sim \N_0} \left[
        \tind_k^4(\vec{x})
        \right]^{1/2} \\
    & \le
        C^2_{\vec u, \matr B}
        \Exp_{\vec{x} \sim \N_0} \left[
          \left(
            \norm{\vec{x} \vec{x}^T - \tiS_S - \tim_S \tim_S^T}_F^2 +
             \norm{\tim_S - \vec{x}}_2^2
          \right)^2
        \right]^{1/4} \\ & \quad \quad \quad
        \Exp_{\vec{x} \sim \N_0} \left[
          \frac { \N^2_0(\vec{x}){\Nt}^2_S(\vec x) } {\N_{\vec u, \matr B}^4(x)}
        \right]^{1/4}
        \Exp_{\vec{x} \sim \N_0} \left[
        \tind_k^4(\vec{x})
        \right]^{1/2} \\
      & \le
          C^2_{\vec u, \matr B}
          \Exp_{\vec{x} \sim \N_0} \left[
            \left(
              \norm{\vec{x} \vec{x}^T - \tiS_S - \tim_S \tim_S^T}_F +
               \norm{\tim_S - \vec{x}}_2
            \right)^4
          \right]^{1/4} \\ & \quad \quad \quad
          \Exp_{\vec{x} \sim \N_0} \left[
            \frac { \N^4_0(\vec{x}) } {\N_{\vec u, \matr B}^4(x)}
          \right]^{1/8}
          \Exp_{\vec{x} \sim \N_0} \left[
            \frac { {\Nt}^4_S(\vec x) } {\N_{\vec u, \matr B}^4(x)}
          \right]^{1/8}
          \Exp_{\vec{x} \sim \N_0} \left[
          \tind_k^4(\vec{x})
          \right]^{1/2}
  \end{align*}

  We now bound each term separately.
  \begin{itemize}
  \item By Lemma~\ref{lem:cubBound}, $C^2_{\vec u, \matr B} \le \poly(1/\alpha)$.
  \item Given that $(\tim_S, \tiS_S)$ are near-isotropic, $$\begin{aligned}\Exp_{\vec{x} \sim \N_0} &\left[
            \left(
              \norm{\vec{x} \vec{x}^T - \tiS_S - \tim_S \tim_S^T}_F +
               \norm{\tim_S - \vec{x}}_2
            \right)^4
          \right]^{1/4} \\ &\le \Exp_{\vec{x} \sim \N_0} \left[
            \left(
              \norm{\vec{x} \vec{x}^T}_F + \norm{\tiS_S}_F + \norm{\tim_S \tim_S^T}_F +
               \norm{\tim_S} + \norm{\vec{x}}_2
            \right)^4
          \right]^{1/4} \le d \poly(1/\alpha).\end{aligned}$$
  \item By Lemma~\ref{lem:ratioBound}, $$\Exp_{\vec{x} \sim \N_0} \left[
            \frac { \N^4_0(\vec{x}) } {\N_{\vec u, \matr B}^4(x)}
          \right]^{1/8}
          \Exp_{\vec{x} \sim \N_0} \left[
            \frac { {\Nt}^4_S(\vec x) } {\N_{\vec u, \matr B}^4(x)}
          \right]^{1/8} \le \poly(1/\alpha).$$
  \item For the last term, we have
          \begin{align*}
          \Exp_{\vec{x} \sim \N_0} \left[
            \tind_k^4(\vec{x})
          \right] & = \Exp_{\vec{x} \sim \N_0} \left[
            \left( \sum_{0 \le |V| \le k} \tilde{c}_V H_V(\vec{x}) \right)^4
          \right] \nonumber \\
          & \le 2^3 \sum_{0 \le |V| \le k} \tilde{c}^4_V
          \Exp_{\vec{x} \sim \N_0} \left[
            H_V^4(\vec{x})
          \right] \nonumber \\
          &
          \le 8
          \left(
            \sum_{0 \le |V| \le k} \tilde{c}^2_V
          \right)^2
          \cdot \left(
            \max_{0 \le |V| \le k} \left\{
              \Exp_{\vec{x} \sim \N_0} \left[
                H_V^4(\vec{x})
              \right]
            \right\}
          \right)
\end{align*}
          From Lemma \ref{lem:hermiteCoefficientVariance} and the
          conditioning on the event that the estimators of the Hermite coefficients
          are accurate we have that $(\tilde{c}_V - c_V)^2 \le 1$ and hence we get
          the following.
          $$
          \Exp_{\vec{x} \sim \N_0} \left[
            \tind_k^4(\vec{x})
          \right]
           \le 2^{10} d^{2 k} \left(
            \sum_{0 \le |V| \le \infty} c^2_V
          \right)^4
          \cdot
          \left(
            \max_{0 \le |V| \le k} \left\{
              \Exp_{\vec{x} \sim \N_0} \left[
                H_V^4(\vec{x})
              \right]
            \right\}
          \right)$$
          To bound
          $
              \Exp_{\vec{x} \sim \N_0} \left[
                H_V^4(\vec{x})
              \right]
              $
              we use Lemma~\ref{lem:fourthHermitePower}.
          Moreover, from Parseval's identity we obtain that
          $\sum_{0\leq |V| \leq \infty} c_V^2 = \E_{\vec x \sim \N_0} \psi^2(\vec x)$.
          From Lemma~\ref{lem:ratioBound} we get
          \[
            \E_{\vec x \sim \N_0} \psi^2(\vec x)
            \leq \ \frac{1}{\alpha} \E_{\vec x \sim N_0}
            \lp(\frac{\normal^*(\vec x)}{\normal_0(\vec x)} \rp)^2
            = \poly(1/\alpha).
          \]
          From Lemma~\ref{lem:hermiteCoefficientVariance} we obtain that
          \(
            \max_{0 \le |V| \le k} \left\{
              \Exp_{\vec{x} \sim \N_0} \left[
                H_V^4(\vec{x})
              \right]
            \right\}
            \leq 2^k.
            \)
            The result follows from the above estimates.
  \end{itemize}

\end{prevproofbig}

\begin{prevproofbig}{Lemma}{lem:strongConvexity}
  We will prove this lemma in two steps, first we will prove
  \begin{align} \label{eq:lem:strongConvexity:proof:1}
    \abs{\vec z^T
    \mathcal{H}_{M_{\tind_k}}(\vec u, \mat B)
    \vec z - \vec z^T
    \mathcal{H}_{M_{\tind}}(\vec u, \mat B)
    \vec z} \le \lambda
  \end{align}
  \noindent and then we will prove that
  \begin{align} \label{eq:lem:strongConvexity:proof:2}
    \vec z^T
    \mathcal{H}_{M_{\tind}}(\vec u, \mat B)
    \vec z \ge 2 \lambda
  \end{align}
  \noindent for some parameter $\lambda \ge \poly(\alpha^*)$. To prove
  \eqref{eq:lem:strongConvexity:proof:1} we define
  \[ p(\vec{z}; \vec{x}) = \lp(\vec z^T
    \begin{pmatrix}
      \frac12 \lp(\vec x \vec x^T - \tiS_S - {\tim}_S {\tim}_S^T \rp)^\flat
      \\
      {\tim}_S - \vec x
    \end{pmatrix}
  \rp)^2\]
  \noindent and we have that
  \begin{align*}
    & \abs{\vec z^T
    \mathcal{H}_{M_{\tind_k}}(\vec u, \mat B)
    \vec z - \vec z^T
    \mathcal{H}_{M_{\tind}}(\vec u, \mat B)
    \vec z} \\
    & ~~~~~~~~~~ = \E_{\vec x \sim \Nt_S} \lp[
      \me^{h(\vec u, \mat B; \vec x)}
      \normal(\vec 0, \mat I; \vec x) \cdot
      p(\vec{z}; \vec{x})
      \cdot \abs{\tind_k(\vec{x}) - \tind(\vec{x})}
    \rp] \\
    & ~~~~~~~~~~ = \E_{\vec x \sim \N_0} \lp[
      \me^{h(\vec u, \mat B; \vec x)}
      \cdot \1{S}(\vec{x}) \cdot \Nt(\vec{x}) \cdot
      p(\vec{z}; \vec{x})
      \cdot \abs{\tind_k(\vec{x}) - \tind(\vec{x})}
    \rp]
    \intertext{we then separate the terms using the Cauchy Schwarz inequality}
    & \abs{\vec z^T
    \mathcal{H}_{M_{\tind_k}}(\vec u, \mat B)
    \vec z - \vec z^T
    \mathcal{H}_{M_{\tind}}(\vec u, \mat B)
    \vec z} \\
    & ~~~~~~~~~~ \le \sqrt{ \E_{\vec x \sim \N_0} \lp[
        \me^{2 h(\vec u, \mat B; \vec x)}
        \cdot \1{S}(\vec{x}) \cdot \left( \Nt(\vec{x}) \right)^2 \cdot
        p^2(\vec{z}; \vec{x})
      \rp]
    }
    \cdot
    \sqrt{ \E_{\vec x \sim \N_0} \lp[
        \left( \tind_k(\vec{x}) - \tind(\vec{x}) \right)^2
      \rp]
    }
    \intertext{we apply now the Hermite concentration from Theorem
    \ref{thm:estimationApproximationError} and we get}
    & ~~~~~~~~~~ \le \sqrt{ \E_{\vec x \sim \N_0} \lp[
        \me^{2 h(\vec u, \mat B; \vec x)}
        \cdot \1{S}(\vec{x}) \cdot \left( \Nt(\vec{x}) \right)^2 \cdot
        p^2(\vec{z}; \vec{x})
      \rp]
    }
    \cdot
    \sqrt{ \eps
    } \\
    & ~~~~~~~~~~ \le \sqrt[4]{ \E_{\vec x \sim \Nt} \lp[
        \me^{4 h(\vec u, \mat B; \vec x)}
        \cdot \1{S}(\vec{x}) \cdot \left( \Nt(\vec{x}) \right)^2
        \left( \N_0(\vec{x}) \right)^2
      \rp]
    }
    \cdot
    \sqrt[4]{ \Exp_{\vec{x} \sim \Nt} \lp[
        p^4(\vec{z}; \vec{x})
      \rp]
    }
    \cdot
    \sqrt{ \eps
    } \\
    \intertext{we now use \eqref{eq:optimizationFunctionEquivalentExpression},
    Lemma \ref{lem:cubBound} and the fact that $\1{S}(\vec{x}) \le 1$ to get
    }
    & ~~~~~~~~~~ \le \sqrt[4]{ \E_{\vec x \sim \Nt} \lp[
       \me^{4 h(\vec u, \mat B; \vec x)}
       \left( \Nt(\vec{x}) \right)^2
       \left( \N_0(\vec{x}) \right)^2
     \rp]
   }
   \cdot
   \sqrt[4]{ \Exp_{\vec{x} \sim \Nt} \lp[
       p^4(\vec{z}; \vec{x})
     \rp]
   }
   \cdot
   \sqrt{ \eps
   }
  \end{align*}
  and finally we use Lemma \ref{lem:ratioBound} to prove the following
  \begin{align} \label{eq:lem:strongConvexity:proof:1:final}
    \abs{\vec z^T
    \mathcal{H}_{M_{\tind_k}}(\vec u, \mat B)
    \vec z - \vec z^T
    \mathcal{H}_{M_{\tind}}(\vec u, \mat B)
    \vec z}
    \le
    \sqrt[4]{ \Exp_{\vec{x} \sim \Nt} \lp[
        p^4(\vec{z}; \vec{x})
      \rp]
    }
    \cdot
    \poly(1/\alpha^*)
    \cdot \sqrt{ \eps }
  \end{align}

  \noindent Next we prove \eqref{eq:lem:strongConvexity:proof:2}. We have that
  \begin{align*}
    \vec z^T
    \mathcal{H}_{M_{\tind}}(\vec u, \mat B)
    \vec z & = \E_{\vec x \sim \Nt_S} \lp[
      \me^{h(\vec u, \mat B; \vec x)}
      \normal(\vec 0, \mat I; \vec x) \cdot
      p(\vec{z}; \vec{x})
      \cdot \tind(\vec{x})
    \rp] \\
    & = \frac{1}{\alpha^*} C_{\vec{u}, \mat{B}} \E_{\vec x \sim \Nt_S} \lp[
      \frac{\N^*(\vec{x})}{\N_{\vec{u}, \mat{B}}(\vec{x})}
      p(\vec{z}; \vec{x})
    \rp].
    \intertext{Now we define the set
    $\bar{Q}_{\vec{z}} = \left\{\vec{x} \in \R^{d} \mid
       \abs{p(\vec{z}; \vec{x})} \le \frac{1}{32 C} \left(\alpha^*\right)^4
       \sqrt[4]{\Exp_{\vec{x} \sim \Nt}\left[ p^4(\vec{z}; \vec{x})\right]}
    \right\}$, where $C$ is the universal constant guaranteed from
    Theorem \ref{thm:GaussianMeasurePolynomialThresholdFunctions}. Then
    using Theorem \ref{thm:GaussianMeasurePolynomialThresholdFunctions} and the
    fact that $p(\vec{z}; \vec{x})$ has degree $4$ we get that
    $\N(\mt, \St; \bar{Q}) \le \frac{\alpha^*}{2}$. Hence we define the set
    $S' = S \cap \bar{Q}$ and we have that $\N(\mt, \St; S') \ge \alpha^*/2$.
    }
    \vec z^T
    \mathcal{H}_{M_{\tind}}(\vec u, \mat B)
    \vec z & \ge \frac{1}{\alpha^*} C_{\vec{u}, \mat{B}}
    \E_{\vec x \sim \Nt_{S'}} \lp[
      \frac{\N^*(\vec{x})}{\N_{\vec{u}, \mat{B}}(\vec{x})}
      p(\vec{z}; \vec{x})
    \rp] \\
    & \ge \left( \min_{\vec{x} \in S'} p(\vec{z}; \vec{x})\right) \frac{1}{\alpha^*} C_{\vec{u}, \mat{B}}
    \E_{\vec x \sim \Nt_{S'}} \lp[
      \frac{\N^*(\vec{x})}{\N_{\vec{u}, \mat{B}}(\vec{x})}
    \rp]
    \intertext{and from the definition of $S'$ and Lemma \ref{lem:cubBound} we
    have that}
    \vec z^T
    \mathcal{H}_{M_{\tind}}(\vec u, \mat B)
    \vec z & \ge \poly(\alpha^*)
    \cdot
    \E_{\vec x \sim \Nt_{S'}} \lp[
      \frac{\N^*(\vec{x})}{\N_{\vec{u}, \mat{B}}(\vec{x})}
    \rp]
    \cdot
    \sqrt[4]{\Exp_{\vec{x} \sim \Nt}\left[ p^4(\vec{z}; \vec{x})\right]}
    \intertext{now we can apply Jensen's inequality on the convex function
    $x \mapsto 1/x$ and we get}
    \vec z^T
    \mathcal{H}_{M_{\tind}}(\vec u, \mat B)
    \vec z & \ge \poly(\alpha^*)
    \cdot \frac{1}
               {\E_{\vec x \sim \Nt_{S'}} \lp[
                  \frac{\N_{\vec{u}, \mat{B}}(\vec{x})}{\N^*(\vec{x})}
                \rp]
               }
    \cdot
    \sqrt[4]{\Exp_{\vec{x} \sim \Nt}\left[ p^4(\vec{z}; \vec{x})\right]} \\
    & \ge \poly(\alpha^*)
    \cdot \frac{1}
               {\sqrt{\E_{\vec x \sim \Nt} \lp[
                  \left( \frac{\N_{\vec{u}, \mat{B}}(\vec{x})}{\Nt(\vec{x})}
                  \right)^2
                \rp]}
               }
    \cdot
    \sqrt[4]{\Exp_{\vec{x} \sim \Nt}\left[ p^4(\vec{z}; \vec{x})\right]}
  \end{align*}
  \noindent finally using Lemma \ref{lem:ratioBound} we get
  \begin{align} \label{eq:lem:strongConvexity:proof:2:final}
    \vec z^T
    \mathcal{H}_{M_{\tind}}(\vec u, \mat B)
    \vec z & \ge \poly(\alpha^*)
      \sqrt[4]{\Exp_{\vec{x} \sim \Nt}\left[ p^4(\vec{z}; \vec{x})\right]}
  \end{align}

  Now using \eqref{eq:lem:strongConvexity:proof:1:final} and
  \eqref{eq:lem:strongConvexity:proof:2:final} we can see that it is possible to
  pick $\eps$ in the Hermite concentration to be the correct polynomial in
  $\alpha^*$ so that
  \[ \abs{\vec z^T
    \mathcal{H}_{M_{\tind_k}}(\vec u, \mat B)
    \vec z - \vec z^T
    \mathcal{H}_{M_{\tind}}(\vec u, \mat B)
    \vec z} \le \vec z^T
    \mathcal{H}_{M_{\tind}}(\vec u, \mat B)
    \vec z
  \]
  \noindent which implies from Jensen's inequality that
  \begin{align*}
    \vec z^T
  \mathcal{H}_{M_{\tind_k}}(\vec u, \mat B)
  \vec z & \ge \poly(\alpha^*)
    \sqrt[4]{\Exp_{\vec{x} \sim \Nt}\left[ p^4(\vec{z}; \vec{x})\right]} \\
         & \ge \poly(\alpha^*) \Exp_{\vec{x} \sim \Nt}\left[ p(\vec{z}; \vec{x}) \right]
  \end{align*}
  \noindent So the last step is to prove a lower bound for
  $\Exp_{\vec{x} \sim \Nt}\left[ p(\vec{z}; \vec{x}) \right]$. For this we can
  use the Lemma 3 of \cite{DGTZ18} from which we can directly get
  $\Exp_{\vec{x} \sim \Nt}\left[ p(\vec{z}; \vec{x}) \right] \ge
  \poly(\alpha^*)$ and the lemma follows.
\end{prevproofbig}
 \section{Details of Section~\ref{sec:set_recover}}
\label{app:setRecover}
We present here the of the proof of Theorem~\ref{thm:pacLearning}.  We
already proved that given only positive examples from a truncated
normal can obtain arbitrarily good estimations of the unconditional
(true) parameters of the normal using
Algorithm~\ref{alg:projectedSGD}.
Recall also that with positive samples we can obtain an approximation
of the function $\psi(\vec x)$ defined in
\ref{eq:definitionOfShiftedIndicator}.  From
Theorem~\ref{thm:estimationApproximationError} we know that with
$d^{\poly(1/\alpha) \Gamma(S)^2/\eps^4}$ samples we can obtain a
function $\tind_k(\vec x)$ such that
\[
  \E_{\vec x \sim \normal_0}[((\tind_k(\vec x) -\tind(\vec x))^2] \leq \eps.
\]
Now we can construct an almost indicator function using $\tind_k$ and the learned
parameters $\wt{\vec \mu}$, $\wt {\mat I}$.
We denote
$\wt{\normal}=\normal(\wt{\vec \mu},\wt{\mat \Sigma})$.
\begin{equation}\label{eq:apxMultIndicator}
  \wt{f}\vec(x) = \frac{\normal_0(\vec x)}{\wt{\normal}(\vec x)} \tind_k(\vec x).
\end{equation}
This function should be a good enough approximation to the function
\begin{equation}\label{eq:trueMultIndicator}
  f(\vec x) = \frac{\normal_0(\vec x)}{\normal^*(\vec x)} \tind(\vec x)
  = \frac{\1{S}(\vec x)}{\alpha^*}.
\end{equation}
Notice that even though we do not know the mass of the truncation set $\alpha^*$ we
can still construct a threshold function that achieves low error with respect to the
zero-one loss.
We first prove a standard lemma that upper bounds the zero-one loss with the
distance of $f$ and $\wt{f}$.
We prove it so that we have a version consistent with our notation.
\begin{lemma}\label{lem:zeroOneToRoot}
  Let $S$ be a subset of $\R^d$.  Let $D$ be a distribution on
  $\R^d$ and let $f: \R^d \to \{0, B\}$, where $B > 1$ such that
  \( f(\vec x) = B\ \1{S}(\vec x) \).
  For any $g:\R^d \mapsto [0,+\infty)$ it holds
  \(
\E_{\vec x \sim D} \lp[\vec{1}\{g(x) > 1/2)\} \neq \1{S}(\vec x)\} \rp]
  \leq
  \sqrt{2} \E_{\vec x \sim D}\lp[\sqrt{|g(x) - f(x)|}\rp].
  \)
\end{lemma}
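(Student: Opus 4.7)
The plan is to establish a pointwise bound $\mathbf{1}\{\mathbf{1}\{g(\vec x) > 1/2\} \neq \mathbf{1}_S(\vec x)\} \leq \sqrt{2}\sqrt{|g(\vec x) - f(\vec x)|}$ and then simply take expectations over $D$. The key observation is that the threshold $1/2$ sits strictly between the two values of $f$, namely $0$ and $B$, with a margin of at least $1/2$ on each side (since $B > 1$ gives $B - 1/2 > 1/2$).

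Concretely, I would fix $\vec x \in \R^d$ and consider the two ways the thresholded predictor can disagree with $\mathbf{1}_S(\vec x)$. If $\vec x \in S$, then $f(\vec x) = B > 1$, so a disagreement means $g(\vec x) \leq 1/2$, whence $|g(\vec x) - f(\vec x)| \geq B - 1/2 > 1/2$. If $\vec x \notin S$, then $f(\vec x) = 0$ and a disagreement means $g(\vec x) > 1/2$, so again $|g(\vec x) - f(\vec x)| > 1/2$. Therefore on the disagreement set we have $\sqrt{|g(\vec x) - f(\vec x)|} > 1/\sqrt{2}$, which rearranges into $1 \leq \sqrt{2}\sqrt{|g(\vec x) - f(\vec x)|}$, and outside the disagreement set the left-hand indicator is $0$ so the inequality is trivial.

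Taking expectations over $\vec x \sim D$ gives exactly
\[
\E_{\vec x \sim D}\bigl[\mathbf{1}\{\mathbf{1}\{g(\vec x)>1/2\} \neq \mathbf{1}_S(\vec x)\}\bigr] \;\leq\; \sqrt{2}\,\E_{\vec x \sim D}\bigl[\sqrt{|g(\vec x) - f(\vec x)|}\bigr],
\]
as claimed. There is no real obstacle here; the only thing to notice is that the assumption $B > 1$ is exactly what guarantees the margin $1/2$ between the threshold and both output levels of $f$, which in turn converts a $0/1$ event into a square-root-integrable quantity. No appeal to any prior lemma of the paper is required.
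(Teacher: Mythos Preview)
Your proposal is correct and is essentially identical to the paper's own proof: both establish the pointwise inequality $\mathbf{1}\{\mathbf{1}\{g(\vec x)>1/2\}\neq \mathbf{1}_S(\vec x)\}\leq \sqrt{2}\sqrt{|g(\vec x)-f(\vec x)|}$ by case analysis on the two possible disagreement types, using that $B>1$ gives a margin of at least $1/2$ on each side of the threshold, and then take expectations.
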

\begin{proof}
  It suffices to show that for all $x \in \R^d$ it holds
  \begin{equation}\label{eq:zeroOneL1}
    \bm{1}\{\sgn(g(x) -1/2) \neq S(\vec x)\} \leq \sqrt{2} \sqrt{|g(x) - f(x)|}.
  \end{equation}
  We only need to consider the case where $\sgn(g(x) -1/2) \neq S(\vec x)$.
  Assume first that $g(x) > 1/2$ and $x\neq S$. Then the LHS of
  Equation~\eqref{eq:zeroOneL1} is $1$ and the RHS of \eqref{eq:zeroOneL1} is
  \(
  \sqrt{2} \sqrt{|g(x) - f(x)|} \geq \sqrt{2} \sqrt{|1/2 - 0|} = 1.
  \)
  Assume now that $g(x)<1/2$ and $S(\vec x)=1$. Then the RHS of \eqref{eq:zeroOneL1}
  equals
  \(
  \sqrt{2} \sqrt{|g(x) - f(x)|} \geq \sqrt{2} \sqrt{|B-1/2|} \geq 1.
  \)
\end{proof}
We now state the following lemma that upper bounds the distance of
$f$ and $\wt{f}$ in with the sum of the total variation distance of the
true and learned distributions as well as the approximation error of $\psi_k$.
\begin{lemma}\label{lem:rootError}
  Let $\alpha$ be the absolute constant of \eqref{eq:globalLowerBound}.
  Let $S \subseteq \R^d$ and let $\normal^*, \wt{\normal}$ be
  $(O(\log(1/\alpha)),1/16)$-isotropic.
  Let $\psi$ be as in \eqref{eq:definitionOfShiftedIndicator}.
  Moreover, let $\wt{f}, f$ be as in
  \eqref{eq:apxMultIndicator}, \eqref{eq:trueMultIndicator}.
  Then,
  \[
    \E_{\vec x \sim \normal^*}\lp[\sqrt{|\wt{f}(\vec x) -  f(\vec x)|}\rp] \leq
    \poly(1/\alpha)\
    \lp( \lp(\E_{\vec x \sim \normal_0} \lp[ (\tind_k(\vec x) - \tind(\vec x))^2 \rp] \rp)^{1/4}
    + \lp( \dtv{\normal^*}{\wt{\normal}} \rp)^{1/4}
    \rp)
  \]
\end{lemma}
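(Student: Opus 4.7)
The plan is to split the pointwise difference $\wt f(\vec x) - f(\vec x)$ into a ``function error'' piece and a ``parameter error'' piece, then use Jensen and Cauchy--Schwarz twice to convert each piece into the form of the target bound, absorbing all Gaussian ratio factors via Lemma~\ref{lem:ratioBound}. Concretely, writing
\[
  \wt f(\vec x) - f(\vec x)
  = \underbrace{\frac{\normal_0(\vec x)}{\wt\normal(\vec x)}\bigl(\tind_k(\vec x) - \tind(\vec x)\bigr)}_{A(\vec x)}
  + \underbrace{\tind(\vec x)\normal_0(\vec x)\Bigl(\frac{1}{\wt\normal(\vec x)} - \frac{1}{\normal^*(\vec x)}\Bigr)}_{B(\vec x)},
\]
and using $\sqrt{|a+b|}\le\sqrt{|a|}+\sqrt{|b|}$, it suffices to bound $\E_{\vec x\sim\normal^*}\!\bigl[\sqrt{|A(\vec x)|}\bigr]$ and $\E_{\vec x\sim\normal^*}\!\bigl[\sqrt{|B(\vec x)|}\bigr]$ separately.

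For the first term, I would first apply Jensen to move the square root outside: $\E_{\normal^*}[\sqrt{|A|}] \le \sqrt{\E_{\normal^*}[|A|]}$. The inside expectation equals $\int |\tind_k - \tind|\,(\normal_0/\wt\normal)\,\normal^*\,d\vec x$, and Cauchy--Schwarz splits this as
\[
  \Bigl(\E_{\vec x\sim\normal_0}\bigl[(\tind_k(\vec x)-\tind(\vec x))^2\bigr]\Bigr)^{1/2}
  \cdot \Bigl(\E_{\vec x\sim\normal_0}\bigl[(\normal^*(\vec x)/\wt\normal(\vec x))^2\bigr]\Bigr)^{1/2}.
\]
The second factor is $\poly(1/\alpha)$ by Lemma~\ref{lem:ratioBound} since both $\normal^*$ and $\wt\normal$ are near-isotropic; combining with the outer $\sqrt{\cdot}$ from Jensen produces exactly the $\bigl(\E_{\normal_0}[(\tind_k-\tind)^2]\bigr)^{1/4}$ factor in the statement.

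For the second term, using $\tind = \1{S}\normal^*/(\alpha^*\normal_0)$ we get
\[
  B(\vec x) \;=\; \frac{\1{S}(\vec x)}{\alpha^*}\,\frac{\normal^*(\vec x) - \wt\normal(\vec x)}{\wt\normal(\vec x)},
\]
so Jensen again gives $\E_{\normal^*}[\sqrt{|B|}]\le\sqrt{\E_{\normal^*}[|B|]}$, and $\E_{\normal^*}[|B|] \le \frac{1}{\alpha^*}\int|\normal^*-\wt\normal|\,(\normal^*/\wt\normal)\,d\vec x$. I would apply Cauchy--Schwarz with the factorization $\sqrt{|\normal^*-\wt\normal|}\cdot\bigl(\sqrt{|\normal^*-\wt\normal|}\,\normal^*/\wt\normal\bigr)$, obtaining
\[
  \Bigl(\textstyle\int|\normal^*-\wt\normal|\,d\vec x\Bigr)^{1/2}\,\Bigl(\textstyle\int|\normal^*-\wt\normal|(\normal^*/\wt\normal)^2\,d\vec x\Bigr)^{1/2}
  \;\le\; (2\,\dtv{\normal^*}{\wt\normal})^{1/2}\cdot \poly(1/\alpha)^{1/2},
\]
where the second factor is controlled by upper bounding $|\normal^*-\wt\normal|\le\normal^*+\wt\normal$ and invoking Lemma~\ref{lem:ratioBound} on each of $\E_{\normal_0}[(\normal^*/\wt\normal)^3]$ and $\E_{\normal_0}[(\normal^*/\wt\normal)^2]$. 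Applying the outer Jensen square root then yields the $(\dtv{\normal^*}{\wt\normal})^{1/4}$ term, and summing both contributions gives the stated bound with a $\poly(1/\alpha)$ prefactor.

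The only delicate step is the second Cauchy--Schwarz: one must split $|\normal^*-\wt\normal|\cdot(\normal^*/\wt\normal)$ in a way that produces $\int|\normal^*-\wt\normal|\,d\vec x = 2\dtv{\normal^*}{\wt\normal}$ (and not a $\chi^2$-like quantity, which would not translate into a TV bound). The factorization above is what makes the TV distance appear, while leaving the other factor as a pure Gaussian-ratio moment that Lemma~\ref{lem:ratioBound} handles under the $(O(\log(1/\alpha)),1/16)$-isotropic hypothesis.
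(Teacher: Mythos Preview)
Your proposal is correct and follows essentially the same structure as the paper's proof: the same additive decomposition of $\wt f - f$ into a ``function error'' piece and a ``parameter error'' piece, followed by Jensen/Cauchy--Schwarz and Lemma~\ref{lem:ratioBound} to control each. The only noteworthy difference is in the treatment of the second (parameter-error) term: the paper rewrites it as $\frac{1}{\alpha^*}\E_{\normal^*}\bigl[|\normal^*/\wt\normal - 1|\bigr]$ and invokes the Hellinger-based transfer estimate of Lemma~\ref{lem:errorTransfer} to pass from $\normal^*$ to $\wt\normal$, whereas you use a direct Cauchy--Schwarz splitting $\sqrt{|\normal^*-\wt\normal|}\cdot\sqrt{|\normal^*-\wt\normal|}\,(\normal^*/\wt\normal)$ to pull out the TV factor. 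Your route is slightly more self-contained and avoids the auxiliary lemma; both yield the same $\poly(1/\alpha)\sqrt{\dtv{\normal^*}{\wt\normal}}$ bound on $\E_{\normal^*}[|B|]$ and hence the desired $(\dtv{\normal^*}{\wt\normal})^{1/4}$ after the outer Jensen.
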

\begin{proof}
  We compute
  \begin{align*}
    \E_{\vec x \sim \normal^*}\lp[\sqrt{|\wt{f}(\vec x) - f(\vec x)|} \rp]
    &\leq
    \E_{\vec x \sim \normal^*}
    \lp[ \lp(\lp|
    \tind_k(\vec x) \frac{\normal_0(\vec x)}{\wt{\normal}(\vec x)} -
    \tind(\vec x) \frac{\normal_0(\vec x)}{\normal^*(\vec x)}
    \rp|\rp)^{1/2} \rp]
 \\ &=
 \E_{\vec x \sim \normal^*}
 \lp[
 \lp(\lp|
 \tind_k(\vec x) \frac{\normal_0(\vec x)}{\wt{\normal}(\vec x)}
 - \tind(\vec x) \frac{\normal_0(\vec x)}{\wt{\normal}(\vec x)}
 + \tind(\vec x) \frac{\normal_0(\vec x)}{\wt{\normal}(\vec x)}
 - \tind(\vec x) \frac{\normal_0(\vec x)}{\normal^*(\vec x)}
 \rp| \rp)^{1/2}\rp]
 \\ &\leq
 \E_{\vec x \sim \normal^*}
 \lp[
 \lp(
 |\tind_k(\vec x) - \tind(\vec x)| \frac{\normal_0(\vec x)}{\wt{\normal}(\vec x)}
 \rp)^{1/2}
 \rp]
 +
 \E_{\vec x \sim \normal^*}
 \lp[
 \lp(
 \tind(\vec x)
 \lp|\frac{\normal_0(\vec x)}{\wt{\normal}(\vec x)}
 - \frac{\normal_0(\vec x)}{\normal^*(\vec x)}\rp|
 \rp)^{1/2}
 \rp]
 \\ &\leq
 \Bigg(
 \underbrace{
   \E_{\vec x \sim \normal^*}
 \lp[ |\tind_k(\vec x) - \tind(\vec x)| \rp]
 }_{A}
 \Bigg)^{1/2}
 \Bigg(
   \underbrace{
     \E_{\vec x \sim \normal^*}
     \lp[
     \frac{\normal_0(\vec x)}{\wt{\normal}(\vec x)}
     \rp]
   }_{B}
 \Bigg)^{1/2}
 \\
    &+
    \Bigg(
    \underbrace{
      \E_{\vec x \sim \normal^*}
      \lp[
      \tind(\vec x)
      \lp|
      \frac{\normal_0(\vec x)}{\wt{\normal}(\vec x)}
      - \frac{\normal_0(\vec x)}{\normal^*(\vec x)}\rp|
    \rp] }_{C}
  \Bigg)^{1/2}
  \end{align*}
  where for term $C$ we used Jensen's inequality.
  Using Lemma~\ref{lem:errorTransfer} and Lemma~\ref{lem:ratioBound} we have that
  \[
  A \leq
  \lp(
  \E_{\vec x \sim \normal_0} \lp[ (\tind_k(\vec x) - \tind(\vec x))^2 \rp]
  \rp)^{1/2}
  \lp(
   \E_{\vec x \sim \normal^*} \lp[
    \frac{\normal_0(\vec x)}{\normal^*(\vec x)}
   \rp]
  \rp)^{1/2}
  \leq
  \lp(
  \E_{\vec x \sim \normal_0} \lp[ (\tind_k(\vec x) - \tind(\vec x))^2 \rp]
  \rp)^{1/2}
  \poly(1/\alpha)
\]
  Since $\normal_0, \wt{\normal}$, and $\normal^*$ are $(O(\log(1/\alpha), 1/16)$-isotropic,
  using Lemma~\ref{lem:ratioBound} we obtain that
  \[
    B =
    \E_{\vec x \sim \normal_0}
    \lp[
    \frac{\normal_0(\vec x)}{\wt{\normal}(\vec x)}
    \frac{\normal^*(\vec x)}{\wt{\normal}(\vec x)}
    \rp]
    \leq
    \lp(
    \E_{\vec x \sim \normal_0}
    \lp[
    \frac{\normal_0(\vec x)}{\wt{\normal}(\vec x)}
    \rp]
    \rp)^{1/2}
    \lp(
    \E_{\vec x \sim \normal_0}
    \lp[
    \frac{\normal^*(\vec x)}{\wt{\normal}(\vec x)}
    \rp]
    \rp)^{1/2}
    \leq
    \poly(1/\alpha)
  \]
  We now bound term $C$. We write
  \begin{align}\label{eq:CBound}
    C&=
    \E_{\vec x \sim \normal^*}
    \lp[
    \tind(\vec x)
    \lp|
    \frac{\normal_0(\vec x)}{\wt{\normal}(\vec x)}
    - \frac{\normal_0(\vec x)}{\normal^*(\vec x)}\rp|
    \rp]
    =
  \frac{1}{\alpha^*}
  \E_{\vec x \sim \normal^*}
  \lp[
  \lp| \frac{\normal^*(\vec x)}{\wt{\normal}(\vec x)} - 1\rp|
  \rp]
  \end{align}
  To simplify notation, let
  \(
    \ell(\vec x) = \lp| \frac{\normal^*(\vec x)}{\wt{\normal}(\vec x)} - 1\rp|.
    \) Moreover, notice that
    \(
    \E_{\vec x \sim \wt{\normal}}[\ell(\vec x)] = \dtv{\normal^*}{\wt{\normal}}.
    \)
    Using the second bound of Lemma~\ref{lem:errorTransfer} and Lemma~\ref{lem:ratioBound}
    we obtain
    \[
      C \leq  \frac{1}{\alpha} \dtv{\normal^*}{\wt{\normal}} + \poly(1/\alpha) \sqrt{\dtv{\normal^*}{\wt{\normal}}}
      \leq \poly(1/\alpha) \sqrt{\dtv{\normal^*}{\wt{\normal}}}.
  \]
  Combining the bounds for $A,B$ and $C$ we obtain the result.
\end{proof}
Since we have the means two make both errors of Lemma~\ref{lem:rootError} small we
can now recover the unknown truncation set $S$.
\begin{prevproofbig}{Theorem}{thm:pacLearning}
  We first run Algorithm~\ref{alg:projectedSGD} to find estimates $\wt{\vec
  \mu}$, $\wt {\mat \Sigma}$.  From
  Theorem~\ref{thm:mainTheoremLocalConvergence} we know that $N =
  d^{\poly(1/\alpha) \Gamma^2(\mcal{S})/\eps^{32}}$ samples suffice to obtain
  parameters $\wt{\vec{\mu}}$, $\wt{\mat{\Sigma}}$ such that $
  \dtv{\normal(\vec \mu^*, \mat \Sigma^*)}{\normal(\wt{\vec \mu}, \wt{\mat
  \Sigma})} \leq \poly(\alpha) \eps^4.$ Notice, that from
  Theorem~\ref{thm:polynomialApproximation} we also know that $N$ samples from
  the conditional distribution $\normal^*_S$ suffice to learn a function
  $\psi_k$  such that $\E_{\vec x \sim \normal_0}[(\psi_k(\vec x) - \psi(\vec
  x))^2] \leq \poly(\alpha) \eps^4$.  Now we can construct the approximation $
  \wt{f}(\vec x) = \psi_k(\vec x)\normal_0(\vec x)/\wt{\normal}(\vec x)$.  Let
  our indicator $\wt{S} = \vec1\{\wt{f}(\vec(x) > 1/2\}$ and from
  Lemma~\ref{lem:zeroOneToRoot} and Lemma~\ref{lem:rootError} we obtain the
  result.
\end{prevproofbig}

\begin{lemma}\label{lem:errorTransfer}
  Let $P,Q$ be two distributions on $\R^d$ such that
  $P(\vec x), Q(\vec x) > 0$ for all $\vec x$
  and $\ell:\R^d \mapsto \R$ be a function.
  Then it holds
  \[
    \lp|
    \E_{\vec x \sim P}[\ell(\vec x)] - \E_{\vec x \sim Q}[\ell(\vec x)]
    \rp|
    \leq
    \Bigg(\E_{\vec x \sim P}[\ell^2(\vec x)] \E_{\vec x \sim P} \Bigg)^{1/2}
\Bigg(
    \lp[
    \lp(\frac{Q(\vec x)}{P(\vec x)} \rp)^2
    \rp]
  \Bigg)^{1/2}
  \]
  Moreover,
  \[
    \lp|
    \E_{\vec x \sim P}[\ell(\vec x)] - \E_{\vec x \sim Q}[\ell(\vec x)]
    \rp|
    \leq
    2
    \Bigg(
      \lp(\E_{\vec x \sim P}[\ell^2(\vec x)]
      +
      \E_{\vec x \sim Q}[\ell^2(\vec x)]
      \rp) \
    \Bigg)^{1/2}  \sqrt{\dtv{P}{Q}}
  \]
\end{lemma}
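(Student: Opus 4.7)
The plan is to prove both bounds by writing the difference of expectations as a single integral and then applying the Cauchy--Schwarz inequality in two different ways, tailored to the right-hand side we want to produce.

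For the first inequality, I would start by observing that since $P,Q$ are densities with $P > 0$ everywhere, we can change measure and write
\[
\E_{\vec x \sim P}[\ell(\vec x)] - \E_{\vec x \sim Q}[\ell(\vec x)]
= \int \ell(\vec x)(P(\vec x) - Q(\vec x))\,d\vec x
= \E_{\vec x \sim P}\!\lp[\ell(\vec x)\lp(1 - \frac{Q(\vec x)}{P(\vec x)}\rp)\rp].
\]
Applying Cauchy--Schwarz under $P$ gives
$|\E_P[\ell] - \E_Q[\ell]| \leq \sqrt{\E_P[\ell^2]} \cdot \sqrt{\E_P[(1-Q/P)^2]}$.
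Expanding the second factor and using $\E_P[Q/P] = \int Q\, d\vec x = 1$ yields $\E_P[(1-Q/P)^2] = \E_P[(Q/P)^2] - 1 \leq \E_P[(Q/P)^2]$, which gives the claimed bound.

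For the second inequality, I would again start from $|\E_P[\ell] - \E_Q[\ell]| = |\int \ell(\vec x)(P(\vec x) - Q(\vec x))\, d\vec x|$, but now split the signed measure $P-Q$ as $\sgn(P-Q)\sqrt{|P-Q|}\cdot \sqrt{|P-Q|}$ and apply Cauchy--Schwarz to the factorization $g(\vec x)\,h(\vec x)$ with $g(\vec x) = \ell(\vec x)\sgn(P-Q)\sqrt{|P-Q|}$ and $h(\vec x) = \sqrt{|P-Q|}$. This produces
\[
\lp|\E_P[\ell] - \E_Q[\ell]\rp|
\leq \sqrt{\int \ell^2(\vec x)\,|P(\vec x) - Q(\vec x)|\,d\vec x}\cdot \sqrt{\int |P(\vec x) - Q(\vec x)|\,d\vec x}.
\]
The second factor equals $\sqrt{2\,\dtv{P}{Q}}$ by definition of total variation distance, and the first factor is bounded by $\sqrt{\E_P[\ell^2] + \E_Q[\ell^2]}$ via the pointwise estimate $|P - Q| \leq P + Q$. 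Combining and using $\sqrt{2} \leq 2$ gives exactly the stated bound.

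There is essentially no obstacle: both inequalities are two-line consequences of Cauchy--Schwarz once the right factorization of the integrand is chosen. The only subtlety worth mentioning in the write-up is the identity $\E_P[Q/P] = 1$ used in the first part, which relies on $P$ being strictly positive so that $Q/P$ is defined everywhere.
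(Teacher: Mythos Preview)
Your proof is correct. For the first inequality your argument is essentially identical to the paper's: both factor the integrand as $\ell\sqrt{P}\cdot \frac{|P-Q|}{\sqrt{P}}$ (equivalently, apply Cauchy--Schwarz under $P$ to $\ell\cdot(1-Q/P)$), arriving at $\sqrt{\E_P[\ell^2]}\cdot\sqrt{\dchi{Q}{P}}$, and then drop the $-1$ to get $\E_P[(Q/P)^2]$.

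For the second inequality your route differs from the paper's. The paper factors as $\ell\sqrt{P+Q}\cdot\frac{|P-Q|}{\sqrt{P+Q}}$, which after Cauchy--Schwarz leaves the Le Cam--type quantity $\int\frac{(P-Q)^2}{P+Q}$; it then bounds this by $2\int(\sqrt{P}-\sqrt{Q})^2 = 4\,\dhel{P}{Q}^2$ and finally invokes $\dhel{P}{Q}\le\sqrt{\dtv{P}{Q}}$. Your factorization $\ell\sqrt{|P-Q|}\cdot\sqrt{|P-Q|}$ is more direct: it lands immediately on $\sqrt{\int\ell^2|P-Q|}\cdot\sqrt{2\,\dtv{P}{Q}}$ and only needs the trivial pointwise bound $|P-Q|\le P+Q$, avoiding the Hellinger detour entirely. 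Both approaches yield the same final constant $2$; yours is shorter, while the paper's makes the intermediate Hellinger bound explicit, which can occasionally be useful on its own.
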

\begin{proof}
  Write
  \begin{align*}
    \lp|
    \E_{\vec x \sim P}[\ell(\vec x)] - \E_{\vec x \sim Q}[\ell(\vec x)]
    \rp|
    &\leq
    \int \ell(\vec x) \sqrt{P(\vec x)}
    \frac{|P(\vec x) - Q(\vec x)|}{\sqrt{P(\vec x)}}  \d x
    \\ &=
    \Bigg(\int \ell^2(\vec x) P(\vec x)
      \d x
      \int
      \frac{(P(\vec x) - Q(\vec x))^2}{P(\vec x)}  \d x
  \Bigg)^{1/2}
\end{align*}
For the second inequality we have
  \begin{align*}
    \lp|\E_{\vec x \sim P}[\ell(\vec x)]
    - \E_{\vec x \sim Q}[\ell(\vec x)]
    \rp|
     &\leq
    \int \ell(\vec x) |P(\vec x) - Q(\vec x) |
    \d x
    \\ &\leq
    \int \ell(\vec x) \sqrt{P(\vec x) + Q(\vec x)}
    \
    \frac{|P(\vec x) - Q(\vec x) |}
    {\sqrt{P(\vec x) + Q(\vec x)}}
    \
    \d x
    \\ &\leq
    \lp(
    \E_{x \sim P}[\ell^2(x)] +
    \E_{x \sim Q}[\ell^2(x)]
    \rp)^{1/2}
    \lp(
    \int \frac {(P(\vec x) - Q(\vec x))^2} {P(\vec x) + Q(\vec x)} \d x
    \rp)^{1/2}
  \end{align*}
  Now observe that
  \begin{align*}
    \lp(
    \int \frac{(P(\vec x) - Q(\vec x))^2} {P(\vec x) + Q(\vec x)} \d x
    \rp)^{1/2}
    &\leq
    \lp( 2 \int \lp(\sqrt{P(\vec x)} - \sqrt{Q(\vec x)}\rp)^2 \d x \rp)^{1/2}
     &=
    2 \dhel{P}{Q}
    \leq 2 \sqrt{\dtv{P}{Q}}
  \end{align*}

\end{proof}

 \section{Missing Proofs of Section~\ref{sec:IdentifiabilityGaussian}}
\label{app:IdentifiabilityGaussian}
In the following we use the polynomial norms.  Let $p(\vec x) = \sum_{V: |V|
\leq k} c_V x^V$ be a multivariate polynomial.  We define the $\norm{p}_\infty
= \max_{V: |V| \leq k} |c_V|$, $\norm{p}_1 = \sum_{V:|V|\leq k} |c_V|$.

\begin{prevproofbig}{Lemma}{lem:TVDPolynomial}
  Let $W = S_1 \cap S_2 \cap \{f_1 > f_2\} \cup S_1\setminus S_2$,
  that is the set of points where the first density is larger than the
  second.  We now write the $L_1$ distance between $f_1, f_2$ as
  \[
    \int |f_1(\vec x) - f_2(\vec x)| \d \vec x =
    \int \1{W}(\vec x)
    (f_1(\vec x) - f_2(\vec x)) \d \vec x
  \]
  Denote $p(\vec x)$ the polynomial that will do the approximation
  of the $L_1$ distance.
  From Lemma~\ref{lem:chiSquaredExistence} we know that there exists
  a Normal distribution within small chi-squared divergence of both
  $\normal(\vec \mu_1, \matr \Sigma_1)$ and $\normal(\vec \mu_2, \matr \Sigma_2)$.
  Call the density function of this distribution $g(\vec x)$.
  We have
  \begin{align}\label{eq:tvd_apx_int}
    \Big| \int |f_1(\vec x) &- f_2(\vec x)|\d \vec x -
    \int p(\vec x) (f_1(\vec x) - f_2(\vec x))
    \Big|
    \\
                            &= \lp| \int (\1{W}(\vec x) - p(\vec x))\ (f_1(\vec x) - f_2(\vec x))\d \vec x \nonumber
                            \rp|
                            \\
                            &\leq  \int |\1{W}(\vec x) - p(\vec x)|\ |f_1(\vec x) - f_2(\vec x)| \d \vec x \nonumber
                            \\
                            &\leq  \int |\1{W}(\vec x) - p(\vec x)| \sqrt{g(\vec x)}\ \frac{|f_1(\vec x) - f_2(\vec x)|}{\sqrt{g(\vec x)}} \d x \nonumber
                            \\
                            &\leq  \sqrt{\int (\1{W}(\vec x) - p(\vec x))^2 g(\vec x) \d \vec x}
                            \sqrt{\int \frac{(f_1(\vec x) - f_2(\vec x))^2}{g(\vec x)} \d \vec x},
  \end{align}
  where we use Schwarzs' inequality.  From Lemma~\ref{lem:chiSquaredExistence}
  we know that
  \[
    \int \frac{f_1(\vec x)^2}{g(\vec x)} \d \vec x
    \leq \int \frac{\normal(\vec \mu_1, \matr \Sigma_1; \vec x)^2}{g(\vec x)}
    \d \vec x
    = \exp(\poly(1/\alpha)).
  \]
  Similarly, $\int \frac{f_2(\vec x)^2}{g(\vec x)} \d x = \exp(\poly(1/\alpha))$.
  Therefore we have,
  \[
    \Big|
    \int |f_1(\vec x) - f_2(\vec x)|\d \vec x - \int p(\vec x) (f_1(\vec x) - f_2(\vec x))
    \Big|
    \leq \exp(\poly(1/\alpha)) \sqrt{\int (\1{W}(\vec x) - p(\vec x))^2 g(\vec x) \d \vec x}
  \]
  Recall that $g(\vec x)$ is the density function of a Gaussian distribution, and
  let $\vec \mu, \matr \Sigma$ be the parameters of this Gaussian.  Notice that
  it remains to show that there exists a good approximating polynomial $p(\vec
  x)$ to the indicator function $\1{W}$.  We can now transform the space so that
  $g(\vec x)$ becomes the standard normal.  Notice that this is an affine
  transformation that also transforms the set $W$; call the transformed set
  $W^t$.  We now argue that the Gaussian surface area of the transformed set
  $W^t$ at most a constant multiple of the Gaussian surface area of the original
  set $W$.    Let $\normal(\vec \mu_i, \matr \Sigma_i; S_i) = \alpha_i$ for $i =1,2$
  and let $h_1(\vec x) = \normal(\bm \mu_1, \matr \Sigma_1; \vec x)/\alpha_1$ resp.
  $h_2(\vec x) = \normal(\vec \mu_2, \matr \Sigma_2; \vec x)/\alpha_2$
  be the density of first resp. second Normal ignoring the truncation sets $S_1,
  S_2$.
  Notice that instead of $f_1, f_2$ we may use $h_1, h_2$ in the definition of $W$,
  that is
  \[
    W = (S_1 \cap S_2 \cap \{h_1 \geq h_2\}) \cup S_1 \setminus S_2.
  \]
  Now, since $\matr \Sigma^{-1/2} > 0$ we have that the affine
  map $T(x) = \matr \Sigma^{-1/2}(\vec x - \vec \mu)$ is a bijection.
  Therefore $T(A \cap B) = T(A) \cap T(B)$ and $T(A \cup B) = T(A) \cup T(B)$.
  Similarly to $W^t= T(W)$, let $S_1^t, S_2^t$, $\{h_1 \geq h_2\}^t$ be
  the transformed sets.  Therefore,
  \[
    W^t = (S_1^t \cap S_2^t \cap \{h_1 \geq h_2\}^t) \cup S_1^t \setminus S_2^t.
  \]
  We will use some elementary properties of Gaussian surface area (see
  for example Fact 17 of \cite{KOS08}).  We have that for any sets $S_1, S_2$
  $\Gamma(S_1 \cap S_2)$ and $\Gamma(S_1 \cup S_2)$ are upper bounded from
  $\Gamma(S_1) + \Gamma(S_2)$.  Moreover, $\Gamma(S_1 \setminus S_2) \leq
  \Gamma(S_1) +\Gamma(S_2^c) = \Gamma(S_1) + \Gamma(S_2)$.
  From our assumptions, we know that the Gaussian surface area of the
  sets $S_1^t, S_2^t$ is $O(\Gamma(\mcal{S})$.  Notice now that the
  set $\{h_1 \geq h_2\}^t$ is a degree $2$ polynomial threshold function.
  Therefore, using the result of \cite{Kan11} (see also Table~\ref{tab:surface-area})
  we obtain that $\Gamma(\{h_1 \geq h_2\}^t) = O(1)$.
  Combining the above we obtain that $\Gamma(W^t) = O(\Gamma(\mcal{S})$.
  To keep the notation simple we from now on we will by $W$ the
  transformed set $W^t$.  Now, assuming that a good approximating polynomial
  $p(\vec x)$ of degree $k$ exists with respect to $\normal(\vec 0, \matr I)$ then
  $p(\matr \Sigma^{-1/2} (\vec x - \bm \mu))$ is a polynomial of degree $k$ that
  approximates $\1{W}(\vec x)$ with respect to $g(\vec x)$.
  Since $\1{W} \in L^2(\R^d, \N_0)$ we can approximate it using Hermite
  polynomials.  For some $k \in \N$ we set $p(\vec x) = S_k \1{W}(x)$, that is
  \[
    p_k(\vec x) = \sum_{V: |V| \leq k} \wh{\1{W}} H_V(\vec x).
  \]
  Combining Lemma~\ref{lem:realSensitivityConcentration} and
  Lemma~\ref{lem:KOS08NoiseSurface} we obtain
  \[
    \E_{\vec x \sim \normal_0}[(\1{W}(\vec x) - p_k(x))^2] =
    O\lp( \frac{\Gamma(\mcal{S})} {k^{1/2}} \rp).
  \]
  Therefore,
  \[
    \Big|
    \int |f_1(\vec x) - f_2(\vec x)|\d \vec x - \int p_k(\vec x) (f_1(\vec x) - f_2(\vec x))
    \Big|
    = \exp(\poly(1/\alpha)) \frac{\Gamma(\mcal{S})^{1/2}}{k^{1/4}}
  \]
  Therefore, ignoring the dependence on the absolute constant $\alpha$,
  to achieve error $O(\eps)$ we need degree $k = O(\Gamma(\mcal{S})^2/\eps^4)$.

  To complete the proof, it remains to obtain a bound for the coefficients of
  the polynomial $q(\vec x) = p_k(\matr \Sigma^{-1/2} (\vec x-\vec \mu))$.
We use the standard notation of polynomial norms, e.g.
  $\norm{p}_\infty$ is the maximum (in absolute value) coefficient,
  $\norm{p}_1$ is the sum of the absolute values of all coefficients etc.
  From Parseval's identity we obtain that the sum of the squared weights is
  less than $1$ so these coefficients are clearly not large.  The large
  coefficients are those of the Hermite Polynomials.  We consider first the $1$
  dimensional Hermite polynomial and take an even degree Hermite polynomial
  $H_n$.  The explicit formula for the $k$-th degree coefficient is
  \[
    \frac{2^{k/2 - n/2} \sqrt{n!}}{\lp(n/2 - k/2 \rp)! k!} \leq 2^n,
  \]
  see, for example, \cite{Sze67}.
  Similarly, we show the same bound when the degree of the Hermite polynomial
  is odd.
  Therefore, we have that the maximum coefficient of
  $H_V(x) = \prod_{i=1}^d H_i(x_i)$ is at most
  $\prod_{i=1}^d 2^{v_i} = 2^{\sum_{i=1}^d v_i} = 2^{|V|}$.
  Using Lemma~\ref{lem:polynomialInfinityNormBound} we obtain that
  \begin{align*}
    \norm{H_V(\matr \Sigma^{-1/2} (\vec x - \vec \mu))}_1
    &\leq
    \binom{d+|V|}{|V|} 2^{|V|}
    \lp(\sqrt{d} \norm{\matr \Sigma^{-1/2}}_2
    + \norm{\matr \Sigma^{-1/2} \vec \mu}_2\rp)^{|V|}
    \\
    &\leq \binom{d+|V|}{|V|} (4d)^{|V|/2} (O(1/\alpha^2))^{|V|}
  \end{align*}

  Now we have
  \[
    \norm{q(\vec x)}_\infty \leq
    \sum_{V: |V| \leq k}
    |c_V|  \norm{H_V(\matr \Sigma^{-1/2}(\vec x - \vec \mu))}_\infty
    \leq \binom{d+k}{k}^2 (4d)^{k/2} (O(1/\alpha^2))^{k},
  \]
  where we used the fact that since $\sum_{V} |c_v|^2 \leq 1$ it holds that
  $|c_V| \leq 1$ for all $V$.
  To conclude the proof we notice that we can pick the degree $k$ so that
  \[
    \lp| \int q(\vec x) (f_1(\vec x)- f_2(\vec x)) \rp| =
    \lp| \sum_{V: |V| \leq k} \vec x^V (f_1(\vec x) - f_2(\vec x)) \rp|
    \geq \eps /2.
  \]
  Since the maximum coefficient of $q(\vec x)$ is bounded by $d^{O(k)}$
  we obtain the result.
\end{prevproofbig}

\begin{prevproofbig}{Theorem}{thm:momentMatching}
  We first draw $O(d^2/\eps^2)$ and compute estimates of the conditional mean
  $\wb \mu_C$ and covariance $\wb \Sigma_C$ that satisfy the guarantees of
  Lemma~\ref{lem:conditionalEstimation}.  We now transform the space so that
  $\wb \mu_C = \vec 0$ and $\matr{\Sigma}_C = \matr I$.  For simplicity we
  still denote $\bm{\mu}$ and $\matr \Sigma$ the parameters of the unknown
  Gaussian after the transformation.  From
  Lemma~\ref{lem:conditionalParameterDistance} we have that $\norm{\matr
  \Sigma^{-1/2} \vec \mu}_2 \leq O(\log(1/\alpha)^{1/2}/\alpha)$, and
  $\Omega(\alpha^2) \leq \norm{\matr \Sigma^{1/2}}_2 \leq O(1/\alpha^2)$.  Let
  $\wt m_V$ be the empirical moments of $\normal(\vec \mu, \matr \Sigma, S)$,
  \(
    \wt m_V = \frac{\sum_{i=1}^N \vec x^V}{N}.
  \)
  We first bound the variance of a moment $\vec x^V$.
  \[
    \Var_{\vec x \sim \normal(\bm \mu, \matr \Sigma, S)}[\vec x^V]
    \leq \E_{\vec x \sim \normal(\bm \mu, \matr \Sigma, S)} [\vec x^{2V}]
    \leq \frac{1}{\alpha} \E_{\vec x \sim \normal(\bm \mu, \matr \Sigma)} [\vec x^{2V}]
    = \frac{1}{\alpha} \E_{\vec x \sim \normal(\bm 0, \matr I)} [(\matr \Sigma^{1/2} \vec x +\vec \mu)^{2V}]
  \]
  Following the proof of Lemma~\ref{lem:polynomialInfinityNormBound} we get that
  $\norm{(\matr \Sigma^{1/2} \vec x + \vec \mu)^{2V}}_\infty
  \leq (\sqrt{d} \norm{\matr\Sigma^{1/2}}_2 + \norm{\vec \mu}_2)^{|V|}.
  $
  Using Lemma~\ref{lem:TVDPolynomial} we know that if we set
  $k = \Gamma(\mcal{S})/\eps^4$ then given any guess of the parameters
  $\wb \mu, \wb \Sigma, \wt S$ we can check whether the corresponding
  truncated Gaussian $\normal(\wb \mu, \wb \Sigma, \wt S)$ is in total
  variation distance $\eps$ from the true by checking that all moments
  $\E_{x \sim \normal(\wb \mu, \wb \Sigma, \wt S)}[ \vec x^V]$ of the
  guess are close to the (estimates) of the true moments.
  Using the above observations and ignoring the dependence on the constant
  $\alpha$ we get that $\norm{(\matr \Sigma^{1/2} \vec x + \vec
  \mu)^{2V}}_\infty \leq d^{O(k)}$.  Chebyshev's inequality implies that with
  $d^{O(k)}/\eps^2$ samples we can get an estimate such that with probability
  at least $3/4$ it holds \(|\wt m_V - m_V| \leq \eps/d^{O(k)} \).  Using the
  standard process of repeating and taking the median estimate we amplify the
  success probability to $1-\delta$.  Since we want all the estimates of all
  the moments $V$ with $|V| \leq k$ to be accurate we choose $\delta =
  1/d^{O(k)}$ and by the union bound we obtain that with constant probability
  $|\wt m_V - m_V| \leq \eps/d^{O(k)}$ for all $V$ with $|V| \leq k$.  Now, for
  any tuple of parameters $(\wb \mu, \wb \Sigma , \wt S)$ we check whether the
  first $d^{O(k)}$ moments of the corresponding truncated Gaussian $\normal(\wb
  \mu, \wb \Sigma, \wt S)$ are in distance $\eps/d^{O(k)}$ of the estimates
  $\wt m_V$. If this is true for all the moments, then
  Lemma~\ref{lem:TVDPolynomial} implies that $\dtv{\normal(\vec \mu, \matr
  \Sigma, S)}{\normal(\wb \mu, \wb \Sigma, \wt S)} \leq \eps$.
\end{prevproofbig}

\begin{prevproofbig}{Lemma}{lem:chiSquaredExistence}
  Without loss of generality we may assume that $N_1 = \normal(\vec 0, \matr
  I)$ and $N_2 = \normal(\vec \mu, \matr \Lambda)$, where $\matr \Lambda$ is a
  diagonal matrix with elements $\lambda_i>0$.
  We define the normal $N = \normal(\vec 0, \matr R)$ with
  $r_i = \max(1, \lambda_i)$.
  We have
  \begin{align*}
    \dchi{N_2}{N} + 1
    &= \int \frac{\normal(\vec \mu, \matr \Lambda ; \vec x)^2}{\normal(\vec 0, \matr R; \vec x)}
    \d \vec x
    \\
    &= \frac{\sqrt{|\matr R|}}{(2 \pi)^{d/2} |\matr \Lambda| }
    \exp(-\vec \mu^T \matr \Lambda^{-1} \vec \mu)
    \underbrace{
      \int
      \exp\lp( \vec x^T \lp( \frac 1 2 \matr R^{-1} - \matr \Lambda^{-1} \rp)
      + 2 \vec \mu^T \matr \Lambda^{-1} \vec x \rp)
      \d \vec x
    }_{I}
  \end{align*}
  We have
  \begin{align*}
    I = \prod_{i=1}^d \int
    \exp\lp( x_i^2 \lp(\frac{1}{2 r_i} - \frac{1}{\lambda_i}\rp) + 2 \frac{\mu_i}{\lambda_i} x_i
    \rp)
    \d x_i
    =
    (2 \pi)^{d/2}
    \prod_{i=1}^d
    \frac{\exp\lp(\frac{2 r_i \mu_i^2}{2 r_i \lambda_i - \lambda_i^2} \rp)}
    {\sqrt{2/\lambda_i - 1/r_i}}
  \end{align*}
  Therefore,
  \begin{align*}
    \dchi{N_2}{N} + 1
    &\leq
    \prod_{i=1}^d \sqrt{\frac{r_i}{ 2 \lambda_i - \lambda_i^2/r_i}}
    \exp\lp(\frac{2 r_i \mu_i^2}{2 r_i \lambda_i - \lambda_i^2} \rp)
    \\
    &=
    \exp
    \lp(
    \sum_{i=1}^d \frac{1}{2} \log\lp(\frac{r_i}{2 \lambda_i - \lambda_i^2/r_i}\rp)
    +
    \frac{2 r_i \mu_i^2}{2 r_i \lambda_i - \lambda_i^2}
    \rp)
  \end{align*}
  Using the fact that $r_i = \max(1, \lambda_i)$ we have
  \begin{align*}
    \sum_{i=1}^d \log\lp(\frac{r_i}{2 \lambda_i - \lambda_i^2/r_i}\rp)
    =
    \sum_{i:\lambda_i < 1}  \log\lp(\frac{1}{2 \lambda_i - \lambda_i^2}\rp)
    \leq
    \sum_{i:\lambda_i < 1} \lp(\frac1 \lambda_i - 1\rp)^2
    \leq \norm{\matr \Lambda^{-1} - \matr I}_F^2,
  \end{align*}
  where we used the inequality $\log(1/(2 x - x^2)) \leq (1/x - 1)^2$ which holds for
  all $x \in (0,1)$.
  Moreover,
  \[
    \sum_{i=1}^d
    \frac{2 r_i \mu_i^2}{2 r_i \lambda_i - \lambda_i^2}
    =
    \sum_{i: \lambda\leq 1}
    \frac{2 \mu_i^2}{2 \lambda_i - \lambda_i^2}
    +
    \sum_{i: \lambda > 1}
    \frac{2 \mu_i^2}{\lambda_i}
    \leq
    \sum_{i=1}^d
    \frac{2 \mu_i^2}{\lambda_i}
    = 2 \norm{\matr \Lambda^{-1/2} \vec \mu}_2^2,
  \]
  where we used the inequality $1/(2 x - x^2) \leq 1/x$ which holds for all $x
  \in (0,1)$.  Combining the above we obtain
  \[
    \dchi{N_2}{N} \leq \exp\lp(\frac 12 \norm{\matr \Lambda^{-1/2} \vec \mu}_2 +
    2 \norm{\matr \Lambda^{-1} - \matr I}_F^2 \rp)
  \]
  Similarly, we compute
  \begin{align*}
    \dchi{N_1}{N} +1 &=
    \prod_{i=1}^d \sqrt{\frac{r_i}{2 - 1/r_i}}
    =
    \exp\lp(
    \frac 12 \sum_{i: \lambda_i > 1} \log\lp( \frac{\lambda_i}{2 - 1/\lambda_i} \rp)
    \rp)
    \\
                     &\leq
                     \exp\lp(
                     \frac 12 \sum_{i: \lambda_i > 1}
                     \lambda_i \lp(1 - \frac 1 \lambda_i \rp)^2
                     \rp)
                     \leq \exp\lp(
                     \frac12
                     \max(\norm{\matr \Lambda}_2,1) \norm{\matr \Lambda^{-1} - \matr I}_F^2
                     \rp)
  \end{align*}

\end{prevproofbig}

The following lemma gives a very rough bound on the maximum coefficient of
multivariate polynomials of affine transformations.
\begin{lemma}\label{lem:polynomialInfinityNormBound}
  Let $p(\vec x) = \sum_{V: |V|\leq k} c_V x^V$ be a multivariate polynomial of
  degree $k$.   Let $\matr A \in \R^{d \times d}$, $\vec b \in \R^d$.  Let
  $q(\vec x) = p(\matr A \vec x + \vec b)$.  Then
  \(
    \norm{q}_\infty
    \leq
    \norm{p}_\infty
    \binom{d+k}{k}
    \lp(\sqrt{d} \norm{\matr{A}}_2 + \norm{\vec{b}}_2 \rp)^k
    .
    \)
\end{lemma}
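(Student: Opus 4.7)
The plan is to work with the $\ell_1$-norm on coefficients (sum of absolute values) as an intermediary, since $\|q\|_\infty \le \|q\|_1$ trivially, and $\ell_1$-norms of polynomials behave well under products and triangle inequality. Writing $q(\vec x) = \sum_{V:|V|\le k} c_V (\matr A \vec x + \vec b)^V$, I would first apply the triangle inequality and bound each $|c_V|$ by $\|p\|_\infty$ to obtain
\[
\|q\|_\infty \;\le\; \|q\|_1 \;\le\; \sum_{V:|V|\le k} |c_V|\,\bigl\|(\matr A\vec x + \vec b)^V\bigr\|_1 \;\le\; \|p\|_\infty \sum_{V:|V|\le k} \bigl\|(\matr A\vec x + \vec b)^V\bigr\|_1.
\]

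Next, I would use the fact that for any polynomials $f,g$ one has $\|fg\|_1 \le \|f\|_1\,\|g\|_1$ (this is immediate from the observation that $\|\cdot\|_1$ is the evaluation of the coefficient-wise absolute-value polynomial $|p|(\vec x)$ at $(1,\dots,1)$, and $|fg| \le |f|\cdot |g|$ coefficient-wise). Since $(\matr A\vec x + \vec b)^V = \prod_{i=1}^d \bigl((\matr A\vec x)_i + b_i\bigr)^{v_i}$, this gives
\[
\bigl\|(\matr A\vec x + \vec b)^V\bigr\|_1 \;\le\; \prod_{i=1}^d \bigl\|(\matr A\vec x)_i + b_i\bigr\|_1^{v_i}.
\]
Each single-factor $\ell_1$-norm is $\sum_{\ell} |A_{i\ell}| + |b_i|$. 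Applying Cauchy–Schwarz to the first term gives $\sum_\ell |A_{i\ell}| \le \sqrt d\,\|A_{i\cdot}\|_2$, and since the $i$-th row norm is at most the spectral norm (because $\|A_{i\cdot}\|_2^2 = (\matr A \matr A^T)_{ii} \le \|\matr A \matr A^T\|_2 = \|\matr A\|_2^2$), we obtain $\sum_\ell |A_{i\ell}| \le \sqrt d\,\|\matr A\|_2$. Combined with $|b_i| \le \|\vec b\|_2$, this yields $\|(\matr A\vec x)_i + b_i\|_1 \le \sqrt d\,\|\matr A\|_2 + \|\vec b\|_2$ uniformly in $i$.

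Putting the pieces together, $\bigl\|(\matr A\vec x + \vec b)^V\bigr\|_1 \le (\sqrt d\,\|\matr A\|_2 + \|\vec b\|_2)^{|V|} \le (\sqrt d\,\|\matr A\|_2 + \|\vec b\|_2)^k$. Since the number of multi-indices $V \in \nats^d$ with $|V| \le k$ is $\binom{d+k}{k}$, summing over $V$ gives the claimed bound. There is no real obstacle here — the only subtlety is choosing to pass through $\|\cdot\|_1$ rather than trying to bound individual coefficients of $q$ directly (which would force a combinatorial expansion of each $(\matr A\vec x + \vec b)^V$ into monomials in $\vec x$, and then a per-monomial aggregation across the different $V$'s that is messy and wasteful); the $\ell_1$-norm approach decouples the two sources of multiplicity cleanly.
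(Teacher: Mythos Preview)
Your proposal is correct and essentially identical to the paper's proof: both pass through the $\ell_1$-norm on coefficients, expand $(\matr A\vec x+\vec b)^V$ as a product of affine factors, bound each factor's $\ell_1$-norm by $\sqrt d\,\|\matr A\|_2+\|\vec b\|_2$, and then count multi-indices by $\binom{d+k}{k}$. The only cosmetic difference is that the paper routes the row-sum bound through the induced $\ell_\infty$ matrix norm rather than Cauchy--Schwarz plus the row-norm-vs-spectral-norm inequality, but these are the same estimate.
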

\begin{proof}
  We have that
  \[
    q(\vec x) =
    \sum_{V: |V| \leq k} c_V \prod_{i=1}^d \lp(\sum_{j=1}^d A_{ij} x_j + b_i\rp)^{v_i}
  \]
  Therefore,
  \begin{align*}
    \norm{q}_1
    &\leq
    \sum_{V: |V| \leq k} c_V \prod_{i=1}^d \lp(\sum_{j=1}^d |A_{ij}| + |b_i|\rp)^{v_i}
    \leq
    \sum_{V: |V| \leq k} c_V \prod_{i=1}^d \lp(\norm{\matr A}_\infty + \norm{\vec b}_\infty \rp)^{v_i}
    \\
    &=
    \sum_{V: |V| \leq k} c_V \lp(\norm{\matr A}_\infty + \norm{\vec b}_\infty \rp)^{|V|}
    \leq
    \norm{p}_\infty
    \binom{d+k}{k}
    \lp(\norm{\matr A}_\infty + \norm{\vec b}_\infty \rp)^k
    \\
    &\leq
    \norm{p}_\infty
    \binom{d+k}{k}
    \lp(\sqrt{d} \norm{\matr{A}}_2 + \norm{\vec{b}}_2 \rp)^k
  \end{align*}
\end{proof}
 
\end{document}